\NewDocumentCommand{\makeabbrev}{mmm}
 {
  \yoruk_makeabbrev:nnn { #1 } { #2 } { #3 }
 }
\makeabbrev{\textbf}{tbf#1}{a,b,c,d,e,f,g,h,i,j,k,l,m,n,o,p,q,r,s,t,u,v,w,x,y,z,A,B,C,D,E,F,G,H,I,J,K,L,M,N,O,P,Q,R,S,T,U,V,W,X,Y,Z}
\makeabbrev{\textbf}{bf#1}{a,b,c,d,e,f,g,h,i,j,k,l,m,n,o,p,q,r,s,t,u,v,w,x,y,z,A,B,C,D,E,F,G,H,I,J,K,L,M,N,O,P,Q,R,S,T,U,V,W,X,Y,Z}
\makeabbrev{\textsf}{tsf#1}{a,b,c,d,e,f,g,h,i,j,k,l,m,n,o,p,q,r,s,t,u,v,w,x,y,z,A,B,C,D,E,F,G,H,I,J,K,L,M,N,O,P,Q,R,S,T,U,V,W,X,Y,Z}
\makeabbrev{\mathsf}{mss#1}{a,b,c,d,e,f,g,h,i,j,k,l,m,n,o,p,q,r,s,t,u,v,w,x,y,z,A,B,C,D,E,F,G,H,I,J,K,L,M,N,O,P,Q,R,S,T,U,V,W,X,Y,Z}
\makeabbrev{\mathfrak}{mf#1}{a,b,c,d,e,f,g,h,i,j,k,l,m,n,o,p,q,r,s,t,u,v,w,x,y,z,A,B,C,D,E,F,G,H,I,J,K,L,M,N,O,P,Q,R,S,T,U,V,W,X,Y,Z}
\makeabbrev{\mathrm}{mrm#1}{a,b,c,d,e,f,g,h,i,j,k,l,m,n,o,p,q,r,s,t,u,v,w,x,y,z,A,B,C,D,E,F,G,H,I,J,K,L,M,N,O,P,Q,R,S,T,U,V,W,X,Y,Z}
\makeabbrev{\mathbf}{mbf#1}{a,b,c,d,e,f,g,h,i,j,k,l,m,n,o,p,q,r,s,t,u,v,w,x,y,z,A,B,C,D,E,F,G,H,I,J,K,L,M,N,O,P,Q,R,S,T,U,V,W,X,Y,Z}
\makeabbrev{\mathcal}{mc#1}{A,B,C,D,E,F,G,H,I,J,K,L,M,N,O,P,Q,R,S,T,U,V,W,X,Y,Z}
\makeabbrev{\mathbb}{mbb#1}{A,B,C,D,E,F,G,H,I,J,K,L,M,N,O,P,Q,R,S,T,U,V,W,X,Y,Z}
\makeabbrev{\mathscr}{ms#1}{A,B,C,D,E,F,G,H,I,J,K,L,M,N,O,P,Q,R,S,T,U,V,W,X,Y,Z}
\makeabbrev{\mathrm}{#1}{
Id,id,ran,rk,diag,stab,ann,conv,pr,ev,tr,End,Hom,sgn,im,op,can,fin,ext,red,tot,rank,
%
rot,usc,lsc,Lip,lip,bLip,AC,loc,
%
supp,Opt,Adm,Cpl,Geo,GeoOpt,GeoAdm,GeoCpl,reg,
%
co,Ric,Exp,dExp,dist,seg,Seg,cut,fcut,Cut,SDiff,Iso,Isom,diam,cl,Homeo,Diff,Der,vol,inj,relint,
%
var,law,Var,Poi,Gam,pa,so,iso,fs,inv,pqi,mix,cov,
%
}
\newcommand{\T}{\tau}
\newcommand{\set}[1]{\left\{#1\right\}}							
\newcommand{\tset}[1]{\big\{#1\big\}}							
\newcommand{\Cb}{\mcC_b}									
\newcommand{\Mb}{\mathscr M_b}
\newcommand{\Mbp}{\mathscr M_b^+}
\newcommand{\semicolon}{\,\,\mathrm{;}\;\,}
\newcommand{\av}[1]{\left\langle#1\right\rangle}
\DeclareMathOperator*{\osc}{osc}
\numberwithin{equation}{section}
\newcommand{\Neu}{\textsc{n}}
\newcommand{\Dir}{\textsc{d}}
\newcommand{\restr}[1]{\big\rvert_{#1}}
\newcommand{\seq}[1]{\left(#1\right)}
\newcommand{\tseq}[1]{\big(#1\big)}
\newcommand{\ttseq}[1]{(#1)}
\newcommand{\tonde}[1]{\left(#1\right)}
\newcommand{\quadre}[1]{\left[#1\right]}
\newcommand{\ttonde}[1]{\big(#1\big)}
\newcommand{\abs}[1]{\left\lvert#1\right\rvert}
\newcommand{\tabs}[1]{\big\lvert#1\big\rvert}
\newcommand{\scalar}[2]{\left\langle #1\, \middle \vert\, #2 \right\rangle}
\newcommand{\tscalar}[2]{\big\langle #1 \big | #2 \big\rangle}
\newcommand{\ttscalar}[2]{\langle #1 | #2 \rangle}
\newcommand{\norm}[1]{\left\lVert#1\right\rVert}
\newcommand{\tnorm}[1]{\big\lVert#1\big\rVert}
\newcommand{\dom}[1]{\mathscr{D}(#1)}
\newcommand{\emparg}{{\,\cdot\,}}
\newcommand{\emp}{\varnothing}
\newcommand{\eqdef}{\coloneqq}
\newcommand{\defeq}{\eqqcolon}
\newcommand{\car}{\mathds{1}}
\newcommand{\bdvol}{\sigma_{\partial\Omega}}
\newcommand{\diff}{\mathrm{d}}
\newcommand{\fstop}{\; \text{.}}
\newcommand{\comma}{\; \text{,}\;\;}
\newcommand{\eps}{\varepsilon}
\newcommand{\bd}{{\partial}}
\newcommand{\purple}[1]{{\color{black}{#1}}}
\newcommand{\rosso}[1]{{\color{black}{#1}}}
\newcommand{\Peps}{}
\newcommand{\cA}{\ensuremath{\mathscr A}} 
\newcommand{\cC}{\ensuremath{\mathcal C}} 
\newcommand{\cF}{\ensuremath{\mathcal F}} 
\newcommand{\cG}{\ensuremath{\mathcal G}}
\newcommand{\cJ}{\ensuremath{\mathcal J}} 
\newcommand{\cK}{\ensuremath{\mathcal K}} 
\newcommand{\cL}{\ensuremath{\mathcal L}} 
\newcommand{\cM}{\ensuremath{\mathcal M}} 
\newcommand{\cN}{\ensuremath{\mathcal N}}
\newcommand{\cR}{\ensuremath{\mathcal R}} 
\newcommand{\cS}{\ensuremath{\mathcal S}} 
\newcommand{\cU}{\ensuremath{\mathcal U}} 
\newcommand{\cV}{\ensuremath{\mathcal V}} 
\newcommand{\cW}{\ensuremath{\mathcal W}} 
\newcommand{\cX}{\ensuremath{\mathcal X}} 
\newcommand{\cY}{\ensuremath{\mathcal Y}}
\newcommand{\E}{\ensuremath{\mathbb{E}}}
\newcommand{\N}{\ensuremath{\mathbb{N}}}
\newcommand{\Z}{\ensuremath{\mathbb{Z}}}
\newcommand{\R}{\ensuremath{\mathbb{R}}}
\renewcommand{\P}{\ensuremath{\mathbb{P}}}
\newcommand{\bx}{\mathbf{x}}
\newcommand{\by}{\mathbf{y}}
\newcommand{\bn}{\mathbf{n}}
\newcommand{\sy}{\textrm{sym}}
\renewcommand{\:}{\,\textrm{\normalfont:}\,}
\newcommand{\dd}{\text{\normalfont d}}
\newcommand{\stat}{\text{\normalfont stat}}
\newcommand{\axz}{\ensuremath{\alpha_\eps^{xz}}}
\newcommand{\ayz}{\ensuremath{\alpha_\eps^{yz}}}
\newcommand{\LipOmega}{\ensuremath{M_\Omega}}
\newtheorem{theorem}{Theorem}[section]
\newtheorem{lemma}[theorem]{Lemma}
\newtheorem{proposition}[theorem]{Proposition}
\newtheorem{corollary}[theorem]{Corollary}
\theoremstyle{definition}
\newtheorem{assumption}[theorem]{Assumption}
\newtheorem{definition}[theorem]{Definition}
\theoremstyle{remark}
\newtheorem{remark}[theorem]{Remark}
\begin{document}

\begin{frontmatter}
\title{Scaling limits of random walks, harmonic profiles, and stationary non-equilibrium states in Lipschitz domains\texorpdfstring{${}^\S$}{S}
}
\runtitle{Random walks, harmonic profiles, and SNS in Lipschitz domains}

\thankstext{t1}{This author gratefully acknowledges funding by the Austrian Science Fund (FWF) grant F65, by the European Research Council (ERC, grant agreement No 716117, awarded to Prof.~Dr.~Jan Maas).
\purple{
He also gratefully acknowledges funding of his current position by the Austrian Science Fund (FWF) grant ESPRIT 208.
}
}

\thankstext{t2}{This author gratefully acknowledges funding by the Hausdorff Center for Mathematics at the University of Bonn.
Part of this work was completed while this author was a member of the Institute of Science and Technology Austria.
He gratefully acknowledges funding of his position at that time by the Austrian Science Fund (FWF) grants F65 and W1245.
}

\thankstext{t3}{This author gratefully acknowledges funding by the Lise Meitner fellowship, Austrian Science Fund (FWF): M3211. Part of this work was completed while  funded  by the European Union's Horizon 2020 research and innovation programme under the Marie-Sk\l{}odowska-Curie grant agreement No.~754411.
}

\thankstext{t4}{The authors are very grateful to Antonio Agresti for many useful conversations about boundary value problems in Lipschitz domains.
The first named author wishes to thank Kazuhiro Kuwae for a useful discussion about the paper~\cite{KuwShi03}.
\purple{He is also grateful to Alessandra Faggionato, Lorenzo Bertini, and Giada Basile for fruitful conversations on the subject of this paper.}
The third named author is grateful to  Patr\'{\i}cia Gon\c{c}alves for some fruitful conversations on an earlier draft of this paper, and to  Claudio Landim for kindly pointing out the reference \cite{landim_hydrodynamical1996}.
\rosso{The authors wish to express their gratitude to three anonymous reviewers for their careful reading and very helpful suggestions.}
}

\begin{aug}
\author[A]{\fnms{Lorenzo} \snm{Dello Schiavo}\ead[label=e1]{lorenzo.delloschiavo@ist.ac.at}\thanksref{t1}},
\author[B]{\fnms{Lorenzo} \snm{Portinale}\ead[label=e2]{portinale@iam.uni-bonn.de}\thanksref{t2}},
and
\author[C]{\fnms{Federico} \snm{Sau}\ead[label=e3]{federico.sau@units.it}\thanksref{t3}}

\runauthor{L.\ Dello Schiavo, L.\ Portinale, F.\ Sau}


\address[A]{Institute of Science and Technology Austria (ISTA) \printead{e1}}
\address[B]{IAM Bonn \printead{e2}}
\address[C]{Dipartimento di Matematica e Geoscienze -- Universit\`a degli Studi di Trieste \printead{e3}}
\end{aug}

\begin{abstract}
We consider the open symmetric exclusion (SEP) and inclusion (SIP) processes on a bounded Lipschitz domain~$\Omega$, with both fast and slow boundary.
For the random walks on~$\Omega$ dual to SEP/SIP we establish: a functional-CLT-type convergence to the Brownian motion on~$\Omega$ with either Neumann (slow boundary), Dirichlet (fast boundary), or Robin (at criticality) boundary conditions; 
the discrete-to-continuum convergence of the corresponding harmonic profiles.
As a consequence, we rigorously derive the hydrodynamic and hydrostatic limits for SEP/SIP on~$\Omega$, and analyze their stationary non-equilibrium fluctuations.
\purple{All scaling limit results for SEP/SIP concern finite–dimensional
	distribution convergence only, as our duality techniques do not require to
	establish tightness for the fields associated to the particle systems.}
\end{abstract}

\begin{keyword}[class=MSC2020]
\kwd[Primary ]{60K35}
\kwd{60F17}
\kwd{35B30}
\kwd[. Secondary ]{35K05}
\kwd{35K20}
\end{keyword}

\begin{keyword}
\kwd{Symmetric exclusion process}
\kwd{Symmetric inclusion process}
\kwd{Stationary non-equilibrium states}
\kwd{Hydrodynamic limit}
\kwd{Hydrostatic limit}
\kwd{Stationary non-equilibrium fluctuations}
\kwd{Lipschitz domain}
\end{keyword}

\end{frontmatter}

\section{Introduction}
Stationary non-equilibrium states (SNS) of open microscopic interacting particle systems play a major role in the development of a macroscopic theory of thermodynamical fluctuations out of equilibrium. In particular, exactly solvable systems  serve as important models to link the emergence of long-range correlations at the microscale, as first observed by H.~Spohn~\cite{spohn_long_1983}, and the non-locality of the action functionals at the macroscale (see, e.g., the surveys~\cite{bertini_macroscopic_2015,derrida_microscopic2011}, the recent articles \cite{bouley2021thermodynamics,frassek_duality2020}, and references therein).

For one-dimensional nearest-neighbor systems coupled with  reservoirs at the two ends of the chain, a number of rigorous results for a broad class of interacting systems is available. For the \emph{symmetric  exclusion process} (SEP), for instance, Derrida \emph{et al.} \cite{derrida_exact_1993-1} provided explicit matrix representations of SNS,  while~\cite{eyink_hydrodynamics_1990, derrida_large2002, landim_stationary_2006} proved	 scaling limits (hydrodynamics,  large deviations and fluctuations, respectively); more recently, sharp convergence to stationarity in total variation has been shown in~\cite{gantert2020mixing,gonccalves2021sharp}. The introduction of an additional parameter tuning the interaction rate of the bulk of the system with the boundary  has also received an increasing attention in the past decade, leading to new macroscopic scenarios (see, e.g., \cite{franco_phase_2015TAMS, baldasso_exclusion_2017, goncalves_hydrodynamics_2019,derrida_large2021}).  Moving to  other one-dimensional systems, we mention, among many:  the open \emph{asymmetric exclusion} and \emph{symmetric harmonic processes}, for which exact solutions have been derived  (see, e.g., 	\cite{derrida_exact_1993-1,frassek2021exact}); the \emph{KMP model} and the \emph{symmetric inclusion process} (SIP), whose structure of  $k$-point correlations and scaling limits   have been considered (see, e.g., \cite{kipnis_heat_1982, bertini_large_deviations_stochastic_model_heat_flow2005, carinci_duality_2013-1, franceschini2020symmetric}).

On more general geometries, mainly due to the lack of closed-form expressions,  the literature is far more sparse.
Hydrodynamic limits on $d$-dimensional hypercubes have been recently studied in \cite{xu2021hydrodynamic} for SEP in contact with slow reservoirs. As for  microscopic properties of SNS, a universal factorized form of the $k$-point correlations and cumulants has been shown on general finite  graphs coupled with two particle reservoirs for  SEP and SIP \cite{floreani_boundary2020}.

\paragraph{Main results} 
In the present work, we provide for the first time a complete characterization of the scaling limits for SNS (hydrostatics and corresponding fluctuations) in the generality of both fast and slow boundaries in Lipschitz domains.

In spite of the lack of explicit representations for SNS,
\begin{itemize}
	\item we rigorously derive the \emph{hydrostatic limit}, \textbf{Theorem~\ref{th:hydrostatic}},
	\item and analyze the corresponding \emph{fluctuations}, \textbf{Theorem~\ref{th:fluctuations-stat}}, 
\end{itemize}   for SEP and SIP on arbitrary bounded Lipschitz domains in $\R^d$, $d\ge 2$, in contact with both fast and slow reservoirs. The reservoirs are placed so to approximate the Lipschitz boundary, and we do not require  their densities to attain only two values, but let them vary continuously in space. In close relation with such scaling limits for SNS,
\begin{itemize}
	\item  we prove the \emph{hydrodynamic limit} for  these particle systems when starting out of stationarity, \textbf{Theorem~\ref{t:MainHydrodynLim}},
	\item and show that the $k$-point stationary correlations vanish uniformly in space,  thus, 	establishing a weak form of \emph{local equilibrium} for SNS, \textbf{Theorem~\ref{t:LocalEquilibrium}}.
\end{itemize}  

Due to the roughness of the Lipschitz boundary, standard discrete-to-continuum approximations testing against smooth functions do not suffice to close the evolution equations for the microscopic systems.
We overcome this difficulty by improving the existing techniques in proving hydrodynamic limits, only relying on the continuity of test functions and mild solution representations of the empirical density fields. The main feature of the microscopic systems which allows this decomposition is \emph{duality} (see, e.g., \cite{carinci_duality_2013-1,floreani_boundary2020}), a form of exact solvability: by means  of a few \textquoteleft dual\textquoteright\ interacting particles   having the original external reservoirs as absorbing states, we give all moments of the original particle systems an alternative probabilistic representation. Thanks to  duality, all steps in the proof of the hydrodynamic and hydrostatic limits can be recast into results for the \textquoteleft simpler\textquoteright\ dual processes.
Moreover, our approach via duality \purple{--- although much less adaptable to perturbations of the particle dynamics ---}  comes with some advantages over the so-called `entropy' and `relative entropy methods' \cite{kipnis_scaling_1999}: on the one side, it avoids the technical steps of establishing
	 replacement lemmas near the boundary; on the other side, it grants more general convergence results, as it allows us to also deal with degenerate reservoirs' densities.  
	 \rosso{Finally, let us note that  tightness of the discrete fields in the path space does not follow from our methods.
	 In fact, it is one other advantage of our approach that we can prove limit theorems for (the finite-dimensional distributions of) the discrete fields without establishing tightness for their paths.}
%
%
%

The  main steps in our argument are the following two convergence results: 
\begin{itemize}
	\item  a \emph{functional central limit theorem} (FCLT) for a single random walk, \textbf{Theorem~\ref{t:MainSemigroups}};
	\item   a \emph{uniform convergence} of discrete \emph{\purple{harmonic functions}} \purple{(also denominated as \emph{harmonic profiles} or \emph{harmonics})} to their continuum counterparts, \textbf{Theorem~\ref{th:harmonic_conv}}.
\end{itemize}  Depending on the intensity of the rate of absorption at the boundary, in the continuum limit we recover Brownian motions and harmonics corresponding to different boundary conditions: Neumann for a sufficiently slow interaction, Dirichlet for a fast one, and Robin at the threshold between the two.
As already mentioned above, such convergence results are derived without assuming smoothness of test functions near the Lipschitz boundary, but only by proving and exploiting the fact that continuous functions up to the boundary behave nicely under the action of the Brownian motions' semigroups. Further, we emphasize that, while in the one-dimensional nearest-neighbor case explicit formulas for both discrete and continuous harmonics are known, in our setting no such expressions are available, nor is the convergence of discrete harmonics a direct consequence of the finite-time horizon FCLTs of the corresponding random walks.

Next to deriving the hydrostatic and hydrodynamic limits, we extract properties of suitable intrinsic nuclear spaces and establish  CLTs for the fluctuations associated to SNS. The limiting fields are centered Gaussian, with  covariances  determined without relying on regularity assumptions other than the Lipschitz boundary and continuity of the boundary data. In particular, covariances   in the Robin and Dirichlet regimes are identified requiring neither strong differentiability nor existence of weak derivatives near the boundary for neither the test functions nor the harmonics.

About our method, we believe the convergence results on random walks and harmonics and the techniques we use to prove them to be of independent interest, as potentially generalizable to other classes of spaces (e.g.\ fractal-like) for which suitable continuity estimates of discrete and continuum heat kernels can be established. 
For the statements on fluctuations, however, we expect that it is not possible to improve our results to rougher (e.g.\ H\"older) domains, since, for example, we crucially use compatibility properties of $L^2$- and $\Cb$-Laplacians which fail on general non-Lipschitz domains (see~\S\ref{sec:generality-Lipschitz} and~\S\ref{s:stat-flu}).

Finally, besides providing a concrete example of scaling limits of particle systems in rough domains with external reservoirs, we extend the usual arguments  \cite{de_masi_mathematical_1991, kipnis_scaling_1999} so to ensure convergence in contexts in which the (predictable) quadratic variations of the so-called Dynkin's martingales associated to the particle systems contain unbounded terms.
This allows us to include in our analysis \emph{singular} (i.e., delta-like) initial conditions which relax at positive times (see, e.g., \cite{landim_hydrodynamical1996} for hydrodynamics with non-relaxing Dirac measures forming due to particles' slow-down), and scaling limits of systems with fast boundary and an unbounded number of particles per site.
This improvement is achieved by exploiting the smoothening action of the random walks' semigroups, and employing mild (in place of weak) solutions  to the hydrodynamic equations.

\paragraph{Organization of the paper}
The paper is organized as follows. In~\S\ref{sec:setting-models}, we introduce the continuum and discrete geometric setting, as well as the interacting particle systems coupled with external reservoirs.
In~\S\ref{sec:main-results}, we present the main results of the paper: the convergence results for random walks, and for discrete harmonics are the contents of~\S\ref{sec:duality-RW-BM} and~\S\ref{sec:harmonic-profiles}, respectively. 
The  hydrodynamic and hydrostatic limits, as well as the local equilibrium for SNS, are all stated in~\S\ref{sec:HDL-HSL}, while the stationary non-equilibrium fluctuations in~\S\ref{s:stat-flu}.
The remaining sections are devoted to the proofs of the main results: 
after deriving some auxiliary results in \S\ref{sec:auxiliary-RW-h} \purple{(part of which is postponed to~\S\S\ref{sec:proof-equicontinuity},\ref{sec:proof-Equivalences})}, the proofs of the main results in~\S\ref{sec:duality-RW-BM} and~\S\ref{sec:harmonic-profiles} are contained in~\S\ref{sec:semigroup-conv-proofs} and~\S\ref{sec:harmonic_conv}, respectively.
In~\S\ref{s:proofs-IPS}, we present the necessary duality relations, related properties, and the proofs of the results from~\S\ref{sec:HDL-HSL}; in~\S\ref{sec:proof-flu}, those of the CLTs for the stationary non-equilibrium fluctuations.
Let us stress that the proofs of scaling limits in~\S\S\ref{s:proofs-IPS}, \ref{sec:proof-flu} may be read independently from \S\S\ref{sec:auxiliary-RW-h}--
\ref{sec:harmonic_conv}.
Finally, known and new results on Laplacians, their corresponding semigroups and intrinsic function spaces on bounded Lipschitz domains are collected in~\S\ref{sec:appendix-laplacians}.

\section{Setting and models}\label{sec:setting-models}
Everywhere in the following we let~$d\geq 2$ be an integer, and~$\eps\in (0,1)$.
All the asymptotic notation is understood in the limit~$\eps\to 0$.
\paragraph{General notation}
Let $\N\eqdef \set{1,2,\ldots}$, $\N_0\eqdef \N\cup \set{0}$, $\R^+\eqdef(0,\infty)$ and  $\R_0^+\eqdef \R^+\cup\set{0}$.
For all $B\subseteq \R^d$, we denote by~$\#B\in \overline\N_0\eqdef \N_0\cup\set{\infty}$ the cardinality of~$B$.
Throughout this work,~$\abs{\emparg}$~stands for the Euclidean norm, while $\dist$, resp.~$\dist_{\rm H}$, denotes the usual Euclidean distance between points and sets, resp.\ the Hausdorff distance between sets. For a set~$U\subset\R^d$, $\cC_b(U)$~denotes the space of bounded and continuous functions on $U$. If $U$ is open and $k \in \overline\N_0$,  $\cC^k_c(U)$, resp.\ $\cC^k(\overline U)$, denotes the subspace of $\cC^k(U)$ of compactly supported functions, resp.\ of functions whose  derivatives of all orders up to $k$ continuously extend to the closure of~$U$.
When $k=0$, we write  $\cC(\overline U)=\cC^0(\overline U)$ and let $\cC_0(U)$ denote the subspace of $\cC(\overline U)$ of identically vanishing functions on $\partial U$.
For all $f \in \cC_b(U)$ and any finite, resp.\ finite signed,  measure $\mu \in \Mbp(U)$, resp. $\Mb(U)$, we write $\scalar{\mu}{f}\eqdef \int f\, \dd \mu$, and denote by $\scalar{\emparg}{\emparg}_H$ the inner product on a Hilbert space $H$. All throughout, $C, C', C_1, C_2, \ldots$ denote positive constants whose value is unimportant and may change from line to line.

\subsection{Lattice approximations of bounded Lipschitz domains}
If not stated otherwise, $\Omega$~always denotes a (connected) \emph{bounded Lipschitz domain}.
Without loss of generality, we assume that $0\in \Omega$.
We denote by~$\mu_\Omega$ the standard Lebesgue measure on~$\Omega$, and by $\bdvol$ the \emph{surface measure} of~$\partial\Omega$, i.e., the restriction to $\partial\Omega$ of the $(d-1)$-dimensional Hausdorff measure $\mcH^{d-1}$ on $\R^d$.

\subsubsection{The discrete domain \texorpdfstring{$\Omega_\eps$}{OmegaEps} and its volume measure \texorpdfstring{$\mu_\eps$}{MuEps}} \label{sec:discrete-domain-volume-measure}
Let $\overline{\eps\Z^d}$ be the union of all closed line segments joining nearest neighbors in $\eps\Z^d$, and $\Omega_\eps^*$ be the connected component of $\Omega\cap \overline{\eps\Z^d}$ containing the origin.
Set $\Omega_\eps\coloneqq \Omega_\eps^*\cap \eps\Z^d$. 
For~$x,y\in\Omega_\eps$ we write~$x\sim y$ to indicate that~$\abs{x-y}=\eps$, the dependence on~$\eps$ being implicit in the notation~$\sim$ and always apparent from context.

Further let $\mu_\eps\coloneqq \eps^d \sum_{x\in \Omega_\eps}\delta_x$ be the (discrete) `volume measure' of $\Omega_\eps$.
For any uniformly bounded and equicontinuous family $\cF$ in $\cC_b(\R^d)$, we have that
\begin{equation}
\lim_{\eps\downarrow0} \sup_{f\in \cF}\abs{\scalar{\mu_\eps}{f}-\scalar{\mu_\Omega}{f}}=0\fstop
\end{equation}
As a consequence, there exists~$C=C_\Omega\ge 1$ such that
\begin{equation}\label{eq:VolumeBound}
C^{-1}\le \mu_\eps(\Omega_\eps)\le C\fstop 
\end{equation}

\subsubsection{The discrete inner boundary \texorpdfstring{$\partial \Omega_\eps$}{PartialOmegaEps} and its measure  \texorpdfstring{$\sigma_\eps$}{SigmaEps}}\label{sss:DiscrInnerBd}
Letting~${\rm deg}_\eps(x)$ denote the degree of the vertex~$x \in \Omega_\eps$, we define the \emph{discrete inner boundary}
\begin{equation}
\partial \Omega_\eps\coloneqq \set{x\in \Omega_\eps: {\rm deg}_\eps(x)<2d} \fstop
\end{equation}
Note that~$\partial\Omega_\eps\neq \emp$, since~$0\in\Omega_\eps$. Since $\Omega$ is a bounded Lipschitz domain,
\begin{equation}\label{eq:bd:lipschitz-count}
\#\partial\Omega_\eps = O(\eps^{1-d})\fstop
\end{equation}

To a family of weights~$\set{\alpha_\eps(x)}_{x\in\partial\Omega_\eps}\subset \R^+$ we associate the discrete measure on~$\partial\Omega_\eps$
\begin{equation}\label{eq:SigmaEps}
\sigma_\eps \coloneqq \eps^{d-1} \sum_{x\in \partial\Omega_\eps} \alpha_\eps(x)\, \delta_x \fstop
\end{equation}
Everywhere in the following, we assume
\begin{itemize}
\item \emph{uniform ellipticity}: there exists $C\ge 1$ such that, \purple{for all~$\eps \in (0,1)$,}
\begin{equation}\label{eq:bd_unif_ellipticity}
C^{-1}\le \alpha_\eps(x)\le C\comma \qquad x \in \partial\Omega_\eps\semicolon 
\end{equation}
\item\emph{weak convergence to the  surface measure of $\partial\Omega$}: for any  continuous  $f \in \cC_b(\R^d)$, 
\begin{equation}\label{eq:bd_weak_conv}
\lim_{\eps\downarrow0} \abs{\scalar{\sigma_\eps}{f}-\scalar{\sigma_{\partial\Omega}}{f}}=0\fstop
\end{equation}
\end{itemize}
\noindent We call any measure~$\sigma_\eps$ as in~\eqref{eq:SigmaEps} and satisfying \eqref{eq:bd_unif_ellipticity}--\eqref{eq:bd_weak_conv} a \emph{discrete inner-boundary measure} of~$\Omega_\eps$.

\paragraph{An explicit construction of $\partial\Omega_\eps$ and $\sigma_\eps$}\label{ex:BoundaryMeasures}
\purple{Let~$(\LipOmega,N_\Omega)$ be the \emph{Lipschitz character} of~$\Omega$, see e.g.~\cite[Ch.~8]{shen_periodic_2018}.
Informally,~$N_\Omega$ is the minimal number of Lipschitz charts covering~$\Omega$ and with Lipschitz constant at most~$M_\Omega$.}
The results in~\cite[Lem.~2.1]{fan2016discrete}  and \cite[Lem.~2.4]{chen2017hydrodynamic} guarantee that  there exist $C\ge 1$ and finite measurable partitions~$\mcA_\eps$ of~$\partial\Omega$ such that
\begin{itemize}
\item the partition~$\mcA_\eps$ is \emph{uniformly elliptic}, i.e., 
\begin{equation}
C^{-1} \le  \frac{\sigma_{\partial\Omega}(A)}{\eps^{d-1}}\le C\comma\qquad A \in \mathcal A_\eps\semicolon
\end{equation}

\item for any uniformly bounded and equicontinuous family $\mcF$ in $\cC_b(\R^d)$, 
\begin{equation}
\lim_{\eps\downarrow 0} \sup_{f \in \mcF}\sum_{A \in \mcA_\eps} \sigma_{\partial\Omega}(A)\, \osc_A f=0\comma \qquad \osc_A f\eqdef \sup_A f-\inf_A f\semicolon
\end{equation}
\item for all $A \in \mcA_\eps$, there exist at most $C$ points $x \in \partial \Omega_\eps$ such that ${\rm dist}(x,A)\leq \eps (1+\LipOmega^2)^{1/2}$.
\end{itemize}
As a consequence of the definition of $\partial \Omega_\eps$, for all $x \in \partial\Omega_\eps$, there exists $A \in \mathcal A_\eps$ such that  ${\rm dist}(x,A)\le \eps$, and at most $C$ sets $A \in \mathcal A_\eps$ such that ${\rm dist}(x,A)\le \eps$.
Hence,  one can always associate to each~$A \in \mcA_\eps$ a non-empty set  $\partial\Omega_\eps^A\subset \partial\Omega_\eps$ such that
\[
\dist_{\rm H}(\partial\Omega_\eps^A,A)\leq \eps (1+\LipOmega^2)^{1/2} \comma \qquad \bigcup_{A\in \mcA_\eps} \partial\Omega_\eps^A= \partial \Omega_\eps\fstop
\]
Finally, letting
\begin{equation}
\alpha_\eps(x)\coloneqq \car_{\partial\Omega_\eps}(x) \sum_{\substack{A \in \mcA_\eps\\ \partial\Omega_\eps^A\ni x}}  \frac{1}{\#\partial\Omega_\eps^A}\, \frac{\sigma_{\partial\Omega}(A)}{\eps^{d-1}}\comma\qquad x \in \Omega_\eps\comma
\end{equation}
the corresponding discrete inner-boundary measure~$\sigma_\eps$ satisfies~\eqref{eq:bd_unif_ellipticity}--\eqref{eq:bd_weak_conv} by construction.

\subsubsection{The discrete outer boundary \texorpdfstring{$\partial_e\Omega_\eps$}{PartialEOmegaEps} and its measure \texorpdfstring{$\sigma_{e,\eps}$}{SigmaEEps}}
Next to the set~$\partial\Omega_\eps$ and its measure $\sigma_\eps$, we consider the \emph{discrete outer boundary} $\partial_e\Omega_\eps$ of $\Omega_\eps$ with its associated measure~$\sigma_{e,\eps}$.
Precisely, let~$\partial_e\Omega_\eps \subset \R^d\setminus \Omega$ be any \purple{(up to) countable} set.
For each $x \in \partial\Omega_\eps$, we let $q_\eps(x,\emparg)$ be a  probability kernel on $\partial_e\Omega_\eps$ satisfying 
\begin{equation}\label{eq:bd_ext}
	\lim_{\eps\downarrow 0}\sup_{x\in \partial\Omega_\eps} \sum_{z\in \partial_e\Omega_\eps} q_\eps(x,z) \abs{z-x}=0\comma
\end{equation}
and we define
\begin{equation}
\sigma_{e,\eps}\eqdef
\eps^{d-1}\sum_{z\in \partial_e\Omega_\eps} \tonde{\sum_{x\in \partial\Omega_\eps}\alpha_\eps(x)\, q_\eps(x,z)} \delta_z 
\fstop
\end{equation}
\purple{Note that the precise definition of the set $\partial_e \Omega_\eps$ is not important: what matters is the behaviour of the probability kernel $q_\eps$ as in \eqref{eq:bd_ext}.}
\begin{remark}\label{rem:bd_ext}
By~\eqref{eq:bd_ext}, the convergence in~\eqref{eq:bd_weak_conv} holds true with~$\sigma_{e,\eps}$ in place of~$\sigma_\eps$. 
\end{remark}

Whenever~$x\in \partial\Omega_\eps$, $z\in \partial_e\Omega_\eps$ and~$q_\eps(x,z)\neq 0$, we write~$z \sim x$ and we set
\begin{equation}
\axz\coloneqq \alpha_\eps(x)\, q_\eps(x,z)\fstop
\end{equation}

\begin{remark}
	Several choices of $\partial_e\Omega_\eps$ and $\set{q_\eps(x,\emparg)}_{x\in \partial\Omega_\eps}$ (thus, of~$\sigma_{e,\eps}$) are possible.
For instance,~$\partial_e\Omega_\eps$ may be chosen as~$\eps\Z^d \cap (\R^d\setminus\overline\Omega)$ and $q_\eps(x,\emparg)$ as the uniform measure on points in~$\partial_e\Omega_\eps$ at distance from $x\in\partial\Omega_\eps$ at most $\eps (1+\LipOmega^2)^{1/2}$.
\end{remark}

Finally, let us denote by~$\overline\Omega_\eps\eqdef \Omega_\eps\cup \partial_e \Omega_\eps$ the totality of~$\Omega_\eps$ and its outer boundary points.
A graphic representation of the lattice approximations of a Lipschitz domain is given in Figure~\ref{fig:Domains}.

\begin{figure}[tb!]
\begin{subfigure}{.25\textwidth}
\includegraphics[width=\linewidth]{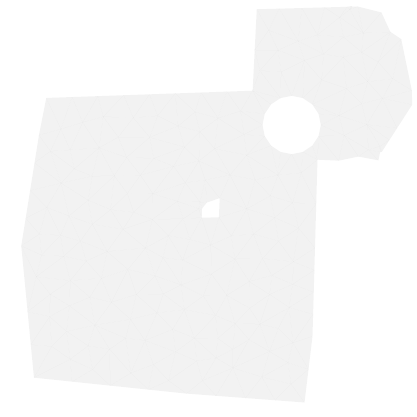}
\caption{$\Omega\subset \quadre{-0.5,0.5}^{\times 2}$}
\label{fig:Domains:1}
\end{subfigure}
\qquad\qquad
\begin{subfigure}{.25\textwidth}
\includegraphics[width=\linewidth]{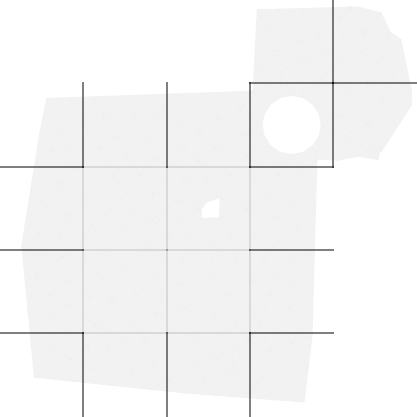}
\caption{$\eps=0.2$}
\label{fig:Domains:2}
\end{subfigure}
\qquad\qquad
\begin{subfigure}{.25\textwidth}
\includegraphics[width=\linewidth]{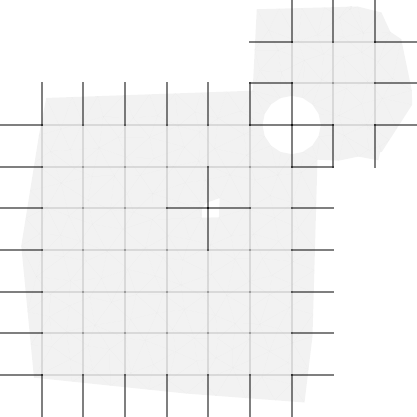}
\caption{$\eps=0.1$}
\label{fig:Domains:3}
\end{subfigure}

\vspace{.5cm}

\begin{subfigure}{.25\textwidth}
\includegraphics[width=\linewidth]{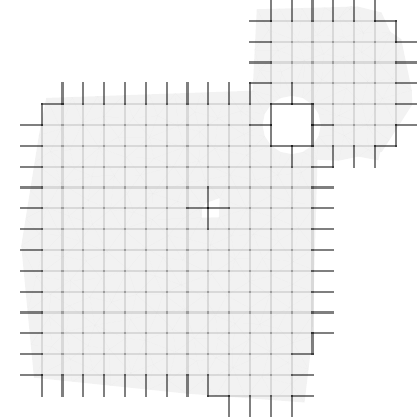}
\caption{$\eps=0.05$}
\label{fig:Domains:4}
\end{subfigure}
\qquad\qquad
\begin{subfigure}{.25\textwidth}
\includegraphics[width=\linewidth]{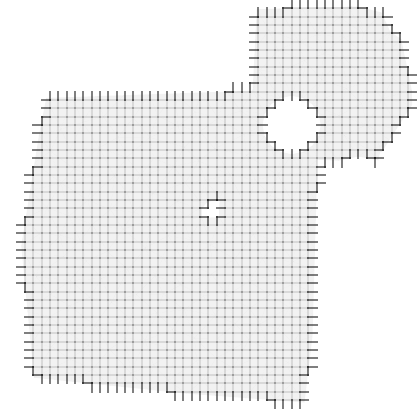}
\caption{$\eps=0.02$}
\label{fig:Domains:5}
\end{subfigure}
\qquad\qquad
\begin{subfigure}{.25\textwidth}
\includegraphics[width=\linewidth]{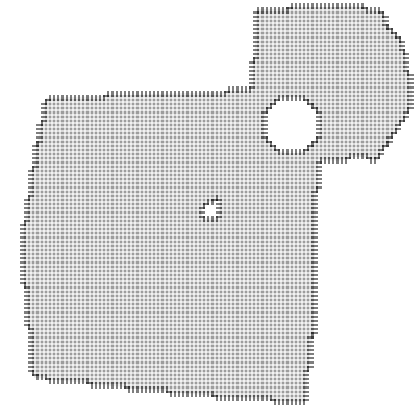}
\caption{$\eps=0.01$}
\label{fig:Domains:6}
\end{subfigure}

\caption{A bounded Lipschitz domain~$\Omega$, in light gray, and its lattice approximations for different values of~$\eps$.
The set~$\Omega_\eps^*$ is \emph{not} connected in Fig.s~\ref{fig:Domains:2},~\ref{fig:Domains:3}, while it is connected in Fig.s~\ref{fig:Domains:4}--\ref{fig:Domains:6}.
\purple{For sufficiently small $\eps \in (0,1)$, the outer boundary~$\partial_e\Omega_\eps$ has been chosen to be the set of points in~$\eps\Z^2\cap \big( \R^2 \setminus \overline \Omega \big)$ having at least one neighbor in $\overline \Omega$}.
Inner edges are depicted in light gray, outer edges in dark gray.
}
\label{fig:Domains}
\end{figure}

\subsection{Symmetric exclusion and inclusion processes in contact with reservoirs}\label{sec:particle_systems}
For~$\sigma=\pm 1$ define the \emph{configuration space}~$\Xi^{\eps,\sigma}$ by~$\Xi^{\eps,-1}\eqdef \set{0,1}^{\Omega_\eps}$ and~$\Xi^{\eps,1}\eqdef \N_0^{\Omega_\eps}$.
For~$\eta\in \Xi^{\eps,\sigma}$, we interpret $\eta(x)$ as the number of particles at $x \in \Omega_\eps$, and further  denote by~$\eta^{x,\emparg}$, resp.~$\eta^{\emparg,x}$, the \emph{annihilation}, resp.\ \emph{creation}, of a particle at $x\in \Omega_\eps$, viz.
\begin{align*}
\eta^{x,\emparg}(z)\eqdef& \begin{cases}\eta(x)-1 & \text{if } z=x, \eta(x)>0\\ \eta(z) & \text{otherwise} \end{cases}\comma
\\
\eta^{\emparg,x}(z)\eqdef& \begin{cases}\eta(x)+1 & \text{if } z=x,\sigma=1 \text{ or } z=x,\sigma=-1, \eta(x)=0 \\
\eta(x)  & \text{otherwise} \end{cases} \fstop
\end{align*}
Similarly, we let
$\eta^{x,y}\eqdef (\eta^{x,\emparg})^{\emparg,y}$. 

\subsubsection{Particle dynamics}\label{sec:particle_dynamics}
Throughout this work, unless stated otherwise, we fix~$\beta \in \R$ and~$\vartheta\in \Cb(\R^d)$ with~$\vartheta\in [0,1]$ if~$\sigma=-1$, $\vartheta\in \R_0^+$ if~$\sigma=1$.	
We consider the continuous-time Markov processes~$\eta^{\eps,\beta,\sigma,\vartheta}_t$ with state spaces~$\Xi^{\eps,\sigma}$ and laws~$\tseq{\mbbP^{\eps,\beta,\sigma,\vartheta}_\eta}_{\eta\in\Xi^{\eps,\sigma}}$ defined by the \emph{infinitesimal generator}~$\cL^{\eps,\beta,\sigma,\vartheta}$ acting on $f:\Xi^{\eps,\sigma}\to \R$ as
\begin{align}\label{eq:gen}
\ttonde{\cL^{\eps,\beta,\sigma,\vartheta}f}(\eta)&\coloneqq \eps^{-2}\sum_{x\in\Omega_\eps} \sum_{\substack{y\in \Omega_\eps\\ y\sim x}} \eta(x)\ttonde{1+\sigma \eta(y)}\ttonde{f(\eta^{x,y})-f(\eta)}
\\
\nonumber
&\qquad +\eps^{\beta-2}\sum_{x\in \partial\Omega_\eps}\sum_{\substack{z\in \partial_e\Omega_\eps\\z\sim x}}\axz\, 	\eta(x)\ttonde{1+\sigma \vartheta(z)}\ttonde{f(\eta^{x,\emparg})-f(\eta)}
\\
\nonumber
&\qquad +\eps^{\beta-2}\sum_{x\in \partial\Omega_\eps}\sum_{\substack{z\in \partial_e\Omega_\eps\\z \sim x}}\axz\,\vartheta(z)\ttonde{1+\sigma \eta(x)}\ttonde{f(\eta^{\emparg,x})-f(\eta)}\fstop
\end{align}
For simplicity of notation, we often omit the specification of~$\sigma$ and~$\vartheta$.

The infinitesimal generators in \eqref{eq:gen} describe the evolution of  particle systems on $\Omega_\eps$,  in which particles jump and interact within $\Omega_\eps$ on a diffusive timescale,  and are created and annihilated at $\partial\Omega_\eps$ at rates proportional to $\eps^{\beta-2}$.  
The parameter $\sigma =\pm 1$ stands for two different types of interaction. When~$\sigma=-1$, the process~$\eta^{\eps,\beta}_t$ is called \emph{symmetric exclusion process} (SEP) \emph{with particle reservoirs}.
Here, particles evolve on the bulk of the system jumping to nearest-neighbor sites at rate $\eps^{-2}$, while being subject to the \emph{exclusion rule}, i.e., jumps to already occupied sites get suppressed. When~$\sigma=1$, the process~$\eta^{\eps,\beta}_t$ is called \emph{symmetric inclusion process} (SIP) \emph{with particle reservoirs}. In this case,  particles on $\Omega_\eps$   jump  to nearest-neighbor sites at rate $\eps^{-2}$, as well as join the position of each nearest-neighbor particle with the same rate.
This mechanism of mutual particle attraction goes under the name of \emph{inclusion rule}. 
For both processes,  particle creation and annihilation rules mimic those governing the evolution within the bulk, with sites~$z \in \partial_e\Omega_\eps$ representing  unlimited particle reservoirs set at  density equal to $\vartheta(z)\sum_{x\in\partial\Omega_\eps} \axz$. 

Both particle systems are well-posed. Indeed, $\eta^{\eps,\beta}_t$ is a finite-state Markov chain for~$\sigma=-1$;
when~$\sigma=1$, albeit $\Xi^\eps$ is countable, the chain is non-explosive (see, e.g., \cite[Prop.\ 2.1]{franceschini2020symmetric}).

\begin{figure}
\begin{subfigure}{.25\textwidth}
\includegraphics[width=\linewidth]{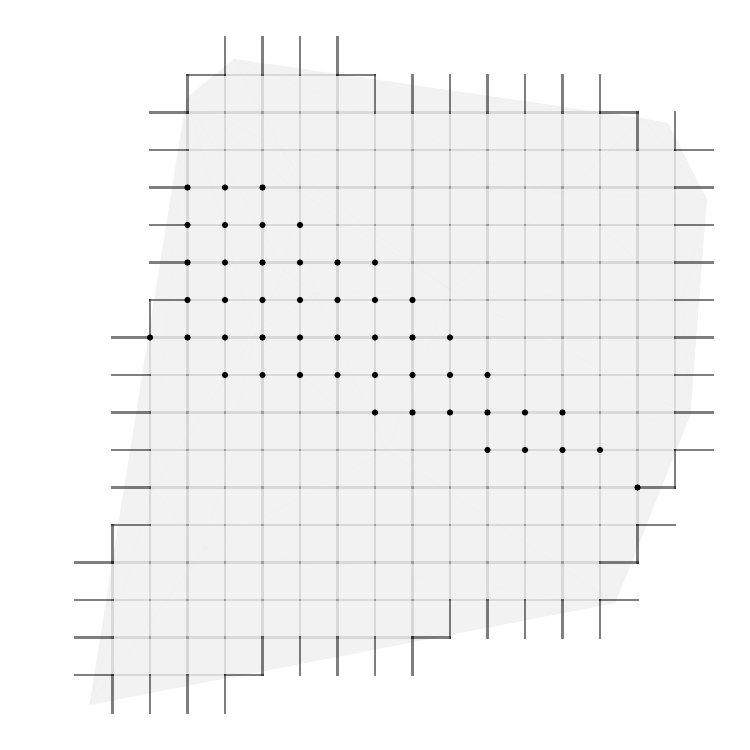}
\caption{$t=0$}
\end{subfigure}
\qquad\qquad
\begin{subfigure}{.25\textwidth}
\includegraphics[width=\linewidth]{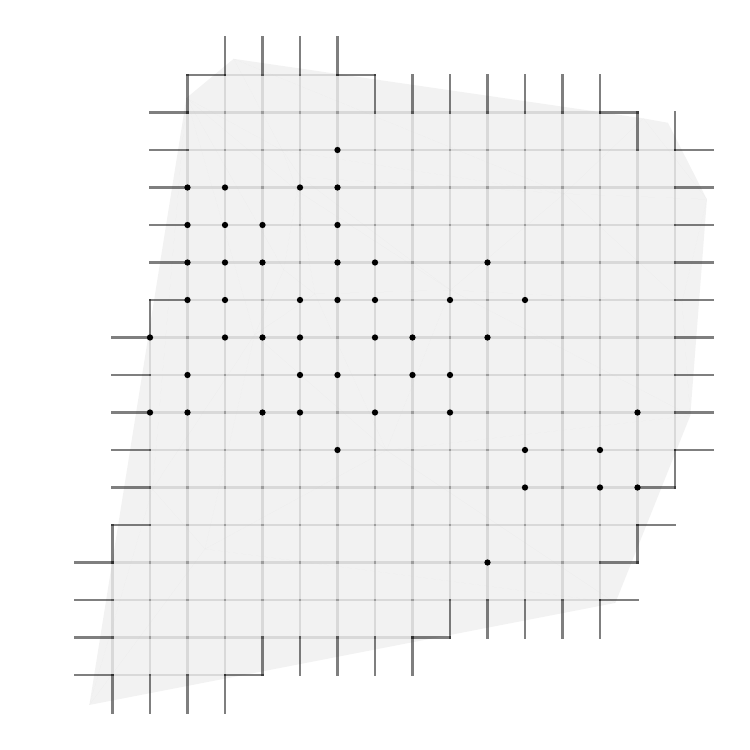}
\caption{$t=1$}
\end{subfigure}
\qquad\qquad
\begin{subfigure}{.25\textwidth}
\includegraphics[width=\linewidth]{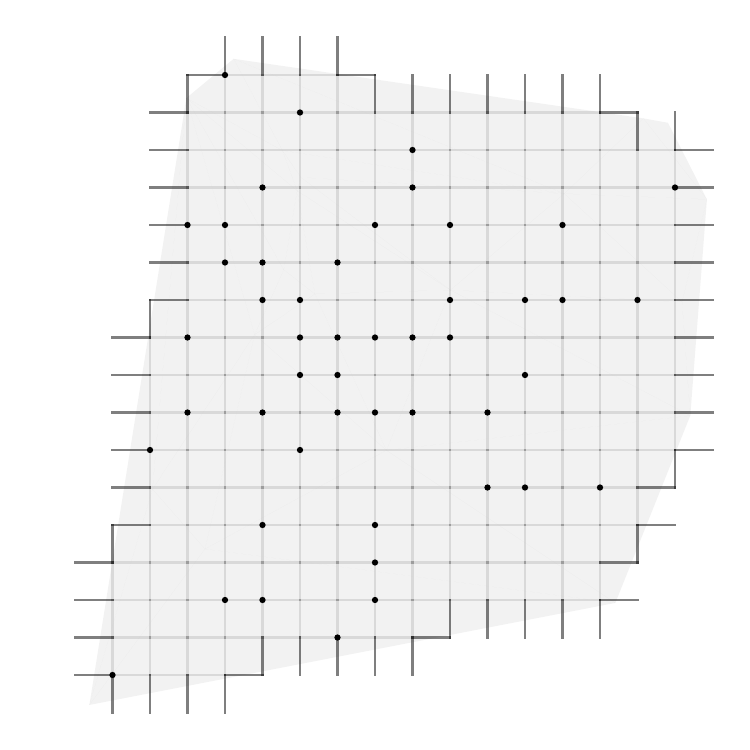}
\caption{$t=100$}
\end{subfigure}
\caption{A time sample of a fixed SEP path with~$\beta=100$ and~$\vartheta\equiv1/3$.}
\end{figure}

\begin{figure}
\begin{subfigure}{.25\textwidth}
\includegraphics[width=\linewidth]{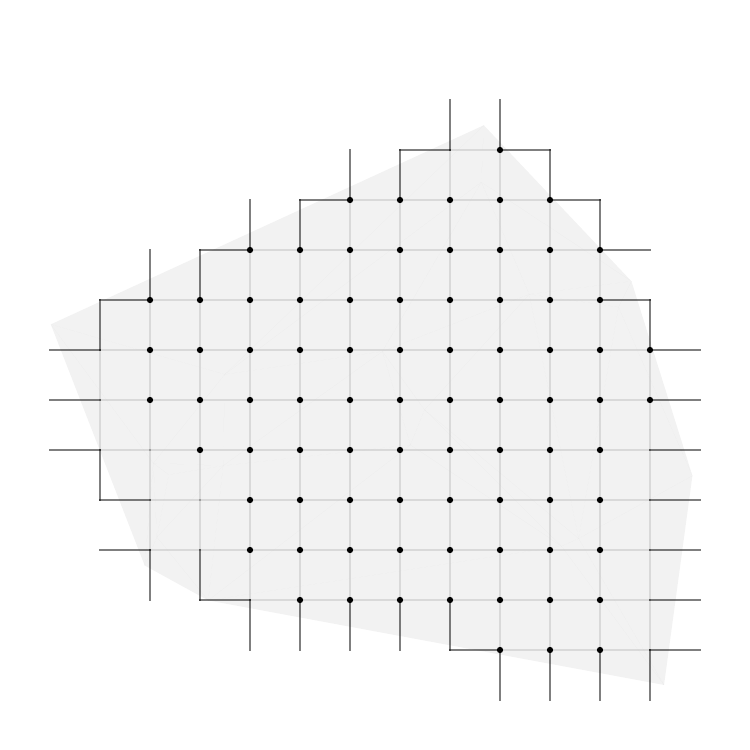}
\caption{$t=0$}
\end{subfigure}
\qquad\qquad
\begin{subfigure}{.25\textwidth}
\includegraphics[width=\linewidth]{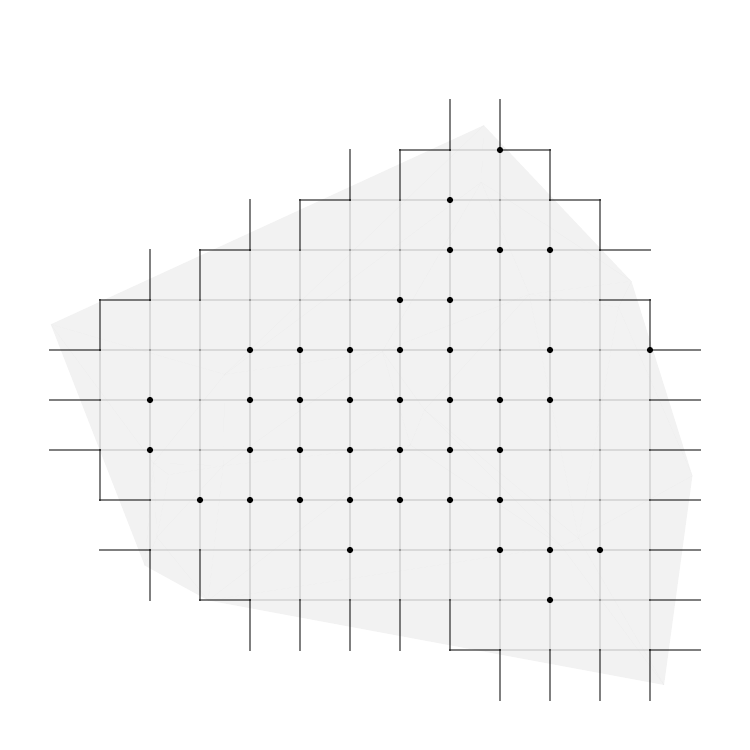}
\caption{$t=1$}
\end{subfigure}
\qquad\qquad
\begin{subfigure}{.25\textwidth}
\includegraphics[width=\linewidth]{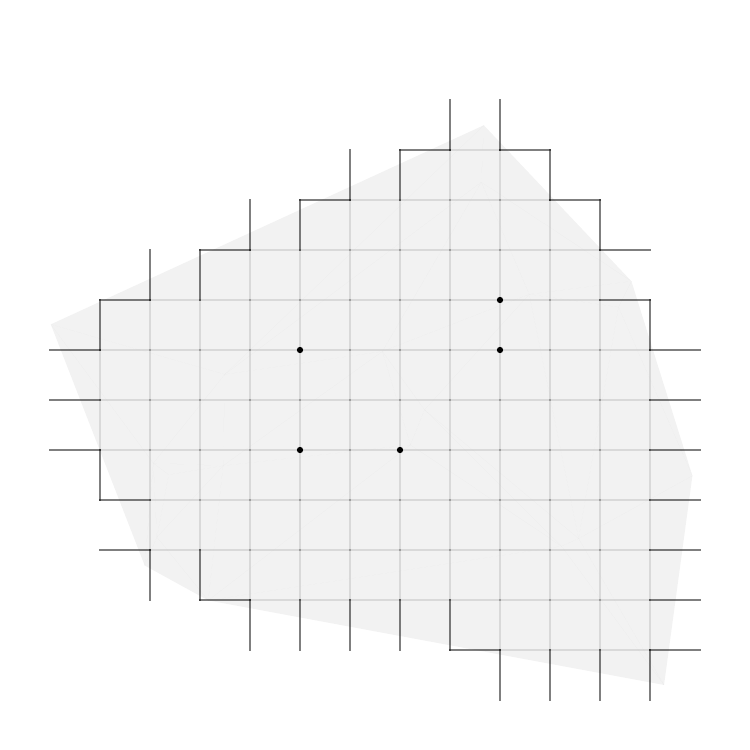}
\caption{$t=100$}
\end{subfigure}
\caption{A time sample of a fixed SEP path with~$\beta=-1$ and~$\vartheta\equiv0$.}
\end{figure}

\subsubsection{Stationary non-equilibrium states}	\label{sec:ness}
For both SEP and SIP described above, it is well-known (see, e.g., \cite[\S3]{floreani_boundary2020}) that there exists a unique invariant measure, denoted by $\nu_\stat^{\eps,\beta}$. 
Unless the function $\vartheta$ is  constant on $\partial_e\Omega_\eps$ ---~in which case $\nu^{\eps,\beta}_\stat$ is fully available and in product form~---, these invariant measures are in general \emph{not} explicit.
(An exception to this is represented by  SEP and its asymmetric variants on one-dimensional lattices, for which a special \emph{matrix formulation} due to Derrida \emph{et al.}~\cite{derrida_exact_1993-1} is available.)

\begin{remark}[SEP$(\alpha)$ and SIP$(\alpha)$] Variants of SEP and SIP parameterized by $\alpha$ (in $\N$ if $\sigma = -1$, in $\R^+_0$ if $\sigma =1$) have also been recently considered (see, e.g., \cite{floreani_boundary2020,franceschini2020symmetric}). Our models correspond to the choice $\alpha\equiv 1$; however, all our main results and proofs carry over to this most general setting with no substantial change. 
\end{remark}

\section{Main results}\label{sec:main-results}
Everywhere in the following  we denote by~`$\bd$' either of the abbrevations~`$\Neu$', for \emph{Neumann boundary conditions},~`$\Dir$' for \emph{Dirichlet boundary conditions}, or~`$\varrho$' for \emph{Robin boundary conditions} with~$0< \varrho\in\Cb(\partial\Omega)$.
When~$\beta$ and~$\bd$ appear in the same statement, it is always tacitly understood that
\begin{align*}
\bd=\Dir \quad \text{if\quad$\beta<1$}\comma \qquad \bd=\varrho\eqdef 1 \quad \text{if\quad$\beta=1$}\comma \qquad \bd=\Neu \quad \text{if\quad$\beta>1$}\fstop
\end{align*}
In particular, in all convergence statements for Robin boundary conditions (Thm.s~\ref{t:MainSemigroups}, \ref{th:harmonic_conv},  \ref{t:MainHydrodynLim}, \ref{th:hydrostatic}, and~\ref{th:fluctuations-stat}), the function~$\varrho$ is constantly equal to~$1$.

\subsection{Duality, random walks and Brownian motions}\label{sec:duality-RW-BM}
The particle systems~$\eta^{\eps,\beta}_t$ introduced in~\S\ref{sec:particle_systems} are in duality with purely absorbing interacting (labeled) particle systems~$\mbfX^{\eps,\beta,k}_t$, with~$k\in \N$ (see, e.g., \cite[\S2.2]{floreani_boundary2020}). As we show in \S\ref{s:proofs-IPS} below, these  dual processes  play a key role in our	 proofs of  scaling limits for the particle systems $\eta^{\eps,\beta}_t$.

In the `bulk'~$\Omega_\eps$, the systems~$\mbfX^{\eps,\beta,k}_t$ follow the same interaction rules as~$\eta^{\eps,\beta}_t$.
At the inner boundary~$\partial\Omega_\eps$ however, the interaction of~$\mbfX^{\eps,\beta,k}_t$ with the outer boundary~$\partial_e\Omega_\eps$ only consists of the absorbtion of particles at~$\partial_e\Omega_\eps$;
furthermore, the jump rates do depend on the underlying structure of $\overline\Omega_\eps$ and its discrete outer-boundary measure $\sigma_{e,\eps}$, but \emph{not} on the parameter $\vartheta \in \cC_b(\R^d)$.

In this section, we only consider one-particle dual systems (i.e., $X^{\eps,\beta}_t=\mbfX^{\eps,\beta,k=1}_t$ consisting of the random walks in~$\overline\Omega_\eps$ defined in~\S\ref{sss:RandomWalks} below), and  refer to \S\ref{s:duality} for  the complete definitions and properties of the dual processes for general~$k\in \N$.
In~\S\ref{sss:FCLT} we prove a functional central limit theorem for $X^{\eps,\beta}_t$, with the Brownian motion on~$\Omega$ with boundary condition~$\bd$ (see~\S\ref{sss:BrownianMotions}) as its limiting process.
	
\subsubsection{Random walks}\label{sss:RandomWalks}
For~$f\in\R^{\overline\Omega_\eps}$ and every~$x,y\in\Omega_\eps$ we write
\begin{equation}
\nabla^\eps_{x,y} f\eqdef \eps^{-1}\ttonde{f(y)-f(x)}\fstop
\end{equation}
We denote by
$
\ttonde{\ttseq{X^{\eps,\beta}_t}_{t\ge 0} , \ttseq{\mbfP^{\eps,\beta}_x}_{x\in \overline\Omega_\eps}}
$
the continuous-time random walk in the Skorokhod space~$\mcD(\R^+_0;\overline\Omega_\eps)$ with infinitesimal generator
\begin{equation}\label{eq:generator_RW}
\begin{aligned}
A^{\eps,\beta}f(x)=&\ \car_{\Omega_\eps}\!(x)\,\eps^{-1}\sum_{\substack{y\in \Omega_\eps \\ y\sim x}} \nabla^\eps_{x,y}f
+ \car_{\partial\Omega_\eps}\!(x)\, \eps^{\beta-1}\sum_{\substack{z\in \partial_e\Omega_\eps\\z\sim x}}\axz \nabla^\eps_{x,z}f\fstop
\end{aligned}
\end{equation}
We further let~$\ttseq{P^{\eps,\beta}_t}_{t\ge 0}$ be the corresponding Markov semigroup on~$\R^{\overline\Omega_\eps}$, with corresponding heat kernel 
\begin{equation}
p^{\eps,\beta}_t(x,y)\coloneqq \mbfP_x^{\eps,\beta}\ttonde{X^{\eps,\beta}_t=y}  \comma \qquad x, y \in \overline\Omega_\eps\comma  t \geq 0\fstop
\end{equation}

We stress that~$A^{\eps,\beta}f\equiv 0$ on $\partial_e\Omega_\eps$, i.e., the outer boundary~$\partial_e\Omega_\eps$ is a set of  absorbing states for the random walk~$X^{\eps,\beta}_t$.	
Furthermore,~$A^{\eps,\beta}$, and thus~$P^{\eps,\beta}_t$, globally fixes the space of functions $f:\overline\Omega_\eps\to \R$ identically vanishing on~$\partial_e\Omega_\eps$.
We identify the latter space with $L^p(\Omega_\eps)$ for any $p\in [1,\infty]$, endowed with the standard norm \purple{(w.r.t.\ $\mu_\eps$, cf.\ \S\ref{sec:discrete-domain-volume-measure})}
\begin{equation}
\norm{f}_{L^p(\Omega_\eps)}^p\coloneqq \eps^d\sum_{x\in \Omega_\eps}\abs{f(x)}^p\comma \quad p\in [1,\infty) \comma \qquad \norm{f}_{L^\infty(\Omega_\eps)}\coloneqq \sup_{x\in \Omega_\eps}\abs{f(x)}\fstop
\end{equation}
With a slight abuse of notation, we further let~$A^{\eps,\beta}$ and~$P^{\eps,\beta}_t$ act in the obvious way on functions in $L^p(\Omega_\eps)$.
Note that~$A^{\eps,\beta}$ and~$P^{\eps,\beta}_t$ are self-adjoint in $L^2(\Omega_\eps)$.
Finally, recalling the definitions \purple{of the measures $\mu_\eps$ and $\sigma_\eps$ in~\S\ref{sec:discrete-domain-volume-measure} and \eqref{eq:SigmaEps}, respectively, and of the generator $A^{\eps,\beta}$ in \eqref{eq:generator_RW},}  the corresponding Dirichlet form $\mcE^{\eps,\beta}(f,g)\purple{	\eqdef\scalar{f}{-A^{\eps,\beta}g}_{L^2(\Omega_\eps)}}$  on $L^2(\Omega_\eps)$ reads as 
\begin{equation}\label{eq:dirichlet-form-RW}
	\mcE^{\eps,\beta}(f,g)\eqdef \frac{\eps^d}{2}\sum_{\substack{x,y\in\Omega_\eps\\
	x\sim y}} \nabla^\eps_{x,y}f\, \nabla^\eps_{x,y}g + \eps^{\beta-1} \scalar{\sigma_\eps}{fg}\comma \qquad f, g \in L^2(\Omega_\eps)\fstop 
\end{equation}

\subsubsection{Brownian motions}\label{sss:BrownianMotions}
In the following let~$H^1_0(\Omega)$ and~$H^1(\Omega)$ denote the standard Sobolev spaces on~$\Omega$.
Recall that the trace operator~$\emparg\restr{\partial\Omega}$ is well-defined on~$H^1(\Omega)$ as a \emph{compact} operator~$\emparg\restr{\partial\Omega}\colon H^1(\Omega)\to L^2(\partial\Omega)$.
This follows from, e.g., \cite{gagliardo1957caratterizzazioni}, combining~\cite[Teor.~1.I, Eqn.~(1.4)]{gagliardo1957caratterizzazioni} and the observation in~\cite[\S4\textparagraph2]{gagliardo1957caratterizzazioni}.

\paragraph{Dirichlet forms, Laplacians and heat semigroups}
Let~$\varrho\colon \partial \Omega\to [0,\infty]$ be a continuous function.
Denote by~$(\mcE^\varrho, H^1(\Omega))$ the bilinear form
\begin{align}\label{eq:FormRobin}
\mcE^{\varrho}(f,g)\eqdef & \int_\Omega \nabla f\cdot \nabla g\, \diff\mu_\Omega +\int_{\partial\Omega} \varrho f\restr{\partial \Omega}\, g\restr{\partial \Omega} \diff \bdvol \comma\qquad f,g\in H^1(\Omega)\comma
\end{align}
well-defined and closable on~$L^2(\Omega)$ by compactness of the trace operator. 
Its closure, denoted by~$\ttonde{\mcE^\varrho,\dom{\mcE^\varrho}}$, is a Dirichlet form on~$L^2(\Omega)$, with (negative) generator the  self-adjoint operator~\mbox{$\ttonde{\Delta^\varrho,\dom{\Delta^\varrho}}$}, the  \emph{Robin Laplacian with boundary condition~$\varrho$}.
We further let~\mbox{$\ttonde{\Delta^\Dir,\dom{\Delta^\Dir}}$}, resp.\ \mbox{$\ttonde{\Delta^\Neu,\dom{\Delta^\Neu}}$}, denote the familiar \emph{Dirichlet}, resp.\ \emph{Neumann}, \emph{Laplacian} on~$\Omega$, resp.\ corresponding to the cases~$\varrho\equiv +\infty$ and~$\varrho\equiv 0$.
The domains~$\dom{\mcE^\Dir}$ and~$\dom{\mcE^\Neu}$ respectively coincide with~$H^1_0(\Omega)$ and~$H^1(\Omega)$.

Let~$\Delta^\bd $ denote either~$\Delta^\Dir$,~$\Delta^\Neu$, or~$\Delta^\varrho$.
We respectively write~$\ttonde{\mcE^\bd ,\dom{\mcE^\bd }}$, $\ttseq{P^\bd _t}_{t\ge 0}$, and $R^\bd_\zeta\eqdef(\zeta-\Delta^\bd)^{-1}$  with $\zeta >0$,   for the corresponding Dirichlet form, heat semigroup and resolvent on $L^2(\Omega)$. 

\paragraph{Brownian motions} In the following, we denote by~$\ttseq{B^\bd_t}_{t\geq 0}$ the Brownian motion on~$\Omega$ with boundary condition~$\bd$, i.e., the Markov diffusion process properly associated with the Dirichlet form~$\ttonde{\mcE^\bd,\dom{\mcE^\bd}}$ on~$L^2(\Omega)$. 
For different choices of~$\bd$ we have therefore that $B^\bd_t$ is the Brownian motion on~$\overline\Omega$
\begin{itemize}[leftmargin=2em]
\item[($\Neu$)] with normally reflected boundary conditions, \purple{in the generality of Lipschitz domains see e.g.,~\cite{bass_hsu1991,Che92,fukushima_construction1996};}
\item[($\varrho$)] with Robin boundary conditions driven by~$\varrho$, \purple{in the generality of Lipschitz domains see e.g.,~\cite{Mat19};}
\item[($\Dir$)] stopped at~$\partial\Omega$.
\end{itemize}
Finally, we denote by $\ttseq{\mbfP^\bd_x}_{x\in \overline\Omega}$ and $\ttseq{\mbfE^\bd_x}_{x\in\overline\Omega}$ their laws and the corresponding expectations.

\subsubsection{Convergences}\label{sss:FCLT}
Let~$\Pi_\eps\colon \cC(\overline\Omega)\to L^\infty(\Omega_\eps)$ be defined by~$(\Pi_\eps f)(x)=f(x)$ for every~$x\in \Omega_\eps$.
Whenever needed,~$\Pi_\eps f$ is assumed to be extended by~$0$ outside of~$\Omega_\eps$.
Let~$\cC^\bd$ be either~$\cC(\overline\Omega)$ (for Neumann and Robin boundary conditions) or~$\cC_0(\Omega)$ (for Dirichlet boundary conditions), always regarded as a Banach space endowed with the uniform norm.
Finally, denote by~$P^{\bd,c}_t$ the part of~$P^\bd_t$ on~$\mcC^\bd$.
The operators~$P^{\bd,c}_t$ satisfy~$P^{\bd,c}_t\colon \mcC^\bd\to\mcC^\bd$ and form $\mcC^0$-semigroups on~$\mcC^\bd$.
We denote by~$\ttonde{\Delta^{\bd,c},\dom{\Delta^{\bd,c}}}$ the corresponding $\mcC^\bd$-generators.
We refer to~\S\ref{ss:CbLaplacians} for the details of these constructions.

We are now ready to state our first convergence result, in which we write, cf.~\cite{ethier_kurtz_1986_Markov}:
\begin{equation}\label{eq:EKConvergence}
f_\eps \to f \qquad \text{if } \qquad f_\eps\in L^\infty(\overline\Omega_\eps)\comma f\in \cC(\overline\Omega)\comma \quad \lim_{\eps\downarrow 0} \norm{f_\eps-\Pi_{\eps} f}_{L^\infty(\Omega_\eps)}=0\fstop
\end{equation}

\begin{theorem}[\S\ref{sec:semigroup-conv-proofs}]\label{t:MainSemigroups}
Fix~$f\in \cC^\bd$. Then, $P^{\eps,\beta}_t \Pi_\eps f\to P^{\bd,c}_t f$ locally uniformly in~$t$ on~$\R^+_0$.
\end{theorem}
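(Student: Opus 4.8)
The plan is to prove the convergence $P^{\eps,\beta}_t\Pi_\eps f\to P^{\bd,c}_t f$ by establishing the convergence of the corresponding \emph{resolvents} and then invoking a Trotter--Kato-type approximation theorem for $\cC^0$-semigroups on varying Banach spaces (as in \cite{ethier_kurtz_1986_Markov}, using the notion of convergence recorded in~\eqref{eq:EKConvergence}). Concretely, I would fix $\zeta>0$ and show that $R^{\eps,\beta}_\zeta\Pi_\eps f\to R^{\bd,c}_\zeta f$ in the sense of~\eqref{eq:EKConvergence}, where $R^{\eps,\beta}_\zeta\eqdef(\zeta-A^{\eps,\beta})^{-1}$ is the discrete resolvent acting on $L^\infty(\Omega_\eps)$. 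Since the state spaces $\overline\Omega_\eps$ change with $\eps$, the abstract machinery I have in mind is the one adapted to a sequence of Banach spaces $\cC^\bd\to\cE_\eps\eqdef L^\infty(\Omega_\eps)$ with bounded linear maps $\Pi_\eps$: the statement that $R^{\eps,\beta}_\zeta\Pi_\eps\to R^{\bd,c}_\zeta$ strongly (on a core), together with uniform contractivity of the discrete semigroups (here $\norm{P^{\eps,\beta}_t}_{L^\infty\to L^\infty}\le 1$ since $A^{\eps,\beta}$ generates a sub-Markov semigroup), yields the locally-uniform-in-$t$ convergence of the semigroups.

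**Key steps.** \emph{Step 1: Solve the discrete resolvent equation against a probe.} Given $g\in\cC^\bd$, let $u_\eps\eqdef R^{\eps,\beta}_\zeta\Pi_\eps g\in L^\infty(\Omega_\eps)$, i.e.\ $\zeta u_\eps-A^{\eps,\beta}u_\eps=\Pi_\eps g$ on $\Omega_\eps$ with $u_\eps\equiv 0$ on $\partial_e\Omega_\eps$. Testing against $u_\eps$ in the Dirichlet form~\eqref{eq:dirichlet-form-RW} gives the a priori bound $\zeta\norm{u_\eps}_{L^2(\Omega_\eps)}^2+\mcE^{\eps,\beta}(u_\eps,u_\eps)\le\norm{\Pi_\eps g}_{L^2(\Omega_\eps)}\norm{u_\eps}_{L^2(\Omega_\eps)}$, hence uniform $L^2$ and discrete-$H^1$ bounds; a maximum-principle (or Feynman--Kac) argument gives $\norm{u_\eps}_{L^\infty(\Omega_\eps)}\le\zeta^{-1}\norm{g}_\infty$. \emph{Step 2: Compactness and identification of the limit.} Using a discrete-to-continuum embedding and Rellich-type compactness for the energies $\mcE^{\eps,\beta}$ (the boundary term $\eps^{\beta-1}\scalar{\sigma_\eps}{u_\eps^2}$ is where the three regimes $\beta\gtrless 1$ enter: it vanishes in the limit for $\beta>1$, survives as $\int_{\partial\Omega}\varrho\,\bar u^2\diff\sigma_\Omega$ for $\beta=1$, and forces $\bar u\restr{\partial\Omega}=0$ for $\beta<1$, using~\eqref{eq:bd_unif_ellipticity}--\eqref{eq:bd_weak_conv} and the compactness of the trace operator), extract a subsequential limit $\bar u\in\dom{\mcE^\bd}$ and show it solves $\zeta\bar u-\Delta^\bd\bar u=g$ weakly; by uniqueness $\bar u=R^\bd_\zeta g$, so the whole family converges. \emph{Step 3: Upgrade to uniform ($\cC^\bd$) convergence.} This is the delicate point: $L^2$/weak-$H^1$ convergence must be promoted to $\norm{u_\eps-\Pi_\eps R^\bd_\zeta g}_{L^\infty(\Omega_\eps)}\to 0$. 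I would obtain equicontinuity of $\{u_\eps\}$ up to the boundary from uniform (discrete) elliptic regularity estimates on Lipschitz domains -- e.g.\ De Giorgi--Nash--Moser--type oscillation bounds valid uniformly in $\eps$, combined with boundary Hölder estimates using the Lipschitz character $(\LipOmega,N_\Omega)$ -- and then an Arzelà--Ascoli argument identifies the uniform limit with $R^\bd_\zeta g=R^{\bd,c}_\zeta g\in\cC^\bd$ (recalling that in the Dirichlet case $R^{\Dir,c}_\zeta g\in\cC_0(\Omega)$). The auxiliary estimates of~\S\ref{sec:auxiliary-RW-h} and the $\cC_b$-Laplacian theory of~\S\ref{ss:CbLaplacians} (and~\S\ref{sec:appendix-laplacians}) are presumably exactly what supplies these uniform regularity inputs.

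**Main obstacle.** The crux is Step 3 -- getting $L^\infty$, not merely $L^2$, convergence up to a \emph{Lipschitz} boundary. On smooth domains one would simply invoke Schauder estimates; here one must produce $\eps$-uniform boundary Hölder continuity for the discrete harmonic-type functions $u_\eps$, which is precisely the phenomenon the paper flags as requiring genuinely new work ("standard discrete-to-continuum approximations testing against smooth functions do not suffice"). A secondary subtlety is the $\beta=1$ (Robin) threshold, where one must verify that the boundary energy $\eps^{\beta-1}\scalar{\sigma_\eps}{\cdot}$ converges to the correct trace functional with $\varrho\equiv 1$, using~\eqref{eq:bd_weak_conv} together with the (compact) $H^1(\Omega)\to L^2(\partial\Omega)$ trace embedding to pass to the limit in the mixed term; and one must check in the Dirichlet case that the limit genuinely lands in $\cC_0(\Omega)$, i.e.\ the boundary layer collapses. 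I expect the cleanest route is: prove the resolvent convergence first in $L^2$ (soft, via Mosco/$\Gamma$-convergence of $\mcE^{\eps,\beta}$ to $\mcE^\bd$), then separately establish $\eps$-uniform equicontinuity of $\{u_\eps\}$ on $\overline\Omega$, and finally combine the two to conclude in $\cC^\bd$, after which the Trotter--Kato theorem delivers Theorem~\ref{t:MainSemigroups}.
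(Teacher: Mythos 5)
Your high-level skeleton is sound and, indeed, coincides with what the paper records as Theorem~\ref{t:Equivalence}: strong resolvent convergence (or, equivalently, Mosco/KS-convergence of the forms) plus equicontinuity of the discrete semigroups yields locally-uniform-in-$t$ $\cC^\bd$-semigroup convergence. The paper's own proof of Theorem~\ref{t:Equivalence} establishes precisely this implication structure, and the implication you want---form convergence implies semigroup convergence uniformly up to the boundary---is there. So your plan is not wrong. But the two concrete inputs you propose to feed into this skeleton are not established, and in both cases the route you sketch is not the one the paper takes and contains genuine gaps.

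First, you describe the Mosco convergence $\mcE^{\eps,\beta}\to\mcE^\bd$ as ``soft,'' but it is only soft in the Neumann regime $\beta>1$ (there the paper indeed invokes it, via the Alicandro--Cicalese result, in Lemma~\ref{lemma:neumann_conv}). For $\beta=1$ you need the $\Gamma$-limsup recovery sequence to reproduce the boundary term $\int_{\partial\Omega}\varrho\,\bar u^2\,\diff\sigma_\Omega$ exactly, which requires matching traces of recovery sequences and is not automatic from~\eqref{eq:bd_weak_conv} alone; and for $\beta<1$ the boundary coefficient $\eps^{\beta-1}$ blows up, so you must show that finite-energy sequences have vanishing boundary trace and that the recovery sequence for $u\in H^1_0(\Omega)$ can be built with boundary values decaying fast enough to kill a diverging factor---a boundary-layer analysis that does not follow from abstract compactness of the trace operator. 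The paper sidesteps this entirely: for $\beta=1$ it uses the joint functional CLT $(X^{\eps,\infty}_t,\int_0^t V_\eps(X^{\eps,\infty}_s)\,\diff s)\Rightarrow (B^\Neu_t,L^\Neu_t)$ from~\cite{fan2016discrete} together with the Robin representation~\eqref{eq:semigr_Robin_localtime}; for $\beta<1$ it first proves graph-convergence of generators on smooth domains (Lemmas~\ref{l:ConvergenceGeneratorAll} and~\ref{l:ConvergenceGeneratorDir}, using the $\cC^3$ core $\cS^\Dir$), then approximates the Lipschitz $\Omega$ from the inside by smooth $U_n$, compares stopped semigroups, and controls the remainder via the exit-time estimate~\eqref{eq:exit-time} and the $\varrho\to\infty$ limit of Robin semigroups (Lemma~\ref{l:robin_to_tutto}). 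None of this is a form-convergence argument.

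Second, and more importantly, the equicontinuity you need in Step~3 is not obtained by elliptic De Giorgi--Nash--Moser estimates in the paper, and it is far from clear such estimates can be made uniform in $\eps$ up to a Lipschitz boundary with $\beta$-dependent boundary conditions: in the Dirichlet regime the resolvent $u_\eps$ develops a boundary layer of width $\eps^{1-\beta}\wedge\eps$ and there is no obvious $\eps$-independent boundary H\"older modulus. The paper's Proposition~\ref{pr:equi_semi_disc} gets equicontinuity of $P^{\eps,\beta}_t\Pi_\eps f$ parabolically: it uses the H\"older estimate~\eqref{eq:holder} for the heat kernel of the reflected walk ($\beta=\infty$) from~\cite{chen2017hydrodynamic}, then transfers it to general $\beta$ via the Feynman--Kac representation~\eqref{eq:feynman-kac} and the dissipation bound~\eqref{eq:estimate_dissipation_away_bdry}. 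This is qualitatively different from an elliptic oscillation estimate and relies on the specific structure that for $\beta<\infty$ the walk is dominated by the $\beta=\infty$ walk~\eqref{eq:domination_HK}. In short, both of the two concrete ingredients in your plan need a different---and essentially probabilistic---implementation to actually close, which is what the paper supplies.
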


\purple{In Theorem~\ref{t:Equivalence} (see \S\ref{sec:proof-Equivalences} in Appendix) we provide many equivalent assertions to the one in Theorem~\ref{t:MainSemigroups}, which will prove crucial in establishing this result.}

\subsection{Harmonic profiles}\label{sec:harmonic-profiles}
Recall the definition of $\vartheta \in \cC_b(\R^d)$ from \S\ref{sec:particle_dynamics}.

We define the \emph{discrete harmonic measure}
\begin{equation}
p^{\eps,\beta}_\infty(x,y)\eqdef \lim_{t\to \infty}p^{\eps,\beta}_t(x,y)\comma\qquad x, y \in \overline\Omega_\eps\comma
\end{equation}
and the \emph{discrete harmonic profile}~$h^{\eps,\beta}$ on~$\overline\Omega_\eps$ with boundary condition~$\vartheta$
 on~$\partial_e\Omega_\eps$
\begin{equation}	\label{eq:harmonic_profiles_discrete}
h^{\eps,\beta}(x)\eqdef \sum_{y\in \overline \Omega_\eps} p^{\eps,\beta}_\infty(x,y)\, \vartheta(y)\comma \qquad x\in \overline\Omega_\eps\fstop
\end{equation}
Equivalently, $h^{\eps,\beta}:\overline\Omega_\eps\to \R_0^+$ is the unique solution to the boundary value problem
\begin{align}\label{eq:bvp}
\left\{\begin{array}{rcll}
A^{\eps,\beta}h&=&0 &\text{on } \overline\Omega_\eps
\\
h&=&\vartheta \qquad&\text{on } \partial_e\Omega_\eps
\end{array}	\right.\fstop
\end{align}

We denote by~$h^\bd\in\mcC(\overline\Omega)$ the (continuum) \emph{harmonic profile} on~$\Omega$, i.e., the distributional solution to
\begin{equation}\label{eq:Dir_problem}
		\begin{cases}
			\Delta h=0 &\text{on}\ \Omega \\
			\mcB_\bd &\text{on}\ \partial\Omega
		\end{cases}\fstop
	\end{equation}
where~$\mcB_\bd$ denotes either of the weak boundary conditions, understood in the $\bdvol$-a.e.\ sense,
\begin{equation}\label{eq:BoundaryCondition}
	(\Dir) \quad u=\vartheta\comma\qquad
	(\varrho) \quad \partial_\mbfn u+\varrho (u-\vartheta)=0 \comma \qquad 
	(\Neu) \quad \begin{dcases}\partial_\mbfn u=0\\
		 \displaystyle\scalar{\bdvol}{u}=\scalar{\bdvol}{\vartheta}
	 \end{dcases}\fstop
\end{equation}
Indeed, since $\Omega$ is a bounded Lipschitz domain, suitable solutions $h^\bd$ exist and are unique (see e.g.~\cite[\S{II.4}]{DauLio90} or~\cite{AreBen99} for~$\bd=\Dir$, \cite{fan2016discrete} for~$\bd=\varrho$); more specifically:
\begin{itemize}
	\item[$(\Neu)$] $h^\Neu$ is a constant and coincides with the $\bdvol$-average of~$\vartheta$ on~$\partial\Omega$;
	\item[$(\varrho)$] Letting $t\mapsto L^\Neu_t$ denote the boundary local time of the  Brownian motion $B^\Neu_t$ on $\Omega$, the function~$h^\varrho$ satisfies, cf.~\cite[\S3]{fan2016discrete},
	\begin{equation}	\label{eq:harmonic_profile_R_LT}
		h^\varrho(x)= \mbfE^\Neu_x\quadre{\int_0^\infty (\varrho \vartheta)(B^\Neu_t) \exp\tonde{-\int_0^t \varrho\ttonde{B^\Neu_s} \dd 	L^\Neu_s} \dd L^\Neu_t}\comma\qquad x \in \Omega\semicolon
	\end{equation}
	\item[$(\Dir)$]  By Riesz--Markov--Kakutani Representation Theorem and the Maximum principle,
	 the correspondence~$\vartheta \mapsto h^\Dir$ uniquely defines the \emph{harmonic measures}~$\tset{p^\Dir_\infty(x,\emparg)}_{x\in \Omega}$ of the Brownian motion $B^\Dir_t$,  probability measures concentrated on $\partial\Omega$ such that
	\begin{align}
		h^\Dir(x)= \int_{\partial\Omega} \vartheta(y)\, p^\Dir_\infty(x,\diff y)\comma\qquad x \in \Omega \fstop
\end{align}
Further, the solution~$h^\Dir$ satisfies~$h^\Dir\in \cC(\overline\Omega)\cap \cC^\infty(\Omega)$ and thus $h^\Dir\vert_{\partial\Omega}=\vartheta\vert_{\partial\Omega}$ everywhere on~$\partial\Omega$, see~\cite[\S6.1 \& Ex.\ 6.1.2.b]{AreBatHieNeu11}.
\purple{However, in general} $D^k h^\Dir$ may be unbounded on $\Omega$ for some~$k\in \N$ due to the lack of boundary regularity; in particular,  boundary data in~$\mcC(\partial\Omega)$  only ensure~$h^\Dir \in H^{1/2}(\Omega)$, see~\cite[Thm.~5.1]{JerKen95}.
\end{itemize}

For the harmonic profiles, we prove the following convergence result.
Recall the definition of convergence of functions in~\eqref{eq:EKConvergence}.

\begin{theorem}[Convergence of harmonic profiles,~\S\ref{sec:harmonic_conv}]\label{th:harmonic_conv} 
$h^{\eps,\beta}\to h^\bd$ for every~$\beta\in\R$.
\end{theorem}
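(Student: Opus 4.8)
I would characterise the discrete profile $h^{\eps,\beta}$ both probabilistically, as an absorption functional of the random walk $X^{\eps,\beta}$, and variationally, as the minimiser of a boundary-penalised Dirichlet energy, and then identify its limit using the compact convergence of forms, semigroups and spectra recorded in Theorem~\ref{t:Equivalence}, finally promoting the resulting $L^2$-type convergence to the uniform convergence \eqref{eq:EKConvergence} by means of a priori bounds and discrete elliptic estimates. First, since $\overline\Omega_\eps$ is finite and $\partial_e\Omega_\eps$ is reached almost surely, \eqref{eq:harmonic_profiles_discrete}--\eqref{eq:bvp} give $h^{\eps,\beta}(x)=\mbfE^{\eps,\beta}_x\ttonde{\vartheta(X^{\eps,\beta}_{\tau_\eps})}$, where $\tau_\eps$ is the absorption time; the continuum profiles admit the analogous representations recalled after \eqref{eq:BoundaryCondition}. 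Eliminating the boundary values turns \eqref{eq:bvp} into an elliptic equation on $\Omega_\eps$ with a boundary source: writing $v_\eps\eqdef h^{\eps,\beta}\restr{\Omega_\eps}\in L^\infty(\Omega_\eps)$ and $\bar\vartheta_\eps(x)\eqdef\big(\sum_{z\sim x}\axz\big)^{-1}\sum_{z\sim x}\axz\,\vartheta(z)$, one gets $\mcE^{\eps,\beta}(v_\eps,g)=\eps^{\beta-1}\scalar{\sigma_\eps}{\bar\vartheta_\eps\,g}$ for all $g\in L^2(\Omega_\eps)$, equivalently $v_\eps$ minimises
\[
f\longmapsto \frac{\eps^d}{4}\sum_{\substack{x,y\in\Omega_\eps\\ x\sim y}}\ttonde{\nabla^\eps_{x,y}f}^2 + \frac{\eps^{\beta-1}}{2}\,\scalar{\sigma_\eps}{(f-\bar\vartheta_\eps)^2}\fstop
\]
By \eqref{eq:bd_weak_conv}--\eqref{eq:bd_ext}, $\bar\vartheta_\eps$ approximates $\vartheta\restr{\partial\Omega}$ against $\sigma_\eps$, while $\eps^{\beta-1}\sigma_\eps$ converges to $\varrho\,\bdvol$ with $\varrho\in\set{0,1,+\infty}$ according to whether $\beta>1$, $\beta=1$, $\beta<1$; thus this is a boundary-penalisation limit, singular in the Dirichlet and Neumann regimes. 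The central difficulty is that $h^{\eps,\beta}$ is an \emph{infinite-time} object, so its limit cannot be read off the finite-horizon convergence of Theorem~\ref{t:MainSemigroups}: one must interchange $\eps\downarrow0$ with $t\to\infty$. Moreover, since $\Omega$ is only Lipschitz, $h^\Dir$ need be no smoother than $H^{1/2}(\Omega)$, so uniform $H^1$-bounds on $v_\eps$ are unavailable and the argument must avoid them.

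\textbf{A priori bounds and compactness.} The discrete maximum principle applied to \eqref{eq:bvp} gives $\tnorm{h^{\eps,\beta}}_{L^\infty(\overline\Omega_\eps)}\le\norm{\vartheta}_\infty$, uniformly in $\eps$. Testing the weak formulation against $v_\eps$ and using $\scalar{\sigma_\eps}{1}\le C$ yields $\scalar{\sigma_\eps}{v_\eps^2}\le\scalar{\sigma_\eps}{\bar\vartheta_\eps^2}\le C$, and the energy bound $\eps^d\sum_{x\sim y}\ttonde{\nabla^\eps_{x,y}v_\eps}^2\le C$ when $\beta\ge1$; for $\beta<1$ one controls the energy only on interior subdomains, by the discrete Caccioppoli/De~Giorgi estimates of \S\ref{sec:auxiliary-RW-h} (consistently with $h^\Dir\notin H^1(\Omega)$ in general). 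Together with a uniform-in-$\eps$ modulus of continuity for $h^{\eps,\beta}$ near $\partial\Omega_\eps$ --- in the Dirichlet regime a discrete barrier estimate built from the Lipschitz cones of $\partial\Omega$, a discrete Wiener-type criterion mirroring the continuity of $h^\Dir$ up to $\partial\Omega$ recalled after \eqref{eq:BoundaryCondition}; in the Robin and Neumann regimes the combination of interior equicontinuity, the bound on $\scalar{\sigma_\eps}{(v_\eps-\bar\vartheta_\eps)^2}$, and the equicontinuity of $\bar\vartheta_\eps$ --- this makes $\set{h^{\eps,\beta}}_\eps$ precompact in $\cC(\overline\Omega)$ with respect to $\Pi_\eps$: along any subsequence, $h^{\eps,\beta}\to v$ in the sense of \eqref{eq:EKConvergence} for some $v\in\cC(\overline\Omega)$, and also in the Kuwae--Shioya sense with respect to $(\cC^\bd,\Pi_\eps)$.

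\textbf{Identification of the limit.} It remains to show $v=h^\bd$. Passing to the limit in the discrete harmonicity tested against interior test functions, and using the compact form convergence $\mcE^{\eps,\beta}\to\mcE^\bd$ (which holds by Theorems~\ref{t:MainSemigroups}--\ref{t:Equivalence}) together with the compatibility of discrete boundary traces (against $\sigma_\eps$) with the $H^1(\Omega)$-trace (against $\bdvol$) --- precisely where the Lipschitz character enters, through compactness of the trace operator --- one finds $\Delta v=0$ in $\Omega$, with: $v\restr{\partial\Omega}=\vartheta$ for $\beta<1$ (from the boundary barrier of the previous step), the weak Robin condition $\partial_\mbfn v+\varrho(v-\vartheta)=0$ for $\beta=1$ (from $\eps^{\beta-1}\sigma_\eps\to\varrho\,\bdvol$ in the weak formulation), and, for $\beta>1$, $v$ constant with $v\,\bdvol(\partial\Omega)=\scalar{\bdvol}{\vartheta}$ --- the latter obtained by testing \eqref{eq:bvp} against the function $\equiv1$, which after telescoping the bulk sum gives the identity $\scalar{\sigma_\eps}{h^{\eps,\beta}}=\scalar{\sigma_\eps}{\bar\vartheta_\eps}$, valid for every $\beta$. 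In each regime $h^\bd$ is the unique such function (as recalled after \eqref{eq:Dir_problem}), so the whole family converges and $v=h^\bd$. Probabilistically, the same outcome reflects the uniform spectral gap delivered by the spectral convergence in Theorem~\ref{t:Equivalence} controlling the $t\to\infty$ absorption tail uniformly in $\eps$ when $\beta\le1$, and the equilibration of the walk to the $\sigma_\eps$-weighted exit law prior to absorption when $\beta>1$.

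\textbf{Main obstacle.} The crux is the uniform-in-$\eps$ boundary regularity of $h^{\eps,\beta}$ in the Dirichlet regime --- entangled with the $\eps\downarrow0$ versus $t\to\infty$ interchange above --- since one has neither smoothness nor uniform $H^1$-control of the profiles up to the rough boundary. The requisite estimate must therefore come from potential-theoretic bounds on the discrete walks that are uniform in $\eps$ and compatible with the mere Lipschitz character of $\partial\Omega$: a discrete counterpart of the fact, used throughout the paper, that continuous boundary data propagate continuously under the Brownian semigroups on Lipschitz domains.
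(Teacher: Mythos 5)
Your variational route --- read off $h^{\eps,\beta}$ as the minimiser of the boundary-penalised discrete Dirichlet energy, extract a uniform limit by compactness, and identify it via Mosco-type convergence of forms --- is a genuinely different strategy from the one the paper takes, which instead works with stochastic representations (the decomposition $h^{\eps,\beta}=h^\bd_\eps+\int_0^\infty P^{\eps,\beta}_t A^{\eps,\beta} h^\bd_\eps\,\dd t$, Feynman--Kac with the potential $V_\eps$, the FCLT for $(X^{\eps,\infty},\int_0^{\cdot}V_\eps\,\dd s)\Rightarrow(B^\Neu,L^\Neu)$, and exhaustion by smooth subdomains $U_n\nearrow\Omega$ in the Dirichlet regime). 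The weak formulation $\mcE^{\eps,\beta}(v_\eps,g)=\eps^{\beta-1}\scalar{\sigma_\eps}{\bar\vartheta_\eps\,g}$ you write down is correct, the $L^\infty$ maximum-principle bound is correct, and the conservation identity $\scalar{\sigma_\eps}{h^{\eps,\beta}}=\scalar{\sigma_\eps}{\bar\vartheta_\eps}$ used to fix the constant in the Neumann regime is a genuinely nice observation.

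The gap is exactly where you flag it but then do not fill it: the precompactness of $\ttseq{h^{\eps,\beta}}_\eps$ in $\cC(\overline\Omega)$, and in particular the uniform-in-$\eps$ modulus of continuity \emph{up to} $\partial\Omega_\eps$ in the Dirichlet regime $\beta<1$. You invoke a ``discrete barrier estimate built from the Lipschitz cones'' and a ``discrete Wiener-type criterion,'' but no such estimate is established anywhere in the paper or in your proposal, and it is precisely as hard as the theorem itself: for $\beta<1$ the energy bound from your weak formulation degenerates as $\eps^{\beta-1}\to\infty$, so you have neither $H^1$-control nor any Caccioppoli/De~Giorgi machinery (the results you cite from \S\ref{sec:auxiliary-RW-h} are heat-kernel ultracontractivity, mixing, exit-time and H\"older estimates, not elliptic regularity --- there are no Caccioppoli or De Giorgi inequalities there, and Propositions~\ref{pr:equi_semi_disc}--\ref{pr:equi_semi_cont} give equicontinuity only for $t\geq t_0>0$ when $\beta<1$, which does not reach the $t=\infty$ object $h^{\eps,\beta}$). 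The paper sidesteps this boundary-regularity obstruction by a completely different device: approximating $\Omega$ from within by smooth domains $U_n$, first proving the claim there by the generator-graph argument of \S\ref{sec:conv_dir_smooth_harmonics} (which needs $h^\Dir\in\cC^3(\overline{U_n})$ and hence smooth $U_n$), and then controlling $\tnorm{h^{\eps,\beta}-h^{\eps,n}}_{L^\infty}$ uniformly on the boundary collar $(\Omega\setminus U_n)_\eps$ via the strong Markov property and the truncated-potential Feynman--Kac bound $\eps^{\beta-1}\overline V^{\delta,y}_\eps\leq\eps^{\beta-1}V_\eps$ together with \eqref{eq:exit-time}. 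Without supplying an independent proof of the uniform boundary modulus --- or reproducing the paper's exhaustion argument --- your compactness step is unsupported, and the identification of the limit would at best give convergence in $L^2$, not the uniform convergence claimed in \eqref{eq:EKConvergence}.
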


\begin{remark}[Relaxing conditions \eqref{eq:bd_unif_ellipticity}, \eqref{eq:bd_weak_conv} and \eqref{eq:bd_ext}]
	Not all the assumptions  on the discrete approximations of the Lipschitz boundary are strictly necessary to prove  Theorems~\ref{t:MainSemigroups} and  \ref{th:harmonic_conv}. More specifically, when $\beta > 1$, only the second inequality in \eqref{eq:bd_unif_ellipticity} is needed to prove Theorem~\ref{t:MainSemigroups}, while Theorem~\ref{th:harmonic_conv} holds true even with a weaker version of \eqref{eq:bd_weak_conv} (with any continuous linear functional $\varSigma:\cC(\partial\Omega)\to \R$ in place of $\sigma_{\partial\Omega}$). Similarly, when $\beta=1$, our results easily generalize to the case of $0<\varrho \in \cC(\partial \Omega)$, by replacing $\sigma_{\partial \Omega}$ in \eqref{eq:bd_weak_conv} with $\varrho\, \sigma_{\partial \Omega}$. When $\beta <1$, Theorem \ref{t:MainSemigroups} uses only  \eqref{eq:bd_unif_ellipticity} and \eqref{eq:bd_ext}.  
\end{remark}

\subsection{Hydrodynamic and hydrostatic limits}\label{sec:HDL-HSL}
In \S\ref{sec:HeatEquation}, we introduce the heat equations, the notions of mild solution and some of its main properties. The assumptions and statement of the hydrodynamic limits (Thm.~\ref{t:MainHydrodynLim}) is the content of \S\ref{sec:HDL}.  In \S\ref{sec:examples+HSL}, we present some examples to which Theorem \ref{t:MainHydrodynLim} applies, as well as the hydrostatic limits (Thm.~\ref{th:hydrostatic}).
Finally,  \S\ref{sss:local-equilibrium} is concerned with scaling limits of stationary correlations (Thm.~\ref{t:LocalEquilibrium}).

\subsubsection{Heat equations}\label{sec:HeatEquation}
Everywhere in this section, fix~$T>0$, and set
\begin{align*}
\Omega_T\eqdef \Omega\times [0,T]\comma\qquad \mathring\Omega_T\eqdef \Omega\times (0,T)\comma \qquad \mathring\partial_T\Omega\eqdef \partial\Omega\times (0,T)\fstop
\end{align*}
Further let~$\msM^\bd\eqdef (\mcC^\bd)^*$ denote the topological dual of~$\mcC^\bd$, always endowed with its weak* topology.
By Riesz--Markov Representation Theorem, we have the standard identifications~$\msM^\Dir\cong\ttonde{\Mb(\Omega),\T_\mrmv}$ and~$\msM^\Neu=\msM^\varrho\cong \ttonde{\Mb(\overline\Omega),\T_\mrmn}$, where~$\T_\mrmv$, resp.~$\T_\mrmn$, denotes the \emph{vague}, resp.~\emph{narrow}, topology on bounded (Radon) measures.
By, e.g.,~\cite[Thm.~4.2, Lem.~4.5]{kallenberg_random_2017},~$\msM^\bd$ is a Polish space.

We consider the following boundary-value problems for the heat equation with $\bd$-boundary conditions, $\vartheta \in \cC_b(\R^d)$ and~$\pi_0\in \msM^\bd$ as  boundary and initial data, respectively: 
\begin{align}\tag{$\mathrm{H}_{\bd,T}$}\label{eq:HeatEquation}
\begin{cases}
(\partial_t - \Delta) u=0 & \text{in } \mathring \Omega_T\comma
\\
u_0=\pi_0 & \text{in } \msM^\bd\comma
\\
\mathcal B_\bd & \text{on } \mathring\partial_T\Omega\comma
\end{cases}
\end{align}
where~$\mathcal B_\bd$ denotes either of the boundary conditions~\eqref{eq:BoundaryCondition} with~$u=u_t$ and $\varrho = 1$.

We now formulate a precise definition of $\msM^\bd$-valued solution to \eqref{eq:HeatEquation}.
To this end, let~$(P^{\bd,c}_t)^*\colon \msM^\bd\to\msM^\bd$ be the dual operator to~$P^{\bd,c}_t\colon \mcC^\bd\to\mcC^\bd$; note that, by the standard theory of $\mcC^0$-semigroups,~$(P^{\bd,c}_t)^*$ is a continuous\footnote{Since~$\mcC^\bd$ is not reflexive,~$(P^{\bd,c}_t)^*$ is only weakly*-continuous. This motivates our choice to endow~$\msM^\bd$ with its weak*-topology.} (in~$t$) positivity preserving semigroup.
\begin{definition}\label{def:HeatEquation}
We say that~$u\colon[0,T]\to\msM^\bd$ is \emph{the} \emph{mild solution} to~\eqref{eq:HeatEquation} starting at~$u_0=\pi_0\in\msM^\bd$ if
\begin{equation}\label{eq:mild-sol}
u_t = h^\bd \mu_\Omega+(P^{\bd,c}_t)^*\ttonde{\pi_0-h^\bd\mu_\Omega} \comma \qquad t\in [0,T]\fstop
\end{equation}
\end{definition}

In the following proposition, let~$E$ denote either~$L^2(\Omega)$ or~$\mcC^\bd$, and let~$\ttonde{\Delta^{\bd,E},\dom{\Delta^{\bd,E}}}$ be the $E$-Laplacian with boundary condition~$\bd$. Its proof is a consequence of the weak$^*$-continuity of $(P_t^{\bd,c})^*$, the regularity of $h^\bd$ (see~\S\ref{sec:harmonic-profiles}), and  \cite[Cor.~3.7.21]{AreBatHieNeu11}. 

\begin{proposition}\label{p:HeatEquation}
The mild solution to~\eqref{eq:HeatEquation} belongs to~$\mcC\ttonde{[0,T];\msM^\bd}$.
If additionally~$u_0=\rho_0\,\mu_\Omega$ with~$\rho_0-h^\bd\in E$, then the mild solution satisfies~$u_t=\rho_t \, \mu_\Omega$ for all~$t\in [0,T]$, and~$\rho_\emparg-h^\bd\in \mcC\ttonde{[0,T];E}\cap \mcC^\infty\ttonde{(0,T); E}\cap \mcC\ttonde{(0,T];\dom{\Delta^{\bd, E}}}$ is the unique solution to the Cauchy problem $ v'(t) = \Delta^{\bd,E} v(t)$ for $t>0$, and $v(0)=\rho_0 - h^\bd$.
\end{proposition}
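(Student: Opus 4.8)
\textbf{Proof plan for Proposition~\ref{p:HeatEquation}.}
The plan is to deduce everything from the mild-solution formula~\eqref{eq:mild-sol}, the continuity properties of the dual semigroup $(P^{\bd,c}_t)^*$ already recorded before Definition~\ref{def:HeatEquation}, the regularity of the harmonic profile $h^\bd$ collected in~\S\ref{sec:harmonic-profiles}, and standard $\mcC^0$-semigroup theory, in particular~\cite[Cor.~3.7.21]{AreBatHieNeu11}. First I would establish the claim $u\in\mcC([0,T];\msM^\bd)$: since $h^\bd\mu_\Omega$ is a fixed element of $\msM^\bd$, this amounts to the continuity of $t\mapsto (P^{\bd,c}_t)^*(\pi_0-h^\bd\mu_\Omega)$ in the weak* topology on $\msM^\bd=(\mcC^\bd)^*$; this is exactly the weak*-continuity of a dual $\mcC^0$-semigroup, which holds because for each fixed $f\in\mcC^\bd$ the map $t\mapsto \scalar{(P^{\bd,c}_t)^*\mu}{f}=\scalar{\mu}{P^{\bd,c}_t f}$ is continuous by strong continuity of $P^{\bd,c}_t$ on $\mcC^\bd$, and $\msM^\bd$ with its weak* topology is metrizable on bounded sets (and $(P^{\bd,c}_t)^*$ maps bounded sets to bounded sets, being a contraction-type semigroup).

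Next I would treat the absolutely continuous case $u_0=\rho_0\,\mu_\Omega$ with $v_0\eqdef\rho_0-h^\bd\in E$, where $E$ is either $L^2(\Omega)$ or $\mcC^\bd$. The point is that on $E$ the operator $(P^{\bd,c}_t)^*$ restricts to (is compatible with) the heat semigroup $P^{\bd,E}_t$ generated by $\Delta^{\bd,E}$: for $E=L^2(\Omega)$ this is the self-adjoint semigroup $P^\bd_t$ of~\S\ref{sss:BrownianMotions}, and for $E=\mcC^\bd$ it is $P^{\bd,c}_t$ itself. Here one uses the standard consistency of the $L^2$- and $\mcC^\bd$-realizations of the Laplacian on a bounded Lipschitz domain (the identification via $P^{\bd,c}_t f = P^\bd_t f$ for $f\in\mcC^\bd\cap L^2(\Omega)=\mcC^\bd$, and the fact that $\msM^\bd\supset E$ under the identification $v\leftrightarrow v\,\mu_\Omega$). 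Consequently $u_t = h^\bd\mu_\Omega + (P^{\bd,E}_t v_0)\,\mu_\Omega = \rho_t\,\mu_\Omega$ with $\rho_t\eqdef h^\bd+P^{\bd,E}_t v_0$, so $v(t)\eqdef\rho_t-h^\bd = P^{\bd,E}_t v_0$. Then $\mcC^0$-semigroup theory, specifically~\cite[Cor.~3.7.21]{AreBatHieNeu11} together with the analyticity of the heat semigroup, gives $v\in\mcC([0,T];E)\cap\mcC^\infty((0,T);E)$, with $v(t)\in\dom{\Delta^{\bd,E}}$ for $t>0$ and $v$ the unique solution of $v'(t)=\Delta^{\bd,E}v(t)$, $v(0)=v_0$; the continuity into $\dom{\Delta^{\bd,E}}$ on $(0,T]$ follows from analyticity (for $t\in(0,T)$) and from $v(t)\in\dom{\Delta^{\bd,E}}$ with $\Delta^{\bd,E}v$ continuous.

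The main obstacle, and the place requiring the most care, is the \emph{compatibility} of the two realizations of $\Delta^\bd$ and of the object $h^\bd$ across the spaces $L^2(\Omega)$, $\mcC^\bd$ and $\msM^\bd$: one must check that $(P^{\bd,c}_t)^*$ indeed leaves $E\,\mu_\Omega$ invariant and acts there as $P^{\bd,E}_t$, that the $\mcC^\bd$-generator $\Delta^{\bd,c}$ and the $L^2$-generator $\Delta^\bd$ agree on the intersection of their domains, and that $h^\bd\mu_\Omega\in\msM^\bd$ is the stationary state with $(P^{\bd,c}_t)^*(h^\bd\mu_\Omega)=h^\bd\mu_\Omega$ (equivalently $P^{\bd,c}_t$ fixes the constant-like datum encoded by $h^\bd$, consistent with $\Delta h^\bd=0$). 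These facts rely precisely on the Lipschitz regularity of $\Omega$ and on the detailed construction of the $\mcC^\bd$-semigroups in~\S\ref{ss:CbLaplacians}; I would quote the relevant statements from~\S\ref{sec:appendix-laplacians} rather than reprove them. Everything else — weak* continuity, uniqueness, smoothing — is routine $\mcC^0$-semigroup theory once this compatibility is in place.
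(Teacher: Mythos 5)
Your proposal is correct and follows exactly the route the paper indicates: the paper itself only lists the three ingredients (weak\textsuperscript{*}-continuity of $(P^{\bd,c}_t)^*$, regularity of $h^\bd$, and \cite[Cor.~3.7.21]{AreBatHieNeu11}) without spelling out details, and you have filled them in correctly, including the key compatibility of the $L^2$- and $\mcC^\bd$-realizations of the heat semigroup, which is the content of \S\ref{ss:CbLaplacians}. One small remark: the metrizability of $\msM^\bd$ on bounded sets is not actually needed, since weak\textsuperscript{*}-continuity of $t\mapsto(P^{\bd,c}_t)^*\mu$ is \emph{by definition} continuity of $t\mapsto\scalar{\mu}{P^{\bd,c}_t f}$ for each $f\in\mcC^\bd$, which is strong continuity of $P^{\bd,c}_t$ on $\mcC^\bd$.
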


Let us observe that the techniques we make use of in this work rely on the Lipschitz property of~$\Omega$ in an essential way, and we expect (a large part of) our results \emph{not} to hold if $\Omega$ is non-Lipschitz. \purple{We refer to 	 \S\ref{sec:generality-Lipschitz} below for more details.}

\subsubsection{Hydrodynamic limits}\label{sec:HDL}
Let	 $\ttseq{\nu_\eps}_\eps$ be a family of probability distributions on the configuration spaces $\ttseq{\Xi^\eps}_\eps$,   $\ttseq{\E_{\nu_\eps}}_\eps$  being the corresponding expectations.
We make the following assumptions:
\begin{assumption}[Second-moment bounds]\label{as:secondII} We assume that
\begin{align}\label{eq:4th_moment_1}
\limsup_{\eps\downarrow  0} \mbbE_{\nu_\eps}\quadre{\ttonde{\eps^d\sum_{x\in\Omega_\eps}\eta(x)}^2}<\infty\comma
&&&
\limsup_{\eps\downarrow 0} \eps^{d}\max_{x\in \Omega_\eps}\mbbE_{\nu_\eps}\quadre{\eta(x)^2}<\infty\fstop
\end{align}
\end{assumption}
For SEP ($\sigma=-1$) Assumption~\ref{as:secondII} trivially holds.

\begin{assumption}[Weak law of large numbers]\label{as:WLLN}
There exists $\pi_0 \in \msM^\bd$ such that
\begin{equation}
\lim_{\eps\downarrow 0}\nu_\eps\set{\eta\in \Xi^\eps: \abs{\scalar{\pi_0}{f}-\eps^d\sum_{x\in \Omega_\eps}\eta(x)\,f(x)}\geq\delta}=0\comma \qquad \delta > 0\comma f \in \mcC^\bd\fstop
\end{equation}
\end{assumption}

Let $\cX^{\eps,\beta}_t$ be the 
normalized \emph{empirical density fields} corresponding to the paths of~$\eta^{\eps,\beta}_t$,	viz., 
\begin{equation}\label{eq:DensityField}
t\in \R^+_0\longmapsto	\mcX^{\eps,\beta}_t\eqdef \eps^d\sum_{x\in \Omega_\eps} \eta^{\eps,\beta}_t(x)\, \delta_x\fstop 
\end{equation}
We are now ready to state the main result of this section.

\begin{theorem}[Hydrodynamic limit, \S\ref{sec:proof-HDL}]\label{t:MainHydrodynLim}
Let~$\ttseq{\nu_\eps}_\eps$ satisfy Assumptions~\ref{as:secondII} and~\ref{as:WLLN}. 
Further, for all $T>0$, let~$u\in\mcC([0,T]; \msM^\bd)$ be the mild solution of~\eqref{eq:HeatEquation} starting at~$u_0=\pi_0\in\msM^\bd$, and~$\mcX^{\eps,\beta}$ be the $\mcD([0,T];\msM^\bd)$-valued random empirical density fields as in~\eqref{eq:DensityField} starting at a configuration~$\eta_0^{\eps,\beta}=\eta \in \Xi^\eps$ randomly distributed as~$\nu_\eps$.
Then, 
\begin{align*} 
\mcX^{\eps,\beta} \xRightarrow[\ \eps \downarrow 0 \ ]{\ {\rm fdd} \ } u \comma
\end{align*}
where~$\xRightarrow[]{\ {\rm fdd} \ }$ denotes (weak)  convergence of finite-dimensional distributions.
\end{theorem}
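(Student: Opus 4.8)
The plan is to follow the classical two-step scheme for hydrodynamic limits --- tightness plus characterization of limit points --- but to carry it out entirely through duality, so that every estimate is reduced to a statement about the one-particle and two-particle dual random walks $\mbfX^{\eps,\beta,k}_t$ for $k=1,2$, to which Theorems~\ref{t:MainSemigroups} and~\ref{th:harmonic_conv} apply. First I would fix $f\in\mcS^{\bd,c}\subset\mcC^\bd$ (a core of test functions, Prop.~\ref{p:TestF}) and write Dynkin's formula for the real-valued process $t\mapsto\scalar{\mcX^{\eps,\beta}_t}{f}$. Duality identifies $\mbbE_{\eta}\scalar{\mcX^{\eps,\beta}_t}{f}$ with $\eps^d\sum_x\eta(x)\,(P^{\eps,\beta}_t f)(x)$ plus a boundary term built from the absorbed dual walk hitting $\partial_e\Omega_\eps$ and picking up the weight $\vartheta$; comparing with the definition~\eqref{eq:harmonic_profiles_discrete} of $h^{\eps,\beta}$ this rearranges into the discrete analogue of the mild-solution formula~\eqref{eq:mild-sol}, namely $\mbbE_\eta\scalar{\mcX^{\eps,\beta}_t}{f}=\scalar{h^{\eps,\beta}\mu_\eps}{\Pi_\eps f}+\eps^d\sum_x\bigl(\eta(x)-h^{\eps,\beta}(x)\bigr)(P^{\eps,\beta}_t\Pi_\eps f)(x)$. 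Passing to the limit in this identity uses Assumption~\ref{as:WLLN}, the volume bound~\eqref{eq:VolumeBound}, Theorem~\ref{t:MainSemigroups} (semigroup convergence $P^{\eps,\beta}_t\Pi_\eps f\to P^{\bd,c}_t f$) and Theorem~\ref{th:harmonic_conv} ($h^{\eps,\beta}\to h^\bd$), and yields exactly $\scalar{u_t}{f}$; thus finite-dimensional distributions of $\scalar{\mcX^{\eps,\beta}_\cdot}{f}$ converge in expectation to the (deterministic) mild solution.

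Second, I would upgrade this to convergence in probability by a second-moment (variance) estimate. Here I use $k=2$ duality: $\mbbE_\eta\bigl[\scalar{\mcX^{\eps,\beta}_t}{f}^2\bigr]$ is expressed through two dual walks, which evolve as two independent copies away from the diagonal and interact (exclusion/inclusion) only on contact; the correlation term is controlled by the collision local time of the pair, whose contribution is $o(1)$ because two random walks in $d\ge2$ meet only negligibly often on the diffusive scale --- this is where the fourth-moment Assumption~\ref{as:secondII} enters, to bound the boundary and coincidence contributions uniformly in $\eps$. Combining with the first step gives $\scalar{\mcX^{\eps,\beta}_t}{f}\to\scalar{u_t}{f}$ in $L^2(\mbbP_{\nu_\eps})$, hence in probability, for each fixed $t$ and each $f\in\mcS^{\bd,c}$; density of the core and a standard approximation argument (using $\sup_t\mcX^{\eps,\beta}_t(\Omega_\eps)$ bounded in probability, again via Assumption~\ref{as:secondII}) extend this to all $f\in\mcC^\bd$.

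Third comes tightness of $\bigl(\mcX^{\eps,\beta}\bigr)_\eps$ in $\mcD([0,T];\msM^\bd)$. By the Jakubowski/Mitoma criterion it suffices to show tightness of the real processes $\scalar{\mcX^{\eps,\beta}_\cdot}{f}$ in $\mcD([0,T];\R)$ for $f$ in the core, plus a compact-containment condition which follows from the uniform mass bound. For the one-dimensional tightness I would again use Dynkin: $\scalar{\mcX^{\eps,\beta}_t}{f}=\scalar{\mcX^{\eps,\beta}_0}{f}+\int_0^t\scalar{\mcX^{\eps,\beta}_s}{A^{\eps,\beta}f}\,\diff s+M^{\eps,\beta,f}_t$, where $A^{\eps,\beta}f$ is close to $\Delta^{\bd,c}f$ by the graph convergence in Theorem~\ref{t:Equivalence}\ref{i:t:Equivalence:3}, so the drift part is tight, and the martingale part is handled via its predictable quadratic variation. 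This is the step where the paper's announced extension of the Kolmogorov--Centsov criterion for c\`adl\`ag processes is needed: the quadratic variation of $M^{\eps,\beta,f}$ contains a boundary term of order $\eps^{\beta-1}$ (unbounded when $\beta<1$) coming from creation/annihilation, and one closes the estimate by exploiting the smoothing of the semigroups --- testing against $P^{\eps,\beta}_\delta f$ rather than $f$ and letting $\delta\downarrow0$ --- to obtain the moment bounds that imply tightness even for singular initial data. Having tightness and unique identification of finite-dimensional limits (a deterministic limit, from Steps 1--2), every subsequential limit equals $u$, so $\mcX^{\eps,\beta}\Rightarrow u$.

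The main obstacle I expect is precisely this last point: controlling the boundary contribution to the quadratic variation of the Dynkin martingale in the fast-boundary regime $\beta<1$ (and with an unbounded number of particles per site for SIP), where naive estimates diverge; the resolution is the semigroup-regularization trick together with the generalized c\`adl\`ag tightness criterion, and verifying that the regularized test functions still lie in a set to which Theorems~\ref{t:MainSemigroups}--\ref{th:harmonic_conv} and the duality identities apply. A secondary technical point is that test functions live only in $\mcC^\bd$ (no smoothness near $\partial\Omega$), so all manipulations of $A^{\eps,\beta}f$ must go through the core $\mcS^{\bd,c}$ and the graph convergence of generators rather than through classical integration by parts.
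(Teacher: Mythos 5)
Your first two steps essentially reproduce the paper's argument for convergence of finite-dimensional distributions: the identity you write,
\[
\E_\eta\!\scalar{\cX^{\eps,\beta}_t}{f}=\scalar{h^{\eps,\beta}\mu_\eps}{\Pi_\eps f}+\eps^d\!\sum_x\bigl(\eta(x)-h^{\eps,\beta}(x)\bigr)\bigl(P^{\eps,\beta}_t\Pi_\eps f\bigr)(x),
\]
is exactly the paper's conditional-expectation decomposition~\eqref{eq:3-term-decomposition} with~$s=0$, and the second-moment control via~$k=2$ duality is the content of Proposition~\ref{pr:variance} and Corollary~\ref{cor:variance-ub}. (One small inaccuracy: the paper bounds the two-particle contribution via ultracontractivity of the~$k$-particle semigroups, not a collision-local-time heuristic; the estimate does not hinge on $d\geq 2$.)

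Your tightness step, however, is genuinely different from the paper's, and it has a gap that the paper's choice of route is specifically designed to sidestep. You propose Dynkin's formula
\[
\scalar{\mcX^{\eps,\beta}_t}{f}=\scalar{\mcX^{\eps,\beta}_0}{f}+\int_0^t\scalar{\mcX^{\eps,\beta}_s}{A^{\eps,\beta}f}\,\diff s+M^{\eps,\beta,f}_t
\]
for $f$ in the core $\mcS^{\bd,c}$, appealing to graph convergence of generators (Theorem~\ref{t:Equivalence}\ref{i:t:Equivalence:3}) to control the drift. But assertion~\ref{i:t:Equivalence:3} only provides the existence of \emph{some} sequence $f_\eps\to f$ with $A^{\eps,\beta}f_\eps\to\Delta^{\bd,c}f$; it does \emph{not} say $A^{\eps,\beta}(\Pi_\eps f)\to\Delta^{\bd,c}f$. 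On a Lipschitz domain, $\mcS^{\bd,c}\subset\mcC^\infty(\Omega)\cap\mcC^\bd$ with no control on derivatives up to $\partial\Omega$ (Prop.~\ref{p:TestF}\ref{i:p:TestF:5}), so $A^{\eps,\beta}\Pi_\eps f$ generically blows up near $\partial\Omega_\eps$: the boundary part of $A^{\eps,\beta}$ carries a factor $\eps^{\beta-1}$ and $\Pi_\eps f$ is merely continuous there. Your Dynkin formula also omits the deterministic source term $\eps^{d}\sum_{x\in\partial\Omega_\eps}\eps^{\beta-2}\sum_{z\sim x}\axz\vartheta(z) f(x)$, which is of the same problematic order $\eps^{\beta-1}$ for $\beta<1$ and cannot be discarded. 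Your proposed fix (test against $P^{\eps,\beta}_\delta f$ and let $\delta\downarrow0$) is the right instinct, but once you replace $f$ by an $\eps$-dependent regularization inside Dynkin's formula, the ``drift'' term is no longer $\int_0^t\scalar{\cX^{\eps,\beta}_s}{A^{\eps,\beta}f}\,\diff s$ and the Mitoma/generator structure you are invoking dissolves.

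The paper sidesteps this entirely: the increment $Z^\eps_t-Z^\eps_s$ is written (again via~\eqref{eq:3-term-decomposition}) as a martingale-type term plus $\scalar{\cX^\eps_s-h^\eps\mu_\eps}{P^{\eps,\beta}_{t-s}\Pi_\eps f-\Pi_\eps f}$, and the generalized Centsov criterion of~\S\ref{sec:tightness} is closed with the \emph{semigroup}, not the generator: the key estimates~\eqref{eq:1st}--\eqref{eq:2nd} are in terms of $\norm{f-P^\bd_{t-s}f}_{\cC^\bd}$, so only strong continuity of $P^{\bd,c}_t$ on $\mcC^\bd$ is used. This works for arbitrary $f\in\mcC^\bd$, does not require a core, and never applies $A^{\eps,\beta}$ to a test function. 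The moral discrepancy is real: the paper advertises that no smoothness of test functions is needed and no replacement lemmas near the boundary are required, and this is precisely what your Dynkin-based scheme reintroduces.
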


\subsubsection{Examples and hydrostatic limits}\label{sec:examples+HSL}
We list a few relevant examples for which the hydrodynamic limit holds.

\paragraph{Product measures associated to a slowly-varying profile} Fix a continuous  (bounded)	 function $g :\overline\Omega\to \R^+_0$ (or $[0,1]$ if $\sigma=-1$) which will play the role of  initial limiting density profile.
For all~$x \in \Omega_\eps$, consider a probability distribution $\nu_\eps^x$ on $\N_0$ (or $\set{0,1}$ if $\sigma=-1$), and set $\nu_\eps\coloneqq \otimes_{x\in \Omega_\eps}\nu_\eps^x$. If we assume the moment bounds~\eqref{eq:4th_moment_1} and further that
\begin{equation}
\mbbE_{\nu_\eps}\quadre{\eta(x)}=g(x)\comma \qquad x \in \Omega_\eps\comma
\end{equation}
then $\ttseq{\nu_\eps}_\eps$ constructed above satisfies Assumption~\ref{as:WLLN} with~$\pi_0\eqdef g\mu_\Omega$.

\paragraph{Deterministic piles of particles $(\sigma=1)$} 
Fix~$x\in\Omega$, and let~$\ttseq{x_\eps}_\eps\subset \Omega$ be a family such that $x_\eps \in \Omega_\eps$ and $\lim_{\eps\downarrow 0}\abs{x-x_\eps}=0$.
Then, the distributions~$\ttseq{\nu_\eps}_\eps$, each concentrated at the particle configuration $\eta^\eps \in \Xi^\eps$ given by
\begin{equation}
\eta^\eps(y)\coloneqq \begin{cases}
\eps^{-d/2} &\text{if}\ \abs{y-x}\le \eps^{1/2}\\
0&\text{otherwise}
\end{cases}
\end{equation}
satisfy Assumption~\ref{as:secondII}, as well Assumption~\ref{as:WLLN} with~$\pi_0\eqdef v_d \,\delta_x$, where~$v_d$ denotes the volume of the Euclidean unit ball in~$\R^d$.

\paragraph{Stationary non-equilibrium states and hydrostatic limits} 
Letting  $\nu^{\eps,\beta}_\stat$ denote the (generally implicit) stationary measures described in \S\ref{sec:ness},  a standard duality argument and the boundedness of $\vartheta$ (see, e.g., \cite[App.~A]{floreani_boundary2020}) readily imply that Assumption \ref{as:secondII} holds for $\ttseq{\nu_\eps}_\eps=\ttseq{\nu_\stat^{\eps,\beta}}_\eps$.
Additionally, we show the less trivial fact that~$\ttseq{\nu^{\eps,\beta}_\stat}_\eps$ satisfies a weak law of large numbers:

\begin{theorem}[\S\ref{sec:hydrostatic}]\label{th:hydrostatic}
$\ttseq{\nu^{\eps,\beta}_\stat}_\eps$ satisfies Assumption~\ref{as:WLLN} with $\pi_0=h^\bd \mu_\Omega$.
\end{theorem}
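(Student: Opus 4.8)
The goal is to show that the empirical density field under the stationary measure $\nu^{\eps,\beta}_\stat$ concentrates, in the weak* topology of $\msM^\bd$, on $h^\bd\mu_\Omega$. The natural strategy is to reduce a weak law of large numbers to control of first and second moments of $\scalar{\mcX^\eps}{f}=\eps^d\sum_{x\in\Omega_\eps}\eta(x)f(x)$ under $\nu^{\eps,\beta}_\stat$, for a fixed $f\in\mcC^\bd$, and then invoke a standard second-moment (Chebyshev) argument together with density of a nice class of test functions. The key input, as the introduction emphasizes, is \emph{duality}: the $n$-point correlation functions of $\eta^{\eps,\beta}$ under the stationary measure are expressed through the absorbing dual walks $\mbfX^{\eps,\beta,k}$ started from $k=1,2$ labeled particles, whose absorption distributions at $\partial_e\Omega_\eps$ are exactly the discrete harmonic measures $p^{\eps,\beta}_\infty$. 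The plan is:

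First I would record the duality identity for the one-point function in stationarity: $\mbbE_{\nu^{\eps,\beta}_\stat}[\eta(x)] = \sum_{y\in\overline\Omega_\eps}p^{\eps,\beta}_\infty(x,y)\,\vartheta(y) = h^{\eps,\beta}(x)$, so that $\mbbE_{\nu^{\eps,\beta}_\stat}[\scalar{\mcX^\eps}{f}] = \scalar{\mu_\eps}{h^{\eps,\beta}\,\Pi_\eps f}$ (identifying $h^{\eps,\beta}$ with its restriction to $\Omega_\eps$). By Theorem~\ref{th:harmonic_conv}, $h^{\eps,\beta}\to h^\bd$ in the sense of~\eqref{eq:EKConvergence}; since $h^\bd\in\mcC(\overline\Omega)$ and $f\in\mcC^\bd\subset\mcC(\overline\Omega)$, the family $\{h^{\eps,\beta}\,\Pi_\eps f\}$ is uniformly bounded and asymptotically equicontinuous in the appropriate sense, so the weak convergence $\mu_\eps\to\mu_\Omega$ (tested against uniformly bounded equicontinuous families, as in~\eqref{eq:bd_weak_conv} and its analogue for $\mu_\eps$) yields $\mbbE_{\nu^{\eps,\beta}_\stat}[\scalar{\mcX^\eps}{f}]\to\int_\Omega h^\bd f\,\diff\mu_\Omega = \scalar{h^\bd\mu_\Omega}{f}$. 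Here one must be a little careful: in the Dirichlet case $f\in\mcC_0(\Omega)$ vanishes on $\partial\Omega$, which helps kill boundary contributions; in the Neumann/Robin case $f\in\mcC(\overline\Omega)$ and one uses that $h^\bd$ is continuous up to $\partial\Omega$.

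Next I would bound the variance. Using duality for two dual particles, $\mbbE_{\nu^{\eps,\beta}_\stat}[\eta(x)\eta(y)]$ (for SEP, a correction for $x=y$; for SIP similarly) is governed by the law of the pair of dual walks, absorbed at $\partial_e\Omega_\eps$ with boundary values $\vartheta$; the correlation $\mbbE_{\nu^{\eps,\beta}_\stat}[\eta(x)\eta(y)] - h^{\eps,\beta}(x)h^{\eps,\beta}(y)$ is expressed through the difference between the joint absorption law of the interacting pair and the product of the single-particle absorption laws — i.e.\ the two-point \emph{truncated} correlation. One then needs a bound of the form $\abs{\mbbE_{\nu^{\eps,\beta}_\stat}[\eta(x)\eta(y)]-h^{\eps,\beta}(x)h^{\eps,\beta}(y)}\le C\,\varphi_\eps(x,y)$ with $\eps^{2d}\sum_{x,y}\abs{f(x)f(y)}\varphi_\eps(x,y)\to 0$; combined with the diagonal term $\eps^{2d}\sum_x f(x)^2\,\mbbE_{\nu^{\eps,\beta}_\stat}[\eta(x)^2]=O(\eps^d)$ (using~\eqref{eq:4th_moment_1}, in fact the second-moment version), this gives $\operatorname{Var}_{\nu^{\eps,\beta}_\stat}(\scalar{\mcX^\eps}{f})\to 0$. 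Either this truncated-correlation estimate is quoted from the factorized-correlation results for SEP/SIP on finite graphs (e.g.~\cite{floreani_boundary2020}), or — more in the spirit of the paper — it is re-derived by noting that two dual walks started from distinct sites meet only on a lower-dimensional set and the interaction correction is an $L^1(\mu_\eps\otimes\mu_\eps)$-negligible perturbation, controlled again via the semigroup/harmonic convergence of Theorems~\ref{t:MainSemigroups} and~\ref{th:harmonic_conv} applied to the two-particle dynamics. I expect this variance bound to be the main obstacle: one must quantify that the pair of interacting absorbed walks decorrelates, uniformly enough to survive multiplication by $f\otimes f$ and summation against $\mu_\eps\otimes\mu_\eps$, and handle the SIP case where occupation variables are unbounded (invoking the fourth-moment hypothesis of Assumption~\ref{as:secondII}, which as noted holds for $\nu^{\eps,\beta}_\stat$ by a duality computation).

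Finally, with $\mbbE_{\nu^{\eps,\beta}_\stat}[\scalar{\mcX^\eps}{f}]\to\scalar{h^\bd\mu_\Omega}{f}$ and $\operatorname{Var}\to 0$, Chebyshev's inequality gives, for each fixed $f\in\mcC^\bd$ and $\delta>0$, $\nu^{\eps,\beta}_\stat\{\abs{\scalar{h^\bd\mu_\Omega}{f}-\eps^d\sum_x\eta(x)f(x)}\ge\delta\}\to 0$, which is exactly Assumption~\ref{as:WLLN} with $\pi_0=h^\bd\mu_\Omega$. (Since Assumption~\ref{as:WLLN} is stated for each individual $f\in\mcC^\bd$ rather than uniformly over a class, no separability reduction is even needed; if one wanted the stronger joint statement one would restrict to a countable uniformly dense subfamily and take a union bound.) The only genuinely new work beyond invoking Theorem~\ref{th:harmonic_conv} is the two-point decorrelation estimate; everything else is bookkeeping with the duality dictionary and the already-established convergences.
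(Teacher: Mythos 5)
Your overall scheme — one-particle duality to get $\E_{\nu^{\eps,\beta}_\stat}[\eta(x)]=h^{\eps,\beta}(x)$, Theorem~\ref{th:harmonic_conv} to pass to $h^\bd$, then a second-moment/Chebyshev argument with two-particle duality for the variance — matches the paper, and the reduction to the $L^2$-statement~\eqref{eq:hydrostatic_L2} is exactly how the paper proceeds. However, you correctly flag the variance bound as the main obstacle and then leave it heuristic, proposing either to import a truncated two-point correlation estimate from~\cite{floreani_boundary2020} or to argue that two dual walks collide on a small set so that the interaction correction is $L^1$-negligible. Neither is what the paper does, and the second sketch is not obviously strong enough: nothing you wrote produces a concrete $\varphi_\eps$ with $\eps^{2d}\sum_{x,y}|f(x)f(y)|\varphi_\eps(x,y)\to 0$, and a pointwise decorrelation bound is genuinely nontrivial on its own. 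Also, the diagonal term requires no appeal to Assumption~\ref{as:secondII}: duality with $k=2$ gives it exactly as $\eps^{2d}\sum_x h^{\eps,\beta}(x)(1+\sigma h^{\eps,\beta}(x))f(x)^2$, bounded by $C\eps^d\norm{f}_{L^\infty(\Omega_\eps)}\norm{f}_{L^1(\Omega_\eps)}$ since $\norm{h^{\eps,\beta}}_{L^\infty}\le\norm{\vartheta}_{\cC_b(\R^d)}$.

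What the paper actually does (Lemma~\ref{lemma:hydrostatic_L2}) is an entirely self-contained algebraic computation, not a decorrelation estimate. The off-diagonal correction is expressed via the integration-by-parts identity
\begin{equation*}
P^{\eps,\beta,k=2}_t - P^{\eps,\beta}_t\otimes P^{\eps,\beta}_t
=\int_0^t P^{\eps,\beta,k=2}_{t-s}\bigl(A^{\eps,\beta,k=2}-A^{\eps,\beta}\oplus A^{\eps,\beta}\bigr)\bigl(P^{\eps,\beta}_s\otimes P^{\eps,\beta}_s\bigr)\,\dd s\comma
\end{equation*}
applied to $h^{\eps,\beta}\otimes h^{\eps,\beta}$. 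The difference of generators is exactly the exclusion/inclusion interaction (a discrete gradient-squared term), and since $P^{\eps,\beta}_s h^{\eps,\beta}=h^{\eps,\beta}$, one gets a closed expression involving $\varGamma^{\eps,\beta=\infty}(h^{\eps,\beta})$ tested against $\int_0^\infty P^{\eps,\beta,k=2}_t(f\otimes f)\,\dd t$. Consistency~\eqref{eq:consistency} collapses the two-particle semigroup to one-particle ones, harmonicity $A^{\eps,\beta}h^{\eps,\beta}=0$ converts $\varGamma^{\eps,\beta=\infty}(h^{\eps,\beta})$ into $A^{\eps,\beta}(h^{\eps,\beta})^2$ minus a nonnegative boundary term (which may be dropped), and self-adjointness plus a final time integration by parts yield $|\cR^{\eps,\beta}|\le\eps^d\norm{\vartheta}^2_{\cC_b(\R^d)}\norm{f}_{L^\infty(\Omega_\eps)}\norm{f}_{L^1(\Omega_\eps)}$. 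So the crucial estimate is not something to be imported or derived by collision heuristics — it is a short chain of identities exploiting harmonicity, consistency, and semigroup symmetry. Importing the bound from~\cite{floreani_boundary2020} would also work but changes the character of the argument; the collision-set route would need substantial further development.
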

\noindent Combined with Theorem~\ref{t:MainHydrodynLim}, this proves the so-called \emph{hydrostatic limit}, i.e., the assertion of Theorem~\ref{t:MainHydrodynLim} with~$\nu_\eps\eqdef \nu^{\eps,\beta}_\stat$ and~$\pi_0\eqdef h^\bd \mu_\Omega$, the latter being the stationary solution to the heat equation with appropriate boundary conditions.

\subsubsection{Local equilibrium for SNS}\label{sss:local-equilibrium}

As a refinement of the weak laws of large numbers presented in Theorem~\ref{th:hydrostatic}, we establish some weak forms of local equilibrium (see, e.g., \cite[\S3]{kipnis_scaling_1999}) for  SNS.

 Recall that in equilibrium (that is, whenever $\vartheta\in \cC_b(\R^d)$ is constant), $\nu^{\eps,\beta}_\stat$ is explicit and in product form,  given by products of i.i.d.\ Bernoulli, resp.\ geometric, distributions if $\sigma = -1$, resp.\ $\sigma =1$ (see, e.g., \cite{floreani_boundary2020}). For a general $\vartheta \in \cC_b(\R^d)$,  this  no longer holds, although a local approximation of this sort remains valid at large scales. The precise statement of this fact is the content of Theorem~\ref{t:LocalEquilibrium} below. In what follows, for every $\vartheta \in \cC_b(\R^d)$ given as in~\S\ref{sec:setting-models} and  corresponding 	$h^{\eps,\beta}$ as in~\eqref{eq:harmonic_profiles_discrete},  we define
 \begin{equation}
 	\nu^{\eps,\beta}_x \sim \begin{cases}
 		{\rm Bern}\tonde{h^{\eps,\beta}(x)} &\text{if}\ \sigma=-1\\
 		{\rm Geom}\tonde{\tonde{1+h^{\eps,\beta}(x)}^{-1}} &\text{if}\ \sigma = 1
 	\end{cases}\comma
 \end{equation} 
and
$
	\nu^{\eps,\beta}_\otimes \eqdef \otimes_{x\in \Omega_\eps}\, \nu^{\eps,\beta}_x$.

\begin{theorem}[Local equilibrium, \S\ref{sec:hk-conv}]\label{t:LocalEquilibrium}
Fix $k\in \N$, $k\ge 2$. Then, 
\begin{equation}
\lim_{\eps\downarrow 0}\sup_{x_1,\ldots,x_k \in \Omega_\eps}\abs{	\E_{\nu^{\eps,\beta}_\stat}\quadre{\prod_{i=1}^k \eta(x_i)}- \E_{\nu^{\eps,\beta}_\otimes}\quadre{\prod_{i=1}^k \eta(x_i)}}=0\fstop
\end{equation}
\end{theorem}
\begin{remark}[Two-point stationary correlations]\label{r:TwoPointsStationaryCorr}
	As a particular case of Theorem~\ref{t:LocalEquilibrium},
	\begin{equation}
		\lim_{\eps\downarrow 0} \sup_{\substack{x,y \in \Omega_\eps\\ x \neq y}}\abs{\E_{\nu^{\eps,\beta}_\stat}\quadre{\tonde{\eta(x)-\E_{\nu^{\eps,\beta}_\stat}\quadre{\eta(x)}}\tonde{\eta(y)-\E_{\nu^{\eps,\beta}_\stat}\quadre{\eta(y)}}}}=0\fstop
	\end{equation}
\end{remark}

\subsection{Stationary non-equilibrium fluctuations}\label{s:stat-flu} Theorem \ref{th:hydrostatic} proves  weak laws of large numbers for  the empirical density fields  at stationarity. In this section, we analyze the corresponding  fluctuations. We present these limit theorems in \S\ref{sec:FCLT-flu}, after presenting in \S\ref{sec:TestF} the function and distribution spaces needed to establish these results. 
\subsubsection{Nuclear Fr\'echet spaces of test functions and their duals}\label{sec:TestF}

We introduce here some  spaces of test functions $\cS^\bd(\Omega)$ and their duals $\cS^\bd(\Omega)'$ well-adapted to the spectral properties of~$-\Delta^\bd$.
Such spaces generalize those in, e.g., \cite[Ch.\ 11]{kipnis_scaling_1999}, \cite{landim_stationary_2006}: there	 $\Omega$ is either the $d$-dimensional torus or $[0,1]$, and $\cS^\bd(\Omega)$ and $\cS^\bd(\Omega)'$ are constructed via the explicit knowledge of an orthonormal system of Laplacian's eigenfunctions. 
	We show that the spaces~$\cS^\bd(\Omega)$ are cores for the~$\cC^\bd$-Laplacians, and that~$(\cS^\bd(\Omega),L^2(\Omega),\cS^\bd(\Omega)')$  is a Gel'fand triple \emph{compatible}	with~$P^\bd_t$ \cite[Def.~1.3.5]{kallianpur_xiong_1995}; thus,  these spaces are natural candidates for the study of fluctuations around the hydrodynamic limit.

In the following, denote by~$\psi^\bd _n$ the eigenfunction of~$-\Delta^\bd $ with eigenvalue~$\lambda^\bd _n\geq 0$. 
For each~$s\in\R$, let~$Q^\bd_s \eqdef (1-\Delta^\bd)^s$ be defined via the spectral resolution of~$-\Delta^\bd $, 
and set~$\psi^{\bd,s}_n\eqdef (1+\lambda^\bd _n)^{-s/2}\psi^\bd _n$.
Finally, denote by~$H^\bd _s(\Omega)$ the Hilbert completion of the linear span of~$\ttseq{\psi^\bd_n}_n$ with respect to the pre-Hilbert norm~$\norm{\psi}_{H^\bd_s(\Omega)}\eqdef \tscalar{\psi}{Q^\bd_s\psi}^{1/2}_{L^2(\Omega)}$ and note that~$\ttseq{\psi^{\bd,s}_n}_n$ is a \textsc{cons} for~$H^\bd_s(\Omega)$ for each~$s$.
For~$s\geq 0$ we have that~$H^\bd_s(\Omega)=\dom{Q^\bd _{s/2}}$, and~$H^\bd_s(\Omega)'\cong H^\bd_{-s}(\Omega)$ as Hilbert spaces.

A proof of the next proposition is given in the Appendix (\S\ref{ss:CbLaplacians}).
\begin{proposition}[Test functions]\label{p:TestF}
	Let~$\varrho\in\R^+$. The following assertions hold:
	\begin{enumerate}[$(i)$]
		\item\label{i:p:TestF:1} the following spaces are countably Hilbert nuclear Fr\'echet spaces
		\begin{equation*}
			\mcS^\bd (\Omega)\eqdef \cap_s H^\bd _s(\Omega) \qquad \text{and} \qquad \mcS^\bd(\Omega)'=\cup_s H^\bd _s(\Omega)\semicolon
		\end{equation*}
		
		\item\label{i:p:TestF:2} $\ttonde{\mcS^\bd (\Omega), L^2(\Omega), \mcS^\bd(\Omega)'}$ is a countably Hilbert nuclear \purple{Gel'fand triple};
		
		\item\label{i:p:TestF:3} $P^\bd _t\colon \mcS^\bd (\Omega)\rightarrow \mcS^\bd (\Omega)$ for all~$t\geq 0$;
		
		\item\label{i:p:TestF:4} $\ttonde{-\Delta^\bd ,\mcS^\bd (\Omega)}$ is essentially self-adjoint on~$L^2(\Omega)$;
		
		\item\label{i:p:TestF:5} $\cC_c^\infty(\Omega)\subset \mcS^\bd (\Omega)\subset \cC^\infty(\Omega)\cap \cC^\bd$;
		
		\item\label{i:p:TestF:6} functions in~$\mcS^\bd (\Omega)$ satisfy the following boundary conditions:
		\begin{subequations}
			\begin{align}\label{eq:SBoundaryDir}
				\Delta^k f\equiv&\ 0 \quad \text{on~} \partial\Omega \quad \text{for all } k\geq 0 &&\text{if~} f\in \cS^\Dir(\Omega)\comma
				\\
				\label{eq:SBoundaryRob}
				\partial_\bn f+\varrho f\equiv&\ 0 \quad \text{on~} \partial\Omega &&\text{if~} f\in \cS^\varrho(\Omega)\comma
				\\
				\label{eq:SBoundaryNeu}
				\partial_\bn f\equiv&\ 0 \quad \text{on~} \partial\Omega &&\text{if~} f\in \cS^\Neu(\Omega) \fstop
			\end{align}
		\end{subequations}
		Here~\eqref{eq:SBoundaryDir} holds everywhere on~$\partial\Omega$, whereas~\eqref{eq:SBoundaryRob} and~\eqref{eq:SBoundaryNeu} ought to be interpreted in the~$\bdvol$-a.e.\ sense.
		
		\item\label{i:p:TestF:7}
		The space~$\cS^\bd(\Omega)$ is a core for the $\cC^\bd$-Laplacian~$\Delta^{\bd,c}$.
	\end{enumerate}
\end{proposition}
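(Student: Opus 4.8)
The plan is to reduce all seven assertions to the spectral theory of $-\Delta^\bd$ together with three facts about bounded Lipschitz domains, collected in \S\ref{sec:appendix-laplacians}: \emph{(a)} $-\Delta^\bd$ has compact resolvent — immediate from compactness of the trace $H^1(\Omega)\to L^2(\partial\Omega)$ and Rellich's theorem — so its spectrum is a sequence $0\le \lambda^\bd_0\le\lambda^\bd_1\le\cdots\uparrow\infty$ with an $L^2(\Omega)$-orthonormal eigenbasis $\set{\psi^\bd_n}_n$; \emph{(b)} ultracontractivity of $P^\bd_t$ on $\Omega$, yielding both the heat-trace bound $\sum_n e^{-t\lambda^\bd_n}=\mathrm{tr}\, P^\bd_t<\infty$ for $t>0$ (hence Weyl-type growth $\lambda^\bd_n\gtrsim n^{2/d}$) and the estimate $\norm{P^\bd_t}_{L^2\to L^\infty}\lesssim t^{-d/4}$ for small $t$; \emph{(c)} for each $t>0$, $P^\bd_t$ maps $L^2(\Omega)$ boundedly into $\mcC^\bd$ — equivalently, the $\bd$-heat kernel extends continuously to $\overline\Omega$ (vanishing on $\partial\Omega$ when $\bd=\Dir$) — which also delivers consistency of $P^\bd_t$ and $P^{\bd,c}_t$ on $\mcC^\bd$ (recall $\mcC^\bd\subset L^2(\Omega)$ since $\Omega$ is bounded).

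Given these, $(i)$--$(iv)$ are soft. With respect to the \textsc{cons} $\set{\psi^{\bd,s'}_n}_n$ of $H^\bd_{s'}(\Omega)$, the inclusion $H^\bd_{s'}(\Omega)\hookrightarrow H^\bd_s(\Omega)$ sends $\psi^{\bd,s'}_n$ to a vector of $H^\bd_s$-norm $(1+\lambda^\bd_n)^{(s-s')/2}$; by \emph{(b)}, $\sum_n(1+\lambda^\bd_n)^{s-s'}<\infty$ as soon as $s'-s>d/2$, so this inclusion is Hilbert--Schmidt, and for $s'=s+d$ it is trace class. Hence $\mcS^\bd(\Omega)=\cap_sH^\bd_s(\Omega)$, topologized by the countable cofinal increasing family $\set{\norm{\emparg}_{H^\bd_s}}_{s\in\N_0}$, is a countably Hilbert nuclear Fréchet space (completeness follows from continuity of the inclusions), $\mcS^\bd(\Omega)'=\cup_sH^\bd_{-s}(\Omega)$ is its strong dual, and the continuous dense chain $\mcS^\bd(\Omega)\hookrightarrow L^2(\Omega)=H^\bd_0(\Omega)\hookrightarrow\mcS^\bd(\Omega)'$ gives $(i)$ and $(ii)$. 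For $(iii)$, $P^\bd_t$ is diagonalised by $\set{\psi^\bd_n}_n$ with multipliers $e^{-t\lambda^\bd_n}\in(0,1]$, hence a contraction on every $H^\bd_s(\Omega)$, so it preserves $\mcS^\bd(\Omega)$. For $(iv)$: $\operatorname{span}\set{\psi^\bd_n}_n\subseteq\mcS^\bd(\Omega)\subseteq\dom{\Delta^\bd}=H^\bd_2(\Omega)$; finite linear combinations of eigenvectors form a core for a self-adjoint operator with discrete spectrum, and a restriction of $-\Delta^\bd$ to a domain trapped between a core and $\dom{\Delta^\bd}$ is essentially self-adjoint.

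The substance is $(v)$--$(vii)$. In $(v)$, $\cC_c^\infty(\Omega)\subseteq\mcS^\bd(\Omega)$ because such $f$ trivially satisfies every $\bd$-boundary condition, so $f\in\dom{(\Delta^\bd)^k}$ with $(\Delta^\bd)^kf=\Delta^kf$ for all $k$, whence $\norm{f}_{H^\bd_{2k}}^2=\scalar{f}{(1-\Delta)^kf}_{L^2(\Omega)}<\infty$; conversely, for $u\in\mcS^\bd(\Omega)$ one has $\Delta^ku\in L^2(\Omega)$ for every $k$, so interior (Weyl) regularity gives $u\in\cC^\infty(\Omega)$, and writing $u=(R^\bd_\zeta)^m(\zeta-\Delta^\bd)^mu$ with $(\zeta-\Delta^\bd)^mu\in L^2(\Omega)$, together with $(R^\bd_\zeta)^m=\tfrac{1}{(m-1)!}\int_0^\infty t^{m-1}e^{-\zeta t}P^\bd_t\,\diff t$ and \emph{(b)}--\emph{(c)}, shows $(R^\bd_\zeta)^m\colon L^2(\Omega)\to\mcC^\bd$ is bounded for $m>d/4$, so $u\in\mcC^\bd$. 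Since $\Delta^\bd$ maps $\mcS^\bd(\Omega)$ into itself (it commutes with $Q^\bd_s$), the same representation applied to $(\zeta-\Delta^\bd)^mu\in\mcS^\bd(\Omega)\subseteq\mcC^\bd$ and consistency of resolvents give $\mcS^\bd(\Omega)\subseteq\cap_m\dom{(\Delta^{\bd,c})^m}$, with $\Delta^{\bd,c}=\Delta^\bd$ there. Then $(vi)$: for $\bd=\Dir$, $\Delta^kf=(\Delta^\Dir)^kf\in\mcS^\Dir(\Omega)\subseteq\cC_0(\Omega)$ vanishes on $\partial\Omega$ everywhere; for $\bd\in\set{\varrho,\Neu}$, $f\in\dom{\Delta^\bd}\subset H^{3/2}(\Omega)$ (elliptic boundary regularity on Lipschitz domains), so the conormal trace $\partial_\bn f\in L^2(\partial\Omega)$ exists, and applying the generalised Green formula to the form identity $\mcE^\bd(f,\phi)=\scalar{-\Delta^\bd f}{\phi}_{L^2(\Omega)}$ for $\phi\in H^1(\Omega)$, using density of $H^1$-traces in $L^2(\partial\Omega)$, one reads off $\partial_\bn f+\varrho f=0$, resp.\ $\partial_\bn f=0$, $\bdvol$-a.e. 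Finally $(vii)$: for $g\in\mcC^\bd\subseteq L^2(\Omega)$ and $t>0$ the coefficients $e^{-t\lambda^\bd_n}\scalar{g}{\psi^\bd_n}_{L^2(\Omega)}$ decay faster than any power of $(1+\lambda^\bd_n)$, so $P^\bd_tg\in\mcS^\bd(\Omega)$, and $P^\bd_tg=P^{\bd,c}_tg$ by \emph{(c)}; hence, for $g\in\dom{\Delta^{\bd,c}}$, $\mcS^\bd(\Omega)\ni P^{\bd,c}_tg\to g$ and $\Delta^{\bd,c}P^{\bd,c}_tg=P^{\bd,c}_t\Delta^{\bd,c}g\to\Delta^{\bd,c}g$ in $\mcC^\bd$ as $t\downarrow0$, so $\mcS^\bd(\Omega)$ is graph-dense in $\dom{\Delta^{\bd,c}}$, i.e.\ a core.

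The main obstacle is input \emph{(c)} together with the boundary-regularity facts used in $(v)$--$(vi)$: that the $\bd$-heat kernel on a bounded Lipschitz domain extends continuously to $\overline\Omega$ (with the correct vanishing when $\bd=\Dir$), that the $L^2$- and $\mcC^\bd$-Laplacians are consistent, and that $\dom{\Delta^\bd}\subset H^{3/2}(\Omega)$ so that conormal traces make sense. These are precisely the points where the Lipschitz hypothesis cannot be dispensed with (cf.\ Remark~\ref{r:LipDomainsGen}); their proofs are the content of \S\ref{sec:appendix-laplacians}, while everything above is routine spectral theory.
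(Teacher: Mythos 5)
Your proof is correct and follows the same global strategy as the paper — build $\mcS^\bd(\Omega)$ from the spectral resolution of $-\Delta^\bd$, get nuclearity from Weyl-type growth $\lambda^\bd_n\gtrsim n^{2/d}$, get invariance under $P^\bd_t$ by diagonalization, and push the boundary-regularity content onto the Lipschitz-domain facts of \S\ref{sec:appendix-laplacians}. Within that, you make several genuinely different technical choices, each of which is sound. (1)~For Weyl growth you argue from ultracontractivity via the heat-trace Tauberian estimate $\operatorname{tr}P^\bd_t\lesssim t^{-d/2}$; the paper instead cites the Netrusov--Safarov asymptotics~\cite{NetrusovSafarov05} directly (Proposition~\ref{p:Weyl}\ref{i:p:Weyl:3}). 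Your route is more self-contained but requires continuum ultracontractivity of $P^\bd_t$ on $\Omega$ as an input, which is itself a nontrivial Lipschitz-domain fact. (2)~For~\ref{i:p:TestF:4} you use that the span of the eigenvectors is a core and that a restriction of $-\Delta^\bd$ sandwiched between a core and $\dom{\Delta^\bd}$ is essentially self-adjoint; the paper invokes Nelson's theorem~\cite[Thm.~X.49]{ReeSim75} from~\ref{i:p:TestF:3}. Both are standard; yours avoids semigroup invariance at that step. (3)~For the inclusion $\mcS^\bd(\Omega)\subset\mcC^\bd$ you reconstruct the needed $(R^\bd_\zeta)^m\colon L^2(\Omega)\to\mcC^\bd$ boundedness from the Laplace-transform representation of the resolvent, using $\norm{P^\bd_t}_{L^2\to L^\infty}\lesssim t^{-d/4}$ and continuity of the heat kernel up to $\overline\Omega$; the paper cites this mapping property directly from~\cite[Eqn.~(1.10), Thm.~2.4(iii)]{AreBen99} in the Dirichlet case and from the proof of~\cite[Thm.~4.3]{nittka_regularity2011} in the Robin/Neumann cases. (4)~For the boundary conditions~\eqref{eq:SBoundaryRob}--\eqref{eq:SBoundaryNeu} you argue via $\dom{\Delta^\bd}\subset H^{3/2}(\Omega)$ and the generalised Green formula; the paper's proof of~\ref{i:p:TestF:6} for~$\bd\in\set{\varrho,\Neu}$ is terse (``the rest of the proof follows as in the case $\bd=\Dir$''), relying implicitly on the domain characterization~\eqref{eq:DomainCbLaplacian}. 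Your argument is more explicit, though the $H^{3/2}$-regularity of $\dom{\Delta^\Neu}$ and $\dom{\Delta^\varrho}$ on bounded Lipschitz domains is asserted rather than proved — it is true (it is essentially the $p=2$ case of the maximal regularity in~\cite{wood_maximal2007}), but since it is the lynchpin of your argument for~\ref{i:p:TestF:6} you should attach a reference. One cosmetic slip: in~\ref{i:p:TestF:5} you wrote $\norm{f}^2_{H^\bd_{2k}}=\tscalar{f}{(1-\Delta)^kf}_{L^2(\Omega)}$; with the paper's convention $Q^\bd_s=(1-\Delta^\bd)^s$ this should read $\tscalar{f}{(1-\Delta^\bd)^{2k}f}_{L^2(\Omega)}$, but the conclusion (finiteness) is unaffected.
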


\subsubsection{Gaussian fields and limit theorems}\label{sec:FCLT-flu}
Recall the definition of~$h^{\eps,\beta}$ from \S\ref{sec:harmonic-profiles}, \purple{and} let~$\eta^{\eps,\beta}_0$ be randomly distributed as $\nu^{\eps,\beta}_\stat$ and~$\eta^{\eps,\beta}_t$ be the stochastic path starting at~$\eta^{\eps,\beta}_0$.
We introduce the \emph{empirical fluctuation fields at stationarity}
\begin{equation}\label{eq:fluctuation-fields}
	t\in \R^+_0\longmapsto \cY_t^{\eps,\beta}\coloneqq \eps^{d/2} \sum_{x\in \Omega_\eps} \tonde{\eta^{\eps,\beta}_t(x)-h^{\eps,\beta}(x)}\delta_x \fstop
\end{equation}
By stationarity,  $\cY_t^{\eps,\beta} \overset{\text{\normalfont d}}= \cY_0^{\eps,\beta}$ for every $t\in \R^+_0$.  Our aim is to show that such fields \purple{weakly converge in the sense of finite-dimensional distributions} to a $\cS^\bd(\Omega)'$-valued Gaussian field $\cY^\bd_t$, which we now describe.

Recalling $h^\bd$ from \S\ref{sec:harmonic-profiles}, we start by  defining
\begin{equation}\label{eq:chi}
	\chi^\bd\eqdef h^\bd\ttonde{1+\sigma h^\bd}\in \cC(\overline\Omega)\fstop
\end{equation}
For all $f, g \in \R^{\Omega_\eps}$ and $x\in \Omega_\eps$, further let
\begin{equation}\label{eq:carre-def}
	\varGamma^{\eps,\beta}(f,g)(x)\eqdef \frac{1}{2}\sum_{\substack{y\in \Omega_\eps\\ y\sim x}} \nabla^\eps_{x,y}f\,	 \nabla^\eps_{x,y}g + \car_{\partial\Omega_\eps}(x)\, \eps^{\beta-2} \alpha_\eps(x)\, f(x)\, g(x)
\end{equation}
be the \emph{carr\'e du champ} associated to $A^{\eps,\beta}$, and set~$\varGamma^{\eps,\beta}(f)\eqdef \varGamma^{\eps,\beta}(f,f)\geq 0$.
We have that~$\eps^d\sum_{x\in\Omega_\eps}\varGamma^{\eps,\beta}(f,g)(x)=\mcE^{\eps,\beta}(f,g)$. Furthermore, for all $\varphi, \psi \in \R^{\Omega_\eps}$, Cauchy--Schwarz inequality yields
\begin{equation}\label{eq:cs-carre}
\tscalar{\varGamma^{\eps,\beta}(f,g)}{\varphi \psi}_{L^2(\Omega_\eps)}^2\le \tscalar{\varGamma^{\eps,\beta}(f)}{\varphi^2}_{L^2(\Omega_\eps)}\tscalar{\varGamma^{\eps,\beta}(g)}{\psi^2}_{L^2(\Omega_\eps)} \fstop 
\end{equation}

  The covariances of the Gaussian field $\cY^\bd$  arise as  suitable limits of these \emph{carr\'e du champ}. More specifically, for all $f, g \in \cS^\bd(\Omega)$, we show that there exist~$\ttseq{\tau_\eps}_\eps\subset [1,\infty)$, $\ttseq{f_\eps}_\eps$ and~$\ttseq{g_\eps}_\eps$ for which $\E^\bd\quadre{\scalar{\cY^\bd_0}{f}\scalar{\cY^\bd_0}{g}}$ is the limit of
\begin{equation}\label{eq:conv-carre}
\lim_{t\to \infty}\lim_{\eps\downarrow 0}\scalar{\int_0^{\tau_\eps t} 2 \varGamma^{\eps,\beta}(P^{\eps,\beta}_s f_\eps, P^{\eps,\beta}_s g_\eps)\, \dd s }{\Pi_\eps\chi^\bd}_{L^2(\Omega_\eps)}
\end{equation}
plus some non-vanishing boundary terms in the Robin regime.

If~$\beta>1$, then $\chi^\Neu$ is constant and the limit in \eqref{eq:conv-carre} is  $ \chi^\Neu \scalar{f}{g}_{L^2(\Omega_\eps)}$. 
If~$\beta\le 1$, the function~$\chi^\bd$ is not necessarily constant.
Letting $\varGamma^\bd$ denote the continuum \emph{carr\'e du champ} associated to $\Delta^\bd$,  the candidate limit in \eqref{eq:conv-carre} formally reads $\tscalar{\int_0^\infty 2\varGamma^\bd(P^{\bd,c}_s f,P^{\bd,c}_s g)\, \dd s}{\chi^\bd}_{L^2(\Omega)}$.
However, due to the  lack of smoothness of $P^\bd_s f$  and $\chi^\bd$ near the Lipschitz boundary, 	
we have to replace the $L^2(\Omega)$- scalar product with the pairing between $\Mb(\overline\Omega)$  and $\cC(\overline\Omega)$.
Indeed, regarding $\varGamma^{\eps,\beta}(f_\eps,g_\eps)$ as an element of~$\Mb(\overline\Omega)$, 	we establish convergence in \eqref{eq:conv-carre} when $\beta\le1$ through the following two steps:
\begin{itemize}
	\item the family of measures associated to~$\tseq{\int_0^{\tau_\eps t}2 \varGamma^{\eps,\beta}(P^{\eps,\beta}_sf_\eps,P^{\eps,\beta}_sg_\eps)\, \dd s}_{\eps,t}$ is tight in $\Mb(\overline\Omega)$;
	\item for every $\varphi \in \cS^\Neu(\Omega)$, 
	\begin{equation}\label{eq:3.27}
		\begin{aligned}
		&\lim_{t\to \infty}\lim_{\eps\downarrow 0}\scalar{\int_0^{\tau_\eps t} 2 \varGamma^{\eps,\beta}(P^{\eps,\beta}_s f_\eps, P^{\eps,\beta}_s g_\eps)\, \dd s }{\Pi_\eps\varphi}_{L^2(\Omega_\eps)}\\
		&\qquad= \int_0^\infty -\mcE^\Neu(P^\bd_sf\, P^\bd_s g, \varphi)+ \mcE^\bd(\varphi\, P^\bd_s f,P^\bd_s g)+\mcE^\bd(\varphi\, P^\bd_s g, P^\bd_s f)\, \dd s  \fstop
		\end{aligned}
	\end{equation}
Note that the integrand on the right-hand side above is well-defined by combining Lemma~\ref{l:Domination},  $P^{\bd,c}_s f \in \msD(\Delta^\bd)\subset \msD(\mcE^\bd)\subset \msD(\mcE^\Neu)=H^1(\Omega)$, and \cite[Prop.\ 9.4]{brezis_functional_2011}.
\end{itemize}

By density of~$\cS^\Neu(\Omega)$ in $\cC(\overline\Omega)$, the functional of~$\varphi$ defined by the right-hand side of~\eqref{eq:3.27} identifies a unique signed measure, here denoted by~$\gamma^\bd_{f,g}$: for all $\varphi\in \cC(\overline\Omega)$ and $\varphi_n\in \cS^\Neu(\Omega)$ such that $\norm{\varphi-\varphi_n}_{\cC(\overline\Omega)}\to 0$ as $n\to \infty$,
\begin{equation*}
	\gamma^\bd_{f,g}(\varphi)=\lim_{n\to \infty}\int_0^\infty -\mcE^\Neu(P^\bd_s f\, P^\bd_s g,\varphi_n)+\mcE^\bd(\varphi_n\, P^\bd_s f,P^\bd_s g)+\mcE^\bd(\varphi_n\, P^\bd_s g, P^\bd_s f)\, \dd s\fstop
\end{equation*}
	
Before stating the main result of this section, set, for every $f, g \in \cS^\bd(\Omega)$,	
\begin{equation}\label{eq:iota}
	\iota^\varrho_{f,g}(\varphi)\eqdef  \scalar{\tonde{\int_0^\infty (P^\varrho_s f)(P^\varrho_s g)\, \dd s}\varrho\,\sigma_{\partial\Omega}}{ \varphi}\comma\qquad \varphi \in \cC(\overline\Omega)\comma
\end{equation}
and
	\begin{equation}\label{eq:covariance}
	{\rm cov}_\bd(f,g)\eqdef\begin{cases}
		\chi^\Neu\scalar{f}{g}_{L^2(\Omega)} &\text{if}\ \bd=\Neu\\
		\gamma_{f,g}^{\varrho}(\chi^\varrho)+  \iota^\varrho_{f,g}\tonde{(\vartheta-h^\bd)(1+2\sigma h^\bd)} &\text{if}\ 	\bd=\varrho\\
		\gamma^\Dir_{f,g}(\chi^\Dir) &\text{if}\ \bd=\Dir
	\end{cases}\fstop
\end{equation}
\begin{theorem}[Stationary non-equilibrium fluctuations, \S\ref{sec:proof-flu}]\label{th:fluctuations-stat}
For all $T>0$, there exists a unique centered Gaussian process~$\cY^\bd$ in~$\cC([0,T];\cS^\bd(\Omega)')$ with covariances  $	\E^\bd\quadre{\tscalar{\cY^\bd_t}{f}\tscalar{\cY^\bd_s}{g}}$ given, for  $f, g \in \cS^\bd(\Omega)$ and $0\le s \le t$, by $\cov_\bd(P^\bd_{t-s}f,g)$ as in \eqref{eq:covariance}. \purple{Furthermore}, for $\cY^{\eps,\beta}$ being the $\mcD([0,T];\cS^\bd(\Omega)')$-valued random variable given in \eqref{eq:fluctuation-fields}, we have that
\begin{align*}
	\cY^{\eps,\beta} \xRightarrow[\ \eps \downarrow 0 \ ]{\ {\rm fdd} \ } \cY^\bd \comma
\end{align*}
where~$\xRightarrow[]{\ {\rm fdd} \ }$ denotes (weak) convergence of finite-dimensional distributions.
\end{theorem}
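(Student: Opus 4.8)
The plan is to establish Theorem~\ref{th:fluctuations-stat} by the standard martingale route to fluctuation FCLTs (as in~\cite[Ch.~11]{kipnis_scaling_1999},~\cite{landim_stationary_2006}), but carried out intrinsically in the nuclear triple $\ttonde{\cS^\bd(\Omega),L^2(\Omega),\cS^\bd(\Omega)'}$ from Proposition~\ref{p:TestF}, and with all $L^2$-pairings near the boundary replaced by $\Mb(\overline\Omega)$--$\cC(\overline\Omega)$ dualities as indicated after~\eqref{eq:conv-carre}. First, I would \emph{characterize the limit}: for fixed $f\in\cS^\bd(\Omega)$ apply Dynkin's formula to $t\mapsto \tscalar{\cY^{\eps,\beta}_t}{f}$, writing $\tscalar{\cY^{\eps,\beta}_t}{f}=\tscalar{\cY^{\eps,\beta}_0}{f}+\int_0^t \tscalar{\cY^{\eps,\beta}_s}{\eps^{-2}L^{\eps,\beta,*}\text{-action}}\,\dd s + M^{\eps,\beta,f}_t$, where the drift term is $\int_0^t \tscalar{\cY^{\eps,\beta}_s}{A^{\eps,\beta}f}\,\dd s$ plus boundary corrections coming from the creation/annihilation part of~\eqref{eq:gen} acting on the \emph{centered} field (here the recentering by $h^{\eps,\beta}$, which solves~\eqref{eq:bvp}, is what kills the non-vanishing linear-in-$\eps^{\beta/2}$ term), and $M^{\eps,\beta,f}$ is the Dynkin martingale with predictable quadratic variation $\tseq{M^{\eps,\beta,f}}_t=\int_0^t \eps^d\sum_x \cL^{\eps,\beta}(\text{field})^2 - 2(\text{field})\cL^{\eps,\beta}(\text{field})\,\dd s$, which after the duality/closure identity $\eps^d\sum_x \varGamma^{\eps,\beta}(f,g)=\mcE^{\eps,\beta}(f,g)$ and the explicit bulk rates reduces to $\int_0^t 2\tscalar{\varGamma^{\eps,\beta}(\Pi_\eps f)}{\text{(local moment of }\eta^{\eps,\beta}_s)}_{L^2(\Omega_\eps)}\,\dd s$. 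The expected local second moment of $\eta^{\eps,\beta}_s$ under $\nu^{\eps,\beta}_\stat$ is, by the factorized $n$-point correlations of~\cite{floreani_boundary2020} and Theorem~\ref{th:harmonic_conv}, asymptotically $\chi^\bd=h^\bd(1+\sigma h^\bd)$ in the bulk, whence the carré-du-champ convergence~\eqref{eq:conv-carre}--\eqref{eq:3.27} and the measures $\gamma^\bd_{f,g}$, $\iota^\varrho_{f,g}$ identify $\cov_\bd$ exactly as in~\eqref{eq:covariance}.

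Second, \emph{tightness}: since $\cS^\bd(\Omega)'=\cup_s H^\bd_s(\Omega)$ is a nuclear countably-Hilbert dual, by Mitoma's criterion it suffices to prove tightness of the real-valued processes $t\mapsto\tscalar{\cY^{\eps,\beta}_t}{f}$ in $\mcD([0,T];\R)$ for each fixed $f\in\cS^\bd(\Omega)$ (the uniform control in $f$ on $H^\bd_s$-balls being exactly what the nuclear structure of Proposition~\ref{p:TestF}\ref{i:p:TestF:1}--\ref{i:p:TestF:2} buys). Tightness of each scalar process follows from the Dynkin decomposition: the martingale part is controlled by its quadratic variation together with Aldous' or the Kolmogorov--Centsov-type criterion, and the drift part by the smoothing bound $\norm{P^{\eps,\beta}_sf}_{\cC^\bd}\lesssim\norm{f}_\infty$ from Theorem~\ref{t:MainSemigroups} together with the fourth-moment bounds~\eqref{eq:4th_moment_1} (which hold at stationarity by the duality estimate recalled in~\S\ref{sec:examples+HSL}). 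The initial field $\cY^{\eps,\beta}_0$ converges to a centered Gaussian with covariance $\cov_\bd(f,g)$ by a CLT for the stationary measure: expand $\tscalar{\cY^{\eps,\beta}_0}{f}=\eps^{d/2}\sum_x(\eta(x)-h^{\eps,\beta}(x))f(x)$, compute its variance via the two-point correlations of~\cite{floreani_boundary2020} (the connected part contributing the long-range correction encoded in $\gamma^\bd$ and $\iota^\varrho$), and obtain asymptotic normality by the standard moment/cumulant method, using the factorized form of higher cumulants to check all cumulants of order $\ge 3$ vanish in the limit.

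Third, \emph{identification of limit points}: any subsequential limit $\cY^\bd$ is, by passing to the limit in the Dynkin decomposition (using Theorem~\ref{t:MainSemigroups} to pass $A^{\eps,\beta}\Pi_\eps f\to\Delta^{\bd,c}f$ in the drift, and~\eqref{eq:3.27} together with Theorem~\ref{th:hydrostatic} and Theorem~\ref{th:harmonic_conv} to pass the quadratic variation to the deterministic limit $\int_0^t\cov$-type expression), a solution of the martingale problem: $\tscalar{\cY^\bd_t}{f}-\tscalar{\cY^\bd_0}{f}-\int_0^t\tscalar{\cY^\bd_s}{\Delta^\bd f}\,\dd s$ is a continuous martingale with deterministic quadratic variation $\int_0^t 2\,\text{(bilinear form in }P^\bd_rf)\,\dd r$; since the quadratic variation is deterministic, Lévy's characterization forces the martingale to be Gaussian, and together with the Gaussian initial data this pins down all finite-dimensional distributions, giving uniqueness of the limit (hence convergence of the whole family) and the stated covariance $\cov_\bd(P^\bd_{t-s}f,g)$ by the semigroup property. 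Existence and pathwise continuity of the Gaussian process $\cY^\bd$ in $\cC([0,T];\cS^\bd(\Omega)')$ follows from Kolmogorov's continuity theorem applied in a suitable $H^\bd_{-s}(\Omega)$ with $s$ large (nuclearity making the embedding $H^\bd_{-s}\hookrightarrow H^\bd_{-s'}$, $s'>s$, Hilbert--Schmidt).

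The main obstacle I anticipate is the \emph{passage to the limit in the quadratic variation of the Dynkin martingale near the Lipschitz boundary}, i.e.\ making rigorous the two-step argument (tightness in $\Mb(\overline\Omega)$ of $\int_0^{\tau_\eps t}2\varGamma^{\eps,\beta}(P^{\eps,\beta}_sf_\eps,P^{\eps,\beta}_sg_\eps)\,\dd s$, then identification of the limit via~\eqref{eq:3.27}) uniformly enough to conclude, since $P^\bd_sf$ and $\chi^\bd$ need not have boundary-regular derivatives and the naive integration-by-parts that produces $\gamma^\bd_{f,g}$ is only justified through the compatibility of the $L^2$- and $\cC^\bd$-Laplacians (Remark~\ref{r:LipDomainsGen}) and the domination Lemma~\ref{l:Domination}. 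A closely related difficulty, in the Robin regime, is controlling the surface contribution $\eps^{\beta-2}\alpha_\eps(x)$ in~\eqref{eq:carre-def}: one must show the boundary carré-du-champ does not vanish but converges, against continuous test functions, to the measure $\iota^\varrho_{f,g}$ via~\eqref{eq:bd_weak_conv} and the boundary-local-time representation~\eqref{eq:harmonic_profile_R_LT}, again without any boundary smoothness of the profiles. These are precisely the places where the Lipschitz (as opposed to merely Hölder) assumption is indispensable.
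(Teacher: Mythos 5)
Your plan is broadly in the right spirit (martingale decomposition, Mitoma reduction, identification of the limit by deterministic quadratic variation, and the $\Mb(\overline\Omega)$--$\cC(\overline\Omega)$ pairing near the Lipschitz boundary), and you correctly identify where the Lipschitz geometry forces deviations from the classical torus/interval treatments. However, the paper's proof is structured differently from yours at two points, and one of the differences conceals a genuine gap in your plan.

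First, you work with the bare Dynkin decomposition
\[
\tscalar{\cY^{\eps,\beta}_t}{f}=\tscalar{\cY^{\eps,\beta}_0}{f}+\int_0^t\tscalar{\cY^{\eps,\beta}_s}{A^{\eps,\beta}\Pi_\eps f}\,\dd s+M^{\eps,\beta,f}_t\comma
\]
and propose to pass $A^{\eps,\beta}\Pi_\eps f\to\Delta^{\bd,c}f$ in the drift ``using Theorem~\ref{t:MainSemigroups}''. This does not work: Theorem~\ref{t:MainSemigroups} is about semigroups, and the relevant generator-graph-convergence, Theorem~\ref{t:Equivalence}\ref{i:t:Equivalence:3}, requires a \emph{regularized} approximating sequence $f_\eps\ne\Pi_\eps f$ (e.g.~$f_\eps=\zeta_\eps R^{\eps,\beta}_{\zeta_\eps}f$ as in Lemma~\ref{l:ConvergenceGeneratorDir}). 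With $\Pi_\eps f$ one has, by the estimate in the proof of Lemma~\ref{l:ConvergenceGeneratorDir}, $\norm{A^{\eps,\beta}\Pi_\eps f}_{L^\infty(\partial\Omega_\eps)}\lesssim\eps^{-1}\vee\eps^{\beta-1}$, which diverges for every $\beta$, and since $\#\partial\Omega_\eps=O(\eps^{1-d})$ (see~\eqref{eq:bd:lipschitz-count}) the boundary contribution to the drift is not obviously negligible. The paper sidesteps this entirely by writing the \emph{mild} (Duhamel) representation~\eqref{eq:2.1}, $\tscalar{\cY^\eps_t}{f}=\tscalar{\cY^\eps_0}{P^\eps_tf}+\int_0^t\tscalar{\diff\cM^\eps_s}{P^\eps_{t-s}f}$, in which the generator term cancels against the time derivative of $P^\eps_{t-s}f$ exactly at fixed $\eps$, so there is no drift term to control. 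Your boundary carré-du-champ worry (which you rightly flag as the ``main obstacle'') is then the \emph{only} source of boundary singularity, and it is indeed handled through~\eqref{eq:3.27} and the measure-theoretic pairing, as you anticipate.

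Second, you propose to prove Gaussianity of the initial field $\cY^{\eps,\beta}_0$ by a cumulant expansion based on the factorized correlation structure of~\cite{floreani_boundary2020}. The paper does something different and cheaper: by stationarity, $\cY^\eps_0\overset{\text{\normalfont d}}=\cY^\eps_{\tau_\eps T}$, and the mild decomposition~\eqref{eq:decomposition-flu} splits the latter into $\tscalar{\cY^\eps_0}{P^\eps_{\tau_\eps T}f_\eps}$, killed as $T\to\infty$ by the spectral gap (Cor.~\ref{c:spectral_bound_DR} and Prop.~\ref{p:ground_states}\ref{i:p:ground_states:ev}), plus the uncorrelated martingale $\cW^\eps_{T,T}$, whose Gaussian limit comes from the martingale CLT. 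The time-stretching $\tau_\eps$, which your plan does not mention, is essential when $\beta>1$, where the spectral gap degenerates like $\eps^{\beta-1}$; there you must run for times of order $1/\lambda_0^{\eps,\beta}$ and take $f_\eps=c\,\psi_0^{\eps}$ to see the zero mode (Lemma~\ref{l:QV-mean}\ref{i:l:QV-mean-b}). Without this mechanism your martingale-problem argument alone does not determine the stationary law of $\scalar{\cY^\Neu_0}{\car_\Omega}$, since $\Delta^\Neu\car_\Omega=0$. As for the cumulant route, the paper's estimates (Theorem~\ref{th:hydrostatic}, Lemma~\ref{lemma:hydrostatic_L2}, and Prop.~\ref{pr:harmonics_conv-k} for $k\le 4$) only give $L^2$-control and two-/four-point information, so making your cumulant argument rigorous would require extending the $k$-particle harmonic convergence and the closure relations to truncated correlations of all orders at a rate sufficient to kill cumulants of order~$\ge 3$; this is plausible but a genuinely different (and heavier) piece of work than what the paper does.
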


\section{Auxiliary results}\label{sec:auxiliary-RW-h}
In this section we present some auxiliary results on discrete heat kernels and corresponding semigroups in \S\ref{sec:aux-RW} and \S\ref{sec:equicontinuity}\purple{, all required for the proofs in \S\S\ref{sec:semigroup-conv-proofs},\ref{sec:harmonic_conv}}.

\subsection{Random walks with \texorpdfstring{$\beta=\infty$}{BetaInfty} and Feynman--Kac formula}\label{sec:aux-RW}

The definition of random walks $X^{\eps,\beta}$ in \S\ref{sss:RandomWalks} with~$\beta\in \R$ readily extends to the case $\beta=\infty$ (in that case, conventionally, we let~$\eps^\infty\eqdef0$ in \eqref{eq:generator_RW}). The resulting continuous-time Markov processes $X^{\eps,\infty}$ evolve within~$\Omega_\eps$ as the~$X^{\eps,\beta}$ do, with the jumps to~$\partial_e\Omega_\eps$ being suppressed.  	These random walks have been extensively studied in \cite{chen2017hydrodynamic, fan2016discrete}. More specifically, by proving in \cite[\S5.1]{chen2017hydrodynamic} a class of $\eps$-independent relative isoperimetric inequalities, the authors in \cite{chen2017hydrodynamic,fan2016discrete} derive several properties about these random walks, the most important ones for our work being listed below for reference. Here, $C, C' >0$ and $a, b \in (0,1)$ denote constants depending only on $\Omega\subset \R^d$.
 \begin{itemize}
 	\item \emph{Nash inequalities and ultracontractivity of the semigroups \cite[Thm.~5.8]{chen2017hydrodynamic}:} for all $f\in \R^{\Omega_\eps}$,
 	\begin{equation}\label{eq:ultracontractivity}
 		\norm{P^{\eps,\infty}_t f}_{L^\infty(\Omega_\eps)}\le C\ttonde{\ttonde{ t^{1/2} \vee \eps}^{-d}+ 1} \norm{f}_{L^1(\Omega_\eps)}\comma \qquad t >0 \semicolon
 	\end{equation}
 \item \emph{mixing  \cite[Prop.~5.9]{chen2017hydrodynamic}}:
 \begin{align}	\label{eq:mixing_time_estimate}
 	\limsup_{\eps \downarrow 0} 
 	\sup_{x \in \Omega_\eps}
 	\norm
 	{
 		\frac{p_t^{\eps,\infty}(x,\cdot)}{\eps^d}
 		-
 		\frac1{\mu_\eps(\Omega_\eps)}	
 	}_{L^\infty(\Omega_\eps)} \defeq \phi_t \xrightarrow{t \to \infty} 0
 	\purple{\semicolon}
 \end{align}
 \item 	\emph{heat kernel upper bound on the inner boundary \cite[Lem.\ 2.13]{chen2017hydrodynamic}:} 
 \begin{equation}\label{eq:HKbd}
 	\sup_{x\in\Omega_\eps} \eps^{-1} \sum_{y\in \partial\Omega_\eps} p^{\eps,\infty}_t(x,y)\le C\ttonde{t^{-1/2}\purple{\wedge \eps^{-1}}}\comma\qquad 	 t > 0\semicolon
 \end{equation}
\item \emph{exit-time estimate \cite[Cor.\ 4.2]{fan2016discrete}:}  for all $\gamma>0$,
\begin{equation}\label{eq:exit-time}
\sup_{x\in \Omega_\eps}	\mbfP_x^{\eps,\infty}\tonde{\sup_{s\in[0,t]}\abs{X^{\eps,\infty}_s-x}>\gamma}\le C \exp\tonde{-\frac{C'\gamma}{t^{1/2} \vee \eps}}\comma \qquad  t >0 \semicolon
\end{equation}

\item \emph{H\"older's equicontinuity of heat kernel \cite[Thm.~5.12]{chen2017hydrodynamic}:}	for all $x,y \in \Omega_\eps$,
\begin{equation}\label{eq:holder}
	\sup_{z\in \Omega_\eps}\eps^{-d}\abs{p^{\eps,\infty}_t(x,z)-p^{\eps,\infty}_t(y,z)}\le C \frac{\abs{x-y}^a}{t^{b/2}\ttonde{t^{d/2}\wedge 1}} \comma\qquad  t >0\fstop
\end{equation}
 \end{itemize}

It is well-known that the random walks $X^{\eps,\beta}$ with $\beta\in \R$  are related to their counterparts with $\beta=\infty$ through the following \emph{Feynman--Kac representation formula}:  for all~$f\in \R^{\Omega_\eps}$, $x \in \Omega_\eps$ and $t\ge0$,
\begin{equation}\label{eq:feynman-kac}
	P^{\eps,\beta}_tf(x)= \mbfE_x^{\eps,\infty}\quadre{f(X^{\eps,\infty}_t) \exp\tonde{-\eps^{\beta-1}\int_0^t V_\eps(X^{\eps,\infty}_s)\, \dd s}}\comma
\end{equation} 
where, recalling $\alpha_\eps$ introduced in \S\ref{sss:DiscrInnerBd}, we set
\begin{equation}\label{eq:alpha_eps}
	V_\eps(x)\coloneqq  \eps^{-1}\car_{\partial\Omega_\eps}\!(x)\, \alpha_\eps(x) \comma\qquad x \in \Omega_\eps\fstop
\end{equation}
As a consequence of \eqref{eq:feynman-kac}, for all $x, y \in \Omega_\eps$ and $t\ge 0$, we have 
\begin{equation}\label{eq:domination_HK}
	p^{\eps,\beta}_t(x,y)\leq p^{\eps,\beta'}_t(x,y)\le p^{\eps,\infty}_t(x,y)\comma\qquad \beta\leq \beta'\comma
\end{equation}
as well as	
\begin{equation}\label{eq:feynman-kac_dt}
	\frac{\dd}{\dd t}P^{\eps,\beta}_t \car_{\Omega_\eps}(x)=\mbfE^{\eps,\infty}_x\quadre{-\eps^{\beta-1}V_\eps(X^{\eps,\infty}_t)\exp\tonde{-\eps^{\beta-1}\int_0^t V_\eps(X^{\eps,\infty}_s)\, \dd s}}	\fstop
\end{equation}
Finally, by combining the second inequality in \eqref{eq:bd_unif_ellipticity} and \eqref{eq:HKbd}, 
\begin{equation}\label{eq:HKbdV}
\sup_{x\in \Omega_\eps}\mbfE_x^{\eps,\infty}\quadre{V_\eps(X^{\eps,\infty}_t)}=	\sup_{x\in\Omega_\eps}\sum_{y\in \Omega_\eps} p^{\eps,\infty}_t(x,y)\, V_\eps(y)\le C\ttonde{t^{-1/2}\purple{\wedge\eps^{-1}}}\comma\qquad t >0\fstop
\end{equation}

\subsection{Equicontinuity of heat semigroups}\label{sec:equicontinuity}
By \eqref{eq:exit-time} and \eqref{eq:holder}, we get, for all $f \in \cC(\overline\Omega)$, 
\begin{equation}\label{eq:equicontinuity_neumann_eps}
\lim_{\delta\downarrow 0} \sup_{\eps\in (0,1)}\sup_{t\geq 0}\sup_{\substack{x,y \in \Omega_\eps\\\abs{x-y}<\delta}} \abs{P^{\eps,\infty}_t \Pi_\eps f(x)-P^{\eps,\infty}_t \Pi_\eps f(y)}=0 \fstop
\end{equation}
An analogous statement holds true for the strongly continuous semigroup $P^{\Neu,c}_t$ on $\cC(\overline\Omega)$.

As a refinement, we show the spatial equicontinuity of discrete semigroups with modulus of continuity  \emph{independent of $\beta\in \R$}.

\begin{proposition}[Equicontinuity of discrete semigroups, \S\ref{sec:proof-equicontinuity}]\label{pr:equi_semi_disc}
Fix $f \in \cC(\overline\Omega)$. Then, 
\begin{equation}\label{eq:pr:equi_semi_disc-robin}
	\lim_{\delta\downarrow 0}\sup_{\eps\in (0,1)}\sup_{\beta\ge 1}\sup_{t\geq 0}  \sup_{\substack{x,y \in \Omega_\eps\\\abs{x-y}<\delta}} \abs{P^{\eps,\beta}_t\Pi_\eps f(x)-P^{\eps,\beta}_t \Pi_\eps f(y)}=0\comma
\end{equation}
\begin{equation}
\label{eq:pr:equi_semi_disc}
\lim_{\delta\downarrow 0}
{
\sup_{\eps\in (0,1)}
	\sup_{\beta\in \R}
	\sup_{t \geq t_0}
}
	 \sup_{\substack{x,y \in \Omega_\eps\\\abs{x-y}<\delta}} \abs{P^{\eps,\beta}_t\Pi_\eps f(x)-P^{\eps,\beta}_t \Pi_\eps f(y)}=0\comma\qquad t_0>0\fstop
\end{equation}
\end{proposition}

The proof of the proposition above \purple{is postponed to the appendix (\S\ref{sec:proof-equicontinuity}), and} immediately adapts to the continuum setting, yielding the following.
\begin{proposition}[Equicontinuity of continuum semigroups]\label{pr:equi_semi_cont}
	Fix $f \in \cC(\overline\Omega)$. Then, 
	\begin{equation}\label{eq:equi_semi_cont}
		\lim_{\delta\downarrow 0}\sup_{\varrho \ge 0} \sup_{t\geq t_0}\sup_{\substack{x,y \in \Omega\\ \abs{x-y}<\delta}} \abs{P^\varrho_t f(x)-P^\varrho_t f(y)}=0	\comma\qquad t_0>0 \fstop
	\end{equation}
An analogue of \eqref{eq:equi_semi_cont} holds with $t_0=0$ if 
replacing $\sup_{\varrho\ge 0}$ with $\sup_{\varrho\geq \varrho_*}$\, for some $\varrho_*>0$.
\end{proposition}

\section{Proof of Theorem~\ref{t:MainSemigroups}}\label{sec:semigroup-conv-proofs}
In this section we prove Theorem~\ref{t:MainSemigroups}, and divide its proof into three parts, each one of them addressing a different regime of $\beta\in \R$ ($\beta>1$ in \S\ref{sec:semigroup-conv-neumann}, $\beta=1$ in \S\ref{sec:semigroup-conv-robin} and $\beta <1$ in \S\ref{sec:semigroup-conv-dirichlet}). 
In order to lighten the notation, we omit  the evaluation  $\Pi_\eps:\cC(\overline\Omega)\to L^\infty(\Omega_\eps)$; moreover, with a slight abuse of notation,  here and until the end of \S\ref{sec:proof-flu}, we do not explicitly distinguish between  $L^2$- and $\cC^\bd$-semigroups, letting  $P^\bd_t$ indicate either of them; an analogous convention holds  for the corresponding generators and resolvents.

\subsection{Proof of Theorem~\ref{t:MainSemigroups}: Neumann regime}\label{sec:semigroup-conv-neumann}

Throughout this section, fix~$\beta>1$.
By the triangle inequality, for all $f \in \cC(\overline\Omega)$ and $T>0$,
\begin{align*}
\sup_{t\in [0,T]} \tnorm{P^{\eps,\beta}_t  f- P^\Neu_t f}_{L^\infty(\Omega_\eps)} &\leq \sup_{t\in [0,T]} \tnorm{P^{\eps,\beta}_t  f-P^{\eps,\infty}_t  f}_{L^\infty(\Omega_\eps)}\\
&\qquad +  \sup_{t\in [0,T]} \norm{P^{\eps,\infty}_t f - P^\Neu_t f}_{L^\infty(\Omega_\eps)} \fstop
\end{align*}
Theorem~\ref{t:MainSemigroups} thus follows as soon as we show that both terms in the right-hand side above vanish as~$\eps\to 0$.
These claims are immediate consequences of Lemmas~\ref{l:UnifCont} and~\ref{lemma:neumann_conv} below.

\begin{lemma}\label{l:UnifCont}
For every~$f\in\cC(\overline\Omega)$ and $T>0$, there exists $C=C(\Omega,f,T)>0$ such that
\begin{align*}
\sup_{t\in [0,T]} \norm{P^{\eps,\beta}_t f-P^{\eps,\infty}_t f}_{L^\infty(\Omega_\eps)}\le	 C\,\eps^{\beta-1} \fstop
\end{align*}
\begin{proof}
By~\eqref{eq:feynman-kac} and the inequality $1-e^{-a}\le a$ for $a\ge 0$, for all $x\in \Omega_\eps$ and $t\in [0,T]$,
		\begin{align*}
			\abs{P^{\eps,\beta}_t f(x)-P^{\eps,\infty}_t f(x)}&\le \norm{f}_{\cC(\overline\Omega)} \mbfE_x^{\eps,\infty}\quadre{1-\exp\tonde{\eps^{\beta-1}\int_0^t V_\eps(X^{\eps,\infty}_s)\, \dd s}}\\
			&\qquad \le \norm{f}_{\cC(\overline\Omega)}\mbfE_x^{\eps,\infty}\quadre{\eps^{\beta-1}\int_0^T V_\eps(X^{\eps,\infty}_s)\, \dd s}
			\fstop
\end{align*}
The inequality \eqref{eq:HKbdV} yields the desired result.
	\end{proof}
\end{lemma}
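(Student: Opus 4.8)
The plan is to compare the two semigroups pointwise through the Feynman--Kac representation \eqref{eq:feynman-kac}, exploiting that for~$\beta>1$ the multiplicative functional $\exp\ttonde{-\eps^{\beta-1}\int_0^t V_\eps(X^{\eps,\infty}_s)\,\dd s}$ is bounded above by~$1$, since $V_\eps\ge 0$ and $\eps^{\beta-1}>0$. First I would write, for $x\in\Omega_\eps$ and $t\in[0,T]$, using that $P^{\eps,\infty}_t f(x)=\mbfE_x^{\eps,\infty}\quadre{f(X^{\eps,\infty}_t)}$ (the case $\eps^\infty=0$ in \eqref{eq:feynman-kac}),
\begin{equation*}
P^{\eps,\infty}_t f(x)-P^{\eps,\beta}_t f(x)=\mbfE_x^{\eps,\infty}\quadre{f(X^{\eps,\infty}_t)\tonde{1-\exp\tonde{-\eps^{\beta-1}\int_0^t V_\eps(X^{\eps,\infty}_s)\,\dd s}}}\fstop
\end{equation*}
Bounding $\abs f$ by $\norm{f}_{\cC(\overline\Omega)}$ and applying the elementary inequality $0\le 1-e^{-a}\le a$ with $a=\eps^{\beta-1}\int_0^t V_\eps(X^{\eps,\infty}_s)\,\dd s\ge 0$ then yields $\abs{P^{\eps,\infty}_t f(x)-P^{\eps,\beta}_t f(x)}\le \norm{f}_{\cC(\overline\Omega)}\,\eps^{\beta-1}\,\mbfE_x^{\eps,\infty}\quadre{\int_0^t V_\eps(X^{\eps,\infty}_s)\,\dd s}$.

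Next I would estimate the remaining expectation uniformly in $x\in\Omega_\eps$ and $t\le T$. By Tonelli's theorem and the bound \eqref{eq:HKbdV}, $\mbfE_x^{\eps,\infty}\quadre{\int_0^t V_\eps(X^{\eps,\infty}_s)\,\dd s}=\int_0^t\mbfE_x^{\eps,\infty}\quadre{V_\eps(X^{\eps,\infty}_s)}\,\dd s\le C\int_0^T\ttonde{s^{-1/2}\vee 1}\,\dd s$, and the last integral is finite (the $s^{-1/2}$ singularity at the origin is integrable) with value depending only on~$T$. Combining the two estimates gives $\sup_{t\in[0,T]}\norm{P^{\eps,\beta}_t f-P^{\eps,\infty}_t f}_{L^\infty(\Omega_\eps)}\le C(\Omega,f,T)\,\eps^{\beta-1}$, which is the asserted bound.

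I do not anticipate any genuine obstacle: the only two points that require a moment's care are (i) the sign bookkeeping in the Feynman--Kac kernel --- one must use that $\eps^{\beta-1}>0$ precisely because $\beta>1$, so that the killing functional is a true contraction and the inequality $1-e^{-a}\le a$ is applicable with $a\ge0$; and (ii) the integrability at $s=0$ of the boundary heat-kernel bound $s^{-1/2}\vee1$ furnished by \eqref{eq:HKbdV}, which is what makes the time integral, hence the final constant~$C$, finite. Everything else is a direct application of the facts already collected in~\S\ref{sec:aux-RW}.
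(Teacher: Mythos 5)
Your argument is exactly the paper's proof: Feynman--Kac, the elementary bound $0\le 1-e^{-a}\le a$, and then \eqref{eq:HKbdV} together with the integrability of $s^{-1/2}\vee 1$ on $[0,T]$ to get a constant depending only on $\Omega,f,T$. One cosmetic remark: $\eps^{\beta-1}>0$ holds for \emph{every} $\beta\in\R$ (since $\eps\in(0,1)$), so the killing functional is a contraction regardless; the hypothesis $\beta>1$ is only what makes the resulting bound $C\,\eps^{\beta-1}$ vanish as $\eps\downarrow 0$, not what makes the inequality valid.
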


\begin{lemma}\label{lemma:neumann_conv}
For every~$f\in\cC(\overline\Omega)$ and every~$T>0$,
	\begin{equation*}
		\lim_{\eps\downarrow 0}\sup_{t\in [0,T]} \norm{P^{\eps,\infty}_t  f-  P^\Neu_t f}_{L^\infty(\Omega_\eps)}=0 \fstop
	\end{equation*}
	\begin{proof} 
	Thanks to Theorem~\ref{t:Equivalence} (which holds also with $\beta=\infty$ and $\bd=\Neu$), the desired claim is equivalent to show assertion~\ref{i:t:Equivalence:4}, that is 
	$
	\mcE^{\eps,+\infty}\to \mcE^\Neu
	$
	compactly (in the sense of Definition~\ref{def:KS_convergence}). This is a straightforward consequence of \cite[Thm.~6.1 and Rmk.~6.5]{AliCic04} and Proposition~\ref{p:ACtoKS}.
	\end{proof}
\end{lemma}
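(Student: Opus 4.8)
The plan is to recast the statement as a convergence of Dirichlet forms and feed it into Theorem~\ref{t:Equivalence}. Although that theorem is phrased for $\beta\in\R$ with $\bd$ tied to $\beta$, the limiting case $\beta=\infty$, $\bd=\Neu$ is covered without substantial change: the boundary term $\eps^{\beta-1}\scalar{\sigma_\eps}{fg}$ in~\eqref{eq:dirichlet-form-RW} disappears, \eqref{eq:FormRobin} collapses to $\mcE^\Neu(f,g)=\int_\Omega\nabla f\cdot\nabla g\,\diff\mu_\Omega$ on $H^1(\Omega)$, and the arguments of~\S\ref{sec:proof-Equivalences} go through, the only substitution being that the equicontinuity invoked at the last step of the implication \ref{i:t:Equivalence:5}$\,\Rightarrow\,$\ref{i:t:Equivalence:2} is~\eqref{eq:equicontinuity_neumann_eps} rather than Proposition~\ref{pr:equi_semi_disc}. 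Hence the assertion of the lemma --- which is assertion~\ref{i:t:Equivalence:1} in this regime, and already includes local uniformity in~$t$ --- follows once we prove assertion~\ref{i:t:Equivalence:4}, i.e.\ that $\mcE^{\eps,\infty}\to\mcE^\Neu$ compactly with respect to $(\cC(\overline\Omega),\Pi_\eps)$. By Proposition~\ref{p:ACtoKS} this is the Mosco d-convergence ${\iota_\eps}_*\mcE^{\eps,\infty}\to\mcE^\Neu$ on $L^2(\Omega)$ in the usual sense, namely a $\Gamma$-$\liminf$ inequality together with recovery sequences; the accompanying asymptotic-compactness property has already been isolated in~\S\ref{sec:proof-Equivalences}, via~\cite[Prop.~6.5]{forkert2020evolutionary} and the $\eps$-uniform relative isoperimetric inequalities of~\cite[\S5.1]{chen2017hydrodynamic}.

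For the recovery sequences, fix $f\in L^2(\Omega)$. If $f\notin H^1(\Omega)$ then $\mcE^\Neu(f)=+\infty$ and any sequence works, so assume $f\in H^1(\Omega)$; by density of $\cC^\infty(\overline\Omega)$ in $H^1(\Omega)$, lower semicontinuity of $\mcE^\Neu$, and a diagonal argument, it suffices to treat $f\in\cC^\infty(\overline\Omega)$. For such $f$ take $f_\eps\eqdef\Pi_\eps f$, which is strongly KS-convergent to $f$ essentially by definition since $f\in\cC(\overline\Omega)$, and observe that
\begin{equation*}
\mcE^{\eps,\infty}(f_\eps)=\frac{\eps^d}{2}\sum_{\substack{x,y\in\Omega_\eps\\ x\sim y}}\abs{\nabla^\eps_{x,y}f}^2
\end{equation*}
is a Riemann sum converging to $\int_\Omega\abs{\nabla f}^2\,\diff\mu_\Omega=\mcE^\Neu(f)$: the integrand is bounded and uniformly continuous on $\overline\Omega$, so the only delicate point is that the edges of $\Omega_\eps$ near $\partial\Omega$ neither discard a non-negligible part of $\Omega$ nor add contributions from outside it, and this is controlled by $\#\partial\Omega_\eps=O(\eps^{1-d})$ from~\eqref{eq:bd:lipschitz-count} and the Lipschitz character of $\Omega$ (which forces the symmetric difference of $\Omega$ and the union of the unit cells $Q_\eps(x)$, $x\in\Omega_\eps$, to have Lebesgue measure $O(\eps)$).

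For the $\Gamma$-$\liminf$ inequality, let $f_\eps\xrightharpoonup{\cC(\overline\Omega),\Pi_\eps}f$; we may assume $\liminf_\eps\mcE^{\eps,\infty}(f_\eps)<\infty$ and pass to a subsequence realizing this liminf with $\sup_\eps\mcE^{\eps,\infty}(f_\eps)<\infty$. Weak KS-convergence and this bound give $\iota_\eps f_\eps\rightharpoonup f$ weakly in $L^2(\Omega)$ (test against $\iota_\eps$ of smooth functions). Encode the discrete gradient of $f_\eps$ as a piecewise-constant field $G_\eps\in L^2(\Omega;\R^d)$ built cellwise from the directional differences $\nabla^\eps_{x,x\pm\eps e_i}f_\eps$, so that $\norm{G_\eps}_{L^2(\Omega)}^2$ is comparable to $\mcE^{\eps,\infty}(f_\eps)$ and hence bounded; extract a weak limit $G\in L^2(\Omega;\R^d)$. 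A discrete integration by parts against $\varphi\in\cC^\infty_c(\Omega)$, using $\iota_\eps f_\eps\rightharpoonup f$ and the uniform convergence of the difference quotients of $\varphi$ to its partial derivatives, identifies $G=\nabla f$ in the sense of distributions; thus $f\in H^1(\Omega)$, and by weak lower semicontinuity of the $L^2$-norm
\begin{equation*}
\mcE^\Neu(f)=\norm{\nabla f}_{L^2(\Omega)}^2=\norm{G}_{L^2(\Omega)}^2\le\liminf_{\eps\downarrow 0}\norm{G_\eps}_{L^2(\Omega)}^2=\liminf_{\eps\downarrow 0}\mcE^{\eps,\infty}(f_\eps)\fstop
\end{equation*}
Both halves taken together are exactly~\cite[Thm.~6.1 and Rmk.~6.5]{AliCic04}, so in practice one simply invokes that theorem after translating its $\iota_\eps$-formulation into the KS-formulation by Proposition~\ref{p:ACtoKS}.

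I expect the main obstacle to lie not in the interior $\Gamma$-limit of the energies --- a standard discrete-to-continuum computation --- but in its interaction with the rough boundary on both sides of the Mosco conditions: the recovery sequence needs $\Omega_\eps$ to exhaust $\Omega$ efficiently (handled by~\eqref{eq:bd:lipschitz-count}), while the $\liminf$ inequality needs a sequence of bounded discrete energy to be $L^2$-precompact with limit \emph{genuinely} in $H^1(\Omega)$ rather than in a strictly larger space, and this rests on $\eps$-uniform Poincar\'e and extension estimates for the discretized domain. These are precisely what~\cite[\S5.1]{chen2017hydrodynamic} (through~\cite{forkert2020evolutionary}) provides, and this is the structural reason the Lipschitz hypothesis on $\Omega$ cannot be dispensed with at this point.
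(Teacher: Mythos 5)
Your proposal follows exactly the paper's route: invoke the (adapted, $\beta=\infty$, $\bd=\Neu$ version of) Theorem~\ref{t:Equivalence} to reduce the uniform-in-$t$ statement to compact convergence of forms $\mcE^{\eps,\infty}\to\mcE^\Neu$, and then obtain that from \cite[Thm.~6.1, Rmk.~6.5]{AliCic04} together with the KS-to-direct translation of Proposition~\ref{p:ACtoKS}. The paper's proof is the two-line version; your extra paragraphs merely unpack what \cite{AliCic04} and the asymptotic-compactness step from \cite{forkert2020evolutionary} are doing, without changing the argument.
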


\begin{remark}[Dyadic  lattice approximations and local CLT]\label{rmk:dyadic_lclt}
The claim in Lemma \ref{lemma:neumann_conv} has already been proven  in \cite{burdzy_discrete_2008}  for \emph{dyadic} lattice approximations (i.e., when $\eps=2^{-k}$, $k \in \N$). Reasoning as in the proof of \cite[Thm.\ 5.13]{chen2017hydrodynamic}, Lemma \ref{lemma:neumann_conv} and the equicontinuity of the heat kernels in \eqref{eq:holder} ensure the validity of the \emph{local CLT} \cite[Thm.\ 5.13]{chen2017hydrodynamic} for the random walks~$X^{\eps,\infty}$ for \emph{every} lattice approximation.
\end{remark}

\subsection{Proof of Theorem~\ref{t:MainSemigroups}: Robin regime}\label{sec:semigroup-conv-robin}
Fix $f \in \mcC(\overline\Omega)$.
It is well known (see, e.g.,~\cite[Eq.~(3.2)]{fan2016discrete}) that~$P^\varrho_t f$ admits the following stochastic representation in terms of the reflected Brownian motion~$B^\Neu_t$ and its boundary local time $L^\Neu_t$:
\begin{equation}\label{eq:semigr_Robin_localtime}
P^\varrho_t f(x)= \mbfE_x^\Neu\quadre{f(B_t^\Neu)\,  e^{-\varrho L_t^\Neu}}\comma\qquad x \in \Omega\comma  t \geq 0\fstop
\end{equation}
Furthermore, by \eqref{eq:feynman-kac},
\begin{equation}
P^{\eps,\beta=1}_t f(x)= \mbfE_x^{\eps,\infty}\quadre{f(X_t^{\eps,\infty}) \exp\tonde{-\int_0^t V_\eps(X_s^{\eps,\infty})\, \dd s}}\comma\qquad x \in \Omega_\eps\comma t \geq 0\fstop
\end{equation}

By Remark~\ref{rmk:dyadic_lclt} and the assumption in \eqref{eq:bd_weak_conv}, the proof of \cite[Thm.~3.1]{fan2016discrete} carries over to our setting: for all $x \in \overline\Omega$ and $\ttseq{x_\eps}_\eps$ such that $x_\eps \in \Omega_\eps$ and $x_\eps \to x$, we have that 
\begin{align}	\label{eq:joint_conv_RW_LT}
	\seq{\tonde{X^{\eps,\infty}_t, \int_0^t V_\eps(X^{\eps,\infty}_s)\, \dd s}}_t \xRightarrow[\ \eps \downarrow 0 \ ]{\ \mrmd \ } \tseq{(B^\Neu_t, L^\Neu_t)}_t 
\end{align}
in $\mcD(\R^+_0;\overline\Omega)\times \mcC(\R^+_0;\R^+_0)$; in particular, 
\begin{equation}
\lim_{\eps\downarrow 0} P^{\eps,\beta=1}_t f(x_\eps)= P^{\varrho=1}_t f(x)
\end{equation} 
which, by Proposition~\ref{pr:equi_semi_disc}, holds uniformly in space.
Then, the uniformity over bounded intervals of time is \ref{i:t:Equivalence:2}$\implies$\ref{i:t:Equivalence:1} of Theorem \ref{t:Equivalence}. This concludes the proof.

\begin{corollary}[Spectral bound,~$\beta\leq 1$]\label{c:spectral_bound_DR}
\purple{With the definitions adopted in Theorem~\ref{t:Equivalence}\ref{i:t:Equivalence:8} below}, 
	there exists $ \underline\lambda_0>0$ satisfying
\begin{align*}
	\lambda_0^\Dir\ge \lambda_0^\varrho\ge \underline\lambda_0\comma\qquad 
\lambda_0^{\eps,\beta}\geq \lambda_0^{\eps,\beta=1}\geq \underline\lambda_0 \fstop
\end{align*}
\begin{proof}
By, e.g., \cite{daners_faber2006}, we get $\lambda^\varrho_0>0$; the inequality $\lambda_0^\Dir\ge \lambda_0^\varrho$ follows by the monotonicity of Dirichlet forms (Lem.\ \ref{l:Domination}). Analogously (cf.\ \eqref{eq:domination_HK}),  $\lambda_0^{\eps,\beta}\ge\lambda_0^{\eps,\beta=1}$. As for the last  inequality, we combine 
 Theorem~\ref{t:MainSemigroups} for~$\beta=1$ with Theorem~\ref{t:Equivalence}\ref{i:t:Equivalence:8}, implying that $\lambda^{\eps,\beta}_0\to\lambda^\varrho_0$.
\end{proof}
\end{corollary}

\subsection{Proof of Theorem~\ref{t:MainSemigroups}: Dirichlet regime}\label{sec:semigroup-conv-dirichlet}
Throughout this section, fix~$\beta<1$.
In this case, we divide the proof of Theorem~\ref{t:MainSemigroups} into two parts.
As a first step (\S\ref{sec:conv_dir_smooth}), we assume that~$\Omega$ is a bounded \emph{smooth} domain and prove the graph-convergence of the generators, which implies Theorem~\ref{t:Equivalence} (Theorem~\ref{t:Equivalence}\ref{i:t:Equivalence:3} $\implies$ \ref{i:t:Equivalence:1}).
This approach via generators' convergence heavily relies on the existence of a core for $\Delta^\Dir=\Delta^{\Dir,c}$ consisting of smooth functions \textit{with continuous derivatives up to the boundary}.
As a second step (\S\ref{sec:conv_dir_lipschitz}), we drop the assumption of smoothness of the domain $\Omega$.
We prove the semigroups' convergence on the bounded Lipschitz domain $\Omega$ 
approximating it from the inside with smooth domains.

\subsubsection{Case of a bounded smooth domain \texorpdfstring{$\Omega$}{Omega}}\label{sec:conv_dir_smooth} In this section, we assume $\Omega \subset \R^d$ to be a bounded $\cC^\infty$-domain. In this case, we recall that there exists  a  core of smooth functions up to the boundary for the Dirichlet generator $(\Delta^\Dir,\cC_0(\Omega))$ associated to the $\cC_0$-semigroup~$P_t^\Dir$ on $\cC_0(\Omega)$. As a concrete instance of such a core, we choose $\cS^\Dir(\Omega)$ constructed in Proposition~\ref{p:TestF}\ref{i:p:TestF:1} and note that, by the smoothness of $\Omega$ and the classical Sobolev embeddings,  $\cS^\Dir(\Omega)\subset \cC_0(\Omega)\cap \cC^\infty(\overline\Omega)$ (see, e.g.,~\cite[Thm.~2.20]{AmbCarMas18} or~\cite[Thm.~2.5.1.1]{Gri85}).

Recall the definition of the resolvent~$R^{\eps,\beta}_\zeta\coloneqq\tonde{\zeta -A^{\eps,\beta}}^{-1}=\int_0^\infty e^{-\zeta t} P_t^{\eps,\beta}  \dd t$, for $\zeta \in \R^+$. In the next lemmas, for every $f \in \cS^\Dir(\Omega)$, setting $f_\eps\eqdef \zeta_\eps R^{\eps,\beta}_{\zeta_\eps} f$, we show that
$
	f_\eps \to f
$ and 
$
	A^{\eps,\beta}f_\eps \to	 \Delta^\Dir f
	$,
for a suitable choice \eqref{eq:zeps2} of $\zeta_\eps \to \infty$.
\begin{lemma}\label{l:ConvergenceGeneratorAll}
Fix~$f\in\cC_0(\Omega)$. Then~$\zeta_\eps R^{\eps,\beta}_{\zeta_\eps} f\to f$ for every family~$\zeta_\eps \to \infty$. 
	\begin{proof}
		By the triangle inequality, 
		\begin{align}\nonumber \label{eq:IJ}
			\tnorm{\zeta_\eps R^{\eps,\beta}_{\zeta_\eps} f- f}_{L^\infty(\Omega_\eps)}
			\le&\ \sup_{x\in\Omega_\eps} \abs{\zeta_\eps \int_0^\infty e^{-\zeta_\eps t} \sum_{y\in\Omega_\eps} p^{\eps,\beta}_t(x,y) \ttonde{f(x)-f(y)} \diff t}
			\\
			 &\ + \sup_{x\in\Omega_\eps}\abs{\zeta_\eps\int_0^\infty e^{-\zeta_\eps t} \ttonde{1-P^{\eps,\beta}_t \car_{\Omega_\eps}(x)} f(x)\, \dd t} \fstop
			\end{align}
		As for the first term on the right-hand side above for all $\delta >0$, we estimate it from above by
		\begin{align*}
		 &\sup_{\substack{x,y\in\Omega\\ \abs{x-y}<\delta}} \abs{f(x)-f(y)} +2 \norm{f}_{\cC_0(\Omega)} \sup_{x\in \Omega_\eps} \abs{\zeta_\eps\int_0^\infty e^{-\zeta_\eps t} \sum_{y\in\Omega_\eps: \abs{y-x}\ge\delta} p^{\eps,\beta}_t(x,y)\,\diff t} 
	\\
		&\leq 
 		\sup_{\substack{x,y\in\Omega\\ \abs{x-y}<\delta}} \abs{f(x)-f(y)}
            +
           C \norm{f}_{\cC_0(\Omega)}
           \tonde{
             \exp \tonde{ - \frac{C' \delta}{t_\eps^{1/2} \vee \eps }} + e^{-\zeta_\eps t_\eps} 
            }
            \comma
		\end{align*}
	 	where we employed the exit-time estimate \eqref{eq:exit-time},  for a given family~$\ttseq{t_\eps}_\eps \subset \R^+$ such that~$\lim_{\eps\downarrow 0} t_\eps=0$ and~$\lim_{\eps\downarrow 0} \zeta_\eps t_\eps =+\infty$.
		Since $f$ is uniformly continuous in $\overline\Omega$, letting first~$\eps \to 0$ and then $\delta \to 0$ shows that the first term in the right-hand side of \eqref{eq:IJ} vanishes. 

As for the second one, for~$\rho > 0$ and~$\Omega^\rho\eqdef \set{x\in \Omega: \dist(x,\partial\Omega)>\rho}$, arguing as before, we estimate it from above by
\begin{align*}		 		
	&\sup_{x \in \Omega\setminus \Omega^{\rho}}\abs{f(x)} 
	+ \norm{f}_{\cC_0(\Omega)}\sup_{x\in \Omega_\eps\cap \Omega^{\rho}}\abs{\zeta_\eps \int_0^\infty e^{-\zeta_\eps t} \ttonde{1-P^{\eps,\beta}_t \car_{\Omega_\eps}(x)} \dd t} 
\\
		&\leq 
	\sup_{x \in \Omega\setminus \Omega^{\rho}}\abs{f(x)}
	+ 
		 C \norm{f}_{\cC(\overline \Omega)}
		\tonde{
			\exp \tonde{ - \frac{C' \rho}{t_\eps^{1/2} \vee \eps }} + e^{-\zeta_\eps t_\eps}
		} \fstop 
\end{align*}
Finally, since $f \in \cC_0(\Omega)$,  
letting first $\eps \to 0$ and then $\rho \to 0$, we conclude the proof.	\end{proof}
\end{lemma}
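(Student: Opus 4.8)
The plan is to start from the Laplace representation $\zeta_\eps R^{\eps,\beta}_{\zeta_\eps}=\zeta_\eps\int_0^\infty e^{-\zeta_\eps t}P^{\eps,\beta}_t\,\dd t$, legitimate since $\ttseq{P^{\eps,\beta}_t}_{t\ge0}$ is a sub-Markovian $\cC^0$-semigroup on the finite-dimensional space $L^\infty(\Omega_\eps)$; using $\zeta_\eps\int_0^\infty e^{-\zeta_\eps t}\,\dd t=1$, for $x\in\Omega_\eps$ one writes $\zeta_\eps R^{\eps,\beta}_{\zeta_\eps}f(x)-f(x)=\zeta_\eps\int_0^\infty e^{-\zeta_\eps t}\ttonde{P^{\eps,\beta}_tf(x)-f(x)}\,\dd t$. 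Since $f$ is extended by $0$ off $\Omega_\eps$ and $\partial_e\Omega_\eps$ is absorbing, $P^{\eps,\beta}_tf(x)=\sum_{y\in\Omega_\eps}p^{\eps,\beta}_t(x,y)f(y)$, and I split $P^{\eps,\beta}_tf(x)-f(x)=\sum_{y\in\Omega_\eps}p^{\eps,\beta}_t(x,y)\ttonde{f(y)-f(x)}-f(x)\ttonde{1-P^{\eps,\beta}_t\car_{\Omega_\eps}(x)}$ into an \emph{oscillation} part and a \emph{mass-defect} part; after integrating against $\zeta_\eps e^{-\zeta_\eps t}\,\dd t$ these become exactly the two supremal terms $I$ and $J$ on the right-hand side of~\eqref{eq:IJ}, to be bounded separately. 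Every estimate rests on fixing an auxiliary scale $\ttseq{t_\eps}_\eps$ with $t_\eps\downarrow0$ and $\zeta_\eps t_\eps\to\infty$ (e.g.\ $t_\eps\eqdef\zeta_\eps^{-1/2}$) and splitting each time integral as $\int_0^{t_\eps}+\int_{t_\eps}^\infty$: on $[0,t_\eps]$ one uses $\zeta_\eps\int_0^{t_\eps}e^{-\zeta_\eps t}\,\dd t\le1$ and the smallness of $t_\eps^{1/2}\vee\eps$, while on $[t_\eps,\infty)$ crude bounds suffice because $\zeta_\eps\int_{t_\eps}^\infty e^{-\zeta_\eps t}\,\dd t=e^{-\zeta_\eps t_\eps}\to0$.

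For the oscillation term $I$, I split the inner sum according to $\abs{y-x}<\delta$ versus $\abs{y-x}\ge\delta$. The near part is bounded by the modulus of continuity $\sup_{\abs{x-y}<\delta}\abs{f(x)-f(y)}$, which vanishes as $\delta\downarrow0$ because $f$ is uniformly continuous on the compact $\overline\Omega$. For the far part I use $\abs{f(y)-f(x)}\le2\norm{f}_{\cC_0(\Omega)}$ together with the domination $p^{\eps,\beta}_t\le p^{\eps,\infty}_t$ from~\eqref{eq:domination_HK} and the exit-time estimate~\eqref{eq:exit-time}, giving $\sum_{y\in\Omega_\eps:\abs{y-x}\ge\delta}p^{\eps,\beta}_t(x,y)\le\mbfP_x^{\eps,\infty}\ttonde{\sup_{s\le t}\abs{X^{\eps,\infty}_s-x}\ge\delta}\le C\exp\ttonde{-C'\delta/(t^{1/2}\vee\eps)}$; the time-split then controls the far part by $C\norm{f}_{\cC_0(\Omega)}\ttonde{\exp(-C'\delta/(t_\eps^{1/2}\vee\eps))+e^{-\zeta_\eps t_\eps}}$. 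Sending $\eps\downarrow0$ and then $\delta\downarrow0$ gives $I\to0$ as $\eps\downarrow0$.

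For the mass-defect term $J$ — where the hypothesis $f\in\cC_0(\Omega)$, not merely $f\in\cC(\overline\Omega)$, is essential — I fix $\rho>0$ and split $\Omega_\eps$ into the boundary layer $\Omega_\eps\setminus\Omega^\rho$ and the interior $\Omega_\eps\cap\Omega^\rho$, where $\Omega^\rho\eqdef\set{x\in\Omega:\dist(x,\partial\Omega)>\rho}$. On the boundary layer I bound $\abs{f(x)}\le\sup_{\Omega\setminus\Omega^\rho}\abs{f}$, which tends to $0$ as $\rho\downarrow0$ since $f$ vanishes on $\partial\Omega$, and use $0\le1-P^{\eps,\beta}_t\car_{\Omega_\eps}(x)\le1$ with the unit-mass factor. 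On the interior I bound $\abs{f(x)}\le\norm{f}_{\cC(\overline\Omega)}$ and note, from the Feynman--Kac formula~\eqref{eq:feynman-kac} applied to $\car_{\Omega_\eps}$ and the elementary inequality $1-e^{-a}\le\car\{a>0\}$, that $1-P^{\eps,\beta}_t\car_{\Omega_\eps}(x)$ is at most the probability that $X^{\eps,\infty}$ started at $x$ visits $\partial\Omega_\eps$ before time $t$; since $\dist(x,\partial\Omega_\eps)\ge\rho/2$ for every sufficiently small $\eps$ whenever $x\in\Omega^\rho$, the exit-time estimate~\eqref{eq:exit-time} bounds this by $C\exp\ttonde{-C'\rho/(t^{1/2}\vee\eps)}$, and the same time-split controls the interior part of $J$ by $C\norm{f}_{\cC(\overline\Omega)}\ttonde{\exp(-C'\rho/(t_\eps^{1/2}\vee\eps))+e^{-\zeta_\eps t_\eps}}$. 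Letting $\eps\downarrow0$ and then $\rho\downarrow0$ yields $J\to0$ as $\eps\downarrow0$, and combined with the bound on $I$ this proves the claim.

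I expect the main obstacle to be conceptual rather than computational: because mass can leak to $\partial_e\Omega_\eps$, the operators $\zeta_\eps R^{\eps,\beta}_{\zeta_\eps}$ are \emph{not} Markovian, so the abstract $\cC^0$-semigroup fact that $\zeta R_\zeta\to\mathrm{Id}$ is unavailable on a single fixed space; one must instead manufacture $\eps$-uniform quantitative control of the boundary killing — precisely the role of the exit-time estimate~\eqref{eq:exit-time} and the heat-kernel domination~\eqref{eq:domination_HK} — and genuinely exploit $f\restr{\partial\Omega}=0$ to absorb the mass-defect term near $\partial\Omega$. The sole place where $\zeta_\eps\to\infty$ is used is the possibility of choosing $t_\eps$ with $t_\eps\downarrow0$ and $\zeta_\eps t_\eps\to\infty$ at once, reconciling the small-time exponential exit bounds with the large-time tail $e^{-\zeta_\eps t_\eps}$.
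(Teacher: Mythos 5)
Your proposal is correct and follows essentially the same route as the paper: the same split of $\zeta_\eps R^{\eps,\beta}_{\zeta_\eps}f-f$ into an oscillation and a mass-defect term, the same auxiliary scale $t_\eps$ with $t_\eps\downarrow 0$ and $\zeta_\eps t_\eps\to\infty$, the exit-time estimate~\eqref{eq:exit-time} for both terms, and the $\rho$-boundary-layer argument exploiting $f\in\cC_0(\Omega)$. Your explicit Feynman--Kac justification that $1-P^{\eps,\beta}_t\car_{\Omega_\eps}(x)$ is dominated by the probability of hitting $\partial\Omega_\eps$ before time $t$ merely spells out what the paper compresses into ``arguing as before''.
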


\begin{lemma}\label{l:ConvergenceGeneratorDir}
	Let~$\ttseq{\zeta_\eps}_\eps\subset \R^+$ be such that $\zeta_\eps \to \infty$ and 
\begin{equation}\label{eq:zeps2}
		\zeta_\eps  \in o\ttonde{\eps^{-(1\wedge   (1-\beta))}} \fstop
	\end{equation} 
	For~$f\in\cS^\Dir(\Omega)$,  set~$f_\eps\eqdef \zeta_\eps R^{\eps,\beta}_{\zeta_\eps}	 f$.
	Then,~$A^{\eps,\beta} f_\eps\to \Delta^\Dir f$.
	\begin{proof}
		By the triangle inequality, we have that
		\begin{align*}
			&\tnorm{A^{\eps,\beta} f_\eps- \Delta^\Dir f}_{L^\infty(\Omega_\eps)}= \tnorm{\zeta_\eps R^{\eps,\beta}_{\zeta_\eps} A^{\eps,\beta} f -  \Delta^\Dir f}_{L^\infty(\Omega_\eps)}
			\\
			&\qquad\le \tnorm{\zeta_\eps R^{\eps,\beta}_{\zeta_\eps} A^{\eps,\beta} f -\zeta_\eps R^{\eps,\beta}_{\zeta_\eps}  \Delta^\Dir f}_{L^\infty(\Omega_\eps)}
			+ \tnorm{\zeta_\eps R^{\eps,\beta}_{\zeta_\eps}  \Delta^\Dir f -  \Delta^\Dir f}_{L^\infty(\Omega_\eps)} \fstop
		\end{align*}
		Since~$\Delta^\Dir\cS^\Dir\subset \cS^\Dir\subset \cC_0(\Omega)$, the second term vanishes as~$\eps\to 0$ by Lemma~\ref{l:ConvergenceGeneratorAll} with~$ \Delta^\Dir f$ in place of~$f$.
		Thus, it suffices to show that
		\begin{equation*}
			\lim_{\eps\downarrow 0} \tnorm{\zeta_\eps R^{\eps,\beta}_{\zeta_\eps} A^{\eps,\beta} f -\zeta_\eps R^{\eps,\beta}_{\zeta_\eps} \Delta^\Dir f}_{L^\infty(\Omega_\eps)} =0 \fstop
		\end{equation*}
		To this end, we split the term into bulk and boundary contributions:
		\begin{align*}
			&\tnorm{\zeta_\eps R^{\eps,\beta}_{\zeta_\eps} A^{\eps,\beta}f -\zeta_\eps R^{\eps,\beta}_{\zeta_\eps} \Delta^\Dir f}_{L^\infty(\Omega_\eps)}
			\leq \sup_{y\in\Omega_\eps \setminus \partial\Omega_\eps} \tabs{A^{\eps,\beta} f(y)- \Delta^\Dir f(y)} 
\\
&\qquad \qquad  + \sup_{x\in\Omega_\eps} \abs{\zeta_\eps \int_0^\infty e^{-\zeta_\eps t} \sum_{y\in\partial\Omega_\eps} p^{\eps,\beta}_t(x,y) \ttonde{A^{\eps,\beta} f(y)- \Delta^\Dir f(y)}\, \diff t} 
		\comma\end{align*}
		where we used that~$\sum_{y\in \Omega_\eps\setminus\partial\Omega_\eps} p^{\eps,\beta}_t(x,y)\le 1$.
		Note that, on $\Omega_\eps\setminus \partial\Omega_\eps$, $A^{\eps,\beta}$ coincides with the $\eps$-discrete Laplacian $\Delta_\eps$ on $(\eps \Z)^d$.
		Furthermore, $\Delta^\Dir f$ coincides with the usual Laplacian of $f$ for all $f\in \cS^\Dir$. Hence, the first term on the right-hand side above equals
	$
	\norm{ \Delta_\eps f- \Delta f }_{L^\infty(\Omega_\eps \setminus \partial \Omega_\eps)}
	$,
		which vanishes in~$\eps$ since $f\in \cS^\Dir(\Omega)\subset \mcC^3(\overline \Omega)$.
		We claim that the second term also vanishes.
		Since~$\Delta^\Dir f\in\cS^\Dir\subset \cC_0(\Omega)$ implies 
	$\norm{ \Delta^\Dir f}_{L^\infty(\partial\Omega_\eps)}\rightarrow0$,	it suffices to show that
		\begin{align}\label{eq:l:ConvergenceGeneratorDir:1}
			\lim_{\eps\downarrow 0} \sup_{x\in\Omega_\eps} \abs{\zeta_\eps \int_0^\infty e^{-\zeta_\eps t}\sum_{y\in\partial\Omega_\eps} p^{\eps,\beta}_t(x,y) A^{\eps,\beta} f(y) \, \diff t}=0 \fstop
		\end{align}		
		First observe that, by $f\in   \cS^\Dir \subset \cC_0(\Omega) \cap \cC^1(\overline\Omega)$, we have $\sup_{y\in \partial\Omega_\eps}\abs{f(y)}\le C(f)\, \eps$.
		Combining this with the assumption in \eqref{eq:bd_unif_ellipticity}, we estimate uniformly in $y \in \partial \Omega_\eps$
		\begin{align*}
			\abs{A^{\eps,\beta} f(y)}= \abs{\eps^{-2}\sum_{\substack{z\in\Omega_\eps \\ z\sim y}} \ttonde{f(z)-f(y)} - \eps^{\beta-2} \sum_{\substack{z\in \partial_e\Omega_\eps \\ z\sim y}}\ayz f(y)}
			\leq C\,	 \ttonde{\eps^{-1} \vee \eps^{\beta-1}}\comma
		\end{align*}
	for some $C=C(f,d,\Omega)>0$  independent of $\eps >0$ . 
		As a consequence, using the assumption in \eqref{eq:zeps2}, we conclude the proof of~\eqref{eq:l:ConvergenceGeneratorDir:1} showing that
		\begin{equation}	\label{eq:bd_kernel_scaling}
			\tonde{\eps^{-1}\vee \eps^{\beta-1}} \sup_{x\in\Omega_\eps}\int_0^\infty \sum_{y\in \partial\Omega_\eps} p^{\eps,\beta}_t(x,y)\, \dd t\lesssim \tonde{\eps^{1-\beta}\vee \eps}\fstop
		\end{equation}
		By~\eqref{eq:feynman-kac} and~\eqref{eq:bd_unif_ellipticity}, recalling \eqref{eq:alpha_eps}, for $x \in \Omega_\eps$, we obtain that
		\begin{align*}
			(\eps^{-1} \!\vee\! \eps^{\beta-1})  \sum_{y\in\partial\Omega_\eps} p^{\eps,\beta}_t(x,y) 
			\le C (1 \! \vee \! \eps^\beta)	 \mbfE^{\eps,\infty}_x \! \quadre{V_\eps(X^{\eps,\infty}_{t}) \exp\tonde{\! -\eps^{\beta-1}\int_0^t V_\eps(X^{\eps,\infty}_s)\, \diff s}} \fstop  
		\end{align*}
	Recalling~\eqref{eq:feynman-kac_dt}, by integrating in time,  we obtain \eqref{eq:bd_kernel_scaling} and conclude. 
	\end{proof}
\end{lemma}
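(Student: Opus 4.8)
The plan is to reduce to the known convergence $\zeta_\eps R^{\eps,\beta}_{\zeta_\eps}g\to g$ of Lemma~\ref{l:ConvergenceGeneratorAll} by using that the resolvent commutes with the generator. Since $\Pi_\eps f$ (extended by $0$) lies in $\dom{A^{\eps,\beta}}$, we have $A^{\eps,\beta}f_\eps=\zeta_\eps R^{\eps,\beta}_{\zeta_\eps}\,A^{\eps,\beta}\Pi_\eps f$, hence
\[
\tnorm{A^{\eps,\beta}f_\eps-\Delta^\Dir f}_{L^\infty(\Omega_\eps)}\le\tnorm{\zeta_\eps R^{\eps,\beta}_{\zeta_\eps}\ttonde{A^{\eps,\beta}\Pi_\eps f-\Delta^\Dir f}}_{L^\infty(\Omega_\eps)}+\tnorm{\zeta_\eps R^{\eps,\beta}_{\zeta_\eps}\Delta^\Dir f-\Delta^\Dir f}_{L^\infty(\Omega_\eps)}\fstop
\]
Because $\cS^\Dir(\Omega)$ is invariant under $\Delta^\Dir$ (Proposition~\ref{p:TestF}) we have $\Delta^\Dir f\in\cS^\Dir(\Omega)\subset\cC_0(\Omega)$, so the second summand vanishes as $\eps\downarrow0$ by Lemma~\ref{l:ConvergenceGeneratorAll} applied with $\Delta^\Dir f$ in place of $f$. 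Everything then comes down to the first summand.

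For the first summand I would split the sum over $y\in\Omega_\eps$ hidden inside $R^{\eps,\beta}_{\zeta_\eps}$ into an interior part $y\in\Omega_\eps\setminus\partial\Omega_\eps$ and a boundary part $y\in\partial\Omega_\eps$. On the interior, $A^{\eps,\beta}$ is the plain $\eps$-discrete Laplacian and, since $\Omega$ is smooth, $f\in\cS^\Dir(\Omega)\subset\cC^\infty(\overline\Omega)$ (classical Sobolev embedding), while $\Delta^\Dir f=\Delta f$ there; a Taylor expansion thus gives $\|A^{\eps,\beta}\Pi_\eps f-\Delta^\Dir f\|_{L^\infty(\Omega_\eps\setminus\partial\Omega_\eps)}=O(\eps)$, and since the interior carries resolvent mass $\le1$ this contributes $O(\eps)$. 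On the boundary part I would exploit that $f$ is small near $\partial\Omega$: since $f\in\cC_0(\Omega)\cap\cC^1(\overline\Omega)$ and $\Delta^\Dir f\in\cC_0(\Omega)\cap\cC^1(\overline\Omega)$, one has $|f|,|\Delta^\Dir f|\le C(f)\eps$ on $\partial\Omega_\eps$, and then the very definition of $A^{\eps,\beta}$ together with uniform ellipticity~\eqref{eq:bd_unif_ellipticity} yields the pointwise bound $|A^{\eps,\beta}\Pi_\eps f(y)|\le C(f)\ttonde{\eps^{-1}\vee\eps^{\beta-1}}$ for $y\in\partial\Omega_\eps$. Consequently the boundary contribution is at most
\[
C(f)\,\ttonde{\eps^{-1}\vee\eps^{\beta-1}}\,\zeta_\eps\sup_{x\in\Omega_\eps}\int_0^\infty e^{-\zeta_\eps t}\sum_{y\in\partial\Omega_\eps}p^{\eps,\beta}_t(x,y)\,\dd t+o(1)\fstop
\]

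The crux is the estimate $\ttonde{\eps^{-1}\vee\eps^{\beta-1}}\sup_{x}\int_0^\infty\sum_{y\in\partial\Omega_\eps}p^{\eps,\beta}_t(x,y)\,\dd t\lesssim\eps^{1-\beta}\vee\eps=\eps^{1\wedge(1-\beta)}$. Granting it, and using the harmless bound $e^{-\zeta_\eps t}\le1$ (the summand being nonnegative) to discard the exponential, the displayed boundary term is $\lesssim\zeta_\eps\,\eps^{1\wedge(1-\beta)}+o(1)$, which tends to $0$ precisely because of the hypothesis~\eqref{eq:zeps2}, i.e.\ $\zeta_\eps\in o\ttonde{\eps^{-(1\wedge(1-\beta))}}$. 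To prove the estimate I would start from the Feynman--Kac representation~\eqref{eq:feynman-kac} and uniform ellipticity to get $\ttonde{\eps^{-1}\vee\eps^{\beta-1}}\sum_{y\in\partial\Omega_\eps}p^{\eps,\beta}_t(x,y)\le C\ttonde{1\vee\eps^\beta}\,\mbfE_x^{\eps,\infty}\quadre{V_\eps(X^{\eps,\infty}_t)\exp\ttonde{-\eps^{\beta-1}\int_0^t V_\eps(X^{\eps,\infty}_s)\,\dd s}}$, then recognize via~\eqref{eq:feynman-kac_dt} that the time integral of the right-hand side equals $C\ttonde{1\vee\eps^\beta}\,\eps^{1-\beta}\bigl(\car_{\Omega_\eps}(x)-\lim_{t\to\infty}P^{\eps,\beta}_t\car_{\Omega_\eps}(x)\bigr)\le C\ttonde{\eps^{1-\beta}\vee\eps}$.

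The main obstacle is exactly this boundary term: one must witness the compensation of the blow-up $\eps^{-1}\vee\eps^{\beta-1}$ of $A^{\eps,\beta}f$ on $\partial\Omega_\eps$ against, simultaneously, the $O(\eps)$ smallness of $f$ there, the smallness $\lesssim\eps^{1\wedge(1-\beta)}$ of the rescaled total time the absorbed walk spends on $\partial\Omega_\eps$, and the slack built into the choice~\eqref{eq:zeps2} of $\zeta_\eps$; balancing these three powers of $\eps$ is what makes the argument close, together with the (easily overlooked) point that the resolvent's exponential weight $e^{-\zeta_\eps t}$ may be thrown away at this step. Finally, the whole argument leans on $\cS^\Dir(\Omega)\subset\cC^\infty(\overline\Omega)$, and hence on $\Omega$ being smooth, which is why the genuinely Lipschitz case still requires the separate inner-approximation step carried out afterwards.
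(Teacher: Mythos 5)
Your proposal is correct and follows the paper's argument essentially step by step: same triangle-inequality decomposition, same reduction of the second summand to Lemma~\ref{l:ConvergenceGeneratorAll} applied to $\Delta^\Dir f$, same bulk/boundary split of the resolvent sum, same pointwise bound $\abs{A^{\eps,\beta}\Pi_\eps f}\lesssim \eps^{-1}\vee\eps^{\beta-1}$ on $\partial\Omega_\eps$ from $f\in\cC_0(\Omega)\cap\cC^1(\overline\Omega)$ and uniform ellipticity, the same discarding of $e^{-\zeta_\eps t}$, and the same Feynman--Kac/time-integration derivation of the kernel estimate \eqref{eq:bd_kernel_scaling} via \eqref{eq:feynman-kac_dt}. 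The only (harmless) cosmetic difference is that you invoke the $\cC^1$-smallness of $\Delta^\Dir f$ near $\partial\Omega$, whereas the paper only uses $\Delta^\Dir f\in\cC_0(\Omega)$ to make that term $o(1)$; both suffice.
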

\begin{remark}\label{rem:conv_dir_smooth}
	The proofs of both lemmas above do not rely on \eqref{eq:bd_weak_conv}, but only on \eqref{eq:bd_unif_ellipticity}, $\beta <1$, the smoothness of $f \in \cC^3(\overline\Omega)$ and the fact that  $f$ and $\Delta^\Dir f \in \cC_0(\Omega)$.
\end{remark}

\subsubsection{Case of bounded Lipschitz domain \texorpdfstring{$\Omega$}{Omega}}\label{sec:conv_dir_lipschitz}
For a bounded Lipschitz domain $\Omega$, \cite[Thm.~1.12]{verchota_layer1984} (see also \cite[Thm.~8.1.5]{shen_periodic_2018}), there exist bounded smooth domains $\ttseq{U_n}_n$ such that~$\overline U_n\subset \Omega$ and $U_n \nearrow \Omega$, i.e., $\dist_{\rm H}(U_n,\partial\Omega)\to 0$ as $n\to \infty$. 
Moreover, due to the compactness of $\partial\Omega$ and $\partial U_n$, $\inf_{x\in \partial\Omega}\inf_{y\in U_n}\abs{x-y}=\delta_n>0$.

For each $n \in \N$, we introduce $P^{\Dir,n}_t$ as the Dirichlet semigroup on $\cC_0(U_n)$. In other words, $P^{\Dir,n}_t$ is the unique $\cC_0$-semigroup on $\cC_0(U_n)$ corresponding to the standard Brownian motion stopped upon exiting $U_n$; in particular, for all $f\in \cC_0(\Omega)$ such that $f\vert_{U_n}\in \cC_0(U_n)$,
\begin{equation}
	P^{\Dir,n}_tf(x)\coloneqq \mbfE^\Dir_x\quadre{f(X^\Dir_{t\wedge \tau_n})}\comma \qquad x \in \overline\Omega\comma t \ge 0\fstop
\end{equation}
Here and all throughout, $\tau_n$ is defined as the first exit time from $U_n$, viz.
\begin{equation*}
\mcD([0,\infty);\R^d) \ni \omega\longmapsto\tau_n[\omega]\coloneqq \inf\set{t\ge 0: \omega_t \notin U_n}\fstop
\end{equation*}
Similarly,  $P^{\eps,n}_t$ is the sub-Markov semigroup corresponding to the random walk $X^{\eps,\beta}_t$ on~$\overline\Omega_\eps$ stopped upon exiting $U_n$.  (Note that this definition is independent of the value of~$\beta \in \R$ provided that, for fixed $n \in \N$, $\eps\in (0,1)$ is sufficiently small.)

Now we turn  to the proof of Theorem~\ref{t:MainSemigroups} for $\beta<1$.
We observe that, since the semigroups~$P^{\Dir}_t$ and~$P^{\eps,\beta}_t$ are contraction semigroups  in $\cC_0(\Omega)$ and $L^\infty(\Omega_\eps)$, respectively,  and since $\norm{\emparg}_{L^\infty(\Omega_\eps)}\le \norm{\emparg}_{\cC_0(\Omega)}$, it suffices to prove Theorem~\ref{t:MainSemigroups} for $f$ in a dense subspace of $\cC_0(\Omega)$, e.g., for  $f \in \cC^\infty_c(\Omega)$.	For any such $f$, since $U_n\nearrow \Omega$, we have $f\vert_{\overline U_n} \in \cC^\infty_c(U_n) \subset \cC_0(U_n)$ for all $n\in \N$ large enough. For all such $n \in \N$ and for all $T > 0$, by the triangle inequality, 
\begin{align}
	\label{eq:split_smoothdom}
\begin{aligned}
\sup_{t\in [0,T]}&	\tnorm{P^{\eps,\beta}_t f- P^\Dir_t  f}_{L^\infty(\Omega_\eps)} \le\ \sup_{t\in [0,T]}\tnorm{P^{\eps,\beta}_t f- P^{\eps,n}_t f}_{L^\infty(\Omega_\eps)}\\
	+&\ \sup_{t\in [0,T]}\tnorm{P^{\eps,n}_t f- P^{\Dir,n}_t  f}_{L^\infty(\Omega_\eps)}
	+\ \sup_{t\in [0,T]}\tnorm{P^{\Dir,n}_t f-P^\Dir_t f}_{\cC_0(\Omega)}	\fstop
\end{aligned}
\end{align}

Note that
$
	\limsup_{\eps\downarrow 0} \sup_{t \in [0,T]} \norm{P^{\eps,n}_t f- P^{\Dir,n}_t f}_{L^\infty(\Omega_\eps)} =0
$ 
for every $ n \in \N$, as follows by the very same argument in Section \ref{sec:conv_dir_smooth} (see Remark \ref{rem:conv_dir_smooth}), since $U_n$ is a bounded smooth domain.	
As for the third term on the right-hand side of \eqref{eq:split_smoothdom}, we have that 
\begin{equation}\label{eq:dir_n1}
	\norm{P^{\Dir,n}_t f-P^\Dir_t f}_{\cC_0(\Omega)}=\sup_{x\in \Omega} \abs{\mbfE^\Dir_x\quadre{f(X^\Dir_{t\wedge \tau_n})-f(X^\Dir_t)}} = \sup_{x\in \Omega}\abs{\mbfE^\Dir_x\quadre{\car_{\tau_n < t}\, f(X^\Dir_t)}} ,
\end{equation}
for  $t \in [0,T]$. By the strong Markov property, 
\begin{equation}\label{eq:dir_n2}
\sup_{t \in [0,T]}	\sup_{x\in \Omega}\abs{\mbfE^\Dir_x\quadre{\car_{\tau_n< t}\, f(X^\Dir_t)}}
 \le \sup_{s \in [0,T]}\sup_{y \in \overline\Omega \setminus U_n} P^\Dir_s|f|(y)\fstop
\end{equation}
 Since $P^\Dir_s |f|\in \cC_0(\Omega)$ and  $(s,y) \mapsto P^\Dir_s |f|(y) \in   \cC \tonde{ [0,T]\times\overline\Omega}$, by \eqref{eq:dir_n1} and \eqref{eq:dir_n2},
\begin{equation*}
\lim_{n\to \infty}	\sup_{t \in [0,T]}\norm{P^{\Dir,n}_t f- P^\Dir_t f}_{\cC_0(\Omega)}=0\fstop
\end{equation*}

It remains to show that
\begin{equation}\label{eq:beta_n}
	\lim_{n\to \infty}\limsup_{\eps\downarrow 0} \sup_{t\in [0,T]}\tnorm{P^{\eps,\beta}_t f- P^{\eps,n}_t f}_{L^\infty(\Omega_\eps)}=0\fstop 
\end{equation}
Arguing as in \eqref{eq:dir_n1}--\eqref{eq:dir_n2}, and defining $(\Omega\setminus U_n)_\eps\eqdef\Omega_\eps \setminus (\Omega_\eps \cap U_n)$,
\begin{equation}
	\sup_{t\in [0,T]} \tnorm{P^{\eps,\beta}_t f- P^{\eps,n}_t f}_{L^\infty(\Omega_\eps)} \le 	\sup_{s \in [0,T]}\sup_{y \in (\Omega\setminus U_n)_\eps}  P^{\eps,\beta}_s|f|(y)\fstop
\end{equation}
Note that there exists $\delta >0$ such that,  for all $n \in \N$ large enough, $\dist_{\rm H}(\overline\Omega\setminus U_n, {\rm supp}(f))\ge \delta$. Hence, for $t \in (0,T)$ and $y \in (\Omega\setminus U_n)_\eps $, 	we have that
\begin{align*}\nonumber
	&\sup_{s \in [0,T]}   P^{\eps,\beta}_s|f|(y)
	\le\ \sup_{s \in [0,t]}  P^{\eps,\beta}_s|f|(y) + \sup_{s \in [t,T]}P^{\eps,\beta}_s|f|(y)\\
	\le&\  C \norm{f}_{\cC_0(\Omega)} e^{-C'\, \delta/ \sqrt t} + \norm{f}_{\cC_0(\Omega)}  
	\sup_{x \in (\Omega\setminus U_n)_\eps}  \mbfE^{\eps,\infty}_x\quadre{\exp\tonde{-\eps^{\beta-1}\int_0^t V_\eps(X_r^{\eps,\infty})\, \dd r}}\comma
\end{align*}
where in the last inequality we used the exit-time estimate \eqref{eq:exit-time}. Letting first $\eps\to 0$ and then~$n\to \infty$, by $\beta<1$, \eqref{eq:joint_conv_RW_LT}, and Proposition~\ref{pr:equi_semi_disc},  
\begin{align*}
&\ \limsup_{n\to \infty}\ \limsup_{\eps\downarrow 0}	\sup_{x \in (\Omega\setminus U_n)_\eps} \mbfE^{\eps,\infty}_x\quadre{\exp\tonde{-\eps^{\beta-1}\int_0^t V_\eps(X_r^{\eps,\infty})\, \dd r}}  \\
\leq&\ \limsup_{n\to \infty}\ \limsup_{\eps\downarrow 0}	\sup_{x \in (\Omega\setminus U_n)_\eps} \mbfE^{\eps,\infty}_x\quadre{\exp\tonde{-\varrho\int_0^t V_\eps(X_r^{\eps,\infty})\, \dd r}} 
=
\sup_{x\in \partial\Omega} \mbfE^\Neu_x\quadre{e^{-\varrho\, 	L^\Neu_t}} \comma
\end{align*}
for all $\varrho >0$.
Hence, 
\begin{align*}
\limsup_{n\to \infty} \ \limsup_{\eps\downarrow 0}	 \sup_{\substack{s \in [0,T] \\   y \in (\Omega\setminus U_n)_\eps}}  P^{\eps,\beta}_s|f|(y)\le \norm{f}_{\cC_0(\Omega)} \tonde{C e^{-C'\, \delta/\sqrt t}  +  \sup_{x \in \partial\Omega} \mbfE^\Neu_x\quadre{e^{-\varrho\, L^\Neu_t}}}\fstop
\end{align*}
Taking the limits  $\varrho\to \infty$  and $t \to 0$, we conclude the proof by showing that
\begin{align*}
	\lim_{\varrho\to \infty}\sup_{x\in \partial\Omega} \mbfE^\Neu_x\quadre{e^{-\varrho\, L^N_t}}=0 \comma
	\qquad t > 0\fstop
\end{align*}

By \eqref{eq:semigr_Robin_localtime},  $\mbfE^\Neu_\emparg \big[ {e^{-\varrho\, L^N_t}} \big] = P^{\varrho}_t \car_\Omega \in \cC(\overline\Omega)$ for $t>0$ and $\varrho >0$.
In view of Lemma~\ref{l:robin_to_tutto}($b_\Dir$) and Proposition~\ref{pr:equi_semi_cont}, we have that $\lim_{\varrho \to \infty} \norm {P_t^\varrho \car_\Omega - P_t^\Dir \car_\Omega}_{\mcC(\overline \Omega)}=0$, which concludes the proof of Theorem~\ref{t:MainSemigroups} ($\beta<1$) since $P_t^\Dir \car_\Omega \equiv 0$ everyhere on~$\partial \Omega$.

\section{Proof of Theorem~\ref{th:harmonic_conv}}\label{sec:harmonic_conv}
Let us now turn to the convergence of the discrete harmonic profiles $h^{\eps,\beta}$ to the continuum ones $h^\bd$, both introduced in \S\ref{sec:harmonic-profiles}.
As done in \S\ref{sec:semigroup-conv-proofs} for the proof of Theorem \ref{t:MainSemigroups}, we divide the proof of Theorem \ref{th:harmonic_conv} according to the boundary conditions: \S\ref{sec:harmonic_proof_neumann} is devoted to the case $\beta >1$, \S\ref{sec:harmonic_proof_robin} to $\beta = 1$, and \S\ref{sec:harmonic_dirichlet} to $\beta <1$. Finally,  the same notational conventions adopted in \S\ref{sec:semigroup-conv-proofs} hold all throughout this section.

\subsection{Proof of Theorem~\ref{th:harmonic_conv}: Neumann regime}\label{sec:harmonic_proof_neumann}
Throughout this section, fix~$\beta>1$. Let
	\begin{equation}\label{eq:h-spatial_average}\bar h^{\eps,\beta}\coloneqq \frac{\eps^d}{\mu_\eps(\Omega_\eps)}\sum_{x\in \Omega_\eps} h^{\eps,\beta}(x)
		\end{equation} denote the spatial average of $h^{\eps,\beta}$, and recall (see \S\ref{sec:harmonic-profiles}) 
	\begin{equation}\label{eq:h-Neu}
		h^\Neu= \av{\vartheta}_{\partial \Omega}\eqdef \fint_{\partial\Omega} \vartheta\, \dd \sigma_\Omega\fstop
	\end{equation} 
By the following triangle inequality 	 
\begin{equation}
	\tnorm{h^{\eps,\beta}-h^\Neu}_{L^\infty(\Omega_\eps)}\le \tnorm{h^{\eps,\beta}-\bar h^{\eps,\beta}}_{L^\infty(\Omega_\eps)}+\tabs{\bar h^{\eps,\beta}-h^\Neu}\comma
\end{equation}
the desired claim in Theorem \ref{th:harmonic_conv}  follows at once from Lemmas \ref{lemma:harmonic_neumann1} and \ref{lemma:harmonic_neumann2} below. 
\begin{lemma}\label{lemma:harmonic_neumann1}
Recall \eqref{eq:h-spatial_average}.
Then, 
$
\lim_{\eps\downarrow 0}\norm{h^{\eps,\beta}-\bar h^{\eps,\beta}}_{L^\infty(\Omega_\eps)}=0$.
\begin{proof}
Since~$P^{\eps,\beta}_t h^{\eps,\beta} = h^{\eps,\beta}$, the triangle inequality yields, for all $x\in \Omega_\eps$ and $t>0$, 
\begin{align*}
&\tabs{h^{\eps,\beta}(x)-\bar h^{\eps,\beta}}\le \abs{\eps^d\sum_{y\in \Omega_\eps} \tonde{\frac{p^{\eps,\infty}_t(x,y)}{\eps^d}-\frac{1}{\mu_\eps(\Omega_\eps)}}h^{\eps,\beta}(y)}
\\
&\qquad\qquad +\abs{\sum_{y\in\Omega_\eps}\ttonde{p^{\eps,\beta}_t(x,y)-p^{\eps,\infty}_t(x,y)}h^{\eps,\beta}(y)}+\sum_{z\in\partial_e\Omega_\eps} p^{\eps,\beta}_t(x,z)\,\vartheta(z)
\\
& \qquad\le \norm{\vartheta}_{\cC_b(\R^d)} \tonde{\norm{\frac{p^{\eps,\infty}_t(x,\cdot)}{\eps^d}-\frac{1}{\mu_\eps(\Omega_\eps)}}_{L^1(\Omega_\eps)} + 2 \tonde{ 1- \mbfP^{\eps,\beta}_x\ttonde{X^{\eps,\beta}_t\in \Omega_\eps}} }\comma
\end{align*}
where the last inequality follows from $\tnorm{h^{\eps,\beta}}_{L^\infty(\Omega_\eps)}\le \norm{\vartheta}_{\cC_b(\R^d)}$, \eqref{eq:domination_HK} and 
	\begin{equation*}
	\sum_{z\in \partial_e\Omega_\eps} p^{\eps,\beta}_t(x,z)= 1-\sum_{y\in \Omega_\eps} p^{\eps,\beta}_t(x,y)= 1- \mbfP^{\eps,\beta}_x\ttonde{X^{\eps,\beta}_t\in \Omega_\eps}\fstop
\end{equation*} 
Passing to the supremum over $x\in \Omega_\eps$, the proof ends by taking first  $\eps\to 0$ and then $t\to \infty$. Indeed, by H\"older inequality,  \eqref{eq:VolumeBound} and \eqref{eq:mixing_time_estimate}, 
\begin{equation*}
	\lim_{t\to \infty}\limsup_{\eps\downarrow 0} \norm{\frac{p^{\eps,\infty}_t(x,\cdot)}{\eps^d}-\frac{1}{\mu_\eps(\Omega_\eps)}}_{L^1(\Omega_\eps)}=0\comma
\end{equation*}
while by \eqref{eq:feynman-kac} and the very same argument used in the proof of Lemma \ref{l:UnifCont}, 
\begin{equation*}
\limsup_{\eps\downarrow 0}	\sup_{x\in\Omega_\eps}1- \mbfP^{\eps,\beta}_x\ttonde{X^{\eps,\beta}_t\in \Omega_\eps}=0\comma\qquad t > 0\fstop \qedhere
\end{equation*}
\end{proof}
\end{lemma}

\begin{lemma}\label{lemma:harmonic_neumann2}
	Recall \eqref{eq:h-spatial_average} and \eqref{eq:h-Neu}. Then, $\lim_{\eps\to 0}\bar h^{\eps,\beta}= \av{\vartheta}_{\partial\Omega}$ .
\begin{proof}
	 Introduce the following function $h^\Neu_\eps:\overline\Omega_\eps\to [0,\infty)$:
	\begin{equation*}
		h^\Neu_\eps(x)\coloneqq \begin{cases}
			\av{\vartheta}_{\partial \Omega} &\text{if}\ x \in \Omega_\eps\\
			\vartheta(x) &\text{if}\ x \in \partial_e\Omega_\eps
		\end{cases}\fstop
	\end{equation*}

Further observe that, since  $h^{\eps,\beta}$ and $\purple{h^\Neu_\eps}$  coincide on $\partial_e\Omega_\eps$,  
\begin{equation}\label{eq:h_eps=h+W}
	h^{\eps,\beta}(x)= h^\Neu_\eps(x)+\int_0^\infty P^{\eps,\beta}_tA^{\eps,\beta}h^\Neu_\eps(x)\, \diff t\comma \qquad x \in \overline\Omega_\eps\fstop
\end{equation}
Hence,  by the definitions	 of $\bar h^{\eps,\beta}$ and $h^\Neu_\eps$, 
\begin{align*}
&	\abs{\bar h^{\eps,\beta}-\av{\vartheta}_{\partial \Omega} }= \abs{\frac{\eps^d}{\mu_\eps(\Omega_\eps)} \sum_{x\in \Omega_\eps} \ttonde{ h^{\eps,\beta}(x)-h^\Neu_\eps(x)}}\\
	&\qquad= \abs{\frac{\eps^d}{\mu_\eps(\Omega_\eps)}\sum_{x\in\Omega_\eps} \tonde{ \int_0^\infty P^{\eps,\beta}_t \car_{\Omega_\eps}(x)\, \diff t} A^{\eps,\beta}h^\Neu_\eps(x) }\\
	&\qquad	=\abs{\int_0^\infty\eps^{\beta-1}\tonde{\frac{\eps^{d-1}}{\mu_\eps(\Omega_\eps)}\sum_{x\in \partial\Omega_\eps}  P^{\eps,\beta}_t\car_{\Omega_\eps}(x) \sum_{\substack{z\in \partial_e\Omega_\eps\\z \sim x}}\axz\tonde{\vartheta(z)-\av{\vartheta}_{\partial \Omega}}}\diff t }\comma	
\end{align*}
where the second identity uses the fact that~$A^{\eps,\beta}h^\Neu_\eps =0$ on $\partial_e\Omega_\eps$ and the symmetry of~$P^{\eps,\beta}_t$ on~$L^2(\Omega_\eps)$, while the third one uses that $h^\Neu_\eps$ is constant on $\Omega_\eps$. 

In view of \eqref{eq:bd_weak_conv} and Remark \ref{rem:bd_ext}, the conclusion follows by showing that there exists a uniformly bounded family $\ttseq{b_\eps}_\eps\subset \R^+_0$  such that
\begin{equation*}
\limsup_{\eps\downarrow 0} \norm{\frac{\eps^{\beta-1}}{\mu_\eps(\Omega_\eps)}\int_0^\infty P^{\eps,\beta}_t\car_{\Omega_\eps}(\emparg)\,\diff t- b_\eps}_{L^\infty(\Omega_\eps)}=0\fstop
\end{equation*}
In particular, by \eqref{eq:uniform_op_bound_beta} and Proposition~\ref{p:ground_states}\ref{i:p:ground_states:gs},  we can choose $b_\eps = \frac{\eps^{\beta-1}}{\mu_\eps(\Omega_\eps) \lambda_0^{\eps,\beta}}$, which is uniformly bounded by Proposition~\ref{p:ground_states}\ref{i:p:ground_states:ev} (see below).
\end{proof}
\end{lemma}

\subsubsection{Spectral bounds and ground states}
We conclude the proof of Theorem~\ref{th:harmonic_conv} for $\beta>1$ with a last proposition.
Recall, for all $\beta \in\R$, the definition of $\lambda_0^{\eps,\beta} \geq 0$ in Theorem~\ref{t:Equivalence}\ref{i:t:Equivalence:8}, and further define the \emph{ground state} $\psi_0^{\eps,\beta}$ as the unique positive function in $L^2(\Omega_\eps)$  solving 
\begin{equation}\label{eq:eigen_eq}
	-A^{\eps,\beta}\psi_0^{\eps,\beta}=\lambda_0^{\eps,\beta}\psi_0^{\eps,\beta}\comma 
\quad 
		\psi_0^{\eps,\beta} \in \arg \min 
			\left\{
				\mcE^{\eps,\beta}(f) \ : \ \norm{f}_{L^2(\Omega_\eps)}^2= 1 
			\right\} \fstop  
\end{equation}
Finally, set $\psi_0^{\eps,\infty} \eqdef \mu_\eps(\Omega_\eps)^{-1/2}\car_{\Omega_\eps}$. 
\begin{proposition}
\label{p:ground_states}
	For all $\beta >1$, the following properties hold true:
	\begin{enumerate}[$(i)$]
			\item \label{i:p:ground_states:ev}
		\emph{Spectral bound:} \ there exist $0<\underline{\lambda}_0 \leq \overline{\lambda}_0$ such that 
		$
			\displaystyle
		\underline{\lambda}_0\, \varepsilon^{\beta-1}	 \leq \lambda_0^{\varepsilon,\beta} \leq \overline{\lambda}_0\,  \eps^{\beta-1} \fstop
		$
		\item\label{i:p:ground_states:gs}
		\emph{Ground states:} \
		$
			\displaystyle 
		\lim_{\eps\downarrow 0}\tnorm{\psi_0^{\eps,\beta}-\psi_0^{\eps,\infty}}_{L^\infty(\Omega_\eps)}=0\fstop
		$
	\end{enumerate}
	\begin{proof}
		The upper bound in \ref{i:p:ground_states:ev} easily follows by choosing $f= \psi_0^{\eps,\infty}$ in \eqref{eq:eigen_eq}.
		We now show \ref{i:p:ground_states:gs}; we set $\overline \psi_0^{\eps,\beta}\coloneqq \frac{\eps^d}{\mu_\eps(\Omega_\eps)}
		\sum_{x\in \Omega_\eps}\psi_0^{\eps,\beta}(x)
		$, 	and claim that  
		\begin{equation}\label{eq:b0}
			\lim_{\eps\downarrow 0}\tnorm{\psi_0^{\eps,\beta}-\overline \psi_0^{\eps,\beta}}_{L^\infty(\Omega_\eps)}=0\fstop
		\end{equation}
		By $\psi_0^{\eps,\beta}(x)=e^{\lambda_0^{\eps,\beta}t} P^{\eps,\beta}_t \psi_0^{\eps,\beta}(x)$ for any $t\ge1$, we infer that
		\begin{align}
			\label{eq:b1}
			\tnorm{\psi_0^{\eps,\beta}-\overline \psi_0^{\eps,\beta}}_{L^\infty(\Omega_\eps)}
		\le  \tabs{e^{\lambda_0^{\eps,\beta}t}-1} \tnorm{\psi_0^{\eps,\beta}}_{L^\infty(\Omega_\eps)} + \max_{x\in \Omega_\eps}\abs{P  ^{\eps,\beta}_t \psi_0^{\eps,\beta}(x)-\overline \psi_0^{\eps,\beta}} \comma	
			\end{align}
		and, by the domination property~\eqref{eq:domination_HK} and \eqref{eq:ultracontractivity}, that
		\begin{equation}
				\label{eq:bound_unif_psi0_eps_beta}
			\sup_{\eps>0}	\tnorm{\psi_0^{\eps,\beta}}_{L^\infty(\Omega_\eps)}<\infty\fstop
		\end{equation}
		The first term on the right-hand side of \eqref{eq:b1} vanishes as $\eps \to 0$ by \eqref{eq:bound_unif_psi0_eps_beta} and the upper bound in \ref{i:p:ground_states:ev}.
		Moreover, 	
		\begin{align*}
		&\abs{P^{\eps,\beta}_t \psi_0^{\eps,\beta}(x)-\overline \psi_0^{\eps,\beta}}= \abs{\eps^d \sum_{y\in \Omega_\eps}\tonde{\frac{p^{\eps,\beta}_t(x,y)}{\eps^d}-\frac{1}{\mu_\eps(\Omega_\eps)}}\psi_0^{\eps,\beta}(y)} \longrightarrow 0 \comma 
		\end{align*}
	uniformly for $x \in \Omega_\eps$ as $\eps \to 0$, which follows by \eqref{eq:bound_unif_psi0_eps_beta} and arguing as in Lemma~\ref{lemma:harmonic_neumann1}.
	This proves the claim in \eqref{eq:b0}. We conclude the proof by \eqref{eq:b0}, \eqref{eq:bound_unif_psi0_eps_beta}, and $\norm{\psi_0^{\eps,\beta}}_{L^2(\Omega_\eps)} =1$, which implies \ref{i:p:ground_states:gs}.
	We are left with the proof of the lower bound in \ref{i:p:ground_states:ev}. By \eqref{eq:eigen_eq},  
			\begin{equation*}
					\lambda_0^{\eps,\beta}=	\mcE^{\eps,\beta}(\psi_0^{\eps,\beta})\ge \eps^{\beta-1}\tonde{\eps^{d-1}\sum_{x\in \partial\Omega_\eps}\alpha_\eps(x)\tonde{\psi_0^{\eps,\beta}(x)}^2}\fstop
				\end{equation*}
			By \ref{i:p:ground_states:gs} and \eqref{eq:bd_weak_conv}, we have that
			\begin{align*}
					\lim_{\eps\downarrow 0}\eps^{d-1}\sum_{x\in \partial\Omega_\eps}\alpha_\eps(x)\tonde{\psi_0^{\eps,\beta}(x)}^2 = \mu_\Omega(\Omega)^{-1} \  \scalar{\sigma_{\partial \Omega}}{1}  \in (0,\infty)\comma
				\end{align*}
			from which the desired claim follows.
	\end{proof}
\end{proposition}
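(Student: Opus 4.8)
The plan is to establish the three quantitative claims in the order
\emph{upper bound in \ref{i:p:ground_states:ev} $\Rightarrow$ ground-state convergence \ref{i:p:ground_states:gs} $\Rightarrow$ lower bound in \ref{i:p:ground_states:ev}}, since each step feeds into the next. For the \emph{upper bound}, use the variational characterization in \eqref{eq:eigen_eq} with the explicit competitor $f=\psi_0^{\eps,\infty}=\mu_\eps(\Omega_\eps)^{-1/2}\car_{\Omega_\eps}$: this function is constant on $\Omega_\eps$, so the bulk gradient term in the Dirichlet form \eqref{eq:dirichlet-form-RW} vanishes and $\mcE^{\eps,\beta}(\psi_0^{\eps,\infty})=\eps^{\beta-1}\mu_\eps(\Omega_\eps)^{-1}\sigma_\eps(\partial\Omega_\eps)$. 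By \eqref{eq:bd_weak_conv} applied to the constant function, $\sigma_\eps(\partial\Omega_\eps)=\scalar{\sigma_\eps}{1}\to\scalar{\sigma_{\partial\Omega}}{1}$ is bounded, and $\mu_\eps(\Omega_\eps)$ is bounded below by \eqref{eq:VolumeBound}; hence $\lambda_0^{\eps,\beta}\le\overline{\lambda}_0\,\eps^{\beta-1}$, and in particular $\lambda_0^{\eps,\beta}\to0$ since $\beta>1$.

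For the \emph{ground-state convergence}, first record the uniform bound $\sup_\eps\|\psi_0^{\eps,\beta}\|_{L^\infty(\Omega_\eps)}<\infty$: from $\psi_0^{\eps,\beta}=e^{\lambda_0^{\eps,\beta}}P_1^{\eps,\beta}\psi_0^{\eps,\beta}$, the kernel domination $p_1^{\eps,\beta}\le p_1^{\eps,\infty}$ of \eqref{eq:domination_HK} and ultracontractivity \eqref{eq:ultracontractivity} give $\|\psi_0^{\eps,\beta}\|_{L^\infty(\Omega_\eps)}\le e^{\lambda_0^{\eps,\beta}}C\|\psi_0^{\eps,\beta}\|_{L^1(\Omega_\eps)}\le e^{\overline{\lambda}_0}C\sqrt{\mu_\eps(\Omega_\eps)}$, uniformly finite by \eqref{eq:VolumeBound} and Step~1. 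Next, with $\bar\psi_0^{\eps,\beta}\eqdef\frac{\eps^d}{\mu_\eps(\Omega_\eps)}\sum_{x\in\Omega_\eps}\psi_0^{\eps,\beta}(x)$ and the eigenfunction identity $\psi_0^{\eps,\beta}=e^{\lambda_0^{\eps,\beta}t}P_t^{\eps,\beta}\psi_0^{\eps,\beta}$ (valid for every $t\ge0$), estimate for fixed $t\ge1$
\[
\|\psi_0^{\eps,\beta}-\bar\psi_0^{\eps,\beta}\|_{L^\infty(\Omega_\eps)}\le \tabs{e^{\lambda_0^{\eps,\beta}t}-1}\,\|\psi_0^{\eps,\beta}\|_{L^\infty(\Omega_\eps)}+\|\psi_0^{\eps,\beta}\|_{L^\infty(\Omega_\eps)}\,\sup_{x\in\Omega_\eps}\Big\|\tfrac{p_t^{\eps,\beta}(x,\emparg)}{\eps^d}-\tfrac{1}{\mu_\eps(\Omega_\eps)}\Big\|_{L^1(\Omega_\eps)}.
\]
The first summand vanishes as $\eps\downarrow0$ because $\lambda_0^{\eps,\beta}\to0$; for the second, split $\tfrac{p_t^{\eps,\beta}}{\eps^d}-\tfrac{1}{\mu_\eps(\Omega_\eps)}$ as $\big(\tfrac{p_t^{\eps,\infty}}{\eps^d}-\tfrac{1}{\mu_\eps(\Omega_\eps)}\big)+\tfrac{p_t^{\eps,\beta}-p_t^{\eps,\infty}}{\eps^d}$, bound the first bracket by the mixing estimate \eqref{eq:mixing_time_estimate} and the $L^1$-norm of the second by $\sum_y(p_t^{\eps,\infty}-p_t^{\eps,\beta})(x,y)=1-\mbfP^{\eps,\beta}_x(X^{\eps,\beta}_t\in\Omega_\eps)=O(\eps^{\beta-1})$ uniformly in $x$ (the Feynman--Kac argument of Lemma~\ref{l:UnifCont}); letting $\eps\downarrow0$ then $t\to\infty$ yields $\|\psi_0^{\eps,\beta}-\bar\psi_0^{\eps,\beta}\|_{L^\infty(\Omega_\eps)}\to0$. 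Finally, writing $\psi_0^{\eps,\beta}=\bar\psi_0^{\eps,\beta}\car_{\Omega_\eps}+r_\eps$ with $\|r_\eps\|_{L^\infty(\Omega_\eps)}\to0$ and inserting this into $\|\psi_0^{\eps,\beta}\|_{L^2(\Omega_\eps)}^2=1$ (using $\|r_\eps\|_{L^2(\Omega_\eps)}\le\|r_\eps\|_{L^\infty(\Omega_\eps)}\sqrt{\mu_\eps(\Omega_\eps)}\to0$ and the uniform bound on $\bar\psi_0^{\eps,\beta}$) forces $(\bar\psi_0^{\eps,\beta})^2\mu_\eps(\Omega_\eps)\to1$, i.e.\ $\tabs{\bar\psi_0^{\eps,\beta}-\mu_\eps(\Omega_\eps)^{-1/2}}\to0$; combining gives $\|\psi_0^{\eps,\beta}-\psi_0^{\eps,\infty}\|_{L^\infty(\Omega_\eps)}\to0$.

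For the \emph{lower bound}, discard the nonnegative bulk term in $\lambda_0^{\eps,\beta}=\mcE^{\eps,\beta}(\psi_0^{\eps,\beta})$ to obtain $\lambda_0^{\eps,\beta}\ge\eps^{\beta-1}\big(\eps^{d-1}\sum_{x\in\partial\Omega_\eps}\alpha_\eps(x)\,\psi_0^{\eps,\beta}(x)^2\big)$. By \ref{i:p:ground_states:gs} the values $\psi_0^{\eps,\beta}(x)^2$ are uniformly close to $\mu_\eps(\Omega_\eps)^{-1}$, and by \eqref{eq:bd_weak_conv} (constant function) the bracket converges to $\mu_\Omega(\Omega)^{-1}\scalar{\sigma_{\partial\Omega}}{1}>0$; hence $\liminf_{\eps\downarrow0}\eps^{-(\beta-1)}\lambda_0^{\eps,\beta}>0$, giving $\lambda_0^{\eps,\beta}\ge\underline{\lambda}_0\,\eps^{\beta-1}$ for all small enough $\eps$ (and, upon shrinking $\underline{\lambda}_0$, for all $\eps\in(0,1)$, since $\lambda_0^{\eps,\beta}>0$ for each fixed $\eps$ because $\mcE^{\eps,\beta}$ has trivial kernel).

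The only genuinely delicate point is the interplay in Step~2: the near-constancy of the (non-explicit) ground state requires both a uniform-in-$\eps$ $L^\infty$ bound on $\psi_0^{\eps,\beta}$ — gotten via domination by the $\beta=\infty$ walk and ultracontractivity — and the quantitative mixing rate $\phi_t$ of \eqref{eq:mixing_time_estimate}, together with the fact that in the Neumann regime $\beta>1$ the boundary perturbation costs only $O(\eps^{\beta-1})\to0$. No analytic input beyond the heat-kernel estimates of \S\ref{sec:aux-RW} is needed.
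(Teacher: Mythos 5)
Your proposal is correct and follows essentially the same route as the paper's proof: the variational upper bound with the constant competitor $\psi_0^{\eps,\infty}$, the uniform $L^\infty$-bound on $\psi_0^{\eps,\beta}$ via the eigenfunction identity, domination \eqref{eq:domination_HK} and ultracontractivity \eqref{eq:ultracontractivity}, near-constancy of the ground state by combining the mixing estimate \eqref{eq:mixing_time_estimate} with the Feynman--Kac comparison of $p^{\eps,\beta}_t$ and $p^{\eps,\infty}_t$ (limits $\eps\downarrow 0$ then $t\to\infty$), the $L^2$-normalization to identify the constant, and finally the lower bound from the boundary part of the Dirichlet form together with \eqref{eq:bd_weak_conv}. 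The only caveat is your parenthetical claim that the lower bound extends to all $\eps\in(0,1)$ by shrinking $\underline\lambda_0$: pointwise positivity of $\lambda_0^{\eps,\beta}$ does not by itself rule out that the infimum over the continuum $\eps\in[\eps_0,1)$ vanishes, but this is immaterial since the statement, as proved and used in the paper, is asymptotic in $\eps$.
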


\begin{remark}
As a consequence of the domination property \eqref{eq:domination_HK} and the ultracontractivity in \eqref{eq:ultracontractivity}, by Corollary~\ref{c:spectral_bound_DR} ($\beta\leq 1$) and Proposition~\ref{p:ground_states}\ref{i:p:ground_states:ev} ($\beta>1$), we have that
	\begin{gather}
		\label{eq:uniform_op_bound_beta_puntuale}
	\tnorm{P_s^{\eps,\beta}}_{L^1(\Omega_\eps) \to L^\infty(\Omega_\eps)} \leq C \exp \big (- (\eps^{\beta-1} \wedge 1)s \big) \comma 
	\quad \text{for} \ \ s>1 \comma  
\\
		\label{eq:uniform_op_bound_beta}
			\sup_{\eps \in (0,1)}
		\left\|
		\ttonde{\eps^{\beta-1} \wedge  1} 
		\int_0^\infty P_t^{\eps,\beta} \dd t
		\right\|_{L^\infty(\Omega_\eps) \to L^\infty(\Omega_\eps)}
		< \infty \fstop
\end{gather}
\end{remark}

\subsection{Proof of Theorem~\ref{th:harmonic_conv}: Robin regime}\label{sec:harmonic_proof_robin}
\purple{Recall the definition~\eqref{eq:harmonic_profiles_discrete} of the discrete harmonic profile~$h^{\eps,\beta}$.}
By the master equation and \eqref{eq:feynman-kac},  we have that, for all $x \in \Omega_\eps$ and $z \in \partial_e\Omega_\eps$, 
\begin{equation}\label{eq:master-eq}
\begin{aligned}
&	p^{\eps,\beta}_\infty(x,z)= \int_0^\infty \eps^{\beta-2}\sum_{\substack{y\in \partial\Omega_\eps\\
			z \sim y
	}} \ayz	p^{\eps,\beta}_t(x,y)\, \dd t \\
 &\quad = \int_0^\infty \eps^{\beta-2} \sum_{\substack{y \in \partial\Omega_\eps\\z \sim y}} \ayz\, \mbfE^{\eps,\infty}_\purple{x}\quadre{\car_{\{y\} }(X^{\eps,\infty}_t)\exp\tonde{-\eps^{\beta-1}\int_0^t V_\eps(X^{\eps,\infty}_r)\, \dd r}}\dd t\fstop
\end{aligned}
\end{equation}
Hence, for $x \in \Omega_\eps$,  
\begin{equation*}
	h^{\eps,\beta=1}(x)=  \int_0^\infty \mbfE_x^{\eps,\infty}\quadre{V_\eps^\vartheta(X^{\eps,\infty}_t)\exp\tonde{-	\int_0^t V_\eps(X^{\eps,\infty}_r)\,\dd r}}\dd t + \cJ_\eps(x)\comma
\end{equation*}
where
$
V_\eps^\vartheta(x)\coloneqq	\eps^{-1}\car_{\partial\Omega_\eps}(x)\, 	 \vartheta(x)\, \alpha_\eps(x)
$
and $\cJ_\eps(x)$ is given by the expression
\begin{align*}
	\purple{-}\frac1{\eps}\int_0^\infty \mbfE_x^{\eps,\infty}\quadre{\sum_{y \in \partial\Omega_\eps}\car_{\set y}(X^{\eps,\infty}_t)\sum_{z \in \partial_e\Omega_\eps}\ayz \ttonde{\vartheta(y)-\vartheta(z)} \exp\tonde{-\int_0^t V_\eps(X^{\eps,\infty}_r)\, \dd r}}\dd t\fstop
	\end{align*}
Furthermore, for all $T > 0$ and $x \in \Omega_\eps$, we have that
\begin{align}
		\label{eq:tired_of_giving_name}
	\begin{aligned}
		\abs{\cJ_\eps(x)}
	\le& \tonde{\sup_{y \in \partial\Omega_\eps}\sum_{\substack{z \in \partial_e\Omega_\eps\\ z \sim y}} \ayz\abs{\vartheta(z)-\vartheta(y)}}\int_0^T \eps^{-1}\sum_{y \in \partial\Omega_\eps}p^{\eps,\infty}_t(x,y)\,\dd t\\
	+&\  2 \norm{\vartheta}_\infty	\int_T^\infty \mbfE^{\eps,\infty}_x\quadre{V_\eps(X^{\eps,\infty}_t)\exp\tonde{-\int_0^t V_\eps(X^{\eps,\infty}_r)\, \dd r}}\dd t\fstop
	\end{aligned}
\end{align}
The first term on the right-hand side vanishes by the uniform continuity of $\vartheta$, \eqref{eq:bd_ext}, and \eqref{eq:HKbd}; by \eqref{eq:feynman-kac_dt}, \eqref{eq:feynman-kac}, and  \eqref{eq:uniform_op_bound_beta_puntuale}, the second term is controlled by $e^{-\underline{\lambda}_0 T}$. 

Now, recall the stochastic representation \eqref{eq:harmonic_profile_R_LT} of $h^{\varrho=1}:\overline\Omega\to [0,\infty)$. 
Then, for all $T>1$, by \eqref{eq:joint_conv_RW_LT} and \eqref{eq:tired_of_giving_name}  we obtain that 
\begin{align*}
	\limsup_{\eps \downarrow 0}
		\tnorm{h^{\eps,\beta}-h^{\varrho=1}}_{L^\infty(\Omega_\eps)}
	\leq 
	\limsup_{\eps \downarrow 0}
		\norm{\cJ_\eps}_{L^\infty(\Omega_\eps)}
	\leq 
		2\norm{\vartheta}_\infty C e^{-\underline\lambda_0 T}
		\fstop
\end{align*}
Taking the limit as $T \to \infty$, we conclude the proof.

\subsection{Proof of Theorem~\ref{th:harmonic_conv}: Dirichlet regime}\label{sec:harmonic_dirichlet}
Throughout this section, fix~$\beta<1$.
We divide the proof into two main parts. In the first part we treat the case of smooth domains and smooth boundary data, while in the second part we prove the claim for $\Omega$ a bounded Lipschitz domain case with continuous boundary data.
\subsubsection{Case of bounded smooth domain \texorpdfstring{$\Omega$}{Omega} and smooth boundary data \texorpdfstring{$\vartheta$}{vartheta}}\label{sec:conv_dir_smooth_harmonics} Provided that $\Omega$ is a bounded $\cC^\infty$-domain and the (non-negative) boundary datum $\vartheta$ is in $\cC^\infty(\R^d)$, then there exists a unique $\cC^\infty(\overline\Omega)$-solution, say $h^\Dir$, of the corresponding Dirichlet problem on $\Omega$ (see, e.g., \cite[Thm.~2.5.1.1]{Gri85}).
On $\overline\Omega_\eps$, we define the function 
\begin{equation}\label{eq:hDireps}
	h^\Dir_\eps \coloneqq \car_{\Omega_\eps} h^\Dir + \car_{\partial_e\Omega_\eps} \vartheta\comma
\end{equation}
and note that we have the decomposition
$
	h^{\eps,\beta}= h^\Dir_\eps + \int_0^\infty P^{\eps,\beta}_tA^{\eps,\beta}h^\Dir_\eps\, \dd t 
$.
The claim of Theorem~\ref{th:harmonic_conv} ($\beta<1$) follows from  $\lim_{\eps\downarrow 0}\norm{\int_0^\infty P^{\eps,\beta}_t A^{\eps,\beta} h^\Dir_\eps\, \dd t}_{L^\infty(\Omega_\eps)}=0$.

By $\Delta h^\Dir =0$ on $\Omega$,  $h^\Dir \in \cC^3(\overline\Omega)$ and \eqref{eq:bd_unif_ellipticity},  there exists $C=C(\Omega,h^\Dir)>0$ such that
\begin{equation}
	\norm{	\Delta_\eps h^\Dir}_{L^\infty(\Omega_\eps\setminus \partial\Omega_\eps)} \le C \eps \comma\qquad
\tnorm{A^{\eps,\beta}h^\Dir_\eps}_{L^\infty(\partial\Omega_\eps)}\le  C \ttonde{\eps^{-1}\vee \eps^{\beta-1}}\fstop
\end{equation}
As a consequence, we obtain
\begin{align*}
	&\norm{\int_0^\infty P^{\eps,\beta}_t A^{\eps,\beta} h^\Dir_\eps\, \dd t}_{L^\infty(\Omega_\eps)}
	\\
	\le
	&\ \sup_{x\in \Omega_\eps}\abs{\int_0^\infty \sum_{y\in \Omega_\eps\setminus\partial\Omega_\eps} p^{\eps,\beta}_t(x,y)\, \Delta_\eps h^\Dir_\eps(y)\, \dd t}
	+ \sup_{x\in\Omega_\eps}\abs{\int_0^\infty \sum_{y\in \partial\Omega_\eps} p^{\eps,\beta}_t(x,y)\, A^{\eps,\beta}h^\Dir_\eps(y)\, \dd t}
	\\
	\le&\ C\tonde{ \eps\,	\norm{ \int_0^\infty P^{\eps,\beta}_t \, \dd t}_{L^\infty(\Omega_\eps)\to L^\infty(\Omega_\eps)}
	+ \tonde{\eps^{-1}\vee \eps^{\beta-1}} \sup_{x\in\Omega_\eps}\int_0^\infty \sum_{y\in \partial\Omega_\eps} p^{\eps,\beta}_t(x,y)\, \dd t}\fstop
\end{align*}
We conclude the proof by \eqref{eq:uniform_op_bound_beta}  and \eqref{eq:bd_kernel_scaling}.

\subsubsection{General case}
Recall from \S\ref{sec:harmonic-profiles} that the Dirichlet problem~\eqref{eq:Dir_problem} with boundary condition~$\vartheta\vert_{\partial\Omega} \in \cC(\partial\Omega)$ (see~\eqref{eq:BoundaryCondition}) admits a unique solution~$h^\Dir$ with~$h^\Dir\in \cC(\overline\Omega)\cap \cC^\infty(\Omega)$.
Further, recall from Section \ref{sec:conv_dir_lipschitz} the  approximating smooth sets $U_n\nearrow \Omega$,  corresponding stopped semigroups $P^{\Dir,n}_t$ and $P^{\eps,n}_t$, as well as $(\Omega\setminus U_n)_\eps\eqdef\Omega_\eps \setminus (\Omega_\eps \cap U_n)$. Then, for all~$n \in \N$, consider the harmonic profile $h^{\eps,n}:\Omega_\eps\to [0,\infty)$ associated to  $P^{\eps,n}_t$ with boundary data $h^\Dir$ on $(\Omega\setminus U_n)_\eps$, i.e., 
\begin{equation}
	h^{\eps,n}(x)\coloneqq \lim_{t\to\infty} P^{\eps,n}_t h^\Dir(x)\comma \qquad x \in \Omega_\eps\fstop
\end{equation}
Note that $h^{\eps,n}= h^\Dir$ on $(\Omega\setminus U_n)_\eps$.
By the smoothness of the domains $U_n$ and of the boundary data $h^\Dir\vert_{\partial U_n}$, the arguments in Section \ref{sec:conv_dir_smooth_harmonics}  ensure 
\begin{equation}
	\lim_{\eps\downarrow0}\norm{h^{\eps,n}-h^\Dir}_{L^\infty(\Omega_\eps)}=\lim_{\eps\downarrow0}\norm{h^{\eps,n}-h^\Dir}_{L^\infty(\Omega_\eps\cap U_n)}=0\comma\qquad n \in \N\fstop
\end{equation}
To conclude the proof, we will show that 
\begin{align}	\label{eq:final_harm_d}
	\lim_{n \to \infty}
	\limsup_{\eps \downarrow 0}
		\tnorm{h^{\eps,\beta}-h^{\eps,n}}_{L^\infty(\Omega_\eps)} = 0 \fstop
\end{align}
	
Recall $h^\Dir_\eps$ from \eqref{eq:hDireps}; then, by the strong Markov property,
\begin{align}	\label{eq:h_eps_beta_n}
	\tnorm{h^{\eps,\beta}-h^{\eps,n}}_{L^\infty(\Omega_\eps)} \leq \sup_{y \in (\Omega\setminus U_n)_\eps}\tabs{P^{\eps,\beta}_\infty h^\Dir_\eps(y)-h^\Dir_\eps(y)}\comma \qquad n \in \N\fstop
\end{align} For every $\delta>0$,  let $w_{\delta}(h^\Dir)$ denote the $\delta$-modulus of continuity of $h^\Dir\in \cC(\overline\Omega)$; then, for fixed~$n \in \N$,  and for all $\eps \in (0,1)$ small enough, by \eqref{eq:bd_ext} and the uniform continuity of $\vartheta$,  
\begin{equation}	\label{eq:P_infty_h_eps_D}
	\norm{P^{\eps,\beta}_\infty h^\Dir_\eps-h^\Dir_\eps}_{L^\infty((\Omega\setminus U_n)_\eps)} \le w_{2\delta}(h^\Dir)+	 2 \norm{\vartheta}_{\cC_b(\R^d)} \sup_{y\in (\Omega\setminus U_n)_\eps} \sum_{\substack{z\in\partial_e\Omega_\eps\\ \abs{z-y}\ge \delta}} p^{\eps,\beta}_\infty(y,z)\fstop
\end{equation}
Define, for all $\delta>0$ and $y \in \Omega$ (cf.\ \eqref{eq:alpha_eps}), 
\begin{equation}
	\overline	V_\eps^{\delta,y}(x)\coloneqq   \eps^{-1}\car_{x\in \partial\Omega_\eps}\car_{\abs{x-y}\ge \delta/2}\, \alpha_\eps(x)\comma\qquad x \in \Omega_\eps 	\fstop
\end{equation}
Hence, by \eqref{eq:master-eq} and \eqref{eq:feynman-kac}, for $y \in (\Omega\setminus U_n)_\eps$, 
\begin{align*}
&\sum_{\substack{z\in \partial_e\Omega_\eps\\
			\abs{z-y}\ge\delta}} p^{\eps,\beta}_\infty(y,z)
		 \le
		  \int_0^\infty \eps^{\beta-2}\sum_{\substack{x\in \partial\Omega_\eps\\
		\abs{x-y}\ge \delta/2} } p^{\eps,\beta}_s(y,x)\, \alpha_\eps(x)\, \dd s\\
	&\qquad= 
		 \int_0^\infty \mbfE^{\eps,\infty}_y\quadre{\eps^{\beta-1}\,\overline V^{\delta,y}_\eps(X^{\eps,\infty}_s)\exp\tonde{-\eps^{\beta-1}\int_0^s V_\eps(X^{\eps,\infty}_r)\, \dd r}}\dd s\fstop
\end{align*}

Split the above integral at a fixed time $t>0$. By $\overline V_\eps^{\delta,y}\le V_\eps$, 
\begin{equation*}
	\begin{aligned}
	&	\sup_{y\in(\Omega\setminus U_n)_\eps} \int_t^\infty \mbfE^{\eps,\infty}_y\quadre{\eps^{\beta-1}\,\overline V^{\delta,y}_\eps(X^{\eps,\infty}_s)
		\exp\tonde{-\eps^{\beta-1}\int_0^s V_\eps(X^{\eps,\infty}_r)\, \dd r}
	}\dd s\\
&\qquad \le  \sup_{y\in(\Omega\setminus U_n)_\eps} \int_t^\infty \mbfE^{\eps,\infty}_y\quadre{\eps^{\beta-1}\, V_\eps(X^{\eps,\infty}_s)
	\exp\tonde{-\eps^{\beta-1}\int_0^s V_\eps(X^{\eps,\infty}_r)\, \dd r}
}\dd s\purple{\fstop}
\end{aligned}
\end{equation*}
By \eqref{eq:feynman-kac_dt}, arguing as in the proof of Theorem~\ref{t:MainSemigroups} ($\beta<1$, end of \purple{\S\ref{sec:conv_dir_lipschitz}}), the right-hand side above vanishes as $\eps\to0$, $n\to \infty$, for fixed $t>	 0$. On the other hand, again by $\overline V_\eps^{\delta,y}\le V_\eps$,
\begin{equation*}
	\begin{aligned}
		&	\sup_{y\in(\Omega\setminus U_n)_\eps} \int_0^t \mbfE^{\eps,\infty}_y\quadre{\eps^{\beta-1}\,\overline V^{\delta,y}_\eps(X^{\eps,\infty}_s)
			\exp\tonde{-\eps^{\beta-1}\int_0^s V_\eps(X^{\eps,\infty}_r)\, \dd r}
		}\dd s\\
		&\qquad \le  \sup_{y\in(\Omega\setminus U_n)_\eps} \int_0^t \mbfE^{\eps,\infty}_y\quadre{\eps^{\beta-1}\, \overline V^{\delta,y}_\eps(X^{\eps,\infty}_s)
			\exp\tonde{-\eps^{\beta-1}\int_0^s \overline V^{\delta,y}_\eps(X^{\eps,\infty}_r)\, \dd r}
		}\dd s\comma
	\end{aligned}
\end{equation*}
which, by \eqref{eq:feynman-kac_dt} and \eqref{eq:exit-time}, vanishes (uniformly in $\eps$ and $n$) as $t\to 0$.  
Combining these estimates with \eqref{eq:h_eps_beta_n} and \eqref{eq:P_infty_h_eps_D}, we obtain \eqref{eq:final_harm_d}. This concludes the proof of Theorem \ref{th:harmonic_conv}.

\section{Proofs of Theorems \ref{t:MainHydrodynLim},~\ref{th:hydrostatic}, and~\ref{t:LocalEquilibrium}}	\label{s:proofs-IPS}
In this section we prove the hydrodynamic  and  hydrostatic limits (Thm.s~\ref{t:MainHydrodynLim} and  \ref{th:hydrostatic}, respectively), as well as Theorem~\ref{t:LocalEquilibrium} on  stationary correlations. 
While \S\ref{s:duality} presents the  auxiliary  dual processes and some of their main properties, the proofs of Theorems~\ref{t:MainHydrodynLim}, \ref{th:hydrostatic} and \ref{t:LocalEquilibrium} are the subjects of~\S\ref{sec:proof-HDL}, \S\ref{sec:hydrostatic} and~\S\ref{sec:hk-conv}, respectively.

\subsection{Dual particle systems, duality functions, and properties}\label{s:duality} For fixed~$k, \ell \in \N$, 
\begin{equation*}
	\bx\eqdef(x_1,\ldots, x_k)\in (\overline\Omega_\eps)^k\qquad \text{and}\qquad \by\eqdef(y_1,\ldots, y_\ell)\in(\overline\Omega_\eps)^\ell\comma
\end{equation*}
we define 
\begin{equation}\label{eq:bx-by-etc}
\begin{aligned}
	\hat\bx_i&\eqdef(x_1,\ldots, x_{i-1},x_{i+1},\ldots, x_k)\in (\overline\Omega_\eps)^{k-1}\comma
	\\
	\bx_i^{y}&\eqdef(x_1,\ldots, x_{i-1},y,x_{i+1},\ldots, x_k)\in(\overline\Omega_\eps)^k\comma && i \in \set{1,\ldots, k}, y\in \overline\Omega_\eps\comma
	\\
	\bx\:\by&\eqdef(x_1,\ldots,x_k,y_1,\ldots, y_\ell)\in(\overline\Omega_\eps)^{k+\ell}\comma	
\end{aligned}
\end{equation}
and, for all \purple{$\omega\in \R^{(\overline\Omega_\eps)^k}$},	
\begin{align}\label{eq:omega-bx}
	\omega[\bx]&\eqdef \prod_{i=1}^k\tonde{\omega(x_i)+\sigma\sum_{j=1}^{i-1}\car_{x_j}(x_i)}\comma
		\\
	\label{eq:omega-bx2}
	\omega[\by|\bx]&\eqdef \begin{cases} \omega[\bx\:\by]/\omega[\bx] &\text{if}\ \omega[\bx]\neq 0\\
		1&\text{otherwise}
	\end{cases}\fstop
\end{align}
Note that~$\car_{\Omega_\eps}[\emparg]\colon (\overline\Omega_\eps)^k\to\R$, whereas~$\car_{\Omega_\eps}(\emparg)\colon \overline\Omega_\eps\to\R$.

\subsubsection{Particle dynamics}\label{sss:k-particle-dynamics} For all $\beta\in \R$, we denote by $\ttonde{\ttonde{\mbfX^{\eps,\beta,k}_t}_{t\ge0},\ttonde{\mbfP^{\eps,\beta,k}_\bx}_{\bx\in (\overline\Omega_\eps)^k}}$ the continuous-time Markov chain in the Skorokhod space $\mcD(\R_0^+;(\overline\Omega_\eps)^k)$ with generator
\begin{equation}\label{eq:generator_dual}
	\begin{aligned}
		A^{\eps,\beta,k} f (\bx)&\eqdef \eps^{-2}\sum_{x\in \Omega_\eps}\sum_{\substack{y\in\Omega_\eps\\ y\sim x}}\sum_{i=1}^k \car_{x_i}(x)\,\car_{\Omega_\eps}[y|\hat\bx_i]\ttonde{f(\bx_i^y)-f(\bx)}\\
		&\qquad+\eps^{\beta-2}\sum_{x\in\partial\Omega_\eps} \sum_{\substack{z\in\partial_e\Omega_\eps\\z \sim x}}\axz\sum_{i=1}^k \car_{x_i}(x)\ttonde{f(\bx_i^z)-f(\bx)}\fstop
	\end{aligned}
\end{equation}
We further let $\ttonde{P^{\eps,\beta,k}_t}_{t\ge 0}$ be the corresponding Markov semigroup on $\R^{(\overline\Omega_\eps)^k}$, with corresponding heat kernel
\begin{equation}\label{eq:HK-k}
	p^{\eps,\beta,k}_t(\bx,\by)\coloneqq \mbfP_\bx^{\eps,\beta,k}\tonde{\mbfX^{\eps,\beta,k}_t=\by}\fstop
\end{equation}
\begin{remark}[Accessible configurations]
	When $\sigma=-1$, configurations $\bx \in (\overline\Omega_\eps)^k$ for which $x_i=x_j\in \Omega_\eps$ for some $i,j \in \set{1,\ldots, k}$, $i\neq j$, are inaccessible since $\car_{\Omega_\eps}[\bx]=0$, hence they  could be discarded; when $\sigma = 1$, all configurations are accessible. In order to keep track of possible restrictions,  we let $\overline\Omega_\eps^k\eqdef \overline\Omega_\eps^{k,\sigma}$, resp.\ $\Omega_\eps^k\eqdef \Omega_\eps^{k,\sigma}$, denote the subset of accessible configurations in~$(\overline\Omega_\eps)^k$, resp.\ $(\Omega_\eps)^k$. 
\end{remark}

Note that  $A^{\eps,\beta,k}$ with $k=1$ coincides with the generator $A^{\eps,\beta}$ in \eqref{eq:generator_RW}, while for $k\ge 2$ the process $\mbfX_t^{\eps,\beta,k}$ describes the position of $k$ (labeled) particles diffusively evolving and interacting on $\Omega_\eps$, until eventually ---~and independently~--- absorbed in $\partial_e\Omega_\eps$ at rates proportional to~$\eps^{\beta-2}$. When $\sigma=-1$, resp.\ $\sigma=1$, particles undergo the exclusion, resp.\ inclusion, interaction rule.  Moreover, since this Markovian dynamics  does not depend on the particles' labels but only on their positions, the  projection of $P^{\eps,\beta,k}_t$ onto symmetric functions~$f \in \R^{\overline\Omega_\eps^k}_\sy$   again corresponds to a Markov process.

For every $k\in \N$, $A^{\eps,\beta,k}$ and $P^{\eps,\beta,k}_t$ globally fix the space of all functions identically vanishing on~$\partial_e\Omega_\eps^k\eqdef \overline\Omega_\eps^k\setminus \Omega_\eps^k$. As already done for the case $k=1$, we identify the latter space with~$L^p(\Omega_\eps^k)$ for any $p\in [1,\infty]$, endowed with the (weighted) norm
\begin{equation*}
	\norm{f}^p_{L^p(\Omega_\eps^k)}\eqdef \eps^{kd}\sum_{\bx\in 		\Omega_\eps^k} \abs{f(\bx)}^p\car_{\Omega_\eps}[\bx]\comma\quad p \in [1,\infty)\comma\qquad \norm{f}_{L^\infty(\Omega_\eps^k)}\eqdef \sup_{\bx\in\Omega_\eps^k} \abs{f(\bx)}\fstop
\end{equation*}
Further note that $A^{\eps,\beta,k}$ and the corresponding semigroup are self-adjoint in $L^2(\Omega_\eps^k)$.

\subsubsection{Duality and consistency}
For $\eta \in \Xi^\eps$, we define inductively on $k\in \N$ the functions on $\overline\Omega_\eps^k$ 
\begin{align}
\label{eq:duality_function}			
\begin{aligned}
		D(y,\eta) &\eqdef 
	\begin{cases}
		\eta(y)  &\text{if} \ y \in \Omega_\eps \\
		\vartheta(y)  &\text{if} \ y \in \partial_e\Omega_\eps  
	\end{cases} \comma  \quad \text{and for}\ \   \bx\in \Omega_\eps^k\comma \\ 
	D(\bx\:y,\eta)&\eqdef 
	D(\bx,\eta)\times 
	\begin{cases}
		\displaystyle\frac{\eta(y)-\sum_{i=1}^k\car_{x_i}(y)}{\car_{\Omega_\eps}[y|\bx]}&\text{if}\ y \in \Omega_\eps\\
		\vartheta(y) &\text{if} \ y \in \partial_e\Omega_\eps 
	\end{cases} \fstop 
\end{aligned}
\end{align}
For each $\sigma \in \set{-1,1}$, such functions serve as \emph{duality functions} between the processes $\mbfX^{\eps,\beta,k}_t$ and the corresponding particle system $\eta^{\eps,\beta}_t$, viz.,
\begin{equation}\label{eq:duality}
	\E_\eta^{\eps,\beta}\quadre{D\ttonde{\bx,\eta^{\eps,\beta}_t}}=P^{\eps,\beta,k}_t D(\emparg,\eta)(\bx)\comma\qquad \eta \in \Xi^\eps\comma \bx \in 	\overline\Omega_\eps^k\comma t \ge0\fstop
\end{equation}
In view of this relation between $\eta^{\eps,\beta}_t$ and~$\mbfX_t^{\eps,\beta,k}$, the latter processes are usually referred to as the \emph{dual processes}  \cite{carinci_duality_2013-1,floreani_boundary2020}.

The following \textquoteleft consistency\textquoteright\ property for the corresponding unlabeled particle systems is well-known (see, e.g., \cite{carinci_consistent_2019, floreani_boundary2020} for a proof).

\begin{proposition}[Consistency, \cite{carinci_consistent_2019}]
For every $k\in \N$, let
$J^{\eps,k}: \R^{\overline\Omega_\eps^k}\to \R^{\overline\Omega_\eps^{k+1}}$ be the  annihilation operator
\begin{equation}\label{eq:annihilation}
	J^{\eps,k}f(\bx)\eqdef \sum_{i=1}^{k+1} f(\hat\bx_i)\comma\qquad  f\in \R^{\overline\Omega_\eps^k}\comma \bx\in \overline\Omega_\eps^{k+1}\fstop
\end{equation} 
Then, \begin{equation}\label{eq:consistency}
	P_t^{\eps,\beta,k+1}J^{\eps,k} f = J^{\eps,k} P_t^{\eps,\beta,k}f\comma\qquad f \in \R^{\overline\Omega_\eps^k}_\sy\comma  t \ge0\fstop
\end{equation} 
\end{proposition}

\subsubsection{Ultracontractivity  and moment estimates}\label{sss:ultracontractivity}
In this section, we prove  that the  semigroups associated to finitely-many SIP dual particles on lattice approximations of bounded Lipschitz domains are uniformly ultracontractive (Prop.~\ref{cor:nash-k}); this result is of independent interest (see Rmk.~\ref{rmk:nash-k-SEP} below for the SEP analogue), and yields effective estimates on the  moments of the particle systems $\eta^{\eps,\beta}_t$ (Cor.~\ref{pr:nash}). 

	\begin{proposition}[Ultracontractivity for the $k$-particle  semigroup, $\sigma=1$]\label{cor:nash-k}
	For every~$k \in \N$, there exists $C=C(\Omega,d,k)>0$ such that
	\begin{equation}\label{eq:nash-k}
		\tnorm{P^{\eps,k}_t f}_{L^\infty(\Omega_\eps^k)}\le C\ttonde{1+t^{-kd/2}}\norm{f}_{L^1(\Omega_\eps^k)}\comma \qquad f \in \R^{\Omega_\eps^k}\comma t >0\fstop
	\end{equation}	
\end{proposition}
The proof of Proposition~\ref{cor:nash-k} goes through establishing a Nash inequality  for the dual SIP $\mbfX^{\eps,k}_t$. We achieve this by comparing~$P^{\eps,k}_t$ with $(P^\eps_t)^{\otimes k}$, that is, the semigroup corresponding to a system of $k$ \emph{independent} copies of the random walk $X^\eps_t$. This is the content of the following lemma.

\begin{lemma}[Comparison of norms and Dirichlet forms, $\sigma=1$]\label{lem:comparison_dirichlet_forms}
	For every $k \in \N$,	
	\begin{equation}\label{eq:comparison_norms}
		\norm{f}_{L^p(\Omega_\eps)^{\otimes k}}\le \norm{f}_{L^p(\Omega_\eps^k)}\le (k!)^{1/p} \norm{f}_{L^p(\Omega_\eps)^{\otimes k}}\comma \qquad f \in \R^{\Omega_\eps^k}\comma p \in [1,\infty)\comma
	\end{equation}
	and
	\begin{equation}\label{eq:comparison_dirichlet_forms}
		\mcE^{\eps,k}_\otimes(f)\le 	\mcE^{\eps,k}(f)\comma\qquad f \in \R^{\Omega_\eps^k}\comma
	\end{equation}	
	where $\mcE^{\eps,k}_\otimes$, resp.\ $\mcE^{\eps,k}$, denotes the Dirichlet form associated to $(P^\eps_t)^{\otimes k}$, resp.\ $P^{\eps,k}_t$.
	\begin{proof}
		The inequality \eqref{eq:comparison_norms} follows at once from the definition of the spaces $L^p(\Omega_\eps^k)$ and the fact that, letting~$\car_{\Omega_\eps}[\bx]$ be as in~\eqref{eq:omega-bx} with~$\car_{\Omega_\eps}$ in place of~$\omega$,
		\begin{equation}\label{eq:713}
			1 \le \car_{\Omega_\eps}[\bx]\le k!\comma\qquad \bx \in \Omega_\eps^k\fstop
		\end{equation}
	\purple{The claim in \eqref{eq:comparison_dirichlet_forms} follows by the first inequality in \eqref{eq:713} and a straightforward comparison of the jump rates $r^{\eps,k}$ and $r^{\eps,k}_\otimes$  of $k$ inclusion and $k$ independent particles, respectively: for all $\bx \in \Omega_\eps^k$, $y \in \overline\Omega_\eps$, and $i=1,\ldots, k$ (cf.\ \eqref{eq:generator_dual}), 
	\begin{align*}
		r^{\eps,k}(\bx,\bx^y_i)&\eqdef  \sum_{x\in \Omega_\eps}\car_{x_i}(x)\tonde{	\eps^{-2}\,\car_{\Omega_\eps}(y)\,\car_{x\sim y}\tonde{1+\car_{\Omega_\eps}[y|\hat \bx_i]}+
		\eps^{\beta-2}\, \car_{\partial_e\Omega_\eps}(y)\, \alpha_\eps^{xy}}
	 \\
	 &\ge \sum_{x\in \Omega_\eps}\car_{x_i}(x)\tonde{	\eps^{-2}\,\car_{\Omega_\eps}(y)\,\car_{x\sim y}+
	 	\eps^{\beta-2}\, \car_{\partial_e\Omega_\eps}(y)\, \alpha_\eps^{xy}}
		\defeq r^{\eps, k}_\otimes(\bx,\bx^y_i)\fstop
	\end{align*}}
This concludes the proof of the lemma.
	\end{proof}
	\begin{proof}[Proof of Proposition~\ref{cor:nash-k}]
		For~$i=1,\ldots,5$ let $C_i=C_i(\Omega,d,k)>0$.
		By  tensorization of \eqref{eq:ultracontractivity},
		\begin{equation*}
			\tnorm{(P^\eps_t)^{\otimes k} f}_{L^\infty(\Omega_\eps)^{\otimes k}}\le C_1 \ttonde{1+t^{-kd/2}} \norm{f}_{L^1(\Omega_\eps)^{\otimes k}}\ ,\qquad f \in \R^{\Omega_\eps^k}\comma t >0\fstop
		\end{equation*}
		By self-adjointness of~$(P^\eps_t)^{\otimes k}_t$ in~$(L^2(\Omega_\eps))^{\otimes k}$, the latter inequality is equivalent  to (see, e.g., \cite[Thm.~2.3.7]{saloff1997lectures})
		\begin{equation*}
			\norm{f}_{L^2(\Omega_\eps)^{\otimes k}}^{2(1+1/kd)}\le C_2 \tonde{\mcE^{\eps,k}_\otimes(f)+ C_3 \norm{f}_{L^2(\Omega_\eps)^{\otimes k}}^2} \norm{f}^{4/kd}_{L^1(\Omega_\eps)^{\otimes k}}\comma\qquad f \in \R^{\Omega_\eps^k}\fstop
		\end{equation*}
		By  comparison, Lemma \ref{lem:comparison_dirichlet_forms} yields the following Nash inequality
		\begin{equation*}
			\norm{f}_{L^2(\Omega_\eps^k)}^{2(1+1/kd)}\le C_4 \tonde{\mcE^{\eps,k}(f)+ C_5 \norm{f}_{L^2(\Omega_\eps^k)}^2} \norm{f}^{4/kd}_{L^1(\Omega_\eps^k)}\comma\qquad f \in \R^{\Omega_\eps^k}\comma
		\end{equation*}
		which implies the desired claim (see, e.g., \cite[Thm.~2.3.4]{saloff1997lectures}).
	\end{proof}
\end{lemma}
\begin{remark}[Ultracontractivity for the $k$-particle semigroup, $\sigma=-1$]\label{rmk:nash-k-SEP}
	As  observed in, e.g., \cite[Rmk.~2]{landim_decay1998}, the claim in Proposition~\ref{cor:nash-k} holds also for SEP.	Indeed, recalling \eqref{eq:duality_function},  Liggett's comparison inequality \cite[Prop.~VIII.1.7]{liggett_interacting_2005-1} implies
	\begin{equation}
		P^{\eps,k}_t D(\emparg ,\eta)(\bx)\le (P^\eps_t)^{\otimes k} D(\emparg ,\eta)(\bx)\comma\quad t\ge 0\comma\bx\in \overline\Omega_\eps^k\comma \eta\in \Xi^\eps\comma\vartheta\in [0,1]^{\partial_e\Omega_\eps}\fstop
	\end{equation}
	As a consequence,  letting $\mfS_k$ denote the symmetric group of degree $k$ (recall \eqref{eq:HK-k}), 
	\begin{equation}
		p^{\eps,k}_t(\bx,\by)
		\le \sum_{\varsigma\in \mfS_k} p^\eps_t(x_1,y_{\varsigma(1)})\cdots p^\eps_t(x_k,y_{\varsigma(k)})\comma\qquad t \ge 0\comma \bx, \by \in \overline\Omega_\eps^k\fstop
	\end{equation}
	By combining this with \eqref{eq:ultracontractivity}, the desired claim follows.
\end{remark}

We conclude this section by deriving useful moment estimates for the particle systems $\eta^{\eps,\beta}_t$.	In what follows, unless specified otherwise, $\sigma=\pm 1$, and  $\ttseq{\nu_\eps}_\eps$ represents a family of generic probability distributions on $\ttseq{\Xi^\eps}_\eps$. Moreover,  $\beta\in \R$ is fixed and suppressed from the notation, while $\E^\eps_{\nu_\eps}$ denotes expectation with respect to the law of	 $\eta^\eps_t$ with $\eta^\eps_0$ distributed as $\nu_\eps$.

\begin{corollary}\label{pr:nash}
	For every $k \in \N$, there exists $C=C(\Omega,d,k,\vartheta)>0$ such that
	\begin{equation}\label{eq:decay-kth-moment-sip}
		\sup_{\bx \in \Omega_\eps^k}	\E^\eps_{\nu_\eps}\quadre{\prod_{i=1}^k \eta^\eps_t(x_i)}\le C
		(1+t^{-kd/2})\tonde{1+\E_{\nu_\eps}\quadre{\norm{\eta}^k_{L^1(\Omega_\eps)}}}\comma
	\end{equation}
	\begin{equation}
		\label{eq:bound-l1}
		\sup_{s\ge 0}\E^\eps_{\nu_\eps}\quadre{\tnorm{\eta^\eps_s}_{L^1(\Omega_\eps)}^k}\le C\tonde{1+\E_{\nu_\eps}\quadre{\norm{\eta}_{L^1(\Omega_\eps)}^k}}\comma	\end{equation}
	and
	\begin{equation}\label{eq:max_principle}
		\sup_{s\ge 0}\sup_{\bx\in \Omega_\eps^k} \E^\eps_{\nu_\eps}\quadre{\prod_{i=1}^k \eta^\eps_s(x_i)}\le C\tonde{1+\sup_{\bx\in \Omega_\eps^k}\E_{\nu_\eps}\quadre{\prod_{i=1}^k\eta(x_i)}}
	\end{equation}
	hold true for all $t>0$ and $\ttseq{\nu_\eps}_\eps$.
\end{corollary}
\begin{proof}
All claims are trivial for SEP ($\sigma=-1$)	due to the maximal occupancy of one particle per site; hence,  fix $\sigma=1$ all throughout this proof.

	We start by proving \begin{equation}\label{eq:nash-D}
		\sup_{\bx \in \Omega_\eps^k}	P^{\eps,k}_t D(\emparg,\eta)(\bx)\le C
		\ttonde{1+t^{-kd/2}}\ttonde{1+\norm{\eta}^k_{L^1(\Omega_\eps)}}\comma\qquad t >0\comma \eta \in \Xi^\eps\comma
	\end{equation} for every $k\in \N$, from which the claim in \eqref{eq:decay-kth-moment-sip} follows (up to redefining the constants).  Indeed, for all $\bx \in \Omega_\eps^k$ and $\eta \in \Xi^\eps$, 
	\begin{equation*}
		\prod_{i=1}^k \eta(x_i)= \sum_{\ell=0}^k \sum_{\substack{\by\in \Omega_\eps^\ell\\
				\by\le \bx}} a(\bx,	\by)\, \car_{\Omega_\eps}[\by]\, D(\by,\eta)\comma
	\end{equation*}
	for some non-negative $a(\bx,\by)\le C_1=C_1(k)$. Thus,  for some $C_2=C_2(k)>0$, by \eqref{eq:duality},
	\begin{align*}
		\sup_{\bx\in \Omega_\eps^k}\E^\eps_{\nu_\eps}\quadre{\prod_{i=1}^k\eta_t^\eps(x_i)}	&\le C_2  \max_{\ell \le k}\sup_{\by \in \Omega_\eps^\ell}  \E^\eps_{\nu_\eps}\quadre{D(\by,\eta^\eps_t)}\\
		&\qquad = C_2  \max_{\ell \le k}\sup_{\by \in \Omega_\eps^\ell}  \int_{\eta\in \Xi^\eps }P^{\eps,\ell}_t D(\emparg,\eta)(\by)\, \nu_\eps(\dd \eta) \fstop
	\end{align*}
	
	Let us prove \eqref{eq:nash-D} by induction on $k\in \N$. For $k=1$, 
	\begin{align*}
		\sup_{x\in \Omega_\eps}P^\eps_t D(\emparg,\eta)(x)
		&\le \sup_{x\in \Omega_\eps} P^\eps_t\ttonde{D(\emparg,\eta)\car_{\Omega_\eps}}(x)+\sup_{x\in \Omega_\eps}P^\eps_t\ttonde{D(\emparg,\eta)\car_{\partial_e\Omega_\eps}}(x)\\
		&\qquad\le C\ttonde{1+t^{-d/2}}\norm{D(\emparg,\eta)}_{L^1(\Omega_\eps)} + \norm{\vartheta}_{\cC_b(\R^d)}\fstop
	\end{align*}
	Here, the last step is a consequence of  \eqref{eq:nash-k} with $k=1$ and $f=D(\emparg,\eta)\car_{\Omega_\eps}\in \R^{\Omega_\eps}$, $\norm{D(\emparg,\eta)\car_{\Omega_\eps}}_{L^1(\Omega_\eps)}=\norm{\eta}_{L^1(\Omega_\eps)}$ and $D(\emparg,\eta)\car_{\partial_e\Omega}\le \norm{\vartheta}_{\cC_b(\R^d)}$.

	Assume now that \eqref{eq:nash-D} holds for $k\in \N$; then,
	\begin{align*}
		\tnorm{
			P^{\eps,k+1}_t D(\emparg,\eta)}_{L^\infty(\Omega_\eps^{k+1})} &\le \tnorm{P^{\eps,k+1}_t\ttonde{D(\emparg,\eta)\car_{\Omega_\eps^{k+1}}}}_{L^\infty(\Omega_\eps^{k+1})}\\
		&\qquad	 +	\tnorm{P^{\eps,k+1}_t\ttonde{D(\emparg,\eta)\car_{\partial_e\Omega_\eps^{k+1}}}}_{L^\infty(\Omega_\eps^{k+1})}\comma\qquad t >0\fstop
	\end{align*}
	For the first term on the right-hand side above, we apply \eqref{eq:nash-k} with $f=D(\emparg,\eta)\car_{\Omega_\eps^{k+1}}\in \R^{\Omega_\eps^{k+1}}$ and use that $\tnorm{D(\emparg,\eta)\car_{\Omega_\eps^{k+1}}}_{L^1(\Omega_\eps^{k+1})}\le \norm{\eta}_{L^1(\Omega_\eps)}^{k+1}$ to obtain that
	\begin{equation*}
		\tnorm{P^{\eps,k+1}\ttonde{D(\emparg,\eta)\car_{\Omega_\eps^{k+1}}	}}_{L^\infty(\Omega_\eps^{k+1})}\le C\ttonde{1+t^{(k+1)d/2}}\ttonde{1+\norm{\eta}_{L^1(\Omega_\eps)}^{k+1}}\comma \qquad t>0\fstop
	\end{equation*}
	For the second term, recall \eqref{eq:consistency}; then, 	for all $\bx \in \Omega_\eps^{k+1}$, 
	\begin{equation*}
		D(\bx,\eta)\car_{\partial_e\Omega_\eps^k}(\bx)=\sum_{i=1}^{k+1}  D(\hat \bx_i,\eta)\, \vartheta(x_i)\, \car_{\partial_e\Omega_\eps}(x_i)\le \norm{\vartheta}_{\cC_b(\R^d)} J^{\eps,k}D(\emparg,\eta)(\bx)\fstop
	\end{equation*}
	Since $D(\emparg,\eta)$ are symmetric functions of the particles' labels, \eqref{eq:consistency} yields 
	\begin{align*}
		&\tnorm{P^{\eps,k+1}\ttonde{D(\emparg,\eta)\car_{\partial_e\Omega_\eps^{k+1}}}}_{L^\infty(\Omega_\eps^{k+1})}
		\\
		&\qquad\le \norm{\vartheta}_{\cC_b(\R^d)} \tnorm{P^{\eps,k+1}_t J^{\eps,k} D(\emparg,\eta)}_{L^\infty(\Omega_\eps^{k+1})}\\
		&\qquad = \norm{\vartheta}_{\cC_b(\R^d)} \tnorm{J^{\eps,k} P^{\eps,k}_t D(\emparg,\eta)}_{L^\infty(\Omega_\eps^{k+1})}\\
		&\qquad \le \norm{\vartheta}_{\cC_b(\R^d)}(k+1) \tnorm{P^{\eps,k}_t D(\emparg,\eta)}_{L^\infty(\Omega_\eps^k)}\comma \qquad t>0\fstop
	\end{align*}
	By  the induction hypothesis on  $t\mapsto \tnorm{P^{\eps,k}_t D(\emparg,\eta)}_{L^\infty(\Omega_\eps^k)}$,  rearranging all constants (which depend only on $\Omega, d, k+1$ and $\vartheta$) concludes the proof of the claim in \eqref{eq:decay-kth-moment-sip}.
	
	The claim in \eqref{eq:bound-l1} follows by an analogous induction argument employing \eqref{eq:consistency}, while the claim in \eqref{eq:max_principle} is an immediate consequence of duality and the Maximum Principle for the semigroups $P^{\eps,\ell}_t$ for $\ell \le k$.
\end{proof}

\subsection{Proof of Theorem \ref{t:MainHydrodynLim}}\label{sec:proof-HDL}
In what follows,  $\beta \in \R$ and $T>0$ are fixed and,  for notational convenience, we omit the specification of $\beta\in \R$;  the evaluation operator~$\Pi_\eps$ introduced in \S\ref{sss:FCLT} is omitted as well \purple{whenever no confusion may arise}.
Recall that, for  $\ttseq{\nu_\eps}_\eps$ given as in Theorem~\ref{t:MainHydrodynLim}, $\P^\eps_{\nu_\eps}$~and~$\E^\eps_{\nu_\eps}$ denote the laws and corresponding expectations of the particle system~$\eta^\eps_t$ introduced in \S\ref{sec:particle_systems} with $\eta^\eps_0$ distributed as $\nu_\eps$.
Further,  let $\cF^\eps_t=\ttseq{\cF^\eps_t}_{t\ge 0}$ denote the natural filtration associated to the process $\eta^\eps_t$.

In this section we prove that, for all $n\in \N$, $0\leq t_1\leq \ldots \leq t_n $ and $f_1,\ldots, f_n \in \cC^\bd$, 
	\begin{equation}
		\label{eq:HDL-fdd}
		\lim_{\eps\downarrow0}\P^\eps_{\nu_\eps}\tonde{
			\abs{
				\ttonde{
					\ttscalar{\cX^\eps_{t_1}}{f_1}
					,\dotsc,
					\ttscalar{\cX^\eps_{t_n}}{f_n}	
				}
				-\ttonde{
					\ttscalar{u_{t_1}}{f_1},
					\dotsc,\ttscalar{u_{t_n}}{f_n}
				}
			}	
			\ge\delta
		}=0\comma\quad \delta>0\fstop
	\end{equation}
Key facts in our proof are the convergences in Theorems~\ref{t:MainSemigroups} and~\ref{th:harmonic_conv}, as well as the duality relations in \S\ref{s:duality}.
In order to illustrate this, note that  \eqref{eq:duality_function} and \eqref{eq:duality} with $k=1$ yield	
\begin{equation*}
	\E^\eps_{\nu_\eps}\quadre{\eta^\eps_t(x)|\cF^\eps_s}= P^\eps_{t-s}D(\emparg,\eta^\eps_s)(x)\qquad\text{and}\qquad \E^\eps_{\nu^\eps_\stat}\quadre{\eta^\eps_t(x)}=\lim_{r\to \infty}\E^\eps_{\nu_\eps}\quadre{\eta^\eps_r(x)}= h^\eps(x)\comma
\end{equation*}
for all $0 \le s \le t$ and $x \in \Omega_\eps$.
Since $P^\eps_{t-s} h^\eps = h^\eps$ and
\begin{equation*}
	\eta^\eps_t(x)= \ttonde{\eta^\eps_t(x)-\E^\eps_{\nu_\eps}\quadre{\eta^\eps_t(x)|\cF^\eps_s}} + P^\eps_{t-s}\ttonde{D(\emparg,\eta^\eps_s)-h^\eps}(x) + h^\eps(x)\comma
\end{equation*}
we have a decomposition of  the corresponding  fields into three terms:
\begin{equation}\label{eq:3-term-decomposition}
	\begin{aligned}
		\scalar{\cX^\eps_t}{\purple{\Pi_\eps} f}&= \scalar{\cX^\eps_t-\E^\eps_{\nu_\eps}\quadre{\cX^\eps_t|\cF^\eps_s}}{\purple{\Pi_\eps}  f}\\
		&\qquad+  	\scalar{\cX^\eps_s-h^\eps\mu_\eps}{P^\eps_{t-s}\purple{\Pi_\eps} f} + 	\scalar{h^\eps\mu_\eps}{\purple{\Pi_\eps} f}\comma\qquad 0\le s \le t\comma f \in \cC^\bd\fstop
	\end{aligned}
\end{equation}
We stress that for the rewriting of the second term on the right-hand side above we crucially exploited the symmetry of $P^\eps_{t-s}$ in $L^2(\Omega_\eps)$ and the fact that  both functions \mbox{$D(\emparg,\eta)-h^\eps$} and~$\purple{\Pi_\eps} f$ on~$\overline\Omega_\eps$ identically vanish on $\partial_e\Omega_\eps$ \purple{(the second one by definition of~$\Pi_\eps$)}.

Setting $s=0$ in \eqref{eq:3-term-decomposition}, we get, for all $t \ge 0$ and $f \in \cC^\bd$,  convergence  in probability of the real-valued random variables $\ttseq{\scalar{\cX^\eps_t}{f}}_\eps$ to $$\tscalar{u_t}{f}=\tscalar{\pi_0-h^\bd\mu_\Omega}{P^\bd_t f}+\tscalar{h^\bd\mu_\Omega}{f}$$
through the following steps:
\begin{itemize}
	\item by Theorems \ref{t:MainSemigroups} and \ref{th:harmonic_conv}, $\tscalar{h^\eps\mu_\eps}{P^\eps_t\Peps f}\to \tscalar{h^\bd\mu_\Omega}{P^\bd_t f}$ and $\tscalar{h^\eps\mu_\eps}{f}\to \tscalar{h^\bd \mu_\Omega}{f}$; 
	\item by Markov inequality, the first estimate in Assumption	 \ref{as:secondII}, and Theorem \ref{t:MainSemigroups}, 
	\begin{equation*}
		\lim_{\eps\downarrow 0}\P^\eps_{\nu_\eps}\tonde{\tabs{\tscalar{\cX^\eps_0}{P^\eps_t\Peps f-\Peps P^\bd_t f}}\ge\delta}=0\comma\qquad  \delta>0\semicolon
	\end{equation*} 
	\item since $P^\bd_t:\cC^\bd\to \cC^\bd$, Assumption \ref{as:WLLN} on $\ttseq{\nu_\eps}_\eps$ ensures that
	\begin{equation*}
		\lim_{\eps\downarrow 0}\P^\eps_{\nu_\eps}\tonde{\tabs{\tscalar{\cX^\eps_0}{\Peps P^\bd_t f}-\tscalar{\pi_0}{P^\bd_t f}}\ge\delta}=0\comma\qquad \delta>0\semicolon
	\end{equation*}
	\item   Assumption \ref{as:secondII} and Lemma \ref{cor:variance-ub} (cf.\ \S\ref{sss:proof-variance} below) yield
	\begin{equation}\label{eq:variance_vanishes}
		\lim_{\eps\downarrow 0}\E^\eps_{\nu_\eps}\quadre{\tonde{\tscalar{\cX^\eps_t-\E^\eps_{\nu_\eps}\quadre{\cX^\eps_t|\cF^\eps_0}}{f}}^2}=0\fstop
	\end{equation}
\end{itemize}
Since convergence in probability of  marginals implies convergence in probability of finite-dimensional distributions, this would prove~\eqref{eq:HDL-fdd}, thus, Theorem~\ref{t:MainHydrodynLim}.

	\subsubsection{Proof of~\eqref{eq:variance_vanishes}}\label{sss:proof-variance}
	In this section we prove~\eqref{eq:variance_vanishes} in two steps (Lems.~\ref{pr:variance} and~\ref{cor:variance-ub}), thus, concluding the proof of Theorem~\ref{t:MainHydrodynLim}. 
	
	First, recall the definition of the infinitesimal generator~$\cL^\eps$ in~\eqref{eq:gen} and that of duality functions in \eqref{eq:duality_function}. Define, for all $\eta\in \Xi^\eps, 
	$\begin{equation}\label{eq:Veps}
		\cV^\eps\ttonde{(x,y),\eta}\eqdef D(x,\eta)+D(y,\eta)+2\sigma D\ttonde{(x,y),\eta}\comma\qquad (x,y) \in \overline\Omega_\eps^{k=2} \fstop
	\end{equation}
	The functions in \eqref{eq:Veps} show up when computing the second moments of the empirical density fields. Indeed, by expanding
	\[
	\mcL^\eps(\scalar{\mcX^\eps_t}{f})^2-2\,\scalar{\mcX^\eps_t}{f}\,\mcL^\eps \scalar{\mcX^\eps_t}{f} \comma
	\] 
	a simple manipulation yields, for all $\eta\in\Xi^\eps$ and $f \in \cC^\bd$,
	\begin{equation}\label{eq:variance}
		\E^\eps_\eta\quadre{\tonde{\scalar{\cX^\eps_t-\E^\eps_\eta\quadre{\cX^\eps_t}}{f}}^2} = \int_0^t \cG^\eps_{r,t}(f,\eta)\,\dd r\comma\qquad t \ge 0\comma
	\end{equation}
	where
	\begin{equation*}
		\begin{aligned}
			\cG^\eps_{r,t}(f,\eta)&\eqdef  \frac{\eps^d}{2} \sum_{\substack{x,y\in \Omega_\eps\\x\sim y}} \tonde{\nabla_{x,y}^\eps P^\eps_{t-r}\Peps f }^2 \ttonde{\eps^d\,\E^\eps_\eta\quadre{\cV^\eps((x,y),\eta^\eps_r)}}\\
			&\quad 	+ \eps^{d-1}\sum_{x\in \partial\Omega_\eps} \eps^{\beta-1}  \tonde{P^\eps_{t-r}\Peps f(x)}^2 \sum_{\substack{z\in \partial_e\Omega_\eps\\ z\sim x}}\axz \ttonde{\eps^d\,\E^\eps_\eta\quadre{\cV^\eps((x,z),\eta^\eps_r)}}	\fstop
		\end{aligned}
	\end{equation*}
Note that, when $\sigma = -1$, the deterministic upper bound $\cV^\eps\le 2$ holds, while no such a bound exists when $\sigma =1$.

\begin{lemma}\label{pr:variance}
	Let $\ttseq{\nu_\eps}_\eps$ be a family of probability measures on $\ttseq{\Xi_\eps}_\eps$. Then, for every $f \in \cC^\bd$, there exists $C=C(\Omega,\vartheta,f)>0$ such that
	\begin{equation}\label{eq:claim1}
		\begin{aligned}
			&\E^\eps_{\nu_\eps}\quadre{\tonde{\scalar{\cX^\eps_t-\E^\eps_{\nu_\eps}\quadre{\cX^\eps_t\vert \cF^\eps_s}}{f}}^2}\\
			&\quad\le C\tonde{\purple{\phi^{\eps,k=2}_{s,r}} \tonde{	\norm{\Peps f-P^\eps_{r-s}\Peps f}_{L^\infty(\Omega_\eps)}	 + \purple{\phi^{\eps,k=2}_{r,t}}}}  \end{aligned}
	\end{equation}
hold for all $0\le s \le r \le  t$, where  $\phi^{\eps,k}_{u,u}\eqdef 0$ and
	\begin{equation}\label{eq:phi}
		\phi^{\eps,k}_{u,v}	\eqdef \eps^{kd/2}\tonde{1+ \sup_{\tau\in [u,v]}\sup_{\bx\in \Omega_\eps^k}  \E^\eps_{\nu_\eps}\quadre{\prod_{i=1}^k\eta^\eps_\tau(x_i)}}\comma \qquad 0 \le u <v  \fstop
	\end{equation}\end{lemma}
	\begin{proof}
		Splitting the time integrals in \eqref{eq:variance} and recalling the definition of the Dirichlet form in \eqref{eq:dirichlet-form-RW}, H\"older inequality  yields 
		\begin{equation}\label{eq:int_chi}
			\begin{aligned}
				&\E^\eps_{\nu_\eps}\quadre{\tonde{\scalar{\cX^\eps_t-\E^\eps_{\nu_\eps}\quadre{\cX^\eps_t\vert \cF^\eps_s}}{f}}^2}\\
				&\quad\le  \xi^{\eps,k=2}_{s,r} \int_0^{r-s} \mcE^\eps\ttonde{P^\eps_{t-s-u}\Peps f}\dd u	+	\xi^{\eps,k=2}_{r,t}\int_{r-s}^{t-s} \mcE^\eps\ttonde{P^\eps_{t-s-u}\Peps f}\dd u\comma
			\end{aligned}
		\end{equation}
		where
		\begin{equation*}
			\xi^{\eps,k=2}_{u,v}\eqdef \eps^d	\sup_{\tau\in [u,v]}\sup_{\bx\in \overline\Omega_\eps^{k=2}}  \E^\eps_{\nu_\eps}\quadre{\cV^\eps(\bx,\eta^\eps_\tau)}\fstop
		\end{equation*}
		\purple{Confronting with $\phi^{\eps,k=2}_{u,v}$ given in \eqref{eq:phi}, we have (up to a universal constant depending only on $\Omega$ and $\vartheta$) 
		\begin{equation}
			\xi^{\eps,k=2}_{u,v}\lesssim\phi^{\eps,k=2}_{u,v}\comma\qquad 0\le u<v\fstop
		\end{equation}}
		Integrating over time the terms in the right-hand side of~\eqref{eq:int_chi},
					we get the claim in~\eqref{eq:claim1}, thus, concluding the proof of the proposition.
		\end{proof}
	
		\begin{remark}[Properties of $\phi^{\eps,k}_{}$ in \eqref{eq:phi}]\label{rem:phi}
		If the family~$\ttseq{\nu_\eps}_\eps$ satisfies Assumption~\ref{as:secondII}, \purple{Corollary}~\ref{pr:nash} ensures that, for all  $t>0$ and $r\in (0,t)$,
		\begin{equation}
			\sup_{\eps} \sup_{0\le u\le v }	 \phi^{\eps,k=2}_{u,v}<\infty\qquad \text{and}\qquad
			\lim_{\eps\downarrow 0} \phi^{\eps,k=2}_{r,t}=0\fstop	
		\end{equation}
	\end{remark}

	By combining the above result, the following triangle inequality
	\begin{equation}\label{eq:triangle-semigroups}
		\tnorm{\Peps f-P^\eps_{r-s}\Peps f}_{L^\infty(\Omega_\eps)}\le \tnorm{f-P^\bd_{r-s}f}_{\cC^\bd}+\sup_{\tau\in [0,\purple{t}]}\tnorm{P^\eps_\tau \Peps f-\Peps P^\bd_\tau f}_{L^\infty(\Omega_\eps)}\comma
	\end{equation}
	and the convergence of semigroups in Theorem \ref{t:MainSemigroups}, we derive the following  lemma.
	\begin{lemma}\label{cor:variance-ub}
		Let $\ttseq{\nu_\eps}_\eps$ satisfy Assumption~\ref{as:secondII}. Then \eqref{eq:variance_vanishes} holds for all $t > 0$ and~$f \in \cC^\bd$.  
		\begin{proof}
			Fix $t>0$ and $f\in \cC^\bd$.
						Then,  by Remark \ref{rem:phi},  \eqref{eq:claim1} in Lemma \ref{pr:variance} with  $s=0$ and $r\in (0,t)$, \eqref{eq:triangle-semigroups} and Theorem \ref{t:MainSemigroups}, 
			\begin{equation}
				\lim_{\eps\downarrow 0}\E^\eps_{\nu_\eps}\quadre{\tonde{\tscalar{\cX^\eps_t-\E^\eps_{\nu_\eps}\quadre{\cX^\eps_t|\cF^\eps_0}}{f}}^2}\le C  \tonde{\sup_\eps \sup_{u\ge 0}\phi^{\eps,k=2}_{0,u} }\tnorm{f-P^\bd_r f}_{\cC^\bd}\fstop
			\end{equation}
			By the strong continuity of the semigroup $P^\bd_t$ on $\cC^\bd$, letting~$r\to 0$  concludes the proof.
		\end{proof}
	\end{lemma}

	\subsection{Proof of Theorem \ref{th:hydrostatic}}\label{sec:hydrostatic}
	This section is devoted to the proof of the hydrostatic limit. 
	
	We recall that, by definition of $\nu^{\eps,\beta}_\stat$ and duality (see \S\ref{sec:ness} and \eqref{eq:duality}, respectively), 
	\begin{equation}
		h^{\eps,\beta}(x)=\E_{\nu^{\eps,\beta}_\stat}\quadre{\eta(x)}\comma\qquad x \in \Omega_\eps\fstop
	\end{equation} 
	Thus, by the triangle inequality, the convergence of the discrete harmonic profiles in Theorem~\ref{th:harmonic_conv} and Markov's inequality, Theorem \ref{th:hydrostatic} follows from
	\begin{equation}\label{eq:hydrostatic_L2}
		\lim_{\eps\downarrow 0} \E_{\nu^{\eps,\beta}_\stat}\quadre{\tonde{\eps^d \sum_{x\in \Omega_\eps}\tonde{\eta(x)-h^{\eps,\beta}(x)}f(x)}^2}=0\fstop
	\end{equation}
	The above identity is a consequence of Lemma \ref{lemma:hydrostatic_L2} below.
	\begin{lemma}\label{lemma:hydrostatic_L2}
		There exists $C=C(\vartheta)>0$ such that, for every $f \in \R^{\Omega_\eps}$, 
		\begin{equation*}
			\E_{\nu^{\eps,\beta}_\stat}\quadre{\tonde{\eps^d \sum_{x\in \Omega_\eps}\tonde{\eta(x)-h^{\eps,\beta}(x)}f(x)}^2}\leq C\,\eps^d\, \norm{f}_{L^\infty(\Omega_\eps)}\norm{f}_{L^1(\Omega_\eps)}\fstop
		\end{equation*}	
		\begin{proof}
			Recall the definition of the  semigroups $P^{\eps,\beta,k}_t$ and $\ttonde{P^{\eps,\beta}_t}^{\otimes k}$ from \S\ref{sss:k-particle-dynamics} and \S\ref{sss:ultracontractivity}, respectively.
			\purple{
			Note that 
			\begin{align*}
				\lim_{t\to \infty} P^{\eps,\beta,k=2}_t D(\emparg,\eta)
					=
				\lim_{t\to \infty}
					P^{\eps,\beta,k=2}_t \ttonde{h^{\eps,\beta}\otimes h^{\eps,\beta}}
			\end{align*}
			on $\overline \Omega_\varepsilon^{k=2}$, since  we have $D((x,y),\eta) = \ttonde{h^{\eps,\beta} \otimes h^{\eps,\beta}}(x,y) = \vartheta(x)\vartheta(y)$ for $x,y \in \partial_e \Omega_\eps$.
			}
			By  duality (see \eqref{eq:duality_function}--\eqref{eq:duality} as well as \eqref{eq:omega-bx}) with $k=1$ and $k=2$, and stationarity of $\nu^{\eps,\beta}_\stat$, we obtain that
			\begin{align*}\nonumber
				&\E_{\nu^{\eps,\beta}_\stat}\quadre{\tonde{\eps^d \sum_{x\in \Omega_\eps}\tonde{\eta(x)-h^{\eps,\beta}(x)}f(x)}^2}=	 \eps^d \tonde{\eps^d \sum_{x\in \Omega_\purple{\eps}} h^{\eps,\beta}(x)\ttonde{1+\sigma h^{\eps,\beta}(x)} f(x)^2}\\
								&\qquad + \lim_{t\to \infty} \tscalar{\ttonde{P^{\eps,\beta,k=2}_t -P^{\eps,\beta}_t\otimes P^{\eps,\beta}_t}\ttonde{h^{\eps,\beta}\otimes h^{\eps,\beta}}}{f\otimes f}_{L^2(\Omega_\eps^{k=2})}
				\fstop
							\end{align*}
			Since $\sup_{\eps}\norm{h^{\eps,\beta}}_{L^\infty(\Omega_\eps)}\leq \norm{\vartheta}_{\cC_b(\R^d)}<\infty$, the first term on the right-hand side is smaller than $C\eps^d\norm{f}_{L^2(\Omega_\eps)}^2\le C\eps^d \norm{f}_{L^\infty(\Omega_\eps)}\norm{f}_{L^1(\Omega_\eps)}$, for some $C=C(\vartheta)>0$. As for the second term,  by the integration by parts formula (here:~$A^{\eps,\beta}\oplus A^{\eps,\beta}\eqdef A^{\eps,\beta}\otimes \car + \car \otimes A^{\eps,\beta}$)
			\begin{equation*}
				\begin{aligned}
					&\ttonde{P^{\eps,\beta,k=2}_t -P^{\eps,\beta}_t\otimes P^{\eps,\beta}_t}\ttonde{h^{\eps,\beta}\otimes h^{\eps,\beta}}\\				
					&\quad= \int_0^t P^{\eps,\beta,k=2}_{t-s}\ttonde{A^{\eps,\beta,k=2}-A^{\eps,\beta}\oplus A^{\eps,\beta}}\ttonde{P^{\eps,\beta}_s h^{\eps,\beta}\otimes P^{\eps,\beta}_sh^{\eps,\beta}}\, \dd s
										\comma			\end{aligned}
			\end{equation*}
			and $P^{\eps,\beta}_s h^{\eps,\beta}=h^{\eps,\beta}$, we obtain that
			\begin{equation}\label{eq:Reps}
				\begin{aligned}
					\cR^{\eps,\beta}\eqdef&\	\lim_{t\to \infty} \tscalar{\ttonde{P^{\eps,\beta,k=2}_t -P^{\eps,\beta}_t\otimes P^{\eps,\beta}_t}\ttonde{h^{\eps,\beta}\otimes h^{\eps,\beta}}}{f\otimes f}_{L^2(\Omega_\eps^{k=2})}\\
					=&\ \sigma\eps^d \tonde{ \frac{\eps^d}{2} \sum_{\substack{x,y\in \Omega_\eps\\ x \sim y}}\ttonde{\nabla^\eps_{x,y}h^{\eps,\beta}}^2 \tonde{ \int_0^\infty 2\, P^{\eps,\beta,k=2}_t(f\otimes f)(x,y)\, \dd t}}\fstop
				\end{aligned}
			\end{equation}
			In the above identity, we used the explicit form of $A^{\eps,\beta,k=2}-A^{\eps,\beta}\oplus A^{\eps,\beta}$  (see \eqref{eq:generator_dual}), the fact that both $(A^{\eps,\beta,k=2}-A^{\eps,\beta}\oplus A^{\eps,\beta})(h^{\eps,\beta}\otimes h^{\eps,\beta})$ and $f\otimes f$ identically vanish on $\partial_e\Omega_\eps^{k=2}$ and
			the symmetry of $P^{\eps,\beta,k=2}_{t-s}$ in $L^2(\Omega_\eps^{k=2})$.
			Since $\sigma \in \set{-1,1}$, 
			\begin{equation}\label{eq:171}
				\cR^{\eps,\beta}\le
				\eps^d  \tonde{ \frac{\eps^d}{2} \sum_{\substack{x,y\in \Omega_\eps\\ x \sim y}}\ttonde{\nabla^\eps_{x,y}h^{\eps,\beta}}^2 \tonde{ \int_0^\infty 2 P^{\eps,\beta,k=2}_t(|f|\otimes |f|)(x,y)\, \dd t}}\fstop
			\end{equation}
			Furthermore, for all $x, y \in \overline\Omega_\eps$ and $t\ge0$, 
			\begin{equation}\label{eq:super}
				\begin{aligned}
					2	P^{\eps,\beta,k=2}_t(|f|\otimes |f|)(x,y)\leq&\ \norm{f}_{L^\infty(\Omega_\eps)}P^{\eps,\beta,k=2}_t(|f|\otimes \car_{\overline \Omega_\eps}+\car_{\overline\Omega_\eps}\otimes |f|)(x,y)\\
					=&\ \norm{f}_{L^\infty(\Omega_\eps)}\ttonde{P^{\eps,\beta}_t|f|(x)+P^{\eps,\beta}_t|f|(y)}\comma
				\end{aligned}
			\end{equation}
			where the last identity follows from \eqref{eq:consistency}. Now, define	 \begin{equation}
				g^{\eps,\beta}(x)\eqdef
								\int_0^\infty P^{\eps,\beta}_t|f|(x)\, \dd t \comma\qquad x\in \overline\Omega_\eps\comma
			\end{equation}
			and let	 $\varGamma^{\eps,\beta=\infty}(h^{\eps,\beta})$ denote  the \emph{carr\'e du champ} associated to $A^{\eps,\beta=\infty}$ acting on $h^{\eps,\beta}$ (cf.~\eqref{eq:carre-def}), viz.,	  $\varGamma^{\eps,\beta=\infty}(h^{\eps,\beta})\eqdef 0$ on $\partial_e\Omega_\eps$ and
			\begin{equation}
				\begin{aligned}
					2\varGamma^{\eps,\beta=\infty}(h^{\eps,\beta})(x)\eqdef&\ A^{\eps,+\infty}(h^{\eps,\beta})^2(x)-2 h^{\eps,\beta}(x)A^{\eps,+\infty}h^{\eps,\beta}(x)\\
					=&\ \sum_{\substack{y\in \Omega_\eps\\y\sim x}} \tonde{\nabla^\eps_{x,y}h^{\eps,\beta}}^2\comma
				\end{aligned} \quad x \in \Omega_\eps\fstop
			\end{equation} 
			Hence, the inequality in \eqref{eq:super} yields
			\begin{equation}\label{eq:id2}
				\cR^{\eps,\beta}\le \eps^d  \norm{f}_{L^\infty(\Omega_\eps)} \tonde{\eps^d \sum_{x\in \Omega_\eps} g^{\eps,\beta}(x)\, 2\varGamma^{\eps,\beta=\infty}(h^{\eps,\beta})(x)}\fstop
			\end{equation}
			Since $A^{\eps,\beta}h^{\eps,\beta}\equiv 0$, we have that, for all~$x \in \Omega_\eps$,
			\begin{align*}
				&2\varGamma^{\eps,\beta=\infty}(h^{\eps,\beta})(x) \\
				&\quad= (A^{\eps,\beta=\infty}-A^{\eps,\beta})(h^{\eps,\beta})^2(x)-2 h^{\eps,\beta}(x)\ttonde{A^{\eps,\beta=\infty}-A^{\eps,\beta}} h^{\eps,\beta}(x)+ A^{\eps,\beta}(h^{\eps,\beta})^2(x)\\
				&\quad= -\car_{\partial \Omega_\eps}(x)\, \eps^{\beta-2}\sum_{\substack{z\in \partial_e\Omega_\eps\\ z\sim x}}\axz  \tonde{\vartheta(z)-h^{\eps,\beta}(x)}^2+ A^{\eps,\beta}(h^{\eps,\beta})^2(x) \fstop
			\end{align*}
			Plugging the above identity into  \eqref{eq:id2}, we obtain
			\begin{align*}
				\cR^{\eps,\beta}&\le -\eps^d \norm{f}_{L^\infty(\Omega_\eps)}\tonde{\eps^{d-1} \sum_{x\in \partial\Omega_\eps} g^{\eps,\beta}(x)\tonde{\eps^{\beta-1}\sum_{\substack{z\in \partial_e\Omega_\eps\\z\sim x}}\axz\tonde{\vartheta(z)-h^{\eps,\beta}(x)}^2}}\\
				&\qquad+ \eps^d \norm{f}_{L^\infty(\Omega_\eps)} \int_0^\infty\tonde{\eps^d \sum_{x\in \Omega_\eps} P^{\eps,\beta}_t|f|(x)\, A^{\eps,\beta}(h^{\eps,\beta})^2(x)} \dd t\\
				&\qquad\le  \eps^d \norm{f}_{L^\infty(\Omega_\eps)}	\int_0^\infty\tonde{\eps^d \sum_{x\in \Omega_\eps} P^{\eps,\beta}_t|f|(x)\, A^{\eps,\beta}(h^{\eps,\beta})^2(x)} \dd t \comma	
							\end{align*}
			where we estimated the first term on the right-hand side above by zero.
			Moreover, since both~$\abs{f}$ and $A^{\eps,\beta}(h^{\eps,\beta})^2$ are zero on $\partial_e\Omega_\eps$, by symmetry of $P^{\eps,\beta}_t$ on $L^2(\Omega_\eps)$, we further have that
			\begin{align*}
				\cR^{\eps,\beta}&\le  \eps^d  \norm{f}_{L^\infty(\Omega_\eps)}\int_0^\infty \tonde{\eps^d \sum_{x\in \Omega_\eps} |f(x)|\, P^{\eps,\beta}_tA^{\eps,\beta}(h^{\eps,\beta})^2(x)} \dd t
				\\
				&\qquad=\eps^d  \norm{f}_{L^\infty(\Omega_\eps)} \tonde{\eps^d \sum_{x\in \Omega_\eps}|f(x)|\int_0^\infty \frac{\dd}{\dd t}\, P^{\eps,\beta}_t(h^{\eps,\beta})^2(x)\, \dd t}
				\\
				&\qquad=\eps^d \norm{f}_{L^\infty(\Omega_\eps)}\tonde{ \eps^d \sum_{x\in \Omega_\eps} |f(x)| \lim_{t\to \infty}\tonde{P^{\eps,\beta}_t(h^{\eps,\beta})^2(x)-(P^{\eps,\beta}_th^{\eps,\beta}(x))^2}}\\
				&\qquad\le\eps^d \norm{\vartheta}_{\cC_b(\R^d)}^2 \norm{f}_{L^\infty(\Omega_\eps)}\norm{f}_{L^1(\Omega_\eps)}\fstop
			\end{align*}
			This concludes the proof of the lemma.
		\end{proof}
	\end{lemma}

\subsection{Proof of Theorem~\ref{t:LocalEquilibrium}}\label{sec:hk-conv}
Let us define,  for all $k \in \N$ and $\bx = (x_1,\ldots, x_k) \in \overline\Omega_\eps^k$,
\begin{equation}\label{eq:h-eps-k}
	h^{\eps,\otimes k}(\bx)\eqdef (h^\eps)^{\otimes k}(\bx)\qquad \text{and}\qquad	h^{\eps,k}(\bx)\eqdef \lim_{t\to \infty} P^{\eps,k}_th^{\eps,\otimes k}(\bx)\comma 
\end{equation}
or, alternatively, in terms of the duality functions in \eqref{eq:duality_function},
\begin{equation}\label{eq:h-eps-k-duality}
	h^{\eps,\otimes k}(\bx)= \prod_{i=1}^k \E_{\nu^\eps_\stat}\quadre{D(x_i,\eta)}\qquad \text{and}\qquad h^{\eps,k}(\bx)= \E_{\nu^\eps_\stat}\quadre{D(\bx,\eta)}\fstop
\end{equation}
Note that,	in general, $h^{\eps,\otimes k}$ and $h^{\eps,k}$  do not coincide. 

Before presenting the proof of Theorem~\ref{t:LocalEquilibrium}, we derive a corollary to be employed later in the proof of Theorem~\ref{th:fluctuations-stat} (\S\ref{sec:proof-flu}). In what follows, keeping the analogy with \eqref{eq:EKConvergence},  we write
\begin{equation}
	g_\eps\to g\qquad \text{if}\qquad 	g_\eps \in L^\infty(\overline\Omega_\eps^k)\comma g \in \cC(\overline\Omega)^{\otimes k}\comma \quad \lim_{\eps\downarrow 0}\tnorm{g_\eps-\Pi_\eps^{\otimes k} g}_{L^\infty(\Omega_\eps^k)}=0\fstop
\end{equation}

\begin{corollary}[Cf.\ Thm.~\ref{th:harmonic_conv}]\label{pr:harmonics_conv-k} $h^{\eps,k}\to h^{\bd,k}\eqdef (h^\bd)^{\otimes k}$ for every $k\in \N$.
	\begin{proof}
		By triangle inequality and since~$h^{\eps,\otimes k}\to h^{\bd,k}$ (Thm.\ \ref{th:harmonic_conv}), it suffices to show
			\begin{equation}\label{eq:hk-conv}
			\lim_{\eps\downarrow 0}\tnorm{h^{\eps,k}-h^{\eps,\otimes k}}_{L^\infty(\Omega_\eps^k)}=0\comma\qquad k \in \N\comma
		\end{equation}
	which is precisely the statement of Theorem~\ref{t:LocalEquilibrium}.
	\end{proof}
\end{corollary} 

\rosso{\begin{lemma}
For every~$k\in\N$ there exists $C=C(\Omega,\vartheta,k)>0$ such that 
		\begin{equation}\label{eq:hk-conv1.3}
			\tnorm{h^{\eps,k}-h^{\eps,\otimes k}}_{L^1(\Omega_\eps^k)}\le C\tonde{\frac{\eps^{2d}}{2}\sum_{\substack{x,y\in\Omega_\eps\\ x\sim y}} \ttonde{\nabla^\eps_{x,y}h^\eps}^2\int_0^\infty 2 P^{\eps,k=2}_t\car_{\Omega_\eps^{k=2}}(x,y)\,\dd t	} \fstop
		\end{equation}
\begin{proof}
The ideas behind most  of the steps are all already contained in the proof of Lemma~\ref{lemma:hydrostatic_L2}.
We provide the full argumentation for completeness.
Since the $k$-particle dual system differs from a system of~$k$ independent dual random walks only when pairs of particles are located on nearest-neighboring sites on~$\Omega_\eps$, for $\mathbf x\eqdef (x_1,\ldots, x_k)\in \overline\Omega_\eps^k$ we have
	\begin{align*}
		\tonde{A^{\eps,k}-A^{\eps,\otimes k}}f^{\otimes k}(\mathbf x)=\sigma\sum_{\substack{i,j=1\\i< j}}^k	\car_{x_i\sim x_j\in \Omega_\eps}\eps^{-2}\tonde{f(x_i)-f(x_j)}^2
		\prod_{k\neq i, j} f(x_k)	
			\fstop
	\end{align*}
In particular, the above expression is either non-positive ($\sigma=-1$), or non-negative ($\sigma=1$). Furthermore, 	since $P^\eps_s h^\eps=h^\eps$, 
	\begin{align}
	\nonumber
	&\eps^{kd}\sum_{\mathbf x\in \Omega_\eps^k}\abs{	\tonde{P^{\eps,k}_t-P^{\eps,\otimes k}_t}h^{\eps,\otimes k}(\mathbf x)}\car_{\Omega_\eps}[\mathbf x]
	\\
	\nonumber
	&\qquad 	=\eps^{kd}\sum_{\mathbf x\in \Omega_\eps^k}\abs{ \int_0^t P^{\eps,k}_{t-s}\tonde{A^{\eps,k}-A^{\eps,\otimes k}}P^{\eps,\otimes k}_s h^{\eps,\otimes k}(\mathbf x)\, {\rm d}s} \car_{\Omega_\eps}[\mathbf x]
	\\
	\nonumber
		&\qquad 	=\eps^{kd}\sum_{\mathbf x\in \Omega_\eps^k}\abs{ \int_0^t P^{\eps,k}_{t-s}\tonde{A^{\eps,k}-A^{\eps,\otimes k}} h^{\eps,\otimes k}(\mathbf x)\, {\rm d}s} \car_{\Omega_\eps}[\mathbf x]
		\\
		\label{eq:l:AuxiliaryConsistency:1}
		&\qquad \le c\eps^{kd}\sum_{\mathbf x\in \Omega_\eps^k} \tonde{\int_0^t \sum_{\mathbf y\in \overline\Omega_\eps^k} p^{\eps,k}_s(\mathbf x,\mathbf y)\, \sum_{\substack{i,j=1\\i<j} }^k\car_{y_i\sim y_j\in \Omega_\eps}\ttonde{\nabla^\eps_{y_i,y_j}h^\eps}^2 {\rm d}s }\car_{\Omega_\eps}[\mathbf x] \comma
	\end{align}
where $c=c(k,f) \in \R_+$.
Now, set
\begin{align*}
	\varphi(x,y)\eqdef \car_{x\sim y\in \Omega_\eps}\tonde{\nabla^\eps_{x,y}\,h^\eps}^2\comma\qquad x, y \in \overline\Omega_\eps\comma
\end{align*}
and observe that 
\begin{equation}\label{eq:l:AuxiliaryConsistency:2}
	 \sum_{\substack{i,j=1\\i<j} }^k\car_{y_i\sim y_j\in \Omega_\eps}\ttonde{\nabla^\eps_{y_i,y_j}h^\eps}^2 = C\,  J^{\eps,k-1} J^{\eps,k-2}\cdots J^{\eps,2}\varphi(\mathbf y)\comma\qquad 	\mathbf y\in \overline \Omega_\eps^k\comma
\end{equation}
where $C=C(k)>0$ is a universal combinatorial factor.
Combining~\eqref{eq:l:AuxiliaryConsistency:1} and~\eqref{eq:l:AuxiliaryConsistency:2} and applying $k-2$ times the consistency property~\eqref{eq:consistency}, we get
\begin{align*}
&	\eps^{kd}\sum_{\mathbf x\in \Omega_\eps^k}\abs{	\tonde{P^{\eps,k}_t-P^{\eps,\otimes k}_t}h^{\eps,\otimes k}(\mathbf x)} \car_{\Omega_\eps}[\mathbf x]\\
&\qquad\lesssim\eps^{kd}\sum_{\mathbf x \in \Omega_\eps^k} \tonde{P^{\eps,k}_s J^{\eps,k-1} J^{\eps,k-2}\cdots J^{\eps,2}\varphi(\mathbf x)}\car_{\Omega_\eps}[\mathbf x]\\
&\qquad
	 =\eps^{kd}\sum_{\mathbf x \in \Omega_\eps^k} \tonde{J^{\eps,k-1}P^{\eps,k-1}_s J^{\eps,k-2}\cdots J^{\eps,2}\varphi(\mathbf x)}\car_{\Omega_\eps}[\mathbf x]\\
	&\qquad = \eps^{kd}\sum_{\mathbf x\in \Omega_\eps^k} \tonde{J^{\eps,k-1}J^{\eps,k-2}\cdots J^{\eps,2} P^{\eps,2}_s \varphi(\mathbf x)}\car_{\Omega_\eps}[\mathbf x]\lesssim \eps^{2d} \sum_{x,y \in \Omega_\eps} P^{\eps,2}_s\varphi(x,y)\, \car_{\Omega_\eps}[(x,y)] \comma
\end{align*}
where the last estimate is a consequence of the definition of the $J^\eps$-operators and the fact that~$\mu_\eps(\Omega_\eps)=O(1)$. The desired estimate now follows by symmetry of $P^{\eps,2}_s$.
\end{proof}
\end{lemma}}

\begin{proof}[Proof of Theorem~\ref{t:LocalEquilibrium}]
		We prove~\eqref{eq:hk-conv} 	 by induction on $k\in \N$. The case $k=1$ is Theorem \ref{th:harmonic_conv}; thus, let $k\ge 2$ and assume  \eqref{eq:hk-conv}  for all $\ell < k$.
		
		Fix $t>0$. Since $P^{\eps,k}_t h^{\eps,k}=h^{\eps,k}$,  by triangle inequality, 
		\begin{equation}\label{eq:hk-conv-two-terms}
			\begin{aligned}
				\tnorm{h^{\eps,k}-h^{\eps,\otimes k}}_{L^\infty(\Omega^k_\eps)}\leq&\ 	\tnorm{P^{\eps,k}_t\ttonde{h^{\eps,k}-h^{\eps,\otimes k}}}_{L^\infty(\Omega^k_\eps)}
				\\
				&\qquad+	\tnorm{P^{\eps,k}_th^{\eps,\otimes k}-h^{\eps,\otimes k}}_{L^\infty(\Omega^k_\eps)}\fstop
			\end{aligned}
		\end{equation}
		Note that, since  $\vartheta \in \cC_b(\R^d)$, the induction hypothesis implies $\tnorm{h^{\eps,k}-h^{\eps,\otimes k}}_{L^\infty(\partial_e\Omega_\eps^k)}\to0$ (\purple{recall that~$\partial_e\Omega_\eps^k= \overline\Omega_\eps^k\setminus \Omega_\eps^k$}).	
For the first term on the right-hand side of \eqref{eq:hk-conv-two-terms} we thus have
		\begin{equation}
			\label{eq:hk-conv1.1}
			\lim_{\eps\downarrow 0} \tnorm{P^{\eps,k}_t\ttonde{h^{\eps,k}-h^{\eps,\otimes k}}}_{L^\infty(\Omega^k_\eps)}\le \lim_{\eps\downarrow 0} \tnorm{P^{\eps,k}_t\ttonde{\car_{\Omega_\eps^k}\ttonde{h^{\eps,k}-h^{\eps,\otimes k}}}}_{L^\infty(\Omega^k_\eps)}\fstop
		\end{equation}
		By ultracontractivity of the $k$-particle semigroups (Cor.\ \ref{cor:nash-k} for $\sigma=1$, Rmk.\ \ref{rmk:nash-k-SEP} for $\sigma=-1$),
		\begin{equation}\label{eq:hk-conv1.2}
			\tnorm{P^{\eps,k}_t\ttonde{\car_{\Omega_\eps^k}\ttonde{h^{\eps,k}-h^{\eps,\otimes k}}}}_{L^\infty(\Omega^k_\eps)}\le C\ttonde{1+t^{-kd/2}}\tnorm{h^{\eps,k}-h^{\eps,\otimes k}}_{L^1(\Omega_\eps^k)}\fstop
		\end{equation}
\rosso{Using~\eqref{eq:hk-conv1.3}, we conclude from the above inequality that, for some $C=C(\Omega,\vartheta,k)>0$, 
		\begin{align*}
			&\tnorm{P^{\eps,k}_t\ttonde{\car_{\Omega_\eps^k}\ttonde{h^{\eps,k}-h^{\eps,\otimes k}}}}_{L^\infty(\Omega^k_\eps)}
			\\
			&\qquad \leq C\tonde{\frac{\eps^{2d}}{2}\sum_{\substack{x,y\in\Omega_\eps\\ x\sim y}} \ttonde{\nabla^\eps_{x,y}h^\eps}^2\int_0^\infty 2 P^{\eps,k=2}_t\car_{\Omega_\eps^{k=2}}(x,y)\,\dd t	} \comma
		\end{align*}
		where we note that the expression between parenthesis equals $\cR^{\eps,\beta}$ given in \eqref{eq:Reps} with $f=\car_{\Omega_\eps}$.}
		Then, combining \eqref{eq:hk-conv1.1}--\eqref{eq:hk-conv1.3} with the estimates carried out in the proof of Lemma \ref{lemma:hydrostatic_L2} ensures that the first term in the right-hand side of \eqref{eq:hk-conv-two-terms} vanishes in~$\eps$ for every fixed~$t>0$.

		We now show that the second term in the right-hand side of~\eqref{eq:hk-conv-two-terms} vanishes letting first~$\eps\to 0$ and then~$t\to 0$.
		 To this end, for every~$\delta>0$ and $x\in \Omega_\eps$, let $Q^\eps_\delta(x)=Q^{\eps,\beta}_\delta(x)$ be given by
		 \begin{align*}
		 Q^\eps_\delta(x)\eqdef \begin{cases} \set{y\in\Omega_\eps : \abs{x-y}<\delta} & \text{if } \beta\geq 1\\
		 \set{y\in\Omega_\eps : \abs{x-y}<\delta} \cup \set{z\in\partial_e\Omega_\eps: \purple{z\sim y \in \partial \Omega_\eps}, \abs{x-y}<\delta} & \text{if } \beta<1
		 \end{cases} \fstop
		 \end{align*}
		  For~$\bx\in \Omega_\eps^k$, further set $Q^{\eps,k}_\delta(\bx)\eqdef \prod_{i=1}^k Q^\eps_\delta(x_i)\subset \overline\Omega_\eps^k$.
		  The desired claim follows as soon as we show that 
		 \begin{equation}\label{eq:8.15}
		 	 \sup_{\bx\in \Omega_\eps^k}\sum_{\by\in  Q^{\eps,k}_\delta(\bx)} p^{\eps,k}_t(\bx,\by) \tabs{h^{\eps,\otimes k}(\by)-h^{\eps,\otimes k}(\bx)}+ \sup_{\bx\in \Omega_\eps^k}  \mbfP_\bx^{\eps,k}\tonde{ \mbfX_t^{\eps,k}\notin  	Q^{\eps,k}_\delta(\bx)}
		 \end{equation} 
	 vanishes taking the limits $\eps\to 0$, $t\to 0$ and $\delta \to 0$, in this order.
	 By definition of~$Q^{\eps,k}_\delta(\bx)$ and since~$h^\eps\to h^\bd \in \cC(\overline\Omega)$ (Thm.~\ref{th:harmonic_conv}; also recall~\eqref{eq:bd_ext} and that~$h^\Dir\vert_{\partial\Omega}=\vartheta\vert_{\partial\Omega}$), the first term in~\eqref{eq:8.15} vanishes in the limit.
	 As for the second term, we now prove that, for every $\delta>0$,
	 \begin{equation*}
	 	\lim_{t\downarrow 0}\limsup_{\eps\downarrow 0}\sup_{\bx\in\Omega_\eps^k} \mbfP_\bx^{\eps,k}\tonde{ \mbfX_t^{\eps,k}\in  \ttonde{Q^{\eps,k}_\delta(\bx)}^c\cap 	\Omega_\eps^k}+ \mbfP_\bx^{\eps,k}\tonde{ \mbfX_t^{\eps,k}\in  \ttonde{Q^{\eps,k}_\delta(\bx)}^c\cap \partial_e\Omega_\eps^k}=0\fstop
	 \end{equation*}
 
The first term above vanishes uniformly by the $k$-particle analogues  of the exit-time estimates in \eqref{eq:exit-time} for all $\beta \in \R$. The derivation of such estimates is a consequence \cite[Thm.~2.7]{bass_hsu1991} of off-diagonal estimates for   $p^{\eps,k}_t(\emparg,\emparg):\Omega_\eps^k\times \Omega_\eps^k\to[0,1]$, which in turn follow from the ultracontractivity of the $k$-particle semigroups (Cor.\  \ref{cor:nash-k} and Rmk.\ \ref{rmk:nash-k-SEP}) by means of \emph{Davies' method} (see, e.g., \cite[\S3]{carlen_upper_1986}, or \cite[\S2.4]{saloff-coste_aspects2001}).  

 \purple{As for the second term, we divide the proof into two cases. (Recall that the definition of $Q^\eps_\delta(x)$ just above \eqref{eq:8.15} changes depending on whether $\beta<1$ or $\beta\ge 1$.) Since, for $\beta<1$, 
\begin{align}	\label{eq:ancoraa}
	\set{\mbfX_t^{\eps,k}\in  \ttonde{Q^{\eps,k}_\delta(\bx)}^c\cap \partial_e\Omega_\eps^k} 
		\subset 
	\bigcup_{s < t} \set{\mbfX_s^{\eps,k}\in  \ttonde{Q^{\eps,k}_\delta(\bx)}^c\cap \Omega_\eps^k} \comma
\end{align}
the strong Markov property and  off-diagonal estimates ensure the vanishing of the second term in this case.
 If $\beta \ge 1$, by definition, $\ttonde{Q^{\eps,k}_\delta(\bx)}^c\cap \partial_e\Omega_\eps^k = \partial_e \Omega_\eps^k$. (Note that \eqref{eq:ancoraa} fails in this case). Hence, we must show that}
 \begin{equation}\label{eq:8.17}
 	\lim_{t\downarrow 0}\limsup_{\eps\downarrow 0} \sup_{\bx\in \Omega_\eps^k} \mbfP^{\eps,k}_\bx\tonde{\mbfX^{\eps,k}_t \in \partial_e\Omega_\eps^k}=0\fstop
 \end{equation}
By employing a Feynmann--Kac representation formula for the $k$-particle semigroup with parameter $\beta \ge 1$, the very same argument as in the proof of Lemma \ref{l:UnifCont} yields (recall $\eps^\infty\eqdef 0$)
\begin{equation}\label{eq:8.18}
\mbfP^{\eps,k}_\bx\tonde{\mbfX^{\eps,k}_t \in \partial_e\Omega_\eps^k}\le \eps^{\beta-1}\int_0^t \mbfE^{\eps,\beta=\infty,k}_\bx\quadre{\sum_{i=1}^k V_\eps\ttonde{(\mbfX^{\eps,\beta=\infty,k}_s)_i}}\dd s\comma\qquad
\end{equation}
where $(\mbfX^{\eps,\beta=\infty,k}_s)_i$ denotes the $i^{\textrm{th}}$ entry of $\mbfX^{\eps,\beta=\infty,k}_s\in \overline\Omega_\eps^k$, with $\mbfE^{\eps,\beta=\infty,k}_\bx$  the corresponding expectation.
By consistency \eqref{eq:consistency} and by~\eqref{eq:8.18},
\begin{equation*}
\mbfP^{\eps,k}_\bx\tonde{\mbfX^{\eps,k}_t \in \partial_e\Omega_\eps^k}\le 	C\, \eps^{\beta-1} \sup_{x\in\Omega_\eps} \int_0^t \mbfE^{\eps,\beta=\infty}_x\quadre{V_\eps(X^{\eps,\beta=\infty}_s)}\dd s\comma
\end{equation*}
for some constant $C=C(k)>0$.	By \eqref{eq:HKbdV} and $\beta \ge 1$, \eqref{eq:8.17} follows. 
\end{proof}

\section{Proof of Theorem \ref{th:fluctuations-stat}} \label{sec:proof-flu}
The main step in the proof of Theorem \ref{th:fluctuations-stat} consists in showing that, for all $f\in \cS^\bd(\Omega)$, the real-valued random variables $\tseq{\tscalar{\mcY^{\eps,\beta}_0}{f}}_\eps$ are asymptotically Gaussian, with mean zero and variance $\var_\bd(f)\eqdef \cov_\bd(f,f)$ as in \eqref{eq:covariance}, i.e., 
\begin{equation}\label{eq:asymptotic_gaussian}
	\tscalar{\cY^{\eps,\beta}_0}{f} \underset{\eps\downarrow 0} \Longrightarrow Y^{\bd,f}\sim \cN(0,\var_\bd(f))\comma\qquad f\in \cS^\bd(\Omega)\fstop
\end{equation}
Thus, by L\'evy's Continuity Theorem for characteristic functionals on the nuclear space~$\cS^\bd(\Omega)$ (see, e.g.~\cite{Bou73}), we conclude that the $\cS^\bd(\Omega)'$-valued fields $\ttseq{\cY^\eps_0}_\eps$ converge in distribution to the unique Borel probability measure on~$\cS^\bd(\Omega)'$ with characteristic functional~$e^{-\var_\bd(f)/2}$.
The existence of the latter measure is a consequence of the classical Bochner--Minlos theorem on the nuclear space~$\mcS^\bd(\Omega)'$ (see, e.g.,~\cite[Thm.~2.3.1]{kallianpur_xiong_1995}). 
Finally, establishing asymptotic Gaussianity  of all finite--dimensional distributions of $\ttseq{\cY^{\eps,\beta}}_\eps$ goes through analogous arguments dealing with multivariate random variables; we leave the details of this last part to the reader.

 Everywhere in the following, we drop the superscript~$\beta \in \R$,  omit the operator~$\Pi_\eps$, and, for every $f, g\in \R^{\Omega_\eps}$ and $\tilde f\in  \cC^\bd$ such that $\Pi_\eps \tilde f= f$, we simply write~$\scalar{g \mu_\eps}{f}$ in place of $\tscalar{g \mu_\eps}{\tilde f}$.
\purple{Recall the definition~\eqref{eq:fluctuation-fields} of the fluctuation fields~$\cY^{\eps,\beta}$}. Then, by duality \eqref{eq:duality}  and the Markov property of $\eta^\eps_t$, for all $f\in \R^{\Omega_\eps}$, 
\begin{equation}\label{eq:2.1}
	\tscalar{\cY^\eps_t}{f}= \tscalar{\cY^\eps_0}{P^\eps_t f} + \int_0^t \tscalar{\diff \cM^\eps_s}{P^\eps_{t-s}f}\comma \qquad 
	t \in \R^+_0\comma
\end{equation}
where $\cM^\eps_t$ are \emph{c\`{a}dl\`{a}g} $\cF^\eps$-adapted $\cS^\bd(\Omega)'$-valued martingales.
Now, fix $f\in \cS^\bd(\Omega)\subset \cC^\bd$, $T\in \R^+$,  $\ttseq{\tau_\eps}_\eps\subset \R^+$ and $\ttseq{f_\eps}_\eps$ such that $f_\eps \to f$ (see \eqref{eq:EKConvergence}). Then, define the following $\cF^\eps$-adapted martingales
\begin{equation}\label{eq:2.2}
	t \in [0,T]\longmapsto	\cW^\eps_{t,T}\eqdef \int_0^{\tau_\eps t} \tscalar{\diff \cM^\eps_s}{P^\eps_{\tau_\eps T-s}f_\eps} \in \mcD([0,T];\R) \comma
\end{equation}
with  predictable quadratic variations $\cA^\eps_{t,T}$ given  by (recall that~$\eta^\eps_s \sim \nu^\eps_\stat$ and \eqref{eq:Veps})
\begin{equation}\label{eq:A-eps-t}
	\begin{aligned}
		\cA^\eps_{t,T}&\eqdef\int_0^{\tau_\eps t} \frac{\eps^d}{2} \sum_{\substack{x,y\in\Omega_\eps\\ x\sim y}}\tonde{\nabla^\eps_{x,y}P^\eps_{\tau_\eps T-s}f_\eps}^2 \cV^\eps\ttonde{(x,y),\eta^\eps_s}\, \dd s\\
		&\qquad+\int_0^{\tau_\eps t} \eps^{d-1} \sum_{x\in\partial\Omega_\eps} \eps^{\beta-1}\tonde{P^\eps_{\tau_\eps T-s}f_\eps(x)}^2 \sum_{\substack{z\in \partial_e\Omega_\eps\\ z\sim x}} \axz\, \cV^\eps((x,z),\eta^\eps_s)\,\dd s\fstop
	\end{aligned}
\end{equation}
In particular, by combining \eqref{eq:2.1} and \eqref{eq:2.2},
\begin{equation}\label{eq:decomposition-flu}
	\tscalar{\cY^\eps_{\tau_\eps T}}{f_\eps}= \tscalar{\cY^\eps_0}{P^\eps_{\tau_\eps T}f_\eps} + \cW^\eps_{T,T}\comma
\end{equation}
and the two random variables on the right-hand side above are uncorrelated. Furthermore, since~$f_\eps\to f$, Lemma~\ref{lemma:hydrostatic_L2} yields
\begin{equation}\label{eq:approximation-flu}
\lim_{\eps\downarrow0}\E^\eps_{\nu^\eps_\stat}\quadre{\tonde{\scalar{\cY^\eps_{\tau_\eps T}}{f-f_\eps}}^2}=0\fstop
\end{equation}
Now, combining Corollary~\ref{c:spectral_bound_DR} and Proposition~\ref{p:ground_states}\ref{i:p:ground_states:ev} yields that~$\lambda_0^\eps\ge \underline \lambda_0 (1\wedge \eps^{\beta-1})>0$. 
This fact together with Lemma~\ref{lemma:hydrostatic_L2} ensures then that
\begin{equation}\label{eq:l2-bounds-flu}
	\E^\eps_{\nu_\stat^\eps}\quadre{\tonde{\tscalar{\cY^\eps_{\tau_\eps T}}{f_\eps}}^2}\le C\comma\qquad
	\E^\eps_{\nu_\stat^\eps}\quadre{\tonde{\tscalar{\cY^\eps_0}{P^\eps_{\tau_\eps T}f_\eps}}^2}\le  C\,	 e^{-\underline\lambda_0 (1\wedge \eps^{\beta-1})\tau_\eps T}\comma
\end{equation}
for some $C=C(\Omega,\vartheta,f)>0$. 	If, additionally, $\beta>1$ and $\tscalar{f_\eps}{\psi_0^{\eps,\beta}}_{L^2(\Omega_\eps)}=0$, then
\begin{equation}\label{eq:l2-bounds-flu2}
	\E^\eps_{\nu_\stat^\eps}\quadre{\tonde{\tscalar{\cY^\eps_0}{P^\eps_{\tau_\eps T}f_\eps}}^2}\le  C\,	 e^{-\underline\lambda_0 	\tau_\eps T}\fstop
\end{equation}

In view of \eqref{eq:decomposition-flu}--\eqref{eq:l2-bounds-flu2} and the stationarity of $\nu^\eps_\stat$, our main goal becomes that of selecting suitable $\ttseq{\tau_\eps}_\eps$ and $\ttseq{f_\eps}_\eps$ so to:
\begin{itemize}
	\item establish, for every fixed $T\in \R^+$, a limit theorem for $\ttseq{\cW^\eps_{T,T}}_\eps$;
	\item ensure that the contributions coming from the first term in the right-hand side of \eqref{eq:decomposition-flu} vanish letting~$T\to \infty$.
\end{itemize}

As for the first step, exploiting the fact that~$\tseq{\cW^\eps_{t,T}}_\eps$ is a martingale,  the Martingale Central Limit Theorem as in, e.g.,~\cite[Thm.\ 7.1.4(b), p.~339]{ethier_kurtz_1986_Markov} applies to our case:  the condition in~\cite[Eqn.~(1.16), p.~340]{ethier_kurtz_1986_Markov} holds true since the predictable quadratic variations are $\P^\eps_{\nu^\eps_\stat}$-a.s.\ continuous, while the one in~\cite[Eqn.~(1.17), p.~340]{ethier_kurtz_1986_Markov} because~$f\in \cC^\bd$ is bounded and since the particle systems register at most one-particle jump at a time.
Thus, the convergence of~$\cW^\eps_{T,T}$ reduces to a weak law of large numbers for $\cA^\eps_{t,T}$: for some  continuous function $a=a_T:[0,T]\to \R^+$, $a(0)=0$,  
\begin{equation}
\lim_{\eps\downarrow 0}\P^\eps_{\nu^\eps_\stat}\tonde{\abs{	\cA^\eps_{t,T}- a(t)}>\delta}=0\comma\qquad t \in [0,T]\comma \delta >0\fstop
\end{equation}  The  proof of the latter claim --- a specific instance of a Boltzmann--Gibbs principle --- is the content of the next section.

\subsection{Boltzmann-Gibbs principles}\label{sec:proof-boltzmann-gibbs}In this section, for all $f \in \cS^\bd(\Omega)$, we provide suitable $\ttseq{\tau_\eps}_\eps$ and $\ttseq{f_\eps}_\eps$  ensuring a weak law of large numbers for all $\cA^\eps_{t,T}$ given in \eqref{eq:A-eps-t}.
This entails replacing the cylinder functions $\cV^\eps\ttonde{(x,y),\eta}$ in \eqref{eq:A-eps-t} with their expected values (Lem.~\ref{l:QV-var}); then, by means of Theorem \ref{t:MainSemigroups} and  Corollary~\ref{pr:harmonics_conv-k}, we prove convergence of  $\E^\eps_{\nu^\eps_\stat}[\cA^\eps_{t,T}]$ (Lem.~\ref{l:QV-mean}).

\begin{lemma}\label{l:QV-mean}
For every  $f \in \cS^\bd(\Omega)$, assume the following:
	\begin{enumerate}[$(a)$]
		\item\label{i:l:QV-mean-a}  if either $\beta \le 1$, or $\beta > 1$ and $\int f\, \dd \mu_\Omega=0$,  set $\tau_\eps \eqdef 1$ and $\ttseq{f_\eps}_\eps$ be such that $f_\eps \to f$;
		\item\label{i:l:QV-mean-b} if $\beta >1$ and $f = \frac{c}{\sqrt{\mu_\Omega(\Omega)}}\car_{\Omega}$ with $c \in \R$, set $\tau_\eps\eqdef 1/\lambda_0^\eps\purple{\vee} 1$ and $f_\eps\eqdef c\,\psi_0^\eps$, see \eqref{eq:eigen_eq}.
	\end{enumerate}
	Then, for the corresponding $\cA^\eps_{t,T}$ as in \eqref{eq:A-eps-t}, 
	\begin{equation}\label{eq:mean-A-conv-eps}
		\lim_{\eps\downarrow 0} \E^\eps_{\nu^\eps_\stat}\quadre{\cA^\eps_{t,T}} =  a_\bd(t,T)\comma \qquad T\in \R^+\comma t \in [0,T]\comma
	\end{equation}
where $a_\bd(\emparg,T)\in \cC([0,T];\R^+_0)$ and $a_\bd(0,T)=0$. Furthermore, recalling \eqref{eq:covariance},
\begin{equation}\label{eq:mean-A-conv-T}
\lim_{T\to \infty}	a_\bd(T,T) = {\rm var}_\bd(f) \eqdef \cov_\bd(f,f) \fstop
\end{equation}
\end{lemma}
\begin{proof}
			By \eqref{eq:Veps} and \eqref{eq:h-eps-k-duality}, we have that
		\begin{equation}\label{eq:Veps-overline}
			\overline\cV^\eps(x,y)\eqdef	\E_{\nu^\eps_\stat}\quadre{\cV^\eps\ttonde{(x,y),\eta}}= h^\eps(x)+h^\eps(y)+2\sigma h^{\eps,k=2}(x,y)\comma\quad x, y \in \overline\Omega_\eps\fstop
		\end{equation} 
		\purple{Recall the definition~\eqref{eq:chi} of~$\chi^\bd$}.
		By  Corollary~\ref{pr:harmonics_conv-k} and the uniform continuity of $h^\bd \in \cC(\overline\Omega)$, 
		\begin{equation}\label{eq:convergence1}
			\lim_{\eps\downarrow 0}\purple{\sup_{x \sim y\in\Omega_\eps}} \abs{\overline\cV^\eps(x,y)- \chi^\bd(x)- \chi^\bd(y)}=0\fstop
		\end{equation} 
		Note that, in general, the  convergence in \eqref{eq:convergence1} is only on $\Omega_\eps^{k=2}$, not on the whole of~$\overline\Omega_\eps^{k=2}$; outside $\Omega_\eps^{k=2}$, we have instead that
		\begin{equation}\label{eq:convergence2}
			\lim_{\eps\downarrow 0}\sup_{x\in \overline\Omega_\eps}\purple{\sup_{y\in \partial_e\Omega_\eps, \, y \sim x}}\abs{\overline\cV^\eps(x,y)-h^\bd(x)\ttonde{1+\sigma \vartheta(y)}-\vartheta(y)\ttonde{1+\sigma h^\bd(x)}} =0\fstop
		\end{equation}
		As a consequence of the above uniform convergences, the uniform continuity of $\vartheta$, \eqref{eq:bd_ext}, and 
		\begin{equation}\label{eq:boundedness-l1}
			\int_0^\infty 2\, \mcE^\eps(P^\eps_r f_\eps)\,\dd r\le  \norm{f_\eps}^2_{L^2(\Omega_\eps)}\le C=C(f)\comma
		\end{equation}
		and recalling \eqref{eq:A-eps-t}, we obtain by H\"older inequality that
		\begin{equation}
			\lim_{\eps\downarrow 0}	\E^\eps_{\nu^\eps_\stat}\quadre{\cA^\eps_{t,T}} = \lim_{\eps\downarrow 0} \overline\cA^\eps_{t,T}\comma
		\end{equation}
		where, further setting $\bar f_\eps\eqdef P^\eps_{\tau_\eps(T-t)}f_\eps$,
		\begin{equation*}
			\begin{aligned}
				\overline\cA^\eps_{t,T}&\eqdef \int_0^{\tau_\eps t} \frac{\eps^d}{2}\sum_{\substack{x,y\in\Omega_\eps\\ x\sim y}} \tonde{\nabla^\eps_{x,y}P^\eps_s\bar f_\eps}^2 \ttonde{\chi^\bd(x)+\chi^\bd(y)}\,\dd s	\\
				&\qquad+\int_0^{\tau_\eps t} \eps^{d-1} \sum_{x\in\partial\Omega_\eps} \alpha_\eps(x)\, \eps^{\beta-1}\tonde{P^\eps_s\bar f_\eps(x)}^2  \ttonde{h^\bd(x)+\vartheta(x)+2\sigma h^\bd(x)\vartheta(x)}\dd s
	 \fstop
			\end{aligned}
		\end{equation*}
	Recalling \eqref{eq:carre-def} and \eqref{eq:SigmaEps}, we rewrite the expression above as
	\begin{equation}\label{eq:mean-A}
		\begin{aligned}
		\overline\cA^\eps_{t,T}&=	\int_0^{\tau_\eps t} \tscalar{2\varGamma^{\eps,\beta}(P^\eps_s\bar f_\eps)}{\chi^\bd}_{L^2(\Omega_\eps)}\dd s\\
		&\qquad + \int_0^{\tau_\eps t} \scalar{\sigma_\eps}{{\eps^{\beta-1} (P^\eps_s \bar f_\eps)^2}\ttonde{\vartheta-h^\bd}\ttonde{1+2\sigma h^\bd}}\dd s
		\end{aligned}
	\end{equation}
	
	We now divide the proof into two parts, depending on whether $\beta >1$ or $\beta \le 1$.
	
	\paragraph{Neumann boundary conditions} Fix $\beta>1$. Recall that both $h^\Neu=\av{\vartheta}_{\partial\Omega}$ and $\chi^\Neu$ are constants. Hence, 	letting $\hslash^\Neu\eqdef 1+2\sigma h^\Neu \in \R$, 
	\begin{equation*}\begin{aligned}
	&	\overline\cA^\eps_{t,T}= \chi^\Neu\int_0^{\tau_\eps t} 2 \mcE^\eps(P^\eps_s \bar f_\eps)\, \dd s + \hslash^\Neu	\int_0^{\tau_\eps t} \scalar{\sigma_\eps}{{\eps^{\beta-1} (P^\eps_s \bar f_\eps)^2}\ttonde{\vartheta-h^\bd}}\dd s\\
		&=\chi^\Neu \tonde{\tnorm{P^\eps_{\tau_\eps(T-t)}f_\eps}^2_{L^2(\Omega_\eps)}-\tnorm{P^\eps_{\tau_\eps T }f_\eps}^2_{L^2(\Omega_\eps)}}+ \hslash^\Neu\int_0^{\tau_\eps t} \scalar{\sigma_\eps}{{\eps^{\beta-1} (P^\eps_s \bar f_\eps)^2}\ttonde{\vartheta-h^\bd}}\dd s\fstop
		\end{aligned}
	\end{equation*}
If case \ref{i:l:QV-mean-a} holds, then  $f_\eps\to f$, Theorem \ref{t:MainSemigroups} and $\beta >1$ ensure that
\begin{equation}
\lim_{\eps\downarrow 0} \overline\cA^\eps_{t,T}= \chi^\Neu\tonde{\tnorm{P^\Neu_{T-t}f}^2_{L^2(\Omega)}-\tnorm{P^\Neu_Tf}^2_{L^2(\Omega)}} \fstop
\end{equation}	
If case \ref{i:l:QV-mean-b} holds, then
\begin{equation}
	\begin{aligned}
	\lim_{\eps\downarrow 0}\overline\cA^\eps_{t,T}&= \chi^\Neu\tonde{e^{-2(T-t)}\norm{f}^2_{L^2(\Omega)}-e^{-2T} \norm{f}^2_{L^2(\Omega)}} \\
	&\qquad+\hslash^\Neu e^{-2(T-t)} \lim_{\eps\downarrow 0}  \tonde{\int_0^{\tau_\eps t}\eps^{\beta-1} e^{-2\lambda_0^\eps s}\dd s} \scalar{\sigma_\eps}{(\psi_0^\eps)^2\ttonde{\vartheta-h^\Neu}}\\
	&= \chi^\Neu\tonde{e^{-2(T-t)}\norm{f}^2_{L^2(\Omega)}-e^{-2T} \norm{f}^2_{L^2(\Omega)}}\comma
	\end{aligned}
\end{equation}
where the last step follows by Proposition \ref{p:ground_states}, \eqref{eq:bd_weak_conv} and the definition of $h^\Neu=\av{\vartheta}_{\partial\Omega}$.
  In either case, both \eqref{eq:mean-A-conv-eps} and \eqref{eq:mean-A-conv-T}  hold true.

\paragraph{Robin and Dirichlet boundary conditions} Fix $\beta \le 1$, and note that case \ref{i:l:QV-mean-a} holds.
\purple{If~$\beta<1$, the second term on the right-hand side of~\eqref{eq:mean-A} vanishes as $\eps \to 0$. Indeed, we have 
	\begin{align*}
		&\abs{\scalar{\sigma_\eps}{{\eps^{\beta-1} (P^\eps_s \bar f_\eps)^2}\ttonde{\vartheta-h^\bd}\ttonde{1+2\sigma h^\bd}}}\\
		&\qquad 
			\leq 
		\tnorm{\vartheta-h^\bd}_{L^\infty(\partial \Omega_\eps)} \ttonde{1 + 2 \norm{\vartheta}_{L^\infty(\partial \Omega_\eps)}}  \mcE^\eps(P^\eps_s \bar f_\eps) \comma
	\end{align*}
and, thus, 	 the claimed convergence to $0$ by \eqref{eq:boundedness-l1}, $h^\Dir\in \cC(\overline\Omega)$, and $h^\Dir\vert_{\partial\Omega}=\vartheta\vert_{\partial\Omega}$.
}
If instead~$\beta=1$, since~$f_\eps\to f$, Theorem \ref{t:MainSemigroups} and \eqref{eq:bd_weak_conv} imply that the second term on the right-hand side of~\eqref{eq:mean-A} converges as $\eps\to 0$ to
\begin{equation}
	\begin{aligned}
 \scalar{\sigma_{\partial\Omega}}{	\tonde{\int_{T-t}^T(P^{\varrho=1}_s f)^2\dd s}\ttonde{\vartheta-h^\bd}\ttonde{1+2\sigma h^\bd}}\fstop
	\end{aligned}
\end{equation}

Fix $T >0$ and let $\cK^\eps(t)=\cK^\eps_T(t)$ denote the first term on the right-hand side of \eqref{eq:mean-A}, 	viz.,
\begin{equation}
	\cK^\eps(t)\eqdef \scalar{ \int_{T-t}^T  2\varGamma^{\eps,\beta}(P^\eps_s f_\eps)\,\dd s}{\chi^\bd}_{L^2(\Omega_\eps)}\fstop
\end{equation} 
By non-negativity of both $\varGamma^{\eps,\beta}(P^\eps_s f)$ and $\chi^\bd\in \cC_b(\R^d)$,  it is immediate to check that  \eqref{eq:boundedness-l1} yields the relative compactness of the family $\ttseq{\cK^\eps}_\eps$ in $\cC([0,T];\R^+_0)$. In order to establish convergence in $\cC([0,T];\R^+_0)$, we observe that, by the very same arguments, the finite measures 
\begin{equation*}
\gamma^\eps_{t,T}\eqdef  \eps^d\sum_{x\in \Omega_\eps} 	\tonde{\int_{T-t}^T2\varGamma^{\eps,\beta}(P^\eps_s f_\eps)(x)\, \dd s} \delta_x \in \Mbp(\overline\Omega) \comma\qquad T\in \R^+\comma t\in [0,T]\comma
\end{equation*} 
form a tight family $\ttseq{\gamma^\eps_{t,T}}_\eps$  in $\Mbp(\overline\Omega)$. Hence, the limit of $\tseq{\cK^\eps}_\eps$ is uniquely determined by the limits of $\tseq{\tscalar{\gamma^\eps_{t,T}}{\varphi}}_\eps$ for all $t \in [0,T]$ and all $\varphi$ in a \emph{dense subspace} of $\cC(\overline\Omega)$. In what follows,  we choose the $\cC(\overline\Omega)$-Laplacian's core  $\cS^\Neu(\Omega)\subset \cC(\overline\Omega)$ as a dense subspace (Prop.~\ref{p:TestF}\ref{i:p:TestF:7}). 

Fix $\varphi \in \cS^\Neu(\Omega)$. By Theorems \ref{t:MainSemigroups} and \ref{t:Equivalence}\ref{i:t:Equivalence:3} for $\beta=\infty$ and $\bd=\Neu$, there exists  $\ttseq{\varphi_\eps}_\eps$ such that $\varphi_\eps\to \varphi$ and $A^{\eps,\beta=\infty}\varphi_\eps\to \Delta^\Neu \varphi$. As a consequence, $\tscalar{\gamma^\eps_{t,T}}{\varphi-\varphi_\eps}\to0$ as $\eps\to 0$.
Recall \eqref{eq:carre-def} and \eqref{eq:generator_RW}; then, 
\begin{align*}
	\tscalar{\gamma^\eps_{t,T}}{\varphi_\eps}&= \int_{T-t}^T\scalar{A^{\eps,\beta=\infty}(P^\eps_s f_\eps)^2-2 (P^\eps_s f_\eps)(A^\eps P^\eps_s f)}{\varphi_\eps}_{L^2(\Omega_\eps)} \dd s\\
	&\qquad =\int_{T-t}^T \scalar{(P^\eps_s f)^2}{A^{\eps,\beta=\infty}\varphi_\eps}_{L^2(\Omega_\eps)}-\scalar{\frac{\dd}{\dd s}(P^\eps_s f_\eps)^2}{\varphi_\eps}_{L^2(\Omega_\eps)}\dd s\comma
\end{align*}
yielding
\begin{align*}
	&\lim_{\eps\downarrow 0} \tscalar{\gamma^\eps_{t,T}}{\varphi}=\lim_{\eps\downarrow 0}\tscalar{\gamma^\eps_{t,T}}{\varphi_\eps}\\
	&\qquad
	 = \int_{T-t}^T -\mcE^\Neu\ttonde{(P^\bd_s f)^2, \varphi}\,\dd s +\tonde{\tscalar{(P^\bd_{T-t}f)^2}{\varphi}_{L^2(\Omega)}-\tscalar{(P^\bd_T f)^2}{\varphi}_{L^2(\Omega)}}\fstop
\end{align*}
This shows \eqref{eq:mean-A-conv-eps}.
By setting $t=T$ and observing that all relative compactness arguments above hold uniformly  over $T\in \R^+$, \eqref{eq:mean-A-conv-T} follows, thus concluding the proof.	
\end{proof}

\begin{lemma}\label{l:QV-var}For every $\ttseq{\tau_\eps}_\eps \subset \R^+$ and $\ttseq{f_\eps}_\eps$ such that $\tau_\eps \ge 1$ and $f_\eps \to f$ as in~\eqref{eq:EKConvergence},
	\begin{equation}
	\lim_{\eps\to 0} \E^\eps_{\nu^\eps_\stat}\quadre{\tonde{\cA^\eps_{t,T}-\E^\eps_{\nu^\eps_\stat}\quadre{\cA^\eps_{t,T}}}^2} =0\comma\qquad T\in \R^+\comma t \in [0,T]\fstop
	\end{equation}
\end{lemma}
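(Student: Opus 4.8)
\textbf{Proof plan for Lemma~\ref{l:QV-var}.} The goal is a weak law of large numbers for the (random) predictable quadratic variations $\cA^\eps_{t,T}$, which are time-integrals of quadratic functionals of $\eta^\eps_s$ weighted by squared discrete gradients of $P^\eps_{\tau_\eps T - s} f_\eps$. The natural strategy is to compute the variance of $\cA^\eps_{t,T}$ directly, using duality. Writing $\cA^\eps_{t,T} = \int_0^{\tau_\eps t} \Phi^\eps_s(\eta^\eps_s)\, \dd s$ where $\Phi^\eps_s$ is a linear combination (over nearest-neighbor bulk pairs $(x,y)$ and boundary pairs $(x,z)$) of the cylinder functions $\cV^\eps((x,y),\emparg)$ with nonnegative weights $w^\eps_s(x,y) \eqdef \tfrac{\eps^d}{2}(\nabla^\eps_{x,y} P^\eps_{\tau_\eps T - s} f_\eps)^2$ (and analogously at the boundary), we expand
\begin{equation*}
\E^\eps_{\nu^\eps_\stat}\quadre{\tonde{\cA^\eps_{t,T}-\E^\eps_{\nu^\eps_\stat}\quadre{\cA^\eps_{t,T}}}^2} = 2 \int_0^{\tau_\eps t}\!\!\int_0^{s_2} \cov_{\nu^\eps_\stat}\!\ttonde{\Phi^\eps_{s_1}(\eta^\eps_{s_1}), \Phi^\eps_{s_2}(\eta^\eps_{s_2})}\dd s_1\, \dd s_2\fstop
\end{equation*}
By stationarity and the Markov property, the covariance reduces to a pairing of $\Phi^\eps_{s_1}$ against $P^\eps_{s_2 - s_1}$ applied to (centered) $\Phi^\eps_{s_2}$; since each $\cV^\eps((x,y),\emparg)$ is a sum of duality functions of degree $\le 2$, the product $\cV^\eps\cdot\cV^\eps$ is a linear combination of duality functions of degree $\le 4$, and duality \eqref{eq:duality}, consistency \eqref{eq:consistency}, together with the $k$-particle ultracontractivity bounds (Corollary~\ref{cor:nash-k}, Remark~\ref{rmk:nash-k-SEP}) and the uniform moment bounds on $\nu^\eps_\stat$ (Proposition~\ref{pr:nash} applied at stationarity, equivalently \eqref{eq:max_principle} combined with $\sup_\eps\norm{h^{\eps,\beta}}_{L^\infty}\le\norm{\vartheta}_\infty$), control everything.

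The key quantitative input is that the total weight is summable: by the identity $\eps^d\sum_x \varGamma^{\eps,\beta}(g)(x) = \mcE^{\eps,\beta}(g)$ and the $L^2$-contractivity argument already used in \eqref{eq:boundedness-l1}, one has $\int_0^\infty 2\,\mcE^{\eps,\beta}(P^\eps_s f_\eps)\,\dd s \le \norm{f_\eps}_{L^2(\Omega_\eps)}^2 \le C$, uniformly in $\eps$; the boundary contribution $\int_0^\infty \eps^{\beta-1}\scalar{\sigma_\eps}{(P^\eps_s f_\eps)^2}\,\dd s$ is likewise part of the same telescoped quantity $\tnorm{f_\eps}_{L^2(\Omega_\eps)}^2 - \lim_t \tnorm{P^\eps_t f_\eps}^2$ and hence uniformly bounded. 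Thus the "$L^1$-in-time mass" of the weights $w^\eps$ is $O(1)$. The variance is then bounded by a double integral against a product of weight-mass in one factor and, in the other factor, the $L^\infty\to L^\infty$ action of $P^\eps_{s_2-s_1}$ on the fluctuation of a degree-$\le 4$ cylinder function; the crucial gain comes from the extra powers of $\eps$ carried by $\varGamma^{\eps,\beta}(\emparg)$ evaluated against a product of two $\cV^\eps$'s, mirroring the factor $\eps^{kd/2}$ appearing in $\phi^{\eps,k}_{u,v}$ of \eqref{eq:phi}: each "doubling" of the number of dual particles costs a factor $\eps^{d/2}$ in the relevant $L^2(\Omega_\eps^k)$-pairing relative to $L^2(\Omega_\eps)$, by \eqref{eq:comparison_norms} and the definition of the weighted norms, and this is what forces the covariance terms to vanish.

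Concretely, I would organize the proof as: (i) write $\cA^\eps_{t,T}$ as $\int_0^{\tau_\eps t}\Phi^\eps_s(\eta^\eps_s)\,\dd s$ and expand the centered second moment into the double time-integral of covariances; (ii) use stationarity + Markov to reduce $\cov_{\nu^\eps_\stat}(\Phi^\eps_{s_1}(\eta^\eps_{s_1}), \Phi^\eps_{s_2}(\eta^\eps_{s_2}))$ to $\E^\eps_{\nu^\eps_\stat}[\Phi^\eps_{s_1}(\eta)\,(P^\eps_{s_2-s_1}\overline\Phi^\eps_{s_2} - \E_{\nu^\eps_\stat}[\overline\Phi^\eps_{s_2}]) ]$ after expanding products of $\cV^\eps$'s into degree-$\le4$ duality functions via \eqref{eq:duality}; (iii) bound this pairing by $\norm{w^\eps_{s_1}}_{\ell^1} \cdot \big(\text{$O(\eps^{d/2})$-small quantity}\big)$ using ultracontractivity of the $2$- and $4$-particle semigroups and the uniform stationary moment bounds, together with the weight-norm comparison \eqref{eq:comparison_norms}; (iv) integrate in $s_1, s_2 \in [0,\tau_\eps t]$ using the uniform $L^1$-in-time bound on the weights from $\int_0^\infty 2\mcE^{\eps,\beta}(P^\eps_s f_\eps)\,\dd s \le C$ (which is what allows the $\tau_\eps \to \infty$ regime in case \ref{i:l:QV-mean-b} to be harmless, since the bound does not depend on the upper limit of integration). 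The main obstacle is step (iii): making precise that the covariance of two degree-$\le 2$ cylinder functions, after pairing against the dual semigroup and summing against the gradient weights, genuinely carries a factor vanishing in $\eps$ — this requires carefully separating the "diagonal" contributions (where the two dual particle blocks share a site, contributing the singular self-interaction terms analogous to the $\eps^{\beta-2}\alpha_\eps$ term in $\varGamma^{\eps,\beta}$) from the "off-diagonal" ones, and checking that even the diagonal terms are suppressed either by the extra $\eps^d$ volume factor in $\varGamma^{\eps,\beta}$ or by the boundary-mass estimate \eqref{eq:HKbdV}/\eqref{eq:bd_kernel_scaling}. Once this bookkeeping is done, taking $\eps \downarrow 0$ concludes; the bound is uniform in $t \in [0,T]$ and in $T$, consistently with the uses of this lemma in \S\ref{sec:proof-boltzmann-gibbs}.
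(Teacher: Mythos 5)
Your overall framework --- expand the variance into a double time integral of covariances, reduce via stationarity and the Markov property to a pairing of the gradient weights against the $2$-particle semigroup acting on products of degree-$\le 2$ duality functions, and control the total weight mass using $\int_0^\infty 2\,\mcE^{\eps,\beta}(P^\eps_s f_\eps)\,\dd s \le \norm{f_\eps}_{L^2(\Omega_\eps)}^2$ uniformly in $\eps$ and $T$ --- matches the paper's. But step (iii), the one you flag as the main obstacle, has a genuine gap. You attribute the vanishing to an ``extra factor of $\eps^{d/2}$'' analogous to $\phi^{\eps,k}_{u,v}$ in Proposition~\ref{pr:variance}; however, that factor arises from the $\eps^{kd}$-normalization of the density fields $\cX^\eps$, not from the fluctuation fields $\cY^\eps$ normalized by $\eps^{d/2}$. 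Here the $\eps^d$ from the carr\'e du champ weights is exactly absorbed by the $L^1$-in-time integral $\int \mcE^{\eps,\beta}(P^\eps_s f_\eps)\,\dd s = O(1)$, leaving no surplus $\eps$-power --- indeed this is why $\E^\eps_{\nu^\eps_\stat}[\cA^\eps_{t,T}]$ itself converges to a \emph{nonzero} limit in Lemma~\ref{l:QV-mean}.

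The ingredient you are missing is decorrelation of the stationary two-point correlations. Writing $\cU^\eps_{e,(x,y)}\eqdef\E_{\nu^\eps_\stat}[\widehat\cV^\eps_0(e)\,\widehat\cV^\eps_0(x,y)]$ with $\widehat\cV^\eps_0$ the centered cylinder functions, Proposition~\ref{pr:harmonics_conv-k} (the convergence $h^{\eps,k}\to (h^\bd)^{\otimes k}$) gives $\sup_{x,y\notin e}\abs{\cU^\eps_{e,(x,y)}}\to 0$ uniformly over edges $e$. The paper then inserts a time cutoff $u>0$ into the double integral: the near-diagonal range $r-s\le u$ is bounded by a constant times $\norm{f-P^\eps_u f}_{L^\infty(\Omega_\eps)}$, which vanishes as $u\downarrow 0$ uniformly in $\eps$ by Theorem~\ref{t:MainSemigroups} and strong continuity of the limiting semigroup; for $r-s\ge u$, the off-diagonal decay of $\cU^\eps$ combines with ultracontractivity of the two-particle semigroup (which makes $p^{\eps,k=2}_{r-s}(e',(x,y))=O(\eps^{2d})$ pointwise for $r-s\ge u$, killing the $O(1)$-many near-diagonal sites where $\cU^\eps$ does not vanish). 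Without this decorrelation input --- which you do not invoke anywhere --- there is no mechanism driving the variance to zero, and your proposal would not close.
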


\begin{proof} For $t=0$ the claim is straightforward. Fix $t\in (0,T]$.	
		Recalling \eqref{eq:Veps} and \eqref{eq:Veps-overline}, we define
		\begin{equation}
			\widehat \cV^\eps_s(x,y)\eqdef \cV^\eps\ttonde{(x,y),\eta^\eps_s}- \overline\cV^\eps(x,y)\comma\qquad x, y \in \overline\Omega_\eps\comma s \in\R^+_0\fstop
		\end{equation}
		Note that $\widehat \cV^\eps_s(x,y)=0$ if $x,y \in \partial_e\Omega_\eps$, \purple{and since $\cV^\eps(\emparg,	\eta)=J^{\eps,k=1}D(\emparg,\eta)+2\sigma D(\emparg,\eta)$, the consistency provided in \eqref{eq:consistency} implies
		\begin{equation*}
		\E^\eps_\eta\quadre{\cV^\eps((x,y),\eta^\eps_r)}= P^{\eps,k=2}_r\cV^\eps(\emparg,\eta)(x,y)\comma\qquad (x,y)\in\overline\Omega_\eps^{k=2}\comma r \ge 0\semicolon
		\end{equation*}
This yields in particular}
		\begin{equation}
			\label{eq:duality-V-hat}\E^\eps_{\nu^\eps_\stat}\quadre{\widehat \cV^\eps_s(x,y)\big|	\cF^\eps_0}=P^{\eps,k=2}_s \widehat \cV^\eps_0(x,y)\comma\qquad x, y \in \overline\Omega_\eps\comma s \in\R^+_0\fstop\end{equation}
		Further set
		\begin{equation}\label{eq:V4}
			\cU^\eps_{(x,y),(z,w)}\eqdef \E_{\nu^\eps_\stat}\quadre{\widehat \cV^\eps_0(x,y)\widehat \cV^\eps_0(z,w)}\comma\qquad (x,y), (z,w)\in \overline\Omega_\eps^{k=2}\fstop
		\end{equation}
		Let us adopt the following shorthand notation for the Dirichlet form  at $P^\eps_s f$:
		\begin{equation}
			\sum_{e} F^\eps_s(e)\eqdef 	\mcE^\eps(P^\eps_sf) \comma \qquad s \in\R^+_0\comma
		\end{equation}
		with $e, e',\ldots$ denoting (oriented) pairs $(x,y)\in \overline\Omega_\eps$ such that $x\sim y$. Note that $F^\eps_s(e)\ge 0$. Then, the stationarity of $\nu^\eps_\stat$  and \eqref{eq:duality-V-hat} yield  (recall \eqref{eq:HK-k} and \eqref{eq:V4})
		\begin{align*}
				W_\eps\eqdef&\ \E^\eps_{\nu^\eps_\stat}\quadre{\tonde{\cA^\eps_{t,T}-\E^\eps_{\nu^\eps_\stat}\quadre{\cA^\eps_{t,T}}}^2}\\
				%
				=&\ 2 \int_0^{\tau_\eps t}\dd s\,  \int_s^{\tau_\eps t} \dd r \,   \sum_{e, e'} F^\eps_{\tau_\eps T-s}(e) F^\eps_{\tau_\eps T-r}(e')
				\sum_{(x,y)\in \overline\Omega_\eps^{k=2}} p^{\eps,k=2}_{r-s}\ttonde{e',(x,y)}\, \cU^\eps_{e,(x,y)}\fstop
		\end{align*}
		Further,  duality \eqref{eq:duality} ensures that there exists $C=C(\vartheta)>0$ such that
		\begin{equation}\label{eq:upper-bound-U}
			\sup_{e, (x,y)}\tabs{\cU^\eps_{e,(x,y)}}\le C\fstop
		\end{equation}
		Hence, setting   $s_u\eqdef (s+u)\wedge\tau_\eps t$ for some fixed $u\in \R^+$, we get	 (below, $C'=C'(\vartheta,f)>0$) 
		\begin{align*}
				&	W_\eps \le C \int_0^{\tau_\eps t} \sum_e F^\eps_{\tau_\eps T-s}(e) \tonde{\norm{P^\eps_{\tau_\eps T-s}f}_{L^2(\Omega_\eps)}^2-\norm{P^\eps_{\tau_\eps T-s_u}f}_{L^2(\Omega_\eps)}^2}\dd s\\
				&	+ 2 \int_0^{\tau_\eps t}\dd s\,  \int_{s_u}^{\tau_\eps t} \dd r \,   \sum_{e, e'} F^\eps_{\tau_\eps T-s}(e) F^\eps_{\tau_\eps T-r}(e')
				\sum_{(x,y)\in \overline\Omega_\eps^{k=2}} p^{\eps,k=2}_{r-s}\ttonde{e',(x,y)}\, \cU^\eps_{e,(x,y)}\\
				&\qquad \le C' \norm{f-P^\eps_uf}_{L^\infty(\Omega_\eps)}\\
				&	+ 2 \int_0^{(\tau_\eps t-u)\vee 0}\dd s\,  \int_{s_u}^{\tau_\eps t} \dd r \,   \sum_{e, e'} F^\eps_{\tau_\eps T-s}(e) F^\eps_{\tau_\eps T-r}(e')
				\sum_{(x,y)\in \overline\Omega_\eps^{k=2}} p^{\eps,k=2}_{r-s}\ttonde{e',(x,y)}\, \cU^\eps_{e,(x,y)}
				\fstop	
		\end{align*}
The first term on the right-hand side above vanishes as $u\to 0$, uniformly in~$\eps$, $T\in \R^+$ and~$t\in [0,T]$, by Theorem \ref{t:MainSemigroups} and the strong continuity of the continuum semigroups.
The second term vanishes as $\eps\to 0$ (for fixed $u\in \R^+$) as soon as we show 
		\begin{equation}\label{eq:final}
			\lim_{\eps\downarrow 0}\sup_{t\ge u}\sup_{e,e'} \sum_{(x,y)\in\overline\Omega_\eps^{k=2}} p^{\eps,k=2}_t\ttonde{e',(x,y)}\, \cU^\eps_{e,(x,y)}=0\comma \qquad u\in \R^+\fstop
		\end{equation}
		Start by observing that, by the definitions \eqref{eq:duality_function} and \eqref{eq:V4},  Corollary~\ref{pr:harmonics_conv-k} yields
		\begin{equation}\label{eq:Ueps-off-diagonal} \lim_{\eps\downarrow 0} \sup_{x,y \notin \set{z,w}} \tabs{\cU^\eps_{e,(x,y)}}=0\comma \qquad  e=(z,w)\in \overline\Omega_\eps^{k=2}\fstop
		\end{equation}
		By the ultracontractivity of the two-particle semigroups   and \eqref{eq:upper-bound-U}, 
		we get \eqref{eq:final} with $\overline\Omega_\eps^{k=2}$ in the summation being replaced by $\Omega_\eps^{k=2}$. We are left with showing that the contributions from $\partial_e\Omega_\eps^{k=2}$ also vanish. To this purpose, we employ \eqref{eq:consistency} and argue as already done in the proof of Corollary \ref{pr:nash}. Reproducing the above argument for the single-particle system on~$\overline\Omega_\eps$, we conclude the proof of the lemma.
	\end{proof}

\subsection{Conclusion of the proof of Theorem \ref{th:fluctuations-stat}}
Combining Lemmas \ref{l:QV-mean} and \ref{l:QV-var} with the discussion at the beginning of \S\ref{sec:FCLT-flu} concludes the proof of Theorem \ref{th:fluctuations-stat}. More specifically,  by stationarity and the decomposition~\eqref{eq:decomposition-flu}, it suffices to identify the limit in distribution of $\tseq{\tscalar{\cY^\eps_{\tau_\eps T}}{f_\eps}}_{\eps,T}$ as   $\eps\to 0$ and  $T\to \infty$  for every~$f\in \cS^\bd(\Omega)$ and some $\ttseq{f_\eps}_\eps$ such that $f_\eps\to f$. We achieve this by	 setting  $\ttseq{\tau_\eps}_\eps$ and $\ttseq{f_\eps}_\eps$ as in Lemma \ref{l:QV-mean}, and observing that	 the families of real-valued random variables 
\begin{equation}\label{eq:tight-families}
	\tseq{\tscalar{\cY^{\eps,\beta}_0}{P^\eps_{\tau_\eps T}f_\eps}}_{\eps,T}\qquad \text{and}\qquad\ttseq{\cW^\eps_{T,T}}_{\eps,T}
\end{equation} are tight. Hence, taking first $\eps\to 0$ and then $T\to \infty$, on the one side, Lemmas \ref{l:QV-mean} and~\ref{l:QV-var} ensure that the second family in \eqref{eq:tight-families} converges to a centered Gaussian distribution with variance given as in Lemma \ref{l:QV-mean}; on the other side, the first family vanishes. Indeed,  when either $\beta \le 1$ or $\beta > 1$ and $f=c\,\car_{\Omega}$ with $c\in \R$, this latter claim is an immediate consequence of the second inequality in \eqref{eq:l2-bounds-flu};  when $\beta > 1$ and $\int f\, \dd \mu_\Omega=0$, it follows by approximating~$\tscalar{\cY^\eps_0}{P^\eps_T f_\eps}$ with 
\begin{equation}
	\tscalar{\cY^\eps_0}{P^\eps_T \tilde f_\eps}\comma\qquad \tilde f_\eps\eqdef f_\eps-\tscalar{f_\eps}{\psi_0^\eps}_{L^2(\Omega_\eps)}\psi_0^\eps\comma
\end{equation}
(note that $\tnorm{f_\eps-\tilde f_\eps}_{L^\infty(\Omega_\eps)}\to 0$ as $\eps\to 0$ by Prop.\ \ref{p:ground_states}\ref{i:p:ground_states:gs} and $f_\eps\to f$), and 
using \eqref{eq:l2-bounds-flu2}.

\section{Appendix}\label{sec:appendix}
\subsection{Laplacians on Lipschitz domains}\label{sec:appendix-laplacians}
We collect here some auxiliary results on Laplace operators with Dirichlet, Robin, and Neumann boundary conditions on Lipschitz domains. In this section, in contrast to the notation adopted in  \S\ref{sec:semigroup-conv-proofs}--\S\ref{sec:proof-flu}, we distinguish between $L^2$- and $\cC^\bd$-semigroups, generators, resolvents, etc.

\subsubsection{\texorpdfstring{$L^2$}{L2}-Laplacians}
We start by recalling some known spectral properties.
\begin{lemma}[Dominations]\label{l:Domination}
Let~$\varrho_1,\varrho_2\in\mcC(\partial\Omega)$ with~$0<\varrho_1 \leq \varrho_2$. Then,
\begin{itemize}
\item $P^\Dir_t\leq P^{\varrho_2}_t\leq P^{\varrho_1}_t \leq P^\Neu_t$ as non-negative operators for each~$t\geq 0$;
and, equivalently,
\item $\ttonde{\mcE^\Neu,\dom{\mcE^\Neu}}\leq \ttonde{\mcE^{\varrho_1},\dom{\mcE^{\varrho_1}}} \leq \ttonde{\mcE^{\varrho_2},\dom{\mcE^{\varrho_2}}}\leq \ttonde{\mcE^\Dir,\dom{\mcE^\Dir}}$ as quadratic forms.
\end{itemize}
\begin{proof}
The assertion for semigroups is~\cite[Thm.~3.1]{arendt2003dirichlet}. The assertion for the corresponding forms readily follows.
\end{proof}
\end{lemma}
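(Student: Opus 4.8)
The assertion for semigroups is exactly \cite[Thm.~3.1]{arendt2003dirichlet}; the plan is therefore to recall the set-up, match its hypotheses, and then observe that the statement for forms is the same fact read backwards. First I would note that each of $\ttonde{\mcE^\Neu,H^1(\Omega)}$, $\ttonde{\mcE^{\varrho_1},H^1(\Omega)}$, $\ttonde{\mcE^{\varrho_2},H^1(\Omega)}$ and $\ttonde{\mcE^\Dir,H^1_0(\Omega)}$ is a symmetric Dirichlet form on $L^2(\Omega)$: closability and the Markov property of the Robin forms follow from compactness of the trace operator $\emparg\restr{\partial\Omega}\colon H^1(\Omega)\to L^2(\partial\Omega)$, as recorded after~\eqref{eq:FormRobin}, and $\varrho_1,\varrho_2$ are bounded on the compact set $\partial\Omega$. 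Consequently $P^\Neu_t,P^{\varrho_1}_t,P^{\varrho_2}_t,P^\Dir_t$ are sub-Markovian, hence positivity preserving, and Ouhabaz's domination criterion for a pair of Dirichlet forms applies: $0\le P^{(2)}_t\le P^{(1)}_t$ on $L^2(\Omega)$ for all $t\ge 0$ if and only if $\dom{\mcE^{(2)}}$ is an ideal of $\dom{\mcE^{(1)}}$ and $\mcE^{(1)}(u,v)\le \mcE^{(2)}(u,v)$ for all $0\le u,v\in \dom{\mcE^{(2)}}$. It suffices to check this for the three consecutive pairs; the full chain then follows by transitivity.

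Next I would verify the form inequalities. For $u\in H^1(\Omega)$, using $0\le \varrho_1\le \varrho_2$ on $\partial\Omega$,
\begin{align*}
\mcE^\Neu(u)=\int_\Omega \abs{\nabla u}^2\diff\mu_\Omega
&\le \int_\Omega \abs{\nabla u}^2\diff\mu_\Omega+\int_{\partial\Omega}\varrho_1\abs{u\restr{\partial\Omega}}^2\diff\sigma_\Omega=\mcE^{\varrho_1}(u)\\
&\le \mcE^{\varrho_1}(u)+\int_{\partial\Omega}(\varrho_2-\varrho_1)\abs{u\restr{\partial\Omega}}^2\diff\sigma_\Omega=\mcE^{\varrho_2}(u)\comma
\end{align*}
while for $u\in H^1_0(\Omega)$ the trace vanishes, so $\mcE^{\varrho_2}(u)=\int_\Omega\abs{\nabla u}^2\diff\mu_\Omega=\mcE^\Dir(u)$; the corresponding bilinear inequalities on nonnegative arguments are immediate, the differences being boundary integrals of $\varrho_i\,u\restr{\partial\Omega}\,v\restr{\partial\Omega}\ge 0$. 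For the ideal condition: the pairs $(\mcE^\Neu,\mcE^{\varrho_1})$ and $(\mcE^{\varrho_1},\mcE^{\varrho_2})$ share the form domain $H^1(\Omega)$, so nothing is to be checked; for $(\mcE^{\varrho_2},\mcE^\Dir)$ one invokes the classical fact that $H^1_0(\Omega)$ is an ideal of $H^1(\Omega)$ on an arbitrary open set (if $v\in H^1(\Omega)$, $u\in H^1_0(\Omega)$ and $0\le v\le u$ $\mu_\Omega$-a.e., then $v\in H^1_0(\Omega)$). Substituting into the criterion yields $P^\Dir_t\le P^{\varrho_2}_t\le P^{\varrho_1}_t\le P^\Neu_t$.

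For the ``equivalently'' clause I would read the same criterion in reverse: from the semigroup ordering the form ordering is recovered via the monotone identity $\mcE^\bd(u)=\lim_{t\downarrow 0}\tfrac1t\scalar{u-P^\bd_t u}{u}_{L^2(\Omega)}$, which takes the value $+\infty$ precisely off $\dom{\mcE^\bd}$ and is plainly order-reversing; alternatively one simply cites the equivalence already contained in \cite[Thm.~3.1]{arendt2003dirichlet}. I do not foresee a real difficulty: the only inputs beyond a one-line computation are Ouhabaz's domination theorem and the ideal property of $H^1_0(\Omega)$ in $H^1(\Omega)$, both standard. The one point demanding a little care is that the Robin forms really are Dirichlet forms (in particular closed), which is exactly where compactness of the trace operator---hence the standing Lipschitz assumption on $\Omega$---is used.
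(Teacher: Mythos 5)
Your proof is correct and follows essentially the same route as the paper: both rest on \cite[Thm.~3.1]{arendt2003dirichlet}, with the form ordering read off as an equivalent reformulation. The only difference is that you unpack that theorem by verifying Ouhabaz's domination criterion directly (form inequalities plus the ideal property of $H^1_0(\Omega)$ in $H^1(\Omega)$), which is the mechanism underlying the cited result rather than a genuinely different argument.
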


For~$\lambda\in \R$, denote by~$N^\bd _\Omega(\lambda)$ the number of eigenvalues of~$-\Delta^\bd $ which are strictly smaller than~$\lambda\in \R$, counted with multiplicity; if the number of such eigenvalues is infinite, or if~$\sigma_{\mathrm{ess}}(-\Delta^\bd )\cap (-\infty,\lambda)\neq \emp$, we set~$N^\bd _\Omega(\lambda)\eqdef +\infty$.

\begin{proposition}[Weyl asymptotics]\label{p:Weyl}
Let~$\varrho_1,\varrho_2\in\cC(\partial\Omega)$ with~$0<\varrho_1<\varrho_2$ on~$\partial\Omega$. 
\begin{enumerate}[$(i)$]
\item\label{i:p:Weyl:1} $-\Delta^\bd $ has purely discrete spectrum, say~$\seq{\lambda^\bd _n}_n$, $\lambda_n^\bd \geq 0$, indexed with multiplicities;
\item\label{i:p:Weyl:2} $\lambda^\Neu_n\leq \lambda^\varrho_n\leq \lambda^\Dir_n$ and~$\lambda^{\varrho_1}_n<\lambda^{\varrho_2}_n$;
\item\label{i:p:Weyl:3} $N_\Omega^\bd (\lambda)\asymp \lambda^{d/2}$ as $\lambda \to +\infty$. 
\end{enumerate}

\begin{proof}
Assertion~\ref{i:p:Weyl:1} for~$\ttonde{-\Delta^\Dir,\dom{-\Delta^\Dir}}$ is well-known (see e.g. \cite{AreBen99}). Since~$\Omega$ is a bounded Lipschitz domain, it satisfies the \emph{$H^1$-extension property}~\cite[Thm.~6.4.3, p.~285]{Bur98}, hence the resolvent of~$-\Delta^\Neu$ is a compact operator, and the assertion for~$\ttonde{-\Delta^\Neu,\dom{-\Delta^\Neu}}$ follows.
The first assertion for~$\ttonde{-\Delta^\varrho,\dom{-\Delta^\varrho}}$ follows from the corresponding ones for the Dirichlet and Neumann Laplacians and Lemma~\ref{l:Domination}.
The second assertion is shown in~\cite[Thm.~3.2]{rohleder_strict2014}.
Assertion~\ref{i:p:Weyl:2} follows combining Lemma~\ref{l:Domination} and Courant minimax principle~{\cite[Thm.~2.16.1]{LebedevVorovich}} applied to the (compact strictly positive) semigroups~$P^\bd _t$ for some~$t>0$.
As a consequence of~\ref{i:p:Weyl:2}, it suffices to show~\ref{i:p:Weyl:3} for~$-\Delta^\Neu$ and~$-\Delta^\Dir$. These are respectively~\cite[Cor.~1.6]{NetrusovSafarov05} and~\cite[Cor.~1.9]{NetrusovSafarov05}.
\end{proof}
\end{proposition}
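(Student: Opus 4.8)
For \ref{i:p:Weyl:1} the plan is to reduce the Robin case to the two extremal cases through the domination of Lemma~\ref{l:Domination}. First I recall that, $\Omega$ being bounded, $-\Delta^\Dir$ has compact resolvent — the inclusion $H^1_0(\Omega)\hookrightarrow L^2(\Omega)$ is compact by Rellich — hence purely discrete, non-negative spectrum; this is classical, e.g.\ \cite{AreBen99}. For $-\Delta^\Neu$ the only extra ingredient is the $H^1$-extension property of bounded Lipschitz domains \cite[Thm.~6.4.3]{Bur98}, which upgrades Rellich's theorem to a compact embedding $H^1(\Omega)\hookrightarrow L^2(\Omega)$ and yields compactness of $(\zeta-\Delta^\Neu)^{-1}$. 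For $-\Delta^\varrho$, the form inequality $\mcE^\Neu\le\mcE^\varrho$ on $\dom{\mcE^\varrho}$ (Lemma~\ref{l:Domination}) makes the inclusion $\dom{\mcE^\varrho}\hookrightarrow\dom{\mcE^\Neu}=H^1(\Omega)$ continuous, so $\dom{\mcE^\varrho}\hookrightarrow L^2(\Omega)$ is compact and discreteness follows; equivalently one may sandwich the compact positive operators $P^\Dir_t\le P^\varrho_t\le P^\Neu_t$. The strict separation $\lambda^{\varrho_1}_n<\lambda^{\varrho_2}_n$ is not a consequence of mere domination, and here I would quote the strict-monotonicity theorem \cite[Thm.~3.2]{rohleder_strict2014}.

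For \ref{i:p:Weyl:2}, with discreteness established I would invoke the Courant–Fischer min–max principle \cite[Thm.~2.16.1]{LebedevVorovich},
\[
\lambda^\bd_n=\min_{\substack{V\subseteq\dom{\mcE^\bd}\\ \dim V=n+1}}\ \max_{0\neq u\in V}\ \frac{\mcE^\bd(u)}{\norm{u}_{L^2(\Omega)}^2}\fstop
\]
Since $\dom{\mcE^\Dir}\subseteq\dom{\mcE^{\varrho_2}}\subseteq\dom{\mcE^{\varrho_1}}\subseteq\dom{\mcE^\Neu}$ and, on each smaller domain, $\mcE^\Neu\le\mcE^{\varrho_1}\le\mcE^{\varrho_2}\le\mcE^\Dir$ pointwise, passing from Neumann towards Dirichlet simultaneously restricts the family of admissible subspaces $V$ and raises each Rayleigh quotient, so every min–max value can only increase; this gives $\lambda^\Neu_n\le\lambda^{\varrho_1}_n\le\lambda^{\varrho_2}_n\le\lambda^\Dir_n$, with the strict middle inequality again by \cite[Thm.~3.2]{rohleder_strict2014}. (One may equally run Courant min–max for the compact operators $P^\bd_t$, $t>0$, using the semigroup domination of Lemma~\ref{l:Domination}.)

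For \ref{i:p:Weyl:3}, I would first observe that \ref{i:p:Weyl:2} yields, for every fixed $\lambda$, the chain $N^\Dir_\Omega(\lambda)\le N^\varrho_\Omega(\lambda)\le N^\Neu_\Omega(\lambda)$, so it suffices to bound $N^\Dir_\Omega$ from below and $N^\Neu_\Omega$ from above by multiples of $\lambda^{d/2}$. The lower bound $N^\Dir_\Omega(\lambda)\gtrsim\lambda^{d/2}$ is elementary: by Dirichlet–Neumann bracketing against a disjoint family of small cubes compactly contained in $\Omega$ one gets $N^\Dir_\Omega(\lambda)\ge\sum_j N^{Q_j,\Dir}(\lambda)\gtrsim\lambda^{d/2}$ for large $\lambda$ (and, symmetrically, $N^\Dir_\Omega(\lambda)\le N^{Q,\Dir}(\lambda)\asymp\lambda^{d/2}$ for a large cube $Q\supseteq\Omega$ via $H^1_0(\Omega)\hookrightarrow H^1_0(Q)$). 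The delicate point is the upper bound $N^\Neu_\Omega(\lambda)\lesssim\lambda^{d/2}$ on a merely Lipschitz — not smooth — domain, where bracketing against an exterior cover is unavailable; this is exactly where I would invoke the Weyl asymptotics of Netrusov–Safarov, \cite[Cor.~1.6 and Cor.~1.9]{NetrusovSafarov05}, which provide the sharp leading term for both $N^\Neu_\Omega$ and $N^\Dir_\Omega$. That external Lipschitz Weyl law is the main obstacle; everything else is soft functional analysis resting on Lemma~\ref{l:Domination}.
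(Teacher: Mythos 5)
Your proposal is correct and follows essentially the same route as the paper: domination and the $H^1$-extension property for~\ref{i:p:Weyl:1}, Courant min--max plus Rohleder's strict monotonicity for~\ref{i:p:Weyl:2}, and Netrusov--Safarov for~\ref{i:p:Weyl:3}. The only cosmetic differences are that you apply min--max to the quadratic forms while the paper applies it to the compact semigroups $P^\bd_t$ (you note the alternative yourself), and that you sketch an elementary Dirichlet--Neumann bracketing argument for the two-sided Dirichlet bound in~\ref{i:p:Weyl:3}, which is a nice addition but still leaves you invoking Netrusov--Safarov for the Neumann upper bound, exactly as the paper does.
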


The following is a standard approximation result.
\begin{lemma}	\label{l:robin_to_tutto}
The following assertions hold:
\begin{itemize}
\item[$(a_\Dir)$] \label{i:l:robin_to_tutto:1}
$\mcE^\varrho(f,g)\to \mcE^\Dir(f,g)$ as~$\varrho\to\infty$ for every~$f,g\in H^1_0(\Omega)$;
\item[$(a_\Neu)$] \label{i:l:robin_to_tutto:2}
$\mcE^\varrho(f,g)\to \mcE^\Neu(f,g)$ as~$\varrho\to 0$ for every~$f,g\in H^1(\Omega)$;
\item[$(b_\Dir)$] \label{i:l:robin_to_tutto:3}
$P^\varrho_t\to P^\Dir_t$ as~$\varrho\to\infty$, strongly on~$L^2(\Omega)$, \purple{uniformly in~$t$ on~$\R^+_0$};
\item[$(b_\Neu)$] \label{i:l:robin_to_tutto:4}
$P^\varrho_t\to P^\Neu_t$ as~$\varrho\to 0$, strongly on~$L^2(\Omega)$, \purple{locally uniformly in~$t$ on~$\R^+_0$}.
\end{itemize}
\begin{proof}
The $(a)$-assertions are straightforward.
For both $(b)$-assertions, the existence of a limit in the strong operator topology on~$L^2(\Omega)$  follows combining the monotonicity in Lemma~\ref{l:Domination}, the uniform bound~$P^\varrho_t\leq \car$, and the main result in~\cite{behrndt_monotone2010}. Note that the uniformity in $t\in \R_0^+$ in item $(b_\Dir)$ follows from the lower bound $\lambda_0^{\varrho}> 0$, $\varrho> 0$, and the ordering of eigenvalues in Proposition \ref{p:Weyl}\ref{i:p:Weyl:2}.
The identification of the limit is a consequence of the identification of the corresponding limit Dirichlet forms as in the $(a)$-assertions.
\end{proof}
\end{lemma}

\subsubsection{\texorpdfstring{$\mcC_b$}{Cb}-Laplacians}\label{ss:CbLaplacians}
For~$p\in [1,\infty]$ we denote by~$\ttonde{\Delta^{\bd,p},\dom{\Delta^{\bd,p}}}$ the $L^p(\Omega)$-Laplacian corresponding to~$\Delta^{\bd,2}\eqdef\Delta^\bd$, defined as the $L^p(\Omega)$-closure of the operator
\begin{equation*}
\begin{cases}
\Delta^{\bd,p}u=\Delta^\bd u\comma \quad u\in \dom{\Delta^\bd}\cap L^p(\Omega): \Delta^\bd u\in L^p(\Omega) & \text{if } p \in [1,\infty)\comma
\\
(\Delta^{\bd,1})^* \quad \text{the adjoint of} \; \Delta^{\bd,1}
 & \text{if } p=\infty \fstop
\end{cases}
\end{equation*}
For details about the construction of~$\Delta^{\bd,p}$ on bounded Lipschitz domains, its consistency with~$\Delta^\bd$ on~$L^2(\Omega)\cap L^p(\Omega)$, and for the properties of the associated semigroups and resolvents, see e.g.:
\begin{itemize*}
\item[] \cite[\S1]{AreBen99} for~$\Delta^{\Dir,p}$ with~$p\in [1,\infty]$;
\item[] \cite[Thm.~1.4.1]{Dav89} for~$\Delta^{\varrho,p}$ with~$p\in[1,\infty)$, and~\cite{War06} for~$\Delta^{\varrho, \infty}$; 
\item[]  \cite[\S5]{wood_maximal2007} for~$\Delta^{\Neu,p}$ with $p \in [(3+\gamma)',3+\gamma]$ for some $\gamma>0$, where~$p'$ denotes the H\"older conjugate of~$p$.
\end{itemize*}

Finally, recall that~$\cC^\bd$ is either~$\cC(\overline\Omega)$ (for Neumann and Robin boundary conditions) or~$\cC_0(\Omega)$ (for Dirichlet boundary conditions). We denote by~$\ttonde{\Delta^{\bd,c},\dom{\Delta^{\bd,c}}}$ the~$\mcC^\bd$-Laplacian respectively defined by
\begin{align}
\label{eq:DomainCbLaplacian}
\dom{\Delta^{\bd,c}} \eqdef& \set{u\in\dom{\Delta^\bd}\cap\mcC^\bd : \Delta^\bd u \in\mcC^\bd } \comma \qquad \Delta^{\bd,c}u=\Delta^\bd u\fstop
\end{align}

\begin{theorem}
Fix~$\varrho\in \R^+$.
The operator~$\ttonde{\Delta^{\bd,c},\dom{\Delta^{\bd,c}}}$ is the part on~$\mcC^\bd$ of $\ttonde{\Delta^\bd,\dom{\Delta^\bd}}$.
The corresponding semigroup~$P^{\bd,c}_t$ is a
$\mcC^0$-semigroup
satisfying
\begin{align}\label{eq:t:ContPart}
P^{\bd,c}_t f = P^\bd_t f\comma \qquad f\in\mcC^\bd\comma  t>0\fstop
\end{align}
\begin{proof}
We separate different cases.
\paragraph{Dirichlet boundary conditions} Since~$\Omega$ is a bounded Lipschitz domain, it is Dirichlet regular; see e.g.~\cite[Dfn.~6.1.1]{AreBatHieNeu11} for the definition, and~\cite[Ex.~6.1.2b)]{AreBatHieNeu11} and references therein for the assertion.
Thus, all the results in~\cite{AreBen99} apply to our setting. In particular, by~\cite[Lem.~2.2b)]{AreBen99}, the operator~$\ttonde{\Delta^{\Dir,c},\dom{\Delta^{\Dir,c}}}$ is the part of~$\Delta^\Dir$ on~$\cC_0(\Omega)$.
The second assertion is~\cite[Thm.~2.3]{AreBen99}.

\paragraph{Neumann boundary conditions} By~\cite[Thm.~2.1(ii)]{fukushima_construction1996} the operator~$\ttonde{\Delta^{\Neu,c},\dom{\Delta^{\Neu,c}}}$ is the part of~$\Delta^\Neu$ on~$\cC(\overline\Omega)$.
For the second assertion see e.g.,~\cite[Prop.~3($\star$)]{biegert_neumann2003}.

\paragraph{Robin boundary conditions} This case is discussed in~\cite{War06} under the assumption that~$0<c\leq \varrho \in L^\infty(\sigma_\Omega)$ for some constant~$c$.
In particular, the first assertion holds as consequence of~\cite[Lem.~3.1]{War06}, cf.~\cite[p.~22]{War06}.
The validity of~\eqref{eq:t:ContPart} follows from the proof of~\cite[Thm.~3.2]{War06}, \purple{cf.~also~\cite[Thm.~4.3]{nittka_regularity2011}}.
\end{proof}
\end{theorem}

We conclude this part of the appendix with the proof of Proposition \ref{p:TestF}.	
\begin{proof}[Proof of Proposition \ref{p:TestF}]
Since~$\ttseq{\psi^\bd _n}_n$ is total in~$L^2(\Omega)$, and since~$\psi^\bd _n\in\cS^\bd (\Omega)$ by definition,~$\cS^\bd (\Omega)$ is dense in $L^2(\Omega)$.
Since~$\ttseq{\psi^{\bd, s}_n}_n$ is a \textsc{cons} for~$H^\bd _s(\Omega)$, identity operators form a chain of continuous  embeddings
\begin{align*}
\cS^\bd (\Omega)\hookrightarrow H^\bd _s(\Omega) \hookrightarrow H^\bd _r(\Omega) \hookrightarrow L^2(\Omega) \hookrightarrow H^\bd _{-r}(\Omega) \hookrightarrow H^\bd _{-s}(\Omega) \hookrightarrow \cS^\bd (\Omega)'\comma \qquad r\leq s\fstop
\end{align*}
For~$\delta\geq 0$ and integer~$k\geq 1$ let~$\id_{k,\delta}\colon H^\bd _{(k+\delta)d}(\Omega)\hookrightarrow H^\bd _{kd}(\Omega)$ be the identity operator
\begin{align*}
\id_{k,\delta}(\emparg)=\sum_{n=0}^\infty (1+\lambda_n^\bd )^{-\delta d/2}\tscalar{\psi^{\bd,(k+\delta)d}_n}{\emparg}_{H^\bd _{(k+\delta)d}(\Omega)} \psi^{\bd, kd}_n\fstop
\end{align*}
If~$\delta>1/2$, then~$\sum_{n=0}^\infty (1+\lambda_n^\bd )^{-\delta d}<\infty$ by Proposition~\ref{p:Weyl}, hence~$\id_{k,\delta}$ is a Hilbert--Schmidt operator, and thus~$\cS^\bd (\Omega)=\cap_k H^\bd _{kd}(\Omega)$ is a countably Hilbert nuclear space when endowed with the locally convex topology induced by the family of norms~$\tseq{\norm{\emparg}_{H^\bd _s(\Omega)}}_{s\in\R}$.
The rest of the proof of~\ref{i:p:TestF:1}--\ref{i:p:TestF:3} follows as in~\cite[Ex.~1.3.2,~p.~40]{kallianpur_xiong_1995}.
Since~$\cS^\bd (\Omega)$ is dense in~$L^2(\Omega)$, assertion~\ref{i:p:TestF:4} follows from~\ref{i:p:TestF:3} by Nelson's Theorem, e.g.~\cite[Thm.~X.49]{ReeSim75}.
		
\ref{i:p:TestF:5}--\ref{i:p:TestF:7} We separate different cases.

\paragraph{Dirichlet boundary conditions} 
Fix~$u\in \cS^\Dir$. By definition,~$u\in \dom{(\Delta^\Dir)^k}$ for all~$k$, hence by, e.g.,~\cite[Eqn.~(1.10)]{AreBen99}, we have that~$\Delta^\Dir u \in L^\infty(\Omega)$.
Since~$R_0(\Delta^\Dir) \cS^\Dir(\Omega)\subset \mcS^\Dir(\Omega)$, we further have that~$u\in L^\infty(\Omega)$, hence that $u\in \dom{\Delta^{\Dir,\infty}}$.
By \cite[Thm.~2.4(iii)]{AreBen99}, it follows that~$u \in \cC_0(\Omega)$.
Since~$\Delta^\Dir \mcS^\Dir(\Omega)\subset \mcS^\Dir(\Omega)$, iterating this argument shows~\eqref{eq:SBoundaryDir} for~$k>0$.
As a consequence of the latter we have as well that~$\mcS^\Dir(\Omega)=\cap_{k\geq 0} \dom{(\Delta^{\Dir,c})^k}\defeq \cS^{\Dir,c}(\Omega)$.
Together with~\eqref{eq:DomainCbLaplacian} shows~\ref{i:p:TestF:6} for~$\Delta^\Dir$.
Since~$ \cC^\infty_c(\Omega)\subset \cS^\Dir(\Omega)\subset \cC_0(\Omega)$, $\mcS^\Dir(\Omega)$ is dense in~$\cC_0(\Omega)$. Finally, since~$\mcS^\Dir(\Omega)=\mcS^{\Dir,c}(\Omega)$, we have that~$P^{\Dir,c}_t \mcS^\Dir(\Omega)\subset \mcS^\Dir(\Omega)$, and therefore that~$\cS^\Dir(\Omega)$ is a core for both~$\Delta^{\Dir,c}$ and~$\Delta^\Dir$ by~\cite[Thm.~X.49]{ReeSim75}.

\paragraph{Neumann and Robin boundary conditions} 
Let~$\bd$ be either~$\Neu$ or~$\varrho$, and fix~$u\in\mcS^\bd(\Omega)$. By definition,~$u\in \dom{(1+\Delta^\bd)^k}$ for all~$k$.
By the first displayed inequality in the proof of~\cite[Thm.~4.3]{nittka_regularity2011}, we have that~$(R_1^\bd)^k L^2(\Omega)\subset \mcC(\overline\Omega)\defeq \mcC^\bd$ for~$k\gg 1$, hence that~$\mcS^\bd\subset \mcC^\bd$.
By~\eqref{eq:DomainCbLaplacian} we have therefore that~$\mcS^\bd(\Omega)=\cap_k \dom{(\Delta^{\bd,c})^k}\defeq \mcS^{\bd,c}(\Omega)$.
The rest of the proof follows as in the case~$\bd=\Dir$.
\end{proof}

\subsubsection{On the generality of Lipschitz domains}\label{sec:generality-Lipschitz}
Some of the properties of Laplacians discussed in this section may fail if $\Omega$ is non-Lipschitz.
For instance, there exist \emph{bounded} non-Lipschitz domains~$\Omega$ so that
\begin{enumerate}[$(a)$]
	\item the essential spectrum of~$\Delta^\Neu_p$ is non-empty,~\cite{HemSecSim91}, and may depend on~$p$,~\cite{kunstmann_lp2002} (for the importance of the $L^p$-spectral properties in applications see, e.g., the introduction to~\cite{JiWeb15}); 
	\item consequently, (the restriction to~$L^2(\Omega)$ of)~$P^{\Neu,p}_t$ may depend on~$p$, and~$P^{\Neu,c}_t$ may be different from the continuous part of~$P^{\Neu}_t$.
\end{enumerate}
These two properties (emptiness of the essential spectrum and  consistency of the $L^2$- and $\cC^\bd$-semigroups) on bounded Lipschitz domains are crucially used, e.g.,  in the construction and characterization of the space $\mcS^\bd(\Omega)$ of test functions (Prop.~\ref{p:TestF}).
If we allow $\Omega$  to be unbounded, then we may choose, e.g., the horn-shaped domain
\begin{equation*}
	\Omega\eqdef \set{(x,y)\in\R^2 \ : \  x>0 \comma  -e^{-x}<y<e^{-x}}\fstop
\end{equation*}

Further say that~$u$ is a \emph{weak} solution of~$\eqref{eq:HeatEquation}$ with~$\vartheta\equiv 0$ if
\begin{equation}\label{eq:WHeat}\tag{$\mathrm{H}^{\mathrm{weak}}_{\Dir,T}$}
	\begin{aligned}
		u\in& L^\infty\ttonde{(0,T);L^2_\loc(\Omega)}\cap L^2\ttonde{(0,T); H^1_\loc(\Omega)}\comma
		\\
		\iint_{\Omega\times (0,T)}& \ttonde{-u\, \partial_t v+\nabla u\cdot \nabla v} \,\diff\mu_\Omega\,\diff t=0 \comma \qquad v\in \cC^\infty_0\ttonde{\Omega\times(0,T)} \fstop
	\end{aligned}
\end{equation}
Again on unbounded non-Lipschitz domains,
\begin{enumerate}[$(a)$]\setcounter{enumi}{2}
	\item uniqueness of solutions may fail for~\eqref{eq:WHeat}, see~\cite{Mur96}.
\end{enumerate}

\purple{\subsection{Equicontinuity of semigroups}\label{sec:proof-equicontinuity}
This section is devoted to the proof of Proposition \ref{pr:equi_semi_disc}.}
\rosso{Observe that, since 
	\begin{equation*}
		\lim_{\delta\downarrow 0} \sup_{\beta\in \R}\sup_{t\geq 0}  \sup_{\substack{x,y \in \Omega_\eps\\\abs{x-y}<\delta}} \abs{P^{\eps,\beta}_t\Pi_\eps f(x)-P^{\eps,\beta}_t \Pi_\eps f(y)}=0\comma\qquad \eps \in (0,1)\comma
	\end{equation*} the claims in \eqref{eq:pr:equi_semi_disc-robin} and \eqref{eq:pr:equi_semi_disc} may be equivalently restated with $\limsup_{\eps\downarrow 0}$ replacing $\sup_{\eps\in (0,1)}$.}

\begin{proof}[Proof of Proposition \ref{pr:equi_semi_disc}] 
	Throughout the proof, we write $\mbfE_x=\mbfE_x^{\eps,\infty}$ and omit the specification of $\Pi_\eps$. For~$t >0$, by \eqref{eq:feynman-kac} and the Taylor expansion of the exponential function, 
	\begin{align}
		\label{eq:equi1}
		P^{\eps,\beta}_t  f(x) = P^{\eps,\infty}_t f(x)+\sum_{k=1}^\infty \frac{(-\eps^{\beta-1})^k}{k!}\, 		\mbfE_x\quadre{f(X^{\eps,\infty}_t)\tonde{\int_0^tV_\eps(X^{\eps,\infty}_r)\, \dd r}^k} \comma
	\end{align}
	which, by expanding the powers of the integral and by the Markov property, reads as
	\begin{equation}\label{eq:peps_vs_pinf}
		P^{\eps,\beta}_t f(x)= P^{\eps,\infty}_t f(x)- \int_0^t \sum_{z\in \partial\Omega_\eps} p^{\eps,\infty}_r(x,z) \tonde{\eps^{\beta-1}\,V_\eps(z)\, P^{\eps,\beta}_{t-r}f(z)} \dd r\fstop
	\end{equation}
	Hence, 
	\begin{equation}
		\label{eq:equi0}
		\begin{aligned}
			&\abs{P^{\eps,\beta}_tf(x)-P^{\eps,\beta}_t f(y)}\leq \abs{P^{\eps,\infty}_t f(x)-P^{\eps,\infty}_t f(y)}
			\\
			&+ \abs{\int_0^t 
				\sum_{z\in \partial\Omega_\eps} \ttonde{p^{\eps,\infty}_r(x,z)-p^{\eps,\infty}_r(y,z)} \tonde{\eps^{\beta-1}\,V_\eps(z)\, P^{\eps,\beta}_{t-r}f(z)}\dd r}\fstop
		\end{aligned}
	\end{equation}
	Of the two terms on the right-hand side above, the desired equicontinuity of the first one is known, see~\eqref{eq:equicontinuity_neumann_eps}. 
	In order to prove the equicontinuity of the second term in \eqref{eq:equi0},  for all~$t_* \in (0,t\wedge 1]$, \rosso{and $x,y \in \Omega_\eps$, $|x-y|<\delta$, we have that}
	\begin{align}
		\label{eq:proof_equicon_brutta}
		\begin{aligned}
			&\abs{\int_0^t \sum_{z\in \partial\Omega_\eps} \tonde{p^{\eps,\infty}_r(x,z)-p^{\eps,\infty}_r(y,z)} \tonde{\eps^{\beta-1}\, V_\eps(z)\, P^{\eps,\beta}_{t-r}f(z)}\dd r}
			\\
			&\quad\le 2\norm{f}_{\cC(\overline\Omega)}\sup_{x\in \Omega_\eps}\int_0^{t_*} \sum_{z\in \partial\Omega_\eps}p^{\eps,\infty}_r(x,z)\, \eps^{\beta-1}\,V_\eps(z)\, P^{\eps,\beta}_{t-r}\car_{\Omega_\eps}(z)\,\dd r 
			\\
			&\qquad+ C\norm{f}_{\cC(\overline\Omega)} t_*^{-(b+d)/2}\rosso{\delta}^a\int_{t_*}^t \tonde{\eps^d\sum_{z\in \partial\Omega_\eps} \eps^{\beta-1}\,V_\eps(z)\, P^{\eps,\beta}_{t-r}\car_{\Omega_\eps}(z)}\dd r	\comma
	\\
		&\quad 	\rosso{
		=: I_{\eps,\beta,t_*,t} + J_{\eps,\beta, \delta,t_*,t}
			\comma
		}
		\end{aligned}
	\end{align}
where to get the second expression we used \eqref{eq:holder}. 
	(Recall that the positive constants $a, b $ and $C$ depend only on $\Omega\subset\R^d$). 
	Let us estimate the second term on the right-hand side of \eqref{eq:proof_equicon_brutta}: by the symmetry of  $P^{\eps,\beta}_{t-r}$ in $L^2(\Omega_\eps)$ and \eqref{eq:feynman-kac_dt}, 
		\begin{align*}
			&\int_{t_*}^t \tonde{\eps^d\sum_{z\in \partial\Omega_\eps} \eps^{\beta-1}\,V_\eps(z)\, P^{\eps,\beta}_{t-r}\car_{\Omega_\eps}(z)}\dd r\\
			&\quad = \eps^d \sum_{x\in \Omega_\eps}\int_0^{t-t_*} \mbfE_x\quadre{\eps^{\beta-1}\, V_\eps(X^{\eps,\infty}_r)\exp\tonde{-\eps^{\beta-1}\int_0^r V_\eps\tonde{X^{\eps,\infty}_s}\, \dd s}}\dd r\le \mu_\eps(\Omega_\eps)\fstop
		\end{align*}
It is therefore clear that, for every $t_*>0$, the sequence $\sup_{\eps,\beta,t} J_{\eps,\beta,\delta,t_*,t} \to 0$ as $\delta \to 0$. We are therefore left to show that
	\begin{align}	\label{eq:final_equi}
		\lim_{t_* \downarrow 0}
		\limsup_{\eps \downarrow 0}
			\sup_{t \geq t_0}
			\sup_{\beta \in \R} 
		I_{\eps,\beta,t_*,t} = 0 
		\comma \quad t_0 >0 \fstop 
	\end{align}
	For this purpose, we fix $\eps\in(0,1)$, $t\geq 0$, and $\varrho \geq 1$, and split the supremum over $\beta \in \R$ as
	\begin{align}	\label{eq:two_suprema}
		\sup_{\beta \in \R}
			I_{\eps,\beta,t_*,t}
		=
		\max
		\left\{
			\sup_{\substack{\beta \in \R\\  \eps^{\beta-1}\leq \varrho}}
				I_{\eps,\beta,t_*,t}
		\, , \, 
			\sup_{\substack{\beta \in \R\\ \eps^{\beta-1}> \varrho}}
						I_{\eps,\beta,t_*,t}
		\right\}
			\, .
	\end{align}
	By $P^{\eps,\beta}_{t-r}\car_{\Omega_\eps}\le 1$ and \eqref{eq:HKbdV}, we can then estimate the first supremum in \eqref{eq:two_suprema} as
	\begin{align*}
		\sup_{t \geq 0}
		\sup_{\substack{\beta \in \R\\  \eps^{\beta-1}\leq \varrho}}
			I_{\eps,\beta,t_*,t} 
		\le C\, \varrho\, t_*^{1/2}\fstop
	\end{align*}
Note that, since, for $\beta \ge 1$, we have $\eps^{\beta-1}\leq 1\le \varrho$ for all  $\eps\in (0,1)$, the latter estimate and \eqref{eq:proof_equicon_brutta} with $t=t_*$ suffice to prove \eqref{eq:pr:equi_semi_disc-robin} (note that $J_{\eps,\beta, \delta,t,t}=0$). Hence, thanks to	\eqref{eq:pr:equi_semi_disc-robin}, all the arguments in \S\S\ref{sec:semigroup-conv-neumann}--\ref{sec:semigroup-conv-robin} carry through,  ensuring the validity of  Theorem \ref{t:MainSemigroups} for all $\beta \ge 1$, as well as
\begin{align}\label{eq:conv-robin-rho}
	\lim_{\eps\downarrow 0}\sup_{t\ge 0}\sup_{x\in \Omega_\eps}\abs{\mbfE_x\quadre{\exp\tonde{-\varrho\int_0^t V_\eps(X^{\eps,\infty}_s)\, \dd s}}-P^\varrho_t\car_\Omega(x)} =0\fstop
\end{align}
(The above claim holds with the supremum over $t \ge 0$ by Thm\ \ref{t:MainSemigroups} and Cor.\ \ref{c:spectral_bound_DR}.)

\rosso{Now we turn our attention to the regime $\beta<1$ and to the second supremum on the right-hand side of \eqref{eq:two_suprema}.
}
\rosso{Note that by monotonicity of $t \mapsto P^{\eps,\beta}_{t-r}\car_{\Omega_\eps}(z)$, we have $I_{\eps,\beta,t_*,t} \leq I_{\eps,\beta,t_*,t_0}$ for $t\geq t_0$}.
By the Markov property, \eqref{eq:feynman-kac}, and Tonelli's theorem, we get, for all $x\in \Omega_\eps$ and \rosso{$t_* \in (0,t_0 \wedge 1]$},
\begin{align*}
	&	\int_0^{t_*} \sum_{z\in \partial\Omega_\eps}p^{\eps,\infty}_r(x,z)\, \eps^{\beta-1}\,V_\eps(z)\, P^{\eps,\beta}_{\rosso{t_0}-r}\car_{\Omega_\eps}(z)\,\dd r\\
	&=
	\int_0^{t_*} \mbfE_x\quadre{\eps^{\beta-1}\, V_\eps(X^{\eps,\infty}_r)\, \mbfE_{X^{\eps,\infty}_r}\quadre{\exp\tonde{-\eps^{\beta-1}\int_0^{\rosso{t_0}-r}V_\eps(X^{\eps,\infty}_s)\, \dd s}}}\dd r\\
	&= \int_0^{t_*} \mbfE_x\quadre{\eps^{\beta-1}\, V_\eps(X^{\eps,\infty}_r)\exp\tonde{-\eps^{\beta-1}\int_r^{\rosso{t_0}} V_\eps(X^{\eps,\infty}_s)\, \dd s}}\dd r\\
	&= \mbfE_x\quadre{\set{\int_0^{t_*}\frac{\dd}{\dd r}\exp\tonde{\eps^{\beta-1}\int_0^r V_\eps(X^{\eps,\infty}_s)\, \dd s}\dd r}\exp\tonde{-\eps^{\beta-1}\int_0^{\rosso{t_0}} V_\eps(X^{\eps,\infty}_s)\, \dd s}}\\
	&= \mbfE_x\quadre{\exp\tonde{-\eps^{\beta-1}\int_{t_*}^{\rosso{t_0}} V_\eps(X^{\eps,\infty}_s)\, \dd s}\tonde{1-\exp\tonde{-\eps^{\beta-1}\int_0^{t_*}V_\eps(X^{\eps,\infty}_s)\,\dd s}}}
	\fstop
\end{align*}
%

For  $\gamma >0$, define $\Omega_\eps^\gamma\eqdef \set{x\in \Omega_\eps: \dist(x,\partial\Omega)\le\gamma}$. Then, by the exit-time estimate \eqref{eq:exit-time} and the above identity, we get, for all 
\rosso{
$t_*\in (0,t_0\wedge 1)$, 
for some $C=C(f)<\infty$,
\begin{align*}
	\begin{aligned}
	&\sup_{\substack{\beta\in \R\\
				\eps^{\beta-1}>\varrho}}
			I_{\eps,\beta,t_*,t_0} 
	 \le  C 
	 \left(
	 	\sup_{x\in \Omega_\eps^\gamma} \mbfE_x\quadre{\exp\tonde{-\varrho\int_{t_*}^{t_0} V_\eps(X^{\eps,\infty}_s)\, \dd s}} +   \exp\tonde{-\frac{C'\gamma}{t_*^{1/2} \vee \eps}}
	 \right)\fstop
	\end{aligned}
\end{align*} 
Taking first the limsup in $\eps \to 0$ and then in $t_*\to 0$, we have, for all $\varrho\geq 1$, $\gamma >0$  and $t_0>0$, 
\begin{align}
\label{eq:I<K}
	\limsup_{t_* \downarrow 0}
	\limsup_{\eps \downarrow 0}
		\sup_{\substack{\beta\in \R\\
				\eps^{\beta-1}>\varrho}}
		I_{\eps,\beta,t_*,t_0} 
	\leq 
		\limsup_{t_* \downarrow 0}
			\limsup_{\eps \downarrow 0}
		K_{\eps,t_*,\varrho,\gamma,t_0}
	\comma
\end{align}
where we defined the quantity
\begin{align*}
	K_{\eps,t_*,\varrho,\gamma,t_0}
		:=
	\sup_{x\in \Omega_\eps^\gamma} \mbfE_x\quadre{\exp\tonde{-\varrho\int_{t_*}^{t_0} V_\eps(X^{\eps,\infty}_s)\, \dd s}}
		\fstop 	
\end{align*}
}We now estimate this quantity. By the Markov property and the triangle inequality, we have
\rosso{
\begin{align}
\nonumber
	K_{\eps,t_*,\varrho,\gamma,t_0}
		\le &
	\sup_{x\in \Omega_\eps^\gamma}\sum_{y\in \Omega_\eps} p^{\eps,\infty}_{t_*}(x,y)\, P^\varrho_{t_0-t_*}\car_\Omega(y)
\\
\label{eq:fin_1}
	&+ \sup_{y \in \Omega_\eps}\abs{ \mbfE_y\quadre{\exp\tonde{-\varrho\int_0^{t_0-t_*}V_\eps(X^{\eps,\infty}_s)\, \dd s}} -P^\varrho_{t_0-t_*}\car_\Omega(y)}
		\fstop 
\end{align}
For fixed $t_*>0$, $\varrho \geq 1$, and $\gamma>0$, by taking $\eps \to 0$, the local CLT for $p_{t_*}^{\eps,\infty}(x,y)$ (see Rmk.~\ref{rmk:dyadic_lclt}) and the uniform convergence of $\varrho$-Robin semigroups \eqref{eq:conv-robin-rho} yield
\begin{align}
\label{eq:fin_2}
	\limsup_{\eps \downarrow 0}
		K_{\eps,t_*,\varrho,\gamma,t_0}
	\leq 
		\sup_{x \in \Omega^{2 \gamma}}
		\int_{\Omega}
			p_{t_*}(x,y) 
			P_{t_0-t_*}^\varrho \car_\Omega(y)\,	
				\dd y
		\fstop 
\end{align}
We now observe that the strong continuity of the $\varrho$-Robin semigroup implies 
\begin{align}
\label{eq:fin_3}
	\limsup_{t_* \downarrow 0}
		\sup_{x \in \Omega^{2 \gamma}}
		\int_{\Omega}
			p_{t_*}(x,y) 
			P_{t_0-t_*}^\varrho \car_\Omega(y)\,
				\dd y
		=
	\sup_{x \in \Omega^{2 \gamma}}
		P_{t_0}^\varrho \car_\Omega(x) 
			\fstop
\end{align}
We continue the estimate for a given $x \in \Omega^{2\gamma}$, for every $s \in (0,t_0/2)$, as
\begin{align}
\nonumber	
	P^\varrho_{t_0}\car_\Omega(x)
		&\le 	
	\int_\Omega p^\varrho_s(x,y)\, \abs{\ttonde{P^\varrho_{t_0-s}-P^\Dir_{t_0-s}}\car_\Omega(y)} {\rm d}y
		+
	P^\varrho_s P^\Dir_{t_0-s}\car_\Omega(x)
	\\
\nonumber
		&\le 
	\sqrt{p^\varrho_{2s}(x,x)}\sup_{r\ge 0}\norm{P^\varrho_r\car_\Omega-P^\Dir_r\car_\Omega}_{L^2(\Omega)}
		+ 
	P^\varrho_s P^\Dir_{t_0-s}\car_\Omega(x)
	\\
\label{eq:fin_4}
		&\le 	
	\sqrt{p^\Neu_{2s}(x,x)}\sup_{r\ge 0}\norm{P^\varrho_r\car_\Omega-P^\Dir_r\car_\Omega}_{L^2(\Omega)}
		+ 
	P^\Neu_s P^\Dir_{t_0-s}\car_\Omega(x)\comma
\end{align}
where in the second-to-last step we used Cauchy-Schwartz inequality, while	 in the  last step we employed the monotonicity of semigroups (Lemma~\ref{l:Domination}).
In conclusion,  Lemma~\ref{l:robin_to_tutto} yields
\begin{align}
\limsup_{\varrho\to \infty}	\sup_{x\in \Omega^{2\gamma}} \sqrt{p^\Neu_{2s}(x,x)}\sup_{r\ge 0}\norm{P^\varrho_r\car_\Omega-P^\Dir_r\car_\Omega}_{L^2(\Omega)}=0\comma\qquad s\in (0,t_0/2)\comma
\end{align}
while $P^\Dir_{t_0}\car_\Omega \in \mcC_0(\Omega)$ ensures that
\begin{align}
	\lim_{s\downarrow 0}\limsup_{\gamma\downarrow 0} 	
		\sup_{x\in \Omega^{2\gamma}} P^\Neu_s P^\Dir_{t_0-s}\car_\Omega(x) = \sup_{x\in \partial\Omega} P^\Dir_{t_0}\car_\Omega(x)=0\fstop
\end{align}
Collecting all the estimates in \eqref{eq:fin_1}, \eqref{eq:fin_2}, \eqref{eq:fin_3}, and \eqref{eq:fin_4}, we obtain that 
\begin{align*}
	\lim_{\gamma \downarrow 0}
	\limsup_{\varrho \to \infty}
	\limsup_{t_* \downarrow 0}
	\limsup_{\eps \downarrow 0}
		K_{\eps, t_*, \varrho, \gamma, t_0}
	= 0 
		\comma 
	\quad t_0 > 0 
		\comma 
\end{align*}
which, together with \eqref{eq:I<K}, shows the sought claim \eqref{eq:final_equi}, and thus concludes the proof of the proposition.}
\end{proof}

\subsection{Equivalence of convergences}\label{sec:proof-Equivalences}
\purple{Theorem \ref{t:Equivalence} --- which is instrumental to the proof of Theorem \ref{t:MainSemigroups} --- is the main result of this section. In what follows,}
we make use of the general framework of~\cite{KuwShi03}, and in particular of the notion of convergence of Hilbert spaces, bounded operators, and quadratic forms, in the sense of~\cite{KuwShi03} with respect to the pair~$(\mcC^\bd,\Pi_\eps)$. 

\subsubsection{Definitions of convergences}\label{ss:KS}
We start by recalling the main definitions of convergences, specialized to our setting.

\paragraph{Kuwae--Shioya convergence}
We refer to~\cite{KuwShi03} for the details of this construction.
Everywhere in the following, we assume that~$\gamma\in (0,1)$.
Let~$\seq{f_\eps}_\eps$, with~$f_\eps\in L^2(\Omega_\eps)$, and~$f\in L^2(\Omega)$.

\begin{definition}[Kuwae--Shioya convergences]\label{def:KS_convergence}
	We say that~$\seq{f_\eps}_\eps$
	\begin{itemize}
		\item (\emph{strongly}) \emph{KS-converges} (\emph{with respect to the pair~$(\mcC^\bd,\Pi_\eps)$}) to~$f$ if there exists~$\ttseq{\tilde f_\gamma}_\gamma\subset \mcC^\bd$ so that
		\begin{align}\label{eq:KSStrong}
			\lim_{\gamma\downarrow 0} \tnorm{\tilde f_\gamma-f}_{L^2(\Omega)}\comma \qquad \lim_{\gamma\downarrow 0} \limsup_{\eps\downarrow 0} \tnorm{\Pi_\eps\tilde f_\gamma-f_\eps}_{L^2(\Omega_\eps)}=0 \fstop
		\end{align}
		Write~$f_\eps\xrightarrow{\mcC^\bd,\Pi_\eps}f$.
		
		\item \emph{weakly KS-converges} to~$f$ if~$\lim_{\eps\downarrow 0}\scalar{f_\eps}{g_\eps}_{L^2(\Omega_\eps)}= \scalar{f}{g}_{L^2(\Omega)}$ for every~$\seq{g_\eps}_\eps$, with~$g_\eps\in L^2(\Omega_\eps)$, strongly KS-convergent to~$g\in L^2(\Omega)$.
		Write~$f_\eps\xrightharpoonup{\mcC^\bd,\Pi_\eps} f$.
	\end{itemize}
	
	Further let~$\seq{B_\eps}_\eps$, with~$B_\eps\colon L^2(\Omega_\eps)\to L^2(\Omega_\eps)$, and~$B\colon L^2(\Omega)\to L^2(\Omega)$ be bounded operators.
	We say that~$\seq{B_\eps}_\eps$
	\begin{itemize}
		\item (\emph{strongly}) \emph{KS-converges} to~$f$ if~$B_\eps f_\eps \xrightarrow{\mcC^\bd,\Pi_\eps} Bf$ for every~$f_\eps \xrightarrow{\mcC^\bd,\Pi_\eps} f$. Write~$B_\eps \xrightarrow{\mcC^\bd,\Pi_\eps} B$;
		\item \emph{compactly KS-converges} to~$f$ if~$B_\eps f_\eps \xrightarrow{\mcC^\bd,\Pi_\eps} Bf$ for every~$f_\eps \xrightharpoonup{\mcC^\bd,\Pi_\eps} f$. Write~$B_\eps \xRightarrow{\mcC^\bd,\Pi_\eps} B$.
	\end{itemize}
	
	Finally, let~$\seq{F_\eps}_\eps$, with~$F_\eps\colon L^2(\Omega_\eps)\to [0,+\infty]$, and~$F\colon L^2(\Omega)\to [0,+\infty]$ be non-negative functionals.
	We say that~$F_\eps$
	\begin{itemize}
		\item \emph{KS-Mosco-} (\emph{KSM-})\emph{converges} to~$F$ if
		\begin{gather*}
			f_\eps \xrightharpoonup{\mcC^\bd,\Pi_\eps} f \implies F(f)\leq \liminf_{\eps\downarrow 0} F_\eps(f_\eps)\semicolon
			\\
			\forall f\in L^2(\Omega) \quad \exists \seq{f_\eps}_\eps : f_\eps \xrightarrow{\mcC^\bd,\Pi_\eps} f \quad \text{and} \quad \lim_{\eps\downarrow 0} F_\eps(f_\eps)=F(f) \fstop
		\end{gather*}
		Since no confusion may arise, we write as well~$F_\eps\xrightarrow{\mcC^\bd,\Pi_\eps} F$.
	\end{itemize}
\end{definition}

\paragraph{Direct convergence} For~$x\in \R^d$ denote now by~$Q_\eps(x)$ the $d$-hypercube $x+[0,\eps)^d$ of side-length~$\eps$ with lexicographically lowest corner~$x$.
Further define an isometric operator~$\iota_\eps\colon L^2(\Omega_\eps)\to L^2(\Omega)$ by
\begin{align*}
	\iota_\eps\colon f_\eps\longmapsto \sum_{x\in \Omega_\eps} f_\eps(x) \, \car_{Q_\eps(x)\cap\Omega} \comma
\end{align*}
and denote by~$\mcQ_\eps\colon L^2(\Omega)\to L^2(\Omega_\eps)$ its adjoint operator.
For~$F_\eps\colon L^2(\Omega_\eps)\to [0,+\infty]$ set now
\begin{align*}
	\ttonde{{\iota_\eps}_* F_\eps}(f)\eqdef \begin{cases}
		F_\eps(\mcQ_\eps f) & \text{if } f=\iota_\eps\mcQ_\eps f
		\\
		+\infty & \text{otherwise}
	\end{cases}\fstop
\end{align*}

The following notion of convergence describes the `direct-limit' point of view for discretizations, commonly adopted in the literature, see e.g.~\cite{AliCic04, forkert2020evolutionary}.

\begin{definition}[Direct convergences]
	Let~$\seq{f_\eps}_\eps$, with~$f_\eps\in L^2(\Omega_\eps)$, and~$f\in L^2(\Omega)$.
	
	We say that~$\seq{f_\eps}_\eps$
	\begin{itemize}
		\item (\emph{strongly}) \emph{d-converges} to~$f$ if~$\lim_{\eps\downarrow 0}\norm{\iota_\eps f_\eps-f}_{L^2(\Omega)}=0$. Write~$f_\eps\xrightarrow{\iota_\eps}f$.
	\end{itemize}
	
	Further, let~$\seq{F_\eps}_\eps$, with~$F_\eps\colon L^2(\Omega_\eps)\to [0,+\infty]$, and~$F\colon L^2(\Omega)\to [0,+\infty]$ be non-negative functionals. We say that~$F_\eps$
	\begin{itemize}
		\item \emph{Mosco d-converges} to~$F$ if~${\iota_\eps}_* F_\eps$ Mosco converges to~$F$ on $L^2(\Omega)$ in the usual sense. Write~$F_\eps\xrightarrow{\iota_\eps}F$.
	\end{itemize}
\end{definition}

Straightforward manipulations of the above definitions show that these notions of convergence are in fact equivalent.
\begin{proposition}\label{p:ACtoKS}
	With the notation established above, we have that
	\begin{itemize}
		\item $f_\eps \xrightarrow{\mcC^\bd,\Pi_\eps} f$ if and only if~$f_\eps \xrightarrow{\iota_\eps} f$;
		\item $F_\eps \xrightarrow{\mcC^\bd,\Pi_\eps} F$ if and only if~$F_\eps \xrightarrow{\iota_\eps} F$.
	\end{itemize}
\end{proposition}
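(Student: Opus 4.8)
\emph{Strategy.} The plan is to route both equivalences through the isometric embedding~$\iota_\eps$, so that the statement reduces to two elementary facts. First, since~$\iota_\eps$ is an isometry, one has~$\scalar{\iota_\eps f_\eps}{\iota_\eps g_\eps}_{L^2(\Omega)}=\scalar{f_\eps}{g_\eps}_{L^2(\Omega_\eps)}$, and~$\iota_\eps\mcQ_\eps$ is the $L^2(\Omega)$-orthogonal projection onto the finite-dimensional subspace~$V_\eps\eqdef\iota_\eps L^2(\Omega_\eps)$ of functions constant on each cell~$Q_\eps(x)\cap\Omega$, $x\in\Omega_\eps$. Second,~$\iota_\eps\mcQ_\eps g\to g$ in~$L^2(\Omega)$ for every~$g\in L^2(\Omega)$: by~$\norm{\iota_\eps\mcQ_\eps}\le 1$ and density of~$\cC(\overline\Omega)$ in~$L^2(\Omega)$ (so a fortiori of~$\mcC^\bd$, which contains~$\cC_c(\Omega)$) it suffices to check this for~$\tilde g\in\cC(\overline\Omega)$, for which~$\iota_\eps\Pi_\eps\tilde g\in V_\eps$ converges to~$\tilde g$ in~$L^2(\Omega)$ by uniform continuity of~$\tilde g$ on~$\overline\Omega$, the boundary layer of~$\Omega$ uncovered by the cells having vanishing Lebesgue measure as~$\eps\downarrow 0$ (cf.~\eqref{eq:bd:lipschitz-count}). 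Everything else is a matter of unwinding Definition~\ref{def:KS_convergence}; in particular, density of~$\mcC^\bd$ in~$L^2(\Omega)$ being the only property of~$\mcC^\bd$ used, the equivalences will also show that the KS-convergences do not depend on~$\bd$.

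\emph{Convergence of functions.} I would argue by triangle inequality in both directions. If~$f_\eps\xrightarrow{\mcC^\bd,\Pi_\eps}f$ with approximants~$\seq{\tilde f_\gamma}_\gamma\subset\mcC^\bd$, then using the isometry to rewrite~$\norm{f_\eps-\Pi_\eps\tilde f_\gamma}_{L^2(\Omega_\eps)}=\norm{\iota_\eps f_\eps-\iota_\eps\Pi_\eps\tilde f_\gamma}_{L^2(\Omega)}$,
\[
\norm{\iota_\eps f_\eps-f}_{L^2(\Omega)}\le\norm{f_\eps-\Pi_\eps\tilde f_\gamma}_{L^2(\Omega_\eps)}+\norm{\iota_\eps\Pi_\eps\tilde f_\gamma-\tilde f_\gamma}_{L^2(\Omega)}+\norm{\tilde f_\gamma-f}_{L^2(\Omega)}\comma
\]
and sending~$\eps\downarrow0$ (the middle term vanishes by the second fact above) and then~$\gamma\downarrow0$ gives~$f_\eps\xrightarrow{\iota_\eps}f$. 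Conversely, if~$f_\eps\xrightarrow{\iota_\eps}f$, I would choose~$\tilde f_\gamma\in\mcC^\bd$ with~$\norm{\tilde f_\gamma-f}_{L^2(\Omega)}<\gamma$ (density) and, again via the isometry,
\[
\norm{\Pi_\eps\tilde f_\gamma-f_\eps}_{L^2(\Omega_\eps)}\le\norm{\iota_\eps\Pi_\eps\tilde f_\gamma-\tilde f_\gamma}_{L^2(\Omega)}+\norm{\tilde f_\gamma-f}_{L^2(\Omega)}+\norm{f-\iota_\eps f_\eps}_{L^2(\Omega)}\comma
\]
whence~$\limsup_{\eps\downarrow 0}\norm{\Pi_\eps\tilde f_\gamma-f_\eps}_{L^2(\Omega_\eps)}\le\gamma$, verifying~\eqref{eq:KSStrong}. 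I will also need the analogue for weak convergence: $f_\eps\xrightharpoonup{\mcC^\bd,\Pi_\eps}f$ iff~$\iota_\eps f_\eps\rightharpoonup f$ in~$L^2(\Omega)$. Testing the definition of weak KS-convergence against the sequences~$(\mcQ_\eps g)_\eps$, which for each~$g\in L^2(\Omega)$ are strongly KS-convergent to~$g$ by the two facts above and the strong equivalence, and using that~$\iota_\eps f_\eps\in V_\eps$ while~$\iota_\eps\mcQ_\eps=P_{V_\eps}$, so~$\scalar{f_\eps}{\mcQ_\eps g}_{L^2(\Omega_\eps)}=\scalar{\iota_\eps f_\eps}{\iota_\eps\mcQ_\eps g}_{L^2(\Omega)}=\scalar{\iota_\eps f_\eps}{g}_{L^2(\Omega)}$, gives~$\scalar{\iota_\eps f_\eps}{g}_{L^2(\Omega)}\to\scalar{f}{g}_{L^2(\Omega)}$ for all~$g\in L^2(\Omega)$, i.e.\ the forward implication; the converse follows from~$\scalar{f_\eps}{g_\eps}_{L^2(\Omega_\eps)}=\scalar{\iota_\eps f_\eps}{\iota_\eps g_\eps}_{L^2(\Omega)}$ and weak--strong continuity of the inner product, again using the strong equivalence.

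\emph{Convergence of functionals.} Set~$G_\eps\eqdef{\iota_\eps}_*F_\eps$, so that~$G_\eps(u)=F_\eps(\mcQ_\eps u)$ if~$u\in V_\eps$ and~$G_\eps(u)=+\infty$ otherwise. Consequently every sequence with~$\liminf_{\eps\downarrow 0}G_\eps(u_\eps)<+\infty$ is, along a subsequence, of the form~$u_\eps=\iota_\eps f_\eps$ with~$f_\eps\eqdef\mcQ_\eps u_\eps$ and~$G_\eps(u_\eps)=F_\eps(f_\eps)$, while conversely~$\iota_\eps$ turns an arbitrary sequence in~$L^2(\Omega_\eps)$ into a~$V_\eps$-valued one with unchanged functional values. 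Combining this with the two correspondences of the preceding paragraph, the~$\liminf$-clause of Mosco convergence of~$G_\eps$ on~$L^2(\Omega)$ translates verbatim into the~$\liminf$-clause of KSM-convergence of~$F_\eps$, and the recovery-sequence clauses match in the same way, the degenerate case~$F(u)=+\infty$ being handled by taking~$\mcQ_\eps u$ as recovery sequence and invoking the~$\liminf$-inequality just obtained. This yields~$F_\eps\xrightarrow{\iota_\eps}F$ iff~$F_\eps\xrightarrow{\mcC^\bd,\Pi_\eps}F$, and completes the proof.

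\emph{Expected main difficulty.} None of the above is deep: the proposition is essentially a dictionary between two bookkeeping conventions for discretizations. The only point calling for care is the precise sense in which~$\iota_\eps$ is an isometry, since the cells~$Q_\eps(x)\cap\Omega$, $x\in\Omega_\eps$, do not tile~$\Omega$ exactly near~$\partial\Omega$: one either works with an~$o(1)$ discrepancy in the norm identity, controlled on the~$O(\eps^{1-d})$ boundary cells of total measure~$O(\eps)$ via~\eqref{eq:bd:lipschitz-count}, or fixes the conventions so the identity is exact, and in both cases none of the limits above is affected. The second thing to watch is that the implicit effective-domain constraint~$u\in V_\eps$ hidden inside~${\iota_\eps}_*F_\eps$ be matched correctly against the domain-free formulation of KSM-convergence, which is exactly what the bookkeeping of the third paragraph accomplishes.
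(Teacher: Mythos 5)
The paper does not actually supply a proof of Proposition~\ref{p:ACtoKS}: it states only that ``straightforward manipulations of the above definitions'' establish the equivalences. Your proposal supplies exactly the argument the authors left to the reader, and it is essentially correct. The strategy --- routing everything through $\iota_\eps$ and the two elementary facts that $\iota_\eps\mcQ_\eps=P_{V_\eps}$ and $P_{V_\eps}g\to g$ for every $g\in L^2(\Omega)$ --- is the natural one, and your two triangle-inequality computations and the weak--strong bookkeeping verify all three dictionary entries (strong, weak, Mosco).

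One small point is worth tightening in the functionals paragraph. You phrase the $\liminf$-clause by passing to a subsequence along which $u_\eps\in V_\eps$, but the definition of KSM-convergence is stated for the full net of $\eps$, and one should not in general pass $\liminf$-clauses through subsequences without comment. The cleaner route avoids the subsequence altogether: for \emph{arbitrary} $(u_\eps)_\eps$, set $f_\eps\eqdef\mcQ_\eps u_\eps$ for every $\eps$. Then $\iota_\eps f_\eps=P_{V_\eps}u_\eps$, and if $u_\eps\rightharpoonup u$ in $L^2(\Omega)$ then $P_{V_\eps}u_\eps\rightharpoonup u$ as well (since $\scalar{P_{V_\eps}u_\eps}{g}=\scalar{u_\eps}{P_{V_\eps}g}$, $(u_\eps)_\eps$ is norm-bounded, and $P_{V_\eps}g\to g$ strongly), so $f_\eps\xrightharpoonup{\mcC^\bd,\Pi_\eps}u$. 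Since moreover $F_\eps(f_\eps)\leq G_\eps(u_\eps)$ always (with equality iff $u_\eps\in V_\eps$), the KSM $\liminf$-clause for the full net gives $F(u)\leq\liminf_\eps F_\eps(f_\eps)\leq\liminf_\eps G_\eps(u_\eps)$. Your caveat about the near-boundary cells not exactly tiling $\Omega$, so that $\iota_\eps$ is only isometric up to an $o(1)$ correction controlled by~\eqref{eq:bd:lipschitz-count}, is correct and correctly assessed as harmless.
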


\subsubsection{Main result}\label{sss:EquivalenceConvergences}
Let~$\ttonde{B,\dom{B}}$ be an unbounded operator on some Banach space.
Recall that a linear space~$\mcS\subset \dom{B}$ is a \emph{core} for~$\ttonde{B,\dom{B}}$ if it is dense in~$\dom{B}$ in the $B$-graph norm~\purple{$\norm{v}_B\eqdef \norm{v}+\norm{Bv}$ for~$v\in\dom{B}$}.

The proof of the following theorem is the content of	 \S\ref{sec:proof-equivalences} below.
\begin{theorem}[Equivalence of convergences]\label{t:Equivalence}
Let~$\mcS^{\bd,c}$ be any core for~$\ttonde{\Delta^{\bd,c},\dom{\Delta^{\bd,c}}}$. The following assertions are equivalent:
\begin{enumerate}[$({a}_1)$]
\item\label{i:t:Equivalence:1} the assertion of Theorem~\ref{t:MainSemigroups};
\item\label{i:t:Equivalence:2} convergence of~$\mcC^\bd$-semigroups: $P^{\eps,\beta}_t \Pi_\eps f\to P^{\bd,c}_t f$ for each~$f\in \cC^\bd$ for each~$t\geq0$;
\item\label{i:t:Equivalence:3} graph-convergence of~$\mcC^\bd$-generators: for each~$f\in \mcS^{\bd,c}$ there exists~$f_\eps\in \dom{A^{\eps,\beta}}$ with~$f_\eps\to f$ and~$A^{\eps,\beta} f_\eps \to \Delta^{\bd,c} f$;
\item\label{i:t:Equivalence:4} convergence of forms: $\mcE^{\eps,\beta}\to \mcE^\bd$ compactly;
\item\label{i:t:Equivalence:5} convergence of $L^2$-semigroups: $P^{\eps,\beta}_t \to P^\bd_t$ compactly for every~$t> 0$;
\item\label{i:t:Equivalence:6} convergence of $L^2$-resolvents $R^{\eps,\beta}_\zeta \to R^\bd_\zeta$ compactly for either some or every~$\zeta>0$.
\end{enumerate}
Finally, if either of the above convergences holds, then
\begin{enumerate}[$({b}_1)$]
\item\label{i:t:Equivalence:7} graph-convergence of~$L^2$-generators: the strong Kuwae--Shioya-graph limit of~$A^{\eps,\beta}$ coincides with~$\Delta^\bd$;
\item\label{i:t:Equivalence:8} spectral convergence: for every $n \in \N_0$, denote by~$\lambda^{\eps,\beta}_n$ the $n^{\text{th}}$ eigenvalue of~$-A^{\eps,\beta}$ indexed with multiplicity, and set~$\lambda^{\eps,\beta}_n\eqdef +\infty$ for~$n\geq \dim L^2(\Omega_\eps)$.
Further let~$\lambda^\bd_n$ be the $n^{\textrm{th}}$~eigenvalue of~$\ttonde{-\Delta^\bd,\dom{-\Delta^\bd}}$ indexed with multiplicity. Then,
\begin{align*}
\lim_{\eps\downarrow 0} \lambda^{\eps,\beta}_n=\lambda^\bd_n\comma \qquad n\in \N_0\fstop
\end{align*}
\end{enumerate}
\end{theorem}
For an example of a suitable core~$\mcS^\bd=\mcS^{\bd,c}$, see Proposition~\ref{p:TestF}. Finally,  since Theorem~\ref{t:MainSemigroups} establishes~\ref{i:t:Equivalence:1} above, combining it with Theorem~\ref{t:Equivalence} shows that all assertions~\ref{i:t:Equivalence:1}--\ref{i:t:Equivalence:6}, and~\ref{i:t:Equivalence:7},~\ref{i:t:Equivalence:8} hold.

\subsubsection{Proof of Theorem \ref{t:Equivalence}}\label{sec:proof-equivalences}
The equivalence of~\ref{i:t:Equivalence:1}--\ref{i:t:Equivalence:3} is~\cite[Thm.~I.6.1]{ethier_kurtz_1986_Markov}.
The equivalence of~\ref{i:t:Equivalence:4}--\ref{i:t:Equivalence:6} is~\cite[Thm.~2.4]{KuwShi03}.
The implication~\ref{i:t:Equivalence:4} $\implies$ \ref{i:t:Equivalence:7} follows from~\cite[Thm.~2.5]{KuwShi03}.
Since~$L^2(\Omega_\eps)$ is finite-dimensional, the operator~$R^{\eps,\beta}_\zeta\eqdef\ttonde{\zeta-A^{\eps,\beta}}^{-1}$ is compact for every~$\zeta>0$.
As a consequence, the implication~\ref{i:t:Equivalence:4} $\implies$ \ref{i:t:Equivalence:8} follows from~\cite[Cor.~2.5]{KuwShi03}.

Thus, it remains to show the following:

\begin{proposition}
	Assertion~\ref{i:t:Equivalence:2} is equivalent to assertion~\ref{i:t:Equivalence:5}.
	\begin{proof}
		Concerning the implication~\ref{i:t:Equivalence:2}$\implies$\ref{i:t:Equivalence:5}, we show that~\ref{i:t:Equivalence:2} implies the strong KS-convergence~$P^{\eps,\beta}_t\xrightarrow{\mcC^\bd,\Pi_\eps} P^\partial_t$, and that the forms~$\ttonde{\mcE^{\eps,\beta},\dom{\mcE^{\eps,\beta}}}$ verify the definition~\cite[Dfn.~3.12]{KuwShi03} of \emph{asymptotic compactness}.
		The conclusion then follows from~\cite[Thm.~2.4]{KuwShi03}.
		
		\paragraph{Strong KS-convergence of semigroups}
		Let~$\ttseq{f_\eps}_\eps$, with~$f_\eps\in L^2(\Omega_\eps)$, be strongly KS-convergent to~$f\in L^2(\Omega)$.
		We show that~$P^{\eps,\beta}_t f_\eps\xrightarrow{\mcC^\bd,\Pi_\eps} P^\bd_t f$.
		By assumption, there exists~$\ttseq{\tilde f_\gamma}_\gamma$ as in~\eqref{eq:KSStrong}.
		Set~$\tilde g_\gamma\eqdef P^\bd_t \tilde f_\gamma$.
		Since~$\tilde f_\gamma\in\mcC^\bd$ by definition, we have that~$\tilde g_\gamma = P^{\bd,c}_t \tilde f_\gamma\in \mcC^\bd$ by~\eqref{eq:t:ContPart}. 
		Thus,~$\Pi_\eps \tilde g_\gamma$ is well-defined, and we may estimate 
		\begin{align*}
			&\tnorm{\Pi_\eps \tilde g_\gamma-P^{\eps,\beta}_t f_\eps}_{L^2(\Omega_\eps)}\\
			&\quad \le \tnorm{\Pi_\eps P^{\bd,c}_t \tilde f_\gamma-P^{\eps,\beta}_t \Pi_\eps \tilde f_\gamma}_{L^2(\Omega_\eps)} + \tnorm{P^{\eps,\beta}_t  \Pi_\eps \tilde f_\gamma- P^{\eps,\beta}_t f_\eps}_{L^2(\Omega_\eps)}
			\\
			&\quad \le \sqrt{\mu_\eps(\Omega_\eps)}\tnorm{\Pi_\eps P^{\bd,c}_t \tilde f_\gamma-P^{\eps,\beta}_t \Pi_\eps \tilde f_\gamma}_{L^\infty(\Omega_\eps)}
			+ \tnorm{P_t^{\eps,\beta}}_{L^2(\Omega_\eps)\to L^2(\Omega_\eps)} \tnorm{\Pi_\eps \tilde f_\gamma- f_\eps}_{L^2(\Omega_\eps)} \fstop
		\end{align*}
		Since~$\mu_\eps(\Omega_\eps)$ is uniformly bounded by~\eqref{eq:VolumeBound}, the first term vanishes as~$\eps\to 0$ by~\ref{i:t:Equivalence:2} for each fixed~$\gamma$.
		Since~$P^{\eps,\beta}_t$ is $L^2(\Omega_\eps)$-contracting, taking first~$\limsup_{\eps\downarrow 0}$ and then~$\lim_{\gamma\downarrow 0}$, the second term vanishes as well by~\eqref{eq:KSStrong}.
		This verifies~\eqref{eq:KSStrong} with~$P^{\eps,\beta}_t f_\eps$ in place of~$f_\eps$ and~$\tilde g_\gamma$ in place of~$\tilde f_\gamma$, which concludes the assertion by definition of strong KS-convergence.
		
		\paragraph{Asymptotic compactness}
		We need to verify that, if~$\ttseq{f_\eps}_\eps$, with~$f_\eps\in L^2(\Omega_\eps)$, is such that
		\begin{align}\label{eq:AsympCompact}
			\limsup_{\eps\downarrow 0} \mcE^{\eps,\beta}(f_\eps)+\norm{f_\eps}_{L^2(\Omega_\eps)}^2 <\infty\comma
		\end{align}
		then~$\ttseq{f_\eps}_\eps$ has a strongly KS-convergent subsequence.
		By Proposition~\ref{p:ACtoKS}, we may equivalently verify that it has a strongly d-convergent subsequence.
		Since~$\mcE^{\eps,\beta}\geq \mcE^{\eps,\infty}$ for every~$\beta\in\R$, it suffices to verify the conclusion assuming~\eqref{eq:AsympCompact} with~$\beta=\infty$.
		The case~$\beta=\infty$ is shown in~\cite[Prop.~6.5]{forkert2020evolutionary} choosing~$\Omega$ in~\cite{forkert2020evolutionary} as a ball containing our domain~$\Omega$, and~$A$ in~\cite{forkert2020evolutionary} as our~$\Omega$.
		We note that the assumption in~\cite[Dfn.~3.1(i)]{forkert2020evolutionary} is satisfied since, in the notation of~\cite{forkert2020evolutionary}, we are choosing~$\overline{\mfm}=\mu_\Omega$ in~\cite[Eqn.~(2.10)]{forkert2020evolutionary} and~$U_{KL}\equiv 1$ in~\cite[Eqn.~(3.9)]{forkert2020evolutionary}.
		
		\medskip
		
		Let us now turn to the implication~\ref{i:t:Equivalence:5}$\implies$\ref{i:t:Equivalence:2}.
		Fix~$f\in \mcC^\bd$, and~$t>0$.
		It is readily seen that~$\Pi_\eps f\xrightarrow{\mcC^\bd,\Pi_\eps} f$.
		The assumption of~\ref{i:t:Equivalence:5} now implies that~$P^{\eps,\beta}_t \Pi_\eps f\xrightarrow{\mcC^\bd,\Pi_\eps} P^\bd_t f$ and the conclusion follows from the equicontinuity of~$P^{\eps,\beta}_t \Pi_\eps f$ shown in Proposition~\ref{pr:equi_semi_disc}.
	\end{proof}
\end{proposition}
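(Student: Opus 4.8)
The plan is to establish the two implications \ref{i:t:Equivalence:2}$\implies$\ref{i:t:Equivalence:5} and \ref{i:t:Equivalence:5}$\implies$\ref{i:t:Equivalence:2} separately. In both directions the essential inputs will be the Kuwae--Shioya convergence machinery recalled in~\S\ref{ss:KS} and the $\eps$- and $\beta$-uniform equicontinuity of the discrete semigroups (Proposition~\ref{pr:equi_semi_disc}); the remaining steps are soft.

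For \ref{i:t:Equivalence:2}$\implies$\ref{i:t:Equivalence:5}, I would invoke \cite[Thm.~2.4]{KuwShi03}: compact KS-convergence of the $L^2$-semigroups follows once one checks (i) strong KS-convergence of the semigroups and (ii) asymptotic compactness of the forms $\ttonde{\mcE^{\eps,\beta},\dom{\mcE^{\eps,\beta}}}$ in the sense of~\cite[Dfn.~3.12]{KuwShi03}. For (i): given $f_\eps\xrightarrow{\mcC^\bd,\Pi_\eps}f$, choose an approximating family $\ttseq{\tilde f_\gamma}_\gamma\subset\mcC^\bd$ as in~\eqref{eq:KSStrong}; since $P^{\bd,c}_t$ preserves $\mcC^\bd$ and agrees there with $P^\bd_t$ (compatibility of the $L^2$- and $\mcC^\bd$-semigroups, see~\S\ref{ss:CbLaplacians}), the functions $P^{\bd,c}_t\tilde f_\gamma$ form an admissible approximant for $P^\bd_t f$, and one bounds
\[
\tnorm{\Pi_\eps P^{\bd,c}_t\tilde f_\gamma-P^{\eps,\beta}_t f_\eps}_{L^2(\Omega_\eps)}
\le\tnorm{\Pi_\eps P^{\bd,c}_t\tilde f_\gamma-P^{\eps,\beta}_t\Pi_\eps\tilde f_\gamma}_{L^2(\Omega_\eps)}+\tnorm{\Pi_\eps\tilde f_\gamma-f_\eps}_{L^2(\Omega_\eps)},
\]
using that $P^{\eps,\beta}_t$ is an $L^2(\Omega_\eps)$-contraction; the first term tends to $0$ for fixed $\gamma$ by hypothesis~\ref{i:t:Equivalence:2} (upgrading uniform to $L^2$ convergence via the bound~\eqref{eq:VolumeBound} on $\mu_\eps(\Omega_\eps)$), and then $\limsup_{\eps\downarrow0}$ followed by $\lim_{\gamma\downarrow0}$ removes the second term. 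For (ii): since $\mcE^{\eps,\beta}\ge\mcE^{\eps,\infty}$ for every $\beta\in\R$, it suffices to verify the property for $\beta=\infty$, where asymptotic compactness is a discrete Rellich-type statement furnished by the $\eps$-uniform relative isoperimetric inequalities of~\cite{chen2017hydrodynamic} (equivalently~\cite[Prop.~6.5]{forkert2020evolutionary}). Then~\cite[Thm.~2.4]{KuwShi03} gives the conclusion.

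For \ref{i:t:Equivalence:5}$\implies$\ref{i:t:Equivalence:2}, fix $f\in\mcC^\bd$ and $t>0$. One has $\Pi_\eps f\xrightarrow{\mcC^\bd,\Pi_\eps}f$ trivially (take $\tilde f_\gamma\equiv f$ in~\eqref{eq:KSStrong}), so compact KS-convergence of the $L^2$-semigroups — which in particular implies strong KS-convergence on strongly convergent inputs — yields $P^{\eps,\beta}_t\Pi_\eps f\xrightarrow{\mcC^\bd,\Pi_\eps}P^\bd_t f$, i.e.\ $L^2$-type convergence. It then remains to promote this to the uniform convergence of~\eqref{eq:EKConvergence}: the family $\ttseq{P^{\eps,\beta}_t\Pi_\eps f}_\eps$ is uniformly bounded by $\norm{f}_{\cC(\overline\Omega)}$ and, by Proposition~\ref{pr:equi_semi_disc}, equicontinuous with a modulus independent of $\eps$ and $\beta$; hence, by a lattice Arzel\`a--Ascoli argument, it is precompact for uniform convergence, and every uniform limit point along a subsequence must coincide with $P^{\bd,c}_t f$ by the already-established $L^2$-convergence, forcing uniform convergence of the whole family. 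The case $t=0$ is immediate, since $P^{\eps,\beta}_0\Pi_\eps f=\Pi_\eps f$.

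I expect the main obstacle to be the asymptotic compactness needed in the first implication: although I would cite it from~\cite{chen2017hydrodynamic,forkert2020evolutionary}, it is precisely the discrete compact Sobolev embedding on the irregular lattice domains $\Omega_\eps$, and is the one point not reducible to abstract semigroup arguments. The $L^2$-to-uniform upgrade in the second implication is delicate as well, but it is entirely absorbed into the quantitative equicontinuity of Proposition~\ref{pr:equi_semi_disc}, whose proof in turn leans on the H\"older heat-kernel estimate~\eqref{eq:holder}, the exit-time bound~\eqref{eq:exit-time}, ultracontractivity~\eqref{eq:ultracontractivity}, and the Feynman--Kac formula~\eqref{eq:feynman-kac}.
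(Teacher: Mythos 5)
Your proposal is correct and takes essentially the same route as the paper's proof: both directions hinge on \cite[Thm.~2.4]{KuwShi03} via strong KS-convergence plus asymptotic compactness (citing \cite[Prop.~6.5]{forkert2020evolutionary} after the reduction $\mcE^{\eps,\beta}\geq\mcE^{\eps,\infty}$) for \ref{i:t:Equivalence:2}$\implies$\ref{i:t:Equivalence:5}, and on upgrading $L^2$-type KS-convergence to uniform convergence via the $\beta$- and $\eps$-uniform equicontinuity of Proposition~\ref{pr:equi_semi_disc} for \ref{i:t:Equivalence:5}$\implies$\ref{i:t:Equivalence:2}. The only difference is cosmetic: you fold the $L^2$-contraction of $P^{\eps,\beta}_t$ into the triangle inequality in one step rather than two, and you spell out the lattice Arzel\`a--Ascoli step that the paper leaves implicit in its final sentence.
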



\end{document}